\documentclass[10pt]{amsart}
\usepackage{amsthm, amsfonts, amssymb, amsmath, amscd, enumitem}
\usepackage[all]{xy}
\usepackage{lmodern}
\usepackage[margin=3.6cm]{geometry}


\newcommand{\Too}{\longrightarrow}

\newcommand{\onto}{\twoheadrightarrow}

\newcommand{\act}{\curvearrowright}

\newcommand{\C}{\mathbb{C}}
\newcommand{\R}{\mathbb{R}}
\newcommand{\Q}{\mathbb{Q}}
\newcommand{\Z}{\mathbb{Z}}

\newcommand{\M}{\mathrm{M}}

\newcommand{\G}{\Gamma}


\newcommand{\Cred}{C^*_r}
\newcommand{\B}{\mathcal{B}}
\newcommand{\Comp}{\mathcal{K}}
\newcommand{\LL}{\mathcal{L}}
\newcommand{\hi}{C(\bd \G)\rtimes \G}

\newcommand{\hiX}{C(\bd X)\rtimes \G}

\newcommand{\rcross}{\rtimes_\mathrm{r}}
\newcommand{\ralg}{\rtimes_\mathrm{alg}}

\newcommand{\K}{\mathrm{K}}
\newcommand{\KK}{\mathrm{KK}}
\newcommand{\EG}{\mathcal{E}\G}

\newcommand{\pnt}{\textup{pnt}}

\newcommand{\Eul}{\mathrm{Eul}}

\newcommand{\RKK}{\textup{RKK}}

\newcommand{\Calk}{\B(\ell^2\G)/\Comp(\ell^2\G)}
\newcommand{\tauop}{\tau^{\textup{op}}}
\newcommand{\lambdaop}{\lambda^{\textup{op}}}

\newcommand{\ind}{\mathrm{index}}

\newcommand{\E}{\mathrm{E}\hspace{.07pc}}
\newcommand{\Ebar}{\overline{\mathrm{E}}\hspace{.07pc}}

\newcommand{\dev}{\sigma}

\newcommand{\sop}{s^{\mathrm{op}}}

\newcommand{\Hyp}{\mathbb{H}} 

\newcommand{\lambdamuop}{\lambda_\mu^{\mathrm{op}}}
\newcommand{\projcon}{P_{\ell^2\G}}


\newcommand{\e}{\epsilon}

\newcommand{\la}{\langle}
\newcommand{\ra}{\rangle}
\newcommand{\bd}{\partial}
\newcommand{\Lip}{\mathrm{Lip}}

\newcommand{\hdim}{\mathrm{hdim}}
\newcommand{\visdim}{\mathrm{visdim}\:}
\newcommand{\topdim}{\mathrm{topdim}\:}
\numberwithin{equation}{section}
\newcommand{\SO}{\mathrm{SO}}
\newcommand{\SU}{\mathrm{SU}}
\newcommand{\Sp}{\mathrm{Sp}}

\newtheorem{thm}{Theorem}[section]
\newtheorem{lem}[thm]{Lemma}
\newtheorem{cor}[thm]{Corollary}
\newtheorem{prop}[thm]{Proposition}
\newtheorem*{thma}{Theorem A}
\newtheorem*{thmb}{Theorem B}

\theoremstyle{definition}
\newtheorem{fact}[thm]{Fact}
\newtheorem{defn}[thm]{Definition}
\newtheorem{rem}[thm]{Remark}
\newtheorem{ex}[thm]{Example}

\newtheorem*{notation}{Notation}

\setcounter{tocdepth}{1}

\begin{document}
\title{K-homological finiteness and hyperbolic groups}
\author{Heath Emerson}
\address[H.E.]{Department of Mathematics and Statistics, University of Victoria, Victoria (Canada)}
\email{hemerson@uvic.ca}
\author{Bogdan Nica}
\address[B.N.]{Mathematisches Institut, Georg-August Universit\"at G\"ottingen, G\"ottingen (Germany)}
\email{bogdan.nica@gmail.com}
\date{\today}
\keywords{K-homology, finitely summable Fredholm modules, boundaries of hyperbolic groups}
\subjclass[2000]{19K33, 20F67, 58B34}

\begin{abstract}
Motivated by classical facts concerning closed manifolds, we introduce a strong finiteness property in K-homology. We say that a C*-algebra has \emph{uniformly summable K-homology} if all its K-homology classes can be represented by Fredholm modules which are finitely summable over the same dense subalgebra, and with the same degree of summability. We show that two types of C*-algebras associated to hyperbolic groups -- the C*-crossed product for the boundary action, and the reduced group C*-algebra -- have uniformly summable K-homology. We provide explicit summability degrees, as well as explicit finitely summable representatives for the K-homology classes. 
\end{abstract}

\maketitle

\section*{Introduction}
A \emph{Fredholm module} over a unital C*-algebra \(A\) consists of a 
representation \(\pi \colon A \to \B(H)\) of \(A\) on a Hilbert space, and an operator 
$T\in \B(H)$, which in the even case is an essential unitary and in the odd case an 
essential projection, and which in both cases is required to essentially commute with the representation: $[\pi (a), T] \in \Comp(H)$ for all $a\in A$. The definition of Fredholm module was motivated by elliptic operator theory: if \(M\) is a smooth, compact manifold, then any zero-order elliptic pseudodifferential operator \(T\) on sections \(L^2(E)\) of a bundle over \(M\), determines a Fredholm module with the obvious representation of $C(M)$ on $L^2(E)$ by multiplication. Since elliptic operators induce maps on K-theory by an index theoretic construction, this led Atiyah, and subsequently Kasparov, to describe the K-homology of a C*-algebra $A$ as equivalence classes of Fredholm modules over \(A\). 

Connes' early work on cyclic cohomology, the noncommutative analogue of de Rham theory, and on the noncommutative Chern character, a map from K-homology to cyclic theory, suggested the importance of the finite summability 
condition on a Fredholm module that 
\begin{align*}
[\pi (a), T] \in \LL^p(H)\; \textup{for dense \(a\in A\)}
\end{align*}
where \(\LL^p(H)\) is the Schatten ideal of $p$-summable compact operators. Connes showed how to associate a canonical cyclic cocycle, the so-called \emph{character}, to a finitely summable Fredholm module, and how to use the character for computing the index pairing between the K-theory of 
 \(A\) and the K-homology class of the Fredholm module. This is just one aspect of a larger landscape, that of \emph{quantized calculus} \cite[Ch.IV]{Connes}, depending on finite summability.
  
Examples of finitely summable Fredholm modules over C*-algebras are thus of considerable interest in noncommutative geometry, and by this stage there have been numerous constructions of them, but one can state a \emph{theorem} about their existence in the classical situation, using elliptic operator theory and Poincar\'e duality. If \(M\) is a closed manifold, then \(M\) has Poincar\'e duality with its tangent bundle. This implies that every K-homology class for \(M\) is represented by a pseudodifferential operator of order zero. Classical spectral estimates for pseudodifferential operators imply that the singular values of commutators \([f, T]\), where \(f\) is a smooth function and \(T\) is pseudodifferential, satisfy the asymptotic law $s_n=O(n^{-1/\dim M})$. It follows from this that every K-homology class for \(M\) is represented by a Fredholm module which is \(p\)-summable over \(C^\infty (M)\) for each \(p>\dim M\). The moral is that the K-homology of a closed manifold exhibits finiteness, in the sense that all K-homology classes can be represented by finitely summable Fredholm modules, in a strong form: both the smooth subalgebra and the degree of summability may be taken uniform across all K-homology classes. 

The main focus of the present article is on noncommutative manifestations of this strong finiteness phenomenon. We say that a C*-algebra has \emph{uniformly summable K-homology} if all its K-homology classes can be represented by Fredholm modules which are finitely summable over the same smooth subalgebra, and with the same degree of summability.

 \begin{thma}
 Let $\G$ be a regular, torsion-free hyperbolic group, and let $\bd \G$ denote its boundary. Then the crossed-product C*-algebra $\hi$ has uniformly summable K-homology.
 \end{thma}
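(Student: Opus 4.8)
The plan is to reduce the statement to producing finitely summable representatives for a \emph{generating set} of the K-homology group, and then to build those representatives out of the metric geometry of the boundary. The elementary point that makes the reduction work is that summability behaves well under the operations that assemble arbitrary classes from generators: the direct sum of two Fredholm modules that are $p$-summable over a common dense subalgebra $\smootha$ is again $p$-summable over $\smootha$, and passing to the additive inverse (reversing the grading, or the sign of the operator) costs nothing. Hence, once $\K^*(\hi)$ is known to be finitely generated and each generator is represented by a module that is $p$-summable over $\Lipcross$ for one fixed $p$, every class is an integer combination of generators and is represented by the corresponding finite direct sum, still $p$-summable over $\Lipcross$. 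Uniform summability then holds with dense subalgebra $\Lipcross$ and degree $p$, which is what we must produce.

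For the generation step I would invoke Poincar\'e duality for boundary actions: for a torsion-free hyperbolic $\G$ the crossed product $\hi$ is a $\PD$ algebra, so Kasparov product with a fundamental class $\Delta$ gives an isomorphism $\K_*(\hi)\isomto\K^*(\hi)$, and $\K_*(\hi)$ is finitely generated. This exhibits every K-homology class as $\Delta$ capped with a K-theory class, and the latter is representable by idempotents and unitaries over the smooth subalgebra $\Lipcross$ (boundary vector bundles). The whole summability question is thereby concentrated in the single class $\Delta$, \emph{provided} one knows that capping a $p$-summable module with a smooth, finitely generated projective module over $\Lipcross$ leaves the degree of summability unchanged; this last fact is the noncommutative analogue of twisting an elliptic operator by a bundle and should be checked directly from the definition of the Kasparov product at the level of $\Lipcross$.

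The heart of the matter is then an explicit $p$-summable model of $\Delta$ on the Hilbert space $\hilb$ carrying the regular covariant representation of $\hi$. I would represent it by the Fredholm module attached to the boundary extension $0\To\Comp(\ell^2\G)\To\hic\To\hi\To 0$, whose essential symmetry $F$ is assembled from the projection $\projcon$ onto $\ell^2\G\subseteq\hilb$ and a phase coming from the visual geometry of $(\bd\G,d_\e)$ and the Busemann cocycle, so that $F$ is $\G$-equivariant modulo $\Comp$ and descends to an essential symmetry for the covariant representation. Finite summability is the assertion that $[F,a]\in\LL^p(\hilb)$ for all $a\in\Lipcross$ and all $p>\h$. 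For $a\in\Lip(\bd\G,d_\e)$ this should follow from Ahlfors $Q$-regularity of the Patterson--Sullivan measure $\mu$ with $Q=\h$: the singular values of $[F,a]$ are controlled by the metric entropy of the boundary and should decay like $n^{-1/\h}$, which is $p$-summable exactly for $p>\h$. The covariance and variance estimates whose notation is fixed above are the mechanism for converting this heuristic into a genuine bound on $\Tr\,\lvert[F,a]\rvert^p$, by expanding the trace as a sum over $\G$ of matrix coefficients weighted by $\mu$.

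The main obstacle I anticipate is exactly this Schatten estimate, and in two distinct directions. First, $\Lipcross$ is generated not only by Lipschitz functions on the boundary but also by the unitaries implementing $\G$, and since $\G$ acts on $(\bd\G,d_\e)$ by conformal rather than globally Lipschitz maps, the group direction forces the equivariance of $F$ to be arranged with quantitative care so that the commutators with the group unitaries remain in $\LL^p$. Second, pinning the critical exponent at the Hausdorff dimension $\h$, rather than at some larger non-optimal value, requires a sharp count of $\G$ against balls in the visual metric together with the matching decay rate for the singular values; this is where the regularity hypothesis on $\G$ should enter, guaranteeing the clean $n^{-1/\h}$ law. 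Once both estimates are in place, the direct-sum reduction of the first paragraph closes the argument, and the degree of summability may be taken to be any $p>\h$, uniformly over all of $\K^*(\hi)$.
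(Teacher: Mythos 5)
Your overall architecture --- Poincar\'e duality to parametrize $\K^*(\hi)$ by K-theory classes, an explicit Fredholm module built from the boundary extension and a visual measure, and Ahlfors regularity of the Patterson--Sullivan measure to get the Schatten estimates --- is the same as the paper's. But there is a genuine gap at the step you describe as ``concentrating the whole summability question in the single class $\Delta$.'' You conflate the fundamental class $\Delta\in\KK_1\bigl((\hi)\otimes(\hi),\C\bigr)$ with its cap against the unit, $\Delta\cap[1]\in\K^1(\hi)$: the boundary extension represents only the latter. The paper is explicit that no concrete completely positive splitting, hence no concrete $\KK_1$-cycle, is known for $\Delta$ itself, so the plan of ``checking directly from the definition of the Kasparov product'' that capping a $p$-summable module with a smooth projective module preserves summability cannot be carried out as stated --- there is no cycle for $\Delta$ to cap with. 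The paper's workaround is the real content of the proof: it computes the \emph{map} $\Delta\cap$ at the level of Busby invariants (Lemma~\ref{whatispd}), realizes $\Delta\cap[e]$ by the twisted cycle $\bigl(\lambda_\mu, \projcon\lambdaop_\mu(e)\projcon\bigr)$ using the commuting right regular representation $\lambdaop_\mu=J\lambda_\mu J$, and then proves a \emph{new} quantitative estimate (Lemma~\ref{single integral}: $\int d_\e(g\xi,h\xi)\,d\mu(\xi)\lesssim \exp(-\e(go,ho)_o)$, feeding into Lemma~\ref{prop:quantifiedcommutation}) to show that $[\lambda_\mu(a),\projcon\lambdaop_\mu(b)\projcon]$ is finitely summable. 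This single-integral estimate is distinct from the double-integral estimate controlling the basic cycle and is not implied by it; your proposal supplies neither the twisting mechanism nor this estimate. (Your opening reduction to a finite generating set is also unnecessary once duality is in place, since every class is directly of the form $\Delta\cap[e]$ or $\Delta\cap[u]$.)

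Two smaller corrections. First, the regularity hypothesis (a fixed-point-free self-map of $\bd\G$) is used only to make the Poincar\'e duality map an isomorphism; it has nothing to do with the singular value asymptotics, which come from Coornaert's Ahlfors regularity and the orbit-growth count $|\{g:d(o,go)\le R\}|\asymp\exp(e_XR)$, valid for every hyperbolic group. Second, the summability threshold is $p>\max\{2,\visdim\bd X\}$, not $p>\h$: double ergodicity of visual measures (Proposition~\ref{2ergo}) forces the $\G$-deviation out of $\ell^2\G$, so the exponent cannot drop below $2$ even when the visual dimension does.
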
 

The summability degree is the Hausdorff dimension of the boundary, more precisely a suitable interpretation thereof, and it is obtained by analytic means from the $\G$-invariant H\"older structure of the boundary. The same structure is responsible for the smooth subalgebra. 
 
Fredholm modules for the crossed-product C*-algebra $\hi$ restrict to Fredholm modules for the reduced group C*-algebra $\Cred\G$, preserving summability. However, additional effort and new ingredients are needed in order to represent all the K-homology classes of $\Cred\G$. Our main result in this direction, Theorem~\ref{reps for reduced} below, is somewhat technical due to certain issues connected with the Baum - Connes conjecture. At this point, we merely quote the following concrete consequence.

 \begin{thmb}
 \label{thmb}
 Let $\G$ be one of the following:
\begin{itemize}[leftmargin=20pt, itemsep=3pt]
\item a finitely generated free group;
\item a torsion-free cocompact lattice in $\SO(n,1)$ or $\SU(n,1)$;
\item a torsion-free $C'(1/6)$ small-cancellation group with one more generator than relators. 
\end{itemize}
Then the reduced group C*-algebra $\Cred\G$ has uniformly summable K-homology over the group algebra $\C\G$.
 \end{thmb}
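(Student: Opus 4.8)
The plan is to obtain Theorem~B from Theorem~A by restriction along the inclusion $\iota\colon\Cred\G\into\hi$. This $\iota$ is the reduced crossed product of the $\G$-equivariant unital embedding of the constants $\C\into C(\bd\G)$; since the boundary action is amenable, reduced and full crossed products agree and the target is indeed $\hi$. On K-homology it induces the restriction map $\iota^*\colon\K^*(\hi)\to\K^*(\Cred\G)$. By Theorem~A every class in $\K^*(\hi)$ has a representative that is $p$-summable over the fixed smooth subalgebra $\smoothhi$, with $p$ the Hausdorff dimension of $\bd\G$. The generators $u_g$ of $\C[\G]$ already sit inside $\smoothhi$ (the constant function $1$ is Lipschitz), so restricting such a Fredholm module along $\iota$ keeps the operator and retains only the commutators $[\pi(u_g),T]$, which lie among the original $p$-summable ones. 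The restricted module is thus $p$-summable over the dense subalgebra $\C[\G]\subset\Cred\G$, with the same $p$. Hence, \emph{provided $\iota^*$ is onto}, every K-homology class of $\Cred\G$ has a $p$-summable representative over the one fixed subalgebra $\C[\G]$ -- which is exactly uniform summability.

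Everything then hinges on the surjectivity of $\iota^*$, and here the hypotheses on $\G$ enter. The clean reduction is through the long exact sequence in K-homology of the mapping cone of $\iota$: it identifies the cokernel of $\iota^*$ with the image of a single connecting homomorphism, so $\iota^*$ is onto exactly when that homomorphism vanishes. A geometric handle on this homomorphism comes from the boundary compactification $\overline{\G}=\G\sqcup\bd\G$. The $\G$-equivariant extension $0\to c_0(\G)\to C(\overline{\G})\to C(\bd\G)\to 0$ descends to
\begin{equation*}
0\Too\Comp(\ell^2\G)\Too\hic\Too\hi\Too 0,
\end{equation*}
through which $\iota$ factors as $\Cred\G\to\hic\to\hi$. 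Since $\K^*\bigl(\Comp(\ell^2\G)\bigr)\cong\K^*(\C)$ is concentrated in even degree and detected by a single index, the connecting homomorphism becomes a concrete index pairing that can be evaluated group by group.

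For the three families this evaluation is carried out with separate geometric input -- the ``further ingredients'' of the statement. For a free group $\F_n$ the groups $\K^*(\Cred\F_n)$ are given by the Pimsner--Voiculescu sequence, and one checks by hand that their generators lift across $\iota^*$ to the totally disconnected boundary. For a torsion-free cocompact lattice in $\SO(n,1)$ or $\SU(n,1)$ the classifying space $B\G$ is a closed aspherical manifold, so $\G$ is a Poincar\'e duality group and $\Cred\G$ carries its own K-duality; combining this with the boundary duality of \cite{Emerson} for $\hi$ converts surjectivity of $\iota^*$ into an injectivity statement for the dual map on K-theory, which one reads off from the Baum--Connes identification $\K_*(\Cred\G)\cong\K_*(B\G)$. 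For a $C'(1/6)$ small-cancellation group with one more generator than relators the presentation $2$-complex is aspherical and, the deficiency being one (so that $\chi(B\G)=0$), its low-dimensional cohomology is constrained enough to make the index pairing computable and zero.

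The main obstacle is precisely this surjectivity. In contrast to Theorem~A it does \emph{not} hold for all hyperbolic groups, and the real content of Theorem~B is the class-by-class proof that the connecting homomorphism above vanishes. The subtlest case is that of the lattices, where the crux is the compatibility under $\iota$ of two a priori unrelated Poincar\'e dualities -- Emerson's boundary duality for $\hi$ and the manifold duality of $B\G$. Once surjectivity is in place, the transport of the summability degree and of the dense subalgebra $\C[\G]$ along the restriction is routine, as sketched above, and Theorem~B follows.
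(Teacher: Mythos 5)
Your top-level strategy -- restrict finitely summable modules from $\hi$ along $\iota\colon\Cred\G\into\hi$ and control the cokernel of $\iota^*$ by an exact sequence attached to the boundary compactification -- is the right skeleton, and the transport of summability along the restriction is indeed routine, as you say. But there are two genuine gaps in how you close the argument. First, you assert that the content of Theorem~B is the \emph{surjectivity} of $\iota^*$ and that this surjectivity can be verified group by group. It cannot: the paper's Gysin sequence shows that the image of $i^*$ in $\K^0$ is exactly the kernel of the map $a\mapsto\chi(\G)\,\ind(a)\,[\pnt]$, so whenever $\chi(\G)\neq 0$ -- free groups of rank $\geq 2$, even-dimensional real lattices, all complex lattices -- the map $\iota^*$ misses every even class of nonzero index. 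For a free group $\F_n$ one has $\K^0(\Cred\F_n)\cong\Z$ generated by a class of index one, so \emph{nothing} nonzero in $\K^0$ lifts to the boundary crossed product; your claim that the Pimsner--Voiculescu generators ``lift across $\iota^*$'' is false in even degree. The missing idea is that the cokernel is generated by a reduced $\gamma$-element $\gamma_r$, and the decisive ``further ingredients'' are not vanishing index computations but \emph{explicit finitely summable representatives of $\gamma_r$}: the Julg--Valette tree module ($1$-summable) for free groups, Kasparov's conformal pseudodifferential module ($(n-1)^+$-summable, plus a pull-back trick along $\SO(n,1)\subset\SO(n+1,1)$ for even $n$) for real lattices, and the Julg--Kasparov hypoelliptic module ($(2n)^+$-summable) for complex lattices. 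Only for the deficiency-one small-cancellation case does $\chi(\G)=0$ make surjectivity (onto the relevant part) come for free.

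Second, your connecting-homomorphism analysis lands in the wrong group. The K-homology of the middle term $\hic$ of your extension is identified with $\K^*(\Cred\G)$ only on the $\gamma$-part: this is Kasparov's theorem applied to the ($H$-equivariantly contractible) compactified Rips complex, and it is why the paper's sequence computes $i^*\colon\K^*(\hi)\to\gamma\K^*(\Cred\G)$ rather than the full restriction map. To conclude that the $\gamma$-part is all of $\K^*(\Cred\G)$ you need $\gamma=1$, which the paper gets from a-T-menability and Higson--Kasparov -- and for the $C'(1/6)$ groups a-T-menability is itself a theorem of Wise. Your proposal never mentions $\gamma$-elements, so this hypothesis does no work in your argument, whereas in the paper it is exactly where the restriction to these particular families of hyperbolic groups enters (together with the regularity of their boundaries, which for the one-dimensional small-cancellation case needs the Bonk--Kleiner quasi-circle theorem). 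Your suggested Poincar\'e-duality compatibility for lattices is an interesting alternative heuristic, but as stated it would again only prove surjectivity, which fails when $\chi(\G)\neq 0$.
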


Hyperbolicity, in the sense of Gromov, is a coarse notion of negative curvature. A hyperbolic space is a geodesic metric space all of whose geodesic triangles are uniformly thin. A group is said to be hyperbolic if it admits a \emph{geometric} - that is, isometric, proper, and cocompact - action on a hyperbolic space. Fundamental examples of hyperbolic groups are finitely generated free groups, cocompact lattices in $\SO(n,1)$, $\SU(n,1)$, or $\mathrm{Sp}(n,1)$ (`classical hyperbolic groups'), as well as $C'(1/6)$ small-cancellation groups. Throughout this paper, hyperbolic groups are assumed to be non-elementary, meaning that we discard the virtually cyclic groups. 

The boundary $\bd \G$ of a hyperbolic group $\G$ is a compact Hausdorff space carrying a natural action of $\G$ by homeomorphisms. Our standing assumption that $\G$ is non-elementary translates into $\bd \G$ having uncountably many points. For instance, the boundary of a free group is a Cantor set, and the boundary of a classical hyperbolic group is a sphere. The \(\G\)-action on \(\bd \G\) is topologically amenable \cite{Ada}. This means that the C*-algebra $\hi$ is nuclear, and that the full and the reduced crossed products coincide \cite{Ana02}. The boundary compactification \(\overline{\G} = \G \cup \bd \G\) carries a $\G$-action as well, and it is a coarse compactification -- in the sense that a ball of uniform size in \(\G\) becomes small in the topology of the compact space \(\overline{\G}\) when translated out to the boundary. The action of \(\G\) on the boundary \(\bd \G\) is minimal and exhibits a north-south dynamics, making the C*-algebra crossed-product \(\hi\) simple and purely infinite \cite{Ana97, LS96}. If \(\G\) is a free group, then \(\hi\) is canonically a Cuntz - Krieger algebra with transition matrix simply coding that a generator cannot be followed by its inverse. If \(\G\) is a surface group, \(\G = \pi_1(M)\) where $M$ is a closed surface of genus at least $2$, then $\bd\G$ is the \(1\)-sphere $S^1$ and the groupoid \(\G\ltimes \bd \G\) is strongly Morita equivalent to the holonomy groupoid of the canonical foliation of \(\tilde{M} \times_\G S^1\) by the projections to \(\tilde{M}\times_\G S^1\) of copies of \(\tilde{M}\). 

The K-theory of the boundary crossed-product \(\hi\) has been investigated in \cite{Emerson:Euler}. The main 
result of this reference is a Gysin sequence for boundary actions which computes the map on K-theory induced by the inclusion \(i\colon C^*_r \G \to \hi\). Its K-homology version is described in this article, and it plays an important role in our results. Another K-theoretic feature of \(\hi\) is that it exhibits Poincar\'e self-duality: the K-theory and the K-homology of \(\hi\) are isomorphic with a parity shift  \cite{Emerson}. The isomorphism is implemented by a cup-cap product map $\Delta \cap \colon \K_*(\hi) \to \K^{*+1}(\hi)$ with a suitable class $\Delta \in \K^1((\hi)\otimes (\hi))$. For the purposes of this paper, the important feature of Poincar\'e self-duality is the surjectivity of $\Delta \cap$, which leads to an explicit description of the K-homology of $\hi$ as a function of its K-theory. The proof of Poincar\'e duality given in \cite{Emerson} requires $\G$ to be torsion-free, and its boundary $\bd\G$ to admit a continuous self-map without any fixed points. This technical condition on the boundary is the \emph{regularity} assumption in Theorem A. We do not know of any hyperbolic group which fails to be regular. 

Our rank-$1$ results in Theorem B are in stark contrast to the higher rank situation. As shown by Puschnigg \cite{Pusch}, no non-trivial K-homology class for the reduced C*-algebra of a higher rank lattice can be represented by a Fredholm module which is finitely summable over the group algebra. This very opposite behaviour with respect to K-homological finiteness is reminiscent of the following sharp distinction between rank-$1$ lattices and higher rank lattices, also involving a notion of finite summability. Every hyperbolic group admits a proper isometric action on an $L^p$-space for large enough $p>1$ \cite{Yu}, \cite{Nica}, but every isometric action of a higher rank lattice on an $L^p$-space, $p>1$, fixes a point \cite{BFGM}.

It should be noted that, despite our strong finiteness results at the level of Fredholm modules (i.e., \emph{bounded} K-cycles), neither the boundary crossed-product $\hi$ nor the reduced C*-algebra $C^*_r\G$ support any finitely summable spectral triples (i.e., \emph{unbounded} K-cycles). For $\hi$ this is due to the lack of a trace \cite[Thm.8]{Con89}, whereas for $C^*_r\G$ the reason is the non-amenability of $\G$ \cite[Thm.19]{Con89}, \cite[Thm.1 in IV.9.$\alpha$]{Connes}.

The present work expands, and supersedes, our preprint \cite{babyEN}.

\smallskip
\noindent\textbf{Other results.} Lott \cite{Lott} employed elliptic operator methods to construct K-homology cycles, some of which are finitely summable, for the crossed-product C*-algebra arising from the action of a subgroup of $\SO(n,1)$ on its limit set. Lott's constructions are not obviously related to ours. 

Rave \cite{Rave} proved that AF C*-algebras have, in the terminology of this paper, uniformly summable K-homology. Incidentally, \cite{Rave} also contains a good account of the fact that the commutative C*-algebra $C(M)$ of a closed manifold $M$ has uniformly summable K-homology.

Very recently, Goffeng and Mesland \cite{GoM} have addressed the issue of uniform summability for K-homology in the case of Cuntz - Krieger C*-algebras. This is a family which bears some analogies to the boundary C*-crossed products considered herein. In \cite{GoM} it is shown, among other things, that the odd K-homology of a Cuntz - Krieger C*-algebra is uniformly summable.

\smallskip
\noindent\textbf{Acknowledgements.} We thank Nigel Higson, Vadim Kaimanovich, Misha Kapovich, Bruce Kleiner, Georges Skandalis, and Bob Yuncken for correspondence or discussions. The first author acknowledges support from an NSERC Discovery grant. The second author thanks the Pacific Institute for the Mathematical Sciences, and the Alexander von Humboldt Foundation for their support. We also thank the last referee for a careful reading of the paper, and for constructive comments.

\section{Outline} 
We now describe our results and our approach in more detail. Let us start with a conceptual clarification of what exactly is the boundary of a hyperbolic group. Having fixed a (non-elementary) hyperbolic group $\G$, by a \emph{geometric model} for $\G$ we mean a hyperbolic space on which $\G$ acts geometrically. Some groups come with ready-made geometric models, e.g., for a cocompact lattice in $\SO(n,1)$ the $n$-dimensional real hyperbolic space $\Hyp^n$ is such a space. Otherwise, a geometric model can be manufactured as the Cayley graph with respect to a finite generating set - for instance, free groups admit regular trees as geometric models. The important point is that boundaries of geometric models for $\G$ are $\G$-equivariantly homeomorphic. Thus, the boundary of $\G$ should be understood as the boundary of any geometric model for $\G$.

Let $X$ be a geometric model for $\G$, so $\bd X$ is a topological avatar of the boundary of $\G$. There is a natural collection of \emph{visual metrics} on $\bd X$, all assigning a finite Hausdorff dimension to $\bd X$. The \emph{visual dimension} of the boundary, denoted $\visdim \bd X$, is the infimal Hausdorff dimension over the family of visual metrics. In particular, the visual dimension is at least as large as the topological dimension. The Hausdorff measures defined by visual metrics are comparable, in the sense that they are within constant multiples of each other. This prompts us to define a  \emph{visual probability measure} on $\bd X$ as a Borel probability measure which is comparable with some (equivalently, each) Hausdorff measure defined by a visual metric. 

Consider now the crossed product $\hiX$, a C*-avatar of $\hi$. Endowing $\bd X$ with a visual probability measure $\mu$, we obtain a faithful representation of $C(\bd X)$ on $L^2(\bd X,\mu)$ by multiplication. This induces, in turn, a faithful representation $\lambda_\mu$, the \emph{left regular representation with respect to $\mu$}, of $\hiX$ on $\ell^2(\G,L^2(\bd X,\mu))$. We also let $\projcon$ be the projection onto $\ell^2\G$, regarded as constant functions on $\bd X$.

The basic idea for our construction of Fredholm modules, and the relationship 
to Poincar\'e self-duality for \(\hi\) begins with the following result.

\begin{thm}[Basic K-cycle]\label{intro: basic K-cycle}
With the above notations, the pair
\begin{align*}
\big(\lambda_\mu, \projcon\big)
\end{align*}
is an odd Fredholm module over \(\hi\). Moreover, $(\lambda_\mu, \projcon)$ is $p$-summable for every $p>\max\{2,\visdim \bd X\}$, and it represents the Poincar\'e dual \(\Delta \cap [1]\in \K^1(\hi)\) of the unit class \([1]\in \K_0(\hi)\). 
\end{thm}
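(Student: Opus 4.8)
The plan is to prove the three assertions --- Fredholm module, summability, and the identification with the Poincar\'e dual --- in turn, the analytic heart being the summability estimate.

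\smallskip
\noin\emph{The Fredholm module and the variance estimate.} Writing $H=\ell^2\bigl(\G, L^2(\bd X,\mu)\bigr)$ and viewing $\projcon = 1\otimes P_0$, where $P_0$ is the rank-one projection of $L^2(\bd X,\mu)$ onto the constants, I would first reduce the commutator condition to the generators $u_s$ $(s\in\G)$ and $f\in C(\bd X)$ of $\hi$, since the operators essentially commuting with $\projcon$ form a norm-closed subalgebra. For the group elements $\lambda_\mu(u_s)$ merely translates the $\G$-coordinate and fixes the fibre, so $[\lambda_\mu(u_s),\projcon]=0$ exactly. For $f\in C(\bd X)$ the operator $[\lambda_\mu(f),\projcon]$ is block-diagonal along $\G$, its block over $g$ being the rank-$\le 2$ operator $[M_{g^{-1}f},P_0]$ on $L^2(\bd X,\mu)$, where $M_{g^{-1}f}$ is multiplication by $x\mapsto f(gx)$. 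A direct computation gives, for every constant $c$, the bound $\|[M_{g^{-1}f},P_0]\|\le 2\|g^{-1}f-c\|_{L^2(\mu)}$; minimizing over $c$ yields $\|[M_{g^{-1}f},P_0]\|\le 2\sqrt{\Var_\mu(g^{-1}f)}=2\sqrt{\Var_{g_*\mu}(f)}$, the last equality by the change of variables $y=gx$. Since the $\G$-action on $\bd X$ is a convergence action and $\mu$ charges no point, the pushforwards $g_*\mu$ converge weakly to a point mass as $g\to\infty$; hence $\Var_{g_*\mu}(f)\to 0$ for every $f\in C(\bd X)$, the block norms vanish at infinity, and $(\lambda_\mu,\projcon)$ is an odd Fredholm module over $\hi$.

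\smallskip
\noin\emph{Summability.} For summability I would work over the dense subalgebra $\Lipcross$. Because the fibre blocks have rank at most $2$ and act on mutually orthogonal subspaces, the singular values of $[\lambda_\mu(f),\projcon]$ are the union of those of the blocks, so $\|[\lambda_\mu(f),\projcon]\|_p^p \lesssim \sum_{g}\|[M_{g^{-1}f},P_0]\|^p \lesssim \sum_g \Var_{g_*\mu}(f)^{p/2}$. For $f$ Lipschitz with constant $L$ one has $\Var_{g_*\mu}(f)\le \tfrac{L^2}{2}\iint d_\e(gx,gy)^2\,d\mu(x)\,d\mu(y)$, which I would control using the $\G$-equivariant distortion of the visual metric together with the Ahlfors-regularity of $\mu$: the element $g$ contracts $d_\e$ off a shrinking neighbourhood of its repelling point, yielding a geometric decay of the form $\iint d_\e(gx,gy)^2\,d\mu\,d\mu \lesssim e^{-2\e|g|}$ in the word length $|g|$. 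Balancing this against the exponential growth $\#\{|g|=n\}\asymp e^{hn}$, and using that the visual metric $d_\e$ realizes Hausdorff dimension $h/\e$, the series $\sum_g \Var_{g_*\mu}(f)^{p/2}$ converges for $p>h/\e$; taking the infimum over visual metrics gives convergence for $p>\visdim\bd X$, while the complementary requirement $p>2$ reflects the Hilbert--Schmidt floor inherent in the fibrewise $L^2(\mu)$ estimate. I expect this balancing --- pinning down the contraction rate of $d_\e$ against the growth of $\G$ so as to reach the sharp threshold $\max\{2,\visdim\bd X\}$ --- to be the main obstacle.

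\smallskip
\noin\emph{The Poincar\'e dual.} Finally, to identify the class of $(\lambda_\mu,\projcon)$ with $\Delta\cap[1]$, I would unwind the cup-cap construction of \cite{Emerson}: capping the fundamental class $\Delta\in\K^1\bigl((\hi)\otimes(\hi)\bigr)$ with the unit class $[1]\in\K_0(\hi)$ amounts to evaluating one tensor factor of $\Delta$ along the unital inclusion $\C\to\hi$. Recalling that $\Delta$ is itself modelled on the regular representation on $\ell^2\bigl(\G,L^2(\bd X)\bigr)$ and a projection of the same type as $\projcon$, this specialization should reproduce $(\lambda_\mu,\projcon)$ up to the standard operator-homotopy and compact-perturbation equivalences of Fredholm modules; here the only substantive task is to match the two Hilbert-space pictures and check that the resulting operators coincide.
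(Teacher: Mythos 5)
Your treatment of the first two assertions is essentially the paper's own argument. The block-diagonal analysis of $[\lambda_\mu(f),\projcon]$, with the $g$-block controlled by the standard deviation of $f$ with respect to $g_*\mu$, is exactly the paper's reduction of Fredholmness and summability to the decay of the $\G$-deviation $g\mapsto\sqrt{\Var_{g_*\mu}(f)}$ (Lemma~\ref{howtocheckFredholmness}, Proposition~\ref{from deviation to Fredholm}); the convergence-action argument is Propositions~\ref{convergence action} and~\ref{Furstenberg type condition}, and the Lipschitz/Ahlfors-regularity balancing is Lemma~\ref{double integral} and Theorem~\ref{sharp general}. One imprecision worth flagging: the decay $\iint d_\e(g\xi,g\xi')^2\,d\mu\,d\mu\lesssim e^{-2\e\, d(o,go)}$ is not unconditional --- it holds only when $\hdim(\bd X,d_\e)>2$, since the proof needs $\int d_\e(g^{-1}o,\xi)^{-2}\,d\mu(\xi)$ to converge (the contribution near the repelling point is otherwise not integrable). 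When the dimension is $\leq 2$ one first snowflakes the metric to push its dimension into $(2,p)$. That, rather than a ``Hilbert--Schmidt floor'' in the fibrewise estimate, is where the $2$ in $\max\{2,\visdim\bd X\}$ comes from.

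The genuine gap is in the identification with $\Delta\cap[1]$. You assume $\Delta$ ``is itself modelled on the regular representation on $\ell^2(\G,L^2(\bd X))$ and a projection of the same type as $\projcon$,'' but no such cycle is available: $\Delta$ is defined only as an \emph{extension} class, with Busby invariant $a\otimes b\mapsto\tau(a)\tauop(b)$ into the Calkin algebra of $\ell^2\G$, and the paper explicitly notes that a concrete $\KK_1$-cycle for $\Delta$ is not known because no explicit completely positive splitting of that extension is known. The identification therefore has to run through the Ext picture: by functoriality, $\Delta\cap[1]$ is the class of the extension with Busby invariant $\tau$, i.e.\ the boundary extension $0\to\Comp(\ell^2\G)\to C(\overline{\G})\rtimes\G\to\hi\to 0$ (Lemma~\ref{whatispd}), and one must then exhibit $(\lambda_\mu,\projcon)$ as the Stinespring dilation of a completely positive splitting of this extension (Theorem~\ref{representing boundary class}). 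That splitting is $\phi\, g\mapsto(\Ebar\phi)\,g$, and the substantive point your sketch omits is that the $\G$-expectation $\E\phi$ glues continuously to $\phi$ on $\overline{\G}=\G\cup\bd X$ --- equivalently, that $g_*\mu\to\delta_\omega$ whenever $g\to\omega\in\bd X$ (Lemma~\ref{sharp delta}, a quantitative strengthening of the convergence property proved via Ahlfors regularity). Without this there is no splitting, hence no identification of the class of $(\lambda_\mu,\projcon)$ with the boundary extension class, and so none with $\Delta\cap[1]$; ``matching the two Hilbert-space pictures'' is not where the work lies.
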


A certain compatibility between the constructions going into Theorem \ref{intro: basic K-cycle} and the Poincar\'e duality of \cite{Emerson} implies that one can `twist' the basic K-cycle above with projections or unitaries in \(\hi\) in a certain way, generalizing Theorem \ref{intro: basic K-cycle} to cover arbitrary K-homology classes -- leading to the following essential fact about \(\K\)-homology classes for \(\hi\).

\begin{thm}[Twisted K-cycles]\label{intro: twisted K-cycles}
Let \(\G\) be regular and torsion-free. Then the following hold.

\begin{itemize}[leftmargin=20pt, itemsep=3pt]
\item Every class in $\K^1(\hi)$ is represented by an odd Fredholm module of the form 
\begin{align*}
\big(\lambda_\mu, P_{\ell^2\G}\lambdaop_\mu(e)P_{\ell^2\G}\big), \qquad e \textrm{ projection in }\hi.
\end{align*}
Moreover, the projection $e\in \hi$ can be chosen so that the Fredholm module is \(p\)-summable for every \(p >\max\{ 2,  \visdim \bd X\}\). 

\item Every class in $\K^0(\hi)$ is represented by a balanced even Fredholm module of the form
\begin{align*}
\big(\lambda_\mu, P_{\ell^2\G}\lambdaop_\mu(u)P_{\ell^2\G}+(1-P_{\ell^2\G})\big), \qquad u \textrm{ unitary in }\hi.
\end{align*}
Moreover, the unitary $u\in \hi$ can be chosen so that the Fredholm module is \(p\)-summable for every \(p >\max\{ 2,  \visdim \bd X\}\). 
\end{itemize}
\end{thm}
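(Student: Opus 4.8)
The plan is to reduce both statements to the basic K-cycle of Theorem \ref{intro: basic K-cycle}, combined with two structural inputs: the surjectivity of the Poincar\'e duality map $\Delta\cap\colon\K_*(\hi)\to\K^{*+1}(\hi)$, and the fact that $\hi$ is a unital, purely infinite, simple C*-algebra \cite{Ana97,LS96}. By a standard consequence (Cuntz) of pure infiniteness and simplicity, every class in $\K_0(\hi)$ is represented by a single projection $e\in\hi$, and every class in $\K_1(\hi)$ by a single unitary $u\in\hi$, with no passage to matrix algebras. Granting the twisting formula below, the odd case then follows by writing an arbitrary class in $\K^1(\hi)$ as $\Delta\cap[e]$, and the even case by writing an arbitrary class in $\K^0(\hi)$ as $\Delta\cap[u]$, using surjectivity of $\Delta\cap$ in degrees $\K_0\to\K^1$ and $\K_1\to\K^0$ respectively.

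The central step is the \emph{twisting formula}: for a projection $e\in\hi$ the pair $(\lambda_\mu,\projcon\lambdaop_\mu(e)\projcon)$ is an odd Fredholm module representing $\Delta\cap[e]$, and for a unitary $u\in\hi$ the pair $(\lambda_\mu,\projcon\lambdaop_\mu(u)\projcon+(1-\projcon))$ is a balanced even Fredholm module representing $\Delta\cap[u]$. To see that these are genuine Fredholm modules, I would first record the symmetric companion of the commutator condition in Theorem \ref{intro: basic K-cycle}, namely $[\projcon,\lambdaop_\mu(a)]\in\Comp$ for all $a\in\hi$; this follows by applying the same argument to the opposite representation, the representations $\lambda_\mu$ and $\lambdaop_\mu$ being commuting. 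Given this, a short computation yields $(\projcon\lambdaop_\mu(e)\projcon)^2-\projcon\lambdaop_\mu(e)\projcon=-[\projcon,\lambdaop_\mu(e)](1-\projcon)\lambdaop_\mu(e)\projcon\in\Comp$, so $\projcon\lambdaop_\mu(e)\projcon$ is an essential projection; in the even case the identity $F^*F-1=\projcon\big(\lambdaop_\mu(u^*)\projcon\lambdaop_\mu(u)-1\big)\projcon\in\Comp$ shows that $F=\projcon\lambdaop_\mu(u)\projcon+(1-\projcon)$ is an essential unitary. Essential commutation with $\lambda_\mu$ reduces, via $[\lambda_\mu(a),\lambdaop_\mu(e)]=0$, to the basic relation $[\lambda_\mu(a),\projcon]\in\Comp$. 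Taking $e=1$ recovers exactly the basic K-cycle, which already represents $\Delta\cap[1]$; the case of general $e$ (and $u$) is the assertion that compression by $e$ in the second tensor factor computes the Kasparov product $\big([e]\otimes 1_{\hi}\big)\otimes_{\hi\otimes\hi}\Delta$ defining $\Delta\cap[e]$.

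Summability is then inherited from the basic K-cycle and requires no special choice of $e$ or $u$. Indeed, expanding
\begin{align*}
[\lambda_\mu(a),\projcon\lambdaop_\mu(e)\projcon]=[\lambda_\mu(a),\projcon]\,\lambdaop_\mu(e)\projcon+\projcon\lambdaop_\mu(e)\,[\lambda_\mu(a),\projcon]
\end{align*}
via $[\lambda_\mu(a),\lambdaop_\mu(e)]=0$, and using that $\lambdaop_\mu(e)$ is a contraction, the Schatten $p$-norm is at most $2\,\|[\lambda_\mu(a),\projcon]\|_p$, which is finite for $a$ in the smooth subalgebra and $p>\max\{2,\visdim\bd X\}$ by Theorem \ref{intro: basic K-cycle}. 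The even case is identical after writing $[\lambda_\mu(a),F]=[\lambda_\mu(a),\projcon\lambdaop_\mu(u)\projcon]-[\lambda_\mu(a),\projcon]$. Crucially, this holds over the \emph{same} smooth subalgebra and the \emph{same} range of $p$ as in the basic K-cycle, which is exactly what uniform summability demands. Pure infiniteness enters only to guarantee that the representing $e$ and $u$ may be taken inside $\hi$ itself, so that the Fredholm modules have precisely the stated form.

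The main obstacle is the twisting formula's identification of the twisted cycle with the Kasparov product $\Delta\cap[e]$ (resp.\ $\Delta\cap[u]$). This is where the geometry enters: one must use the explicit representative of the duality class $\Delta$ underlying Theorem \ref{intro: basic K-cycle} and the Poincar\'e duality of \cite{Emerson}, and check that forming the product with $[e]$ amounts to compressing $\lambdaop_\mu$ by $e$ between the projections $\projcon$. I would carry this out by constructing a representative of $\big([e]\otimes 1_{\hi}\big)\otimes_{\hi\otimes\hi}\Delta$ and verifying, through Kasparov's connection and positivity criteria, that it is operator-homotopic to the displayed compression, with the normalization $e=1$ pinning down the basic K-cycle. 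By contrast, the Fredholm-module and summability verifications above are routine once the symmetric commutator condition $[\projcon,\lambdaop_\mu(a)]\in\Comp$ is in hand.
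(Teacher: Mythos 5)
Your overall skeleton --- surjectivity of $\Delta\cap$, pure infiniteness to obtain single projections and unitaries, a twisting formula, summability inherited from the basic K-cycle --- matches the paper's, but the argument hinges on an identity that is false: $[\lambda_\mu(a),\lambdaop_\mu(e)]=0$ for general $a,e\in\hi$. The left and right regular representations do \emph{not} commute; they commute only on pairs of functions and on pairs of group elements, and a direct computation shows $[\lambda_\mu(\phi),\lambdaop_\mu(g)]\neq 0$ unless $\phi$ is $g$-invariant. This breaks your proof in two places. For the Fredholm property the damage is repairable: the paper replaces your identity by the nontrivial fact (from \cite{Emerson}) that the Busby invariants $\tau$ and $\tauop$ commute \emph{in the Calkin algebra}, so that mod compacts $\lambda_\mu(a)\,\projcon\lambdaop_\mu(e)\projcon=\tau(a)\tauop(e)=\tauop(e)\tau(a)=\projcon\lambdaop_\mu(e)\projcon\,\lambda_\mu(a)$, using \eqref{helly}. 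For summability the damage is fatal to your route: the expansion you write for $[\lambda_\mu(a),\projcon\lambdaop_\mu(e)\projcon]$ is invalid, and your conclusion that summability ``requires no special choice of $e$ or $u$'' is false --- note that the theorem itself says the projection \emph{can be chosen} so that the module is summable. The paper needs a genuinely new analytic input (Lemma~\ref{single integral}), the estimate $\int d_\e(g\xi,h\xi)\,d\mu(\xi)\leq C\exp(-\e\,(go,ho)_o)$, to control the mixed commutators $[s_\mu(\phi),\sop_\mu(g)]$, which are multiplications by $h\mapsto \E\phi(hg^{-1})-\E\phi(h)$ on $\ell^2\G$; it then constructs a holomorphically closed subalgebra $\mathcal{A}$ (Theorem~\ref{enough are summable}) from which $e$ and $u$ must be drawn in order that the twisted cycles be summable over the fixed Lipschitz crossed product.

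Your plan for the key identification of the twisted cycle with $\Delta\cap[e]$ --- building a Kasparov cycle for $([e]\otimes 1)\otimes_{\hi\otimes\hi}\Delta$ and checking connection and positivity conditions --- runs into exactly the obstacle the paper flags: no concrete completely positive splitting of the extension defining $\Delta$ is known, so no concrete $\KK_1$-cycle for $\Delta$ is available to form products with. The paper sidesteps this by working entirely at the level of extensions: $\Delta$ is given by the Busby invariant $a\otimes b\mapsto\tau(a)\tauop(b)$, the cap product with $[e]$ is computed by composing with the $*$-homomorphism $e\otimes 1$ (Lemma~\ref{whatispd}), yielding the Busby invariant $\tau_e(a)=\tauop(e)\tau(a)\tauop(e)$, and one then checks that the compression of $\lambda_\mu$ by $\projcon\lambdaop_\mu(e)\projcon$ has precisely this Busby invariant. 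You should adopt this extension-theoretic route, and supply the missing integral estimate and smooth subalgebra for the summability claim.
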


Here $\lambdaop_\mu$ is the \emph{right regular representation} of $\hiX$ on $\ell^2(\G,L^2(\bd X,\mu))$, the conjugate of $\lambda_\mu$ by an appropriate self-adjoint unitary \(J\colon \hiX \to \hiX\).

While all projections and all unitaries in $\hi$ yield Fredholm modules as above, one needs to restrict to a suitable smooth subalgebra in order to get finite summability. For each $p$, the twisted K-cycles are $p$-summable over one and the same dense $*$-subalgebra, the algebraic crossed-product $\Lip(\bd X, d)\rtimes_{\textup{alg}}\Gamma$ where $d$ is a visual metric on $\bd X$ of Hausdorff dimension at most $p$. We thus obtain Theorem A, in the following more precise form.

\begin{thm}[Uniform summability]\label{intro: uniform for boundary}
Let $\G$ be regular and torsion-free. Then the K-homology of $\hi$ is uniformly $p$-summable for every $p >\max\{ 2,  \visdim \bd X\}$.
\end{thm}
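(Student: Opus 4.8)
The plan is to package the representatives produced by Theorem~\ref{intro: twisted K-cycles} into a single uniform statement, the only genuine work at this stage being to check that these representatives are all $p$-summable over one fixed dense subalgebra. First I would unwind the definition of uniform $p$-summability: I must produce a single dense $*$-subalgebra $\smootha\subseteq\hi$ together with the single exponent $p$ so that every class in $\K^0(\hi)$ and every class in $\K^1(\hi)$ admits a representative Fredholm module all of whose commutators $[\lambda_\mu(a),T]$ with elements $a\in\smootha$ lie in the Schatten ideal $\LL^p$. Fixing $p>\max\{2,\visdim\bd X\}$, I would invoke the description of the visual dimension as an infimum to select a visual metric $d$ on $\bd X$ of Hausdorff dimension at most $p$, together with a compatible visual probability measure $\mu$, and set $\smootha=\Lip(\bd X,d)\rtimes_{\textup{alg}}\G$. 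This is a dense $*$-subalgebra of $\hi$ because Lipschitz functions are uniformly dense in $C(\bd X)$ and the algebraic crossed product is dense in the reduced one.

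Next I would apply Theorem~\ref{intro: twisted K-cycles}, available here since $\G$ is regular and torsion-free, to represent every odd class by a module $(\lambda_\mu,\projcon\lambdaop_\mu(e)\projcon)$ and every even class by $(\lambda_\mu,\projcon\lambdaop_\mu(u)\projcon+(1-\projcon))$. The crucial structural point is that the representation is always the fixed left regular representation $\lambda_\mu$, and the operator differs from the basic projection $\projcon$ only by bounded twists built from the right regular representation $\lambdaop_\mu$. Since $\lambda_\mu$ commutes with $\lambdaop_\mu$---this being the defining feature of the conjugation by $J$---the Leibniz rule collapses the twisted commutators onto the basic one: for $a\in\smootha$ one has
\[
[\lambda_\mu(a),\,\projcon\lambdaop_\mu(e)\projcon]=[\lambda_\mu(a),\projcon]\,\lambdaop_\mu(e)\projcon+\projcon\lambdaop_\mu(e)\,[\lambda_\mu(a),\projcon],
\]
and likewise $[\lambda_\mu(a),1-\projcon]=-[\lambda_\mu(a),\projcon]$ in the even case. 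Because $\LL^p$ is a two-sided ideal and $\lambdaop_\mu(e)$, $\projcon$ are bounded, each twisted commutator lies in $\LL^p$ as soon as $[\lambda_\mu(a),\projcon]$ does. But this last membership, valid for all $a\in\smootha$ and precisely for $p>\max\{2,\visdim\bd X\}$, is exactly the summability content of Theorem~\ref{intro: basic K-cycle}.

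Assembling these facts proves the theorem: the single subalgebra $\smootha$ and the single exponent $p$ simultaneously witness the $p$-summability of a representative of every K-homology class, in both parities. I expect essentially all of the genuine difficulty to sit upstream, in Theorems~\ref{intro: basic K-cycle} and~\ref{intro: twisted K-cycles}, which respectively supply the summable basic cycle and the surjectivity of the twisting construction through Poincar\'e duality. The only point demanding care at this stage is the uniformity itself---that no class forces one to enlarge $\smootha$ or to raise $p$---and this is precisely what the reduction above secures, since twisting alters the Fredholm module only by bounded operators commuting with $\lambda_\mu$ and therefore never affects which commutators are $p$-summable.
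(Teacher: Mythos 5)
Your reduction of the twisted commutators to the basic one rests on the assertion that $\lambda_\mu$ commutes with $\lambdaop_\mu$, and that assertion is false. The relation $\lambdaop_\mu=J\lambda_\mu J$ yields only the partial commutation $[\lambda_\mu(\phi),\lambdaop_\mu(\phi')]=0$ and $[\lambda_\mu(g),\lambdaop_\mu(g')]=0$; the mixed commutators do not vanish. Indeed, on $\psi_h\delta_h$ one computes $\lambdaop_\mu(g)\lambda_\mu(\phi)\,(\psi_h\delta_h)=(h^{-1}.\phi)\psi_h\delta_{hg^{-1}}$ while $\lambda_\mu(\phi)\lambdaop_\mu(g)\,(\psi_h\delta_h)=((gh^{-1}).\phi)\psi_h\delta_{hg^{-1}}$, so $[\lambda_\mu(\phi),\lambdaop_\mu(g)]$ is a block operator whose blocks all have norm $\sup_\xi|\phi(g\xi)-\phi(\xi)|$; it is not even compact, let alone in $\LL^p$. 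Consequently the Leibniz expansion of $[\lambda_\mu(a),\projcon\lambdaop_\mu(e)\projcon]$ contains the term $\projcon[\lambda_\mu(a),\lambdaop_\mu(e)]\projcon$, which your computation silently discards and which is \emph{not} controlled by the summability of $[\lambda_\mu(a),\projcon]$ from Theorem~\ref{intro: basic K-cycle}. This term only becomes small after compression by $\projcon$, and estimating it is exactly where the paper does new analytic work: everything reduces to $\sop_\mu(g^{-1})[s_\mu(\phi),\sop_\mu(g)]$, which is multiplication on $\ell^2\G$ by $h\mapsto\E\phi(hg^{-1})-\E\phi(h)$, and the required Schatten bound comes from the single-integral estimate $\int d_\e(g\xi,h\xi)\,d\mu(\xi)\leq C\exp(-\e\,(go,ho)_o)$ of Lemma~\ref{single integral} -- a genuinely different estimate from the double integral underlying the basic K-cycle. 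See Lemma~\ref{prop:quantifiedcommutation}.

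There is a second, smaller gap. Finite summability of the twisted module requires the twisting element $e$ (or $u$) itself to lie in a subalgebra over which the mixed commutators above are summable, and $\Lip(\bd X,d_\e)\ralg\G$ need not contain enough projections and unitaries to exhaust $\K_*(\hi)$. The paper resolves this by passing to a holomorphically closed subalgebra $\mathcal{A}\supseteq\Lip(\bd X,d_\e)\ralg\G$ (Theorem~\ref{enough are summable}) and using a symmetry-under-$J$ argument to show the commutator condition is symmetric in $a$ and $b$. You need some version of this before invoking the ``can be chosen'' clause of Theorem~\ref{intro: twisted K-cycles}, since that clause is precisely what is being proved here. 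Relatedly, $p$-summability of an odd module also requires $Q^2-Q,\,Q^*-Q\in\LL^p$ for $Q=\projcon\lambdaop_\mu(e)\projcon$, which uses $[\lambdaop_\mu(e),\projcon]\in\LL^p$ -- again a condition on $e$, not on $a$ -- and is not addressed in your outline.
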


We now turn our attention to the K-homology of the reduced C$^*$-algebra $C^*_r\G$. Here issues related to the Baum - Connes conjecture mean that, in general, our methods only yield results about the `\(\gamma\)-part' of the K-homology of \(C^*_r\G\). The key tool is the following Gysin sequence, which computes the restriction map $i^*\colon \K^*(\hi) \to \gamma \K^*(C^*_r\G)$ on K-homology induced by the inclusion \(i: C^*_r\G\to \hi\). This sequence is the K-homology version of the one in \cite{Emerson:Euler}, which computes the map \(i_*\colon \K_*(C^*_r\G) \to \K_*(\hi)\) induced by \(i\) on K-theory.

\begin{thm}[Gysin sequence for K-homology]\label{intro: gysin} 
Let \(\G\) be torsion-free. Let \(\gamma \in \KK^\G_0(\C, \C) \) be the \(\gamma\)-element for \(\G\), and \(\gamma \K^*(C^*_r\G)\) the corresponding summand of the K-homology of \(C^*_r\G\). Then there is an exact sequence 
\begin{align*}
\minCDarrowwidth10pt\
\begin{CD}
0 @>>> \K_1(B\G) @>>> \K^0(\hi) @>i^*>> \gamma\K^0(C^*_r\G)  @. \\ @.@. @. @VV\Eul V\\
@. @. @.  \K_0(B\G) @>>> \K^1(\hi) @>i^*>> \gamma\K^1(C^*_r\G) @>>> 0
\end{CD}
\end{align*}
where \(\Eul\) is the map $\Eul (a) = \chi (\G)\: \ind(a)\:[\pnt]\in \K_0(B\G)$, and where 
\(\ind\) is the ordinary Fredholm index map \(\KK^\G(\C, \C) \to \Z\), \([\pnt]\) the \(\K\)-homology class of a 
point. 
\end{thm}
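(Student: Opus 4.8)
The plan is to realize the stated sequence as the six-term K-homology exact sequence of a single C*-algebra extension, in exact parallel with the K-theory construction of \cite{Emerson:Euler}, but running the contravariant functor $\KK(-,\C)$ in place of K-theory. I would fix a geometric model $X$ for $\G$ which is contractible and on which $\G$ acts freely, properly and cocompactly: for a torsion-free hyperbolic group the Rips complex serves, its quotient $B\G = X/\G$ is then a finite model for the classifying space, and $\bd X = \bd \G$. The compactification $\overline X = X \cup \bd X$ carries a $\G$-action and yields the boundary extension
\[
0 \to C_0(X)\rtimes\G \to C(\overline X)\rtimes\G \to C(\bd X)\rtimes\G \to 0,
\]
with all crossed products reduced (the boundary action is amenable and the action on $X$ is proper). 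The right-hand algebra is $\hi$.

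The first task is to identify the three K-homology groups. For the quotient this is $\K^*(\hi)$ by definition. The ideal $C_0(X)\rtimes\G$ is Morita equivalent to $C(B\G)$, the free proper cocompact action identifying $X/\G$ with $B\G$; since $B\G$ is a finite complex, $\K^*(C_0(X)\rtimes\G) \cong \K^*(C(B\G)) = \K_*(B\G)$, the K-homology of the classifying space. The middle term is where the group algebra enters: the inclusion of constants $j\colon \Cred\G \to C(\overline X)\rtimes\G$ induces a map $j^*$ on K-homology which, through the dual-Dirac description of the $\gamma$-element, identifies $\K^*(C(\overline X)\rtimes\G)$ with the $\gamma$-summand $\gamma\K^*(\Cred\G)$. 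This step packages the Baum--Connes machinery and is where torsion-freeness is used; the compactified crossed product sees exactly the $\gamma$-part.

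With these identifications the six-term sequence reads, in each degree, $\K^*(\hi) \to \gamma\K^*(\Cred\G) \to \K_*(B\G) \to \K^{*+1}(\hi)$. The first map is restriction along the quotient; because $i = q\circ j$ factors the inclusion $i\colon \Cred\G\to\hi$ through $C(\overline X)\rtimes\G$, transporting along the isomorphism $j^*$ turns it precisely into $i^*$. The second map is restriction along the ideal inclusion, and the crux of the proof is to compute it as the Euler map $a\mapsto \chi(\G)\,\ind(a)\,[\pnt]$. This is the K-homological shadow of the Gauss--Bonnet computation in \cite{Emerson:Euler}: restricting a class to the interior $X$ and pushing to $B\G$ detects only the total index weighted by the Euler characteristic, landing on the point class. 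I expect this identification to be the main obstacle, since it requires tracing the boundary map of the extension through the Morita equivalence and the $\gamma$-element identification while keeping explicit hold of the Euler-characteristic factor.

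Finally, the sequence is a priori a cyclic hexagon; to cut it into the stated six-term form with zeros at both ends it suffices to observe that the odd Euler map vanishes. Indeed, the image of $\Eul$ is always contained in $\Z\,[\pnt]\subseteq\K_0(B\G)$, a purely even class, so its degree-one component $\gamma\K^1(\Cred\G)\to \K_1(B\G)$ is zero. Exactness of the hexagon at the two adjacent spots then forces $i^*\colon\K^1(\hi)\to\gamma\K^1(\Cred\G)$ to be surjective and $\K_1(B\G)\to\K^0(\hi)$ to be injective, which are exactly the two endpoint assertions of the displayed sequence.
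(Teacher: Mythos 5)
Your proposal is correct and follows essentially the same route as the paper: the same extension over the compactified Rips complex, the same identification of the middle term with the $\gamma$-part via the inclusion of constants (the paper's Lemma~\ref{happiness1} together with Proposition~\ref{indexoneelement}), and the same appeal to \cite{Emerson:Euler} for the Euler-class computation. The only cosmetic difference is that you pass to (reduced) crossed products before taking K-homology, whereas the paper runs the six-term sequence in $\G$-equivariant K-homology $\KK^\G_*(-,\C)$ and only then translates to crossed products via the standard identification $\KK^\G_*(A,\C)\cong \KK_*(A\rtimes\G,\C)$ for discrete $\G$.
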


We note that the torsion assumption could be dropped, at the expense of elaborating the sequence in the 
way that was done in \cite{Emerson:Euler}. The Gysin sequence, combined with Theorem~\ref{intro: twisted K-cycles}, yields the following.

\begin{thm}[Twisted K-cycles over the reduced C*-algebra]\label{reps for reduced}
Let $\G$ be regular and torsion-free. Then the following hold.

\begin{itemize}[leftmargin=20pt, itemsep=3pt]
\item Every class in the \(\gamma\)-part $\gamma \K^1(C^*_r\G)$ of the \emph{odd} \(\K\)-homology of \(C^*_r\G\)
 is represented by an odd Fredholm module of the form
\begin{align*}
\big(\lambda, P_{\ell^2\G}\lambdaop_\mu(e)P_{\ell^2\G}\big), \qquad e \textrm{ projection in }\hi.
\end{align*}
Moreover, the projection $e\in \hi$ can be chosen so that the Fredholm module is \(p\)-summable over $\C\G$ for every \(p >\max\{ 2,  \visdim \bd X\}\).

\item If $\chi (\G) = 0$, then, similarly, every class in the \(\gamma\)-part $\gamma \K^0(C^*_r\G)$ is represented by a balanced even Fredholm module of the form
\begin{align*}
\big(\lambda, P_{\ell^2\G}\lambdaop_\mu(u)P_{\ell^2\G}\big),\qquad u \textrm{ unitary in }\hi.
\end{align*}
Moreover, the unitary $u\in \hi$ can be chosen so that the Fredholm module is \(p\)-summable over $\C\G$ for every \(p >\max\{ 2,  \visdim \bd X\}\). 

\item \noindent If $\chi (\G) \neq 0\) and if \(\gamma_r\in \gamma \K^0(C^*_r\G)\) is a reduced \(\gamma\)-element, then every class in the \(\gamma\)-part $\gamma \K^0(C^*_r\G)$ is, up to an integral multiple of $\gamma_r$, represented by a balanced even Fredholm module as above.
\end{itemize}
\end{thm}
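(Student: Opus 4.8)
The plan is to deduce everything from the Gysin sequence of Theorem~\ref{intro: gysin} together with the explicit representatives furnished by Theorem~\ref{intro: twisted K-cycles}, by pulling back K-homology classes along $i^*$ and restricting the resulting Fredholm modules over $\hi$ to the subalgebra $\Cred\G$. The two inputs dovetail: the Gysin sequence tells me precisely which part of $\gamma\K^*(\Cred\G)$ lies in the image of $i^*\colon \K^*(\hi)\to\gamma\K^*(\Cred\G)$, while Theorem~\ref{intro: twisted K-cycles} provides, for each class in $\K^*(\hi)$, a representative of the desired twisted form that is $p$-summable over $\Lip(\bd X,d)\rtimes_{\mathrm{alg}}\G$. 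Since the constant functions are Lipschitz, $\C\G$ sits inside this algebraic crossed product, so restricting a representative along $i$ automatically yields a Fredholm module over $\Cred\G$ that is $p$-summable over $\C\G$: the commutators $[\lambda(g),F]$ already lie in $\LL^p$ for each $g\in\G$. Thus in each degree the problem reduces to reading off the image of $i^*$ from the Gysin sequence and checking that restriction puts the representative into the stated normal form.

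For the odd part, the sequence terminates with $\gamma\K^1(\Cred\G)\to 0$, so the last map $i^*\colon\K^1(\hi)\to\gamma\K^1(\Cred\G)$ is surjective. Given a class in $\gamma\K^1(\Cred\G)$, I lift it to $\K^1(\hi)$, represent the lift by an odd module $\big(\lambda_\mu,\,\projcon\lambdaop_\mu(e)\projcon\big)$ with $e$ a projection chosen as in Theorem~\ref{intro: twisted K-cycles}, and restrict. The restriction of $\lambda_\mu$ to $\C\G$ is the left regular representation $\lambda$, the operator is unchanged, and summability over $\C\G$ follows from the remark above, giving the asserted form.

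For the even part I would use the segment $\K^0(\hi)\xrightarrow{i^*}\gamma\K^0(\Cred\G)\xrightarrow{\Eul}\K_0(B\G)$, where exactness gives $\operatorname{im}i^*=\ker\Eul$. When $\chi(\G)=0$ the map $\Eul(a)=\chi(\G)\,\ind(a)\,[\pnt]$ vanishes identically, so $i^*$ is surjective and every even class lifts to $\K^0(\hi)$; I then restrict the balanced even representative $\big(\lambda_\mu,\projcon\lambdaop_\mu(u)\projcon+(1-\projcon)\big)$. The point requiring care is that the summand $(1-\projcon)$ disappears upon restriction: because $\lambda_\mu(g)$ acts only on the $\G$-index and fibrewise as the identity on $L^2(\bd X,\mu)$, it commutes with $\projcon$, so on the range of $1-\projcon$ the operator is the identity and commutes with $\lambda$, making that summand degenerate. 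Discarding it yields the claimed module $\big(\lambda,\projcon\lambdaop_\mu(u)\projcon\big)$.

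The genuinely new content, and the step I expect to be the main obstacle, is the case $\chi(\G)\neq 0$. Here $\Eul$ is nonzero and $\operatorname{im}i^*=\ker\Eul=\{a:\ind(a)=0\}$, using that $[\pnt]$ has infinite order in $\K_0(B\G)$, so a general even class need not lift to $\hi$. The remedy is to calibrate by the reduced $\gamma$-element: since $\gamma$ maps to $1\in\Z$ under the forgetful index map, $\ind(\gamma_r)=1$, and hence for any $a\in\gamma\K^0(\Cred\G)$ the corrected class $a-\ind(a)\,\gamma_r$ lies in $\ker\Eul=\operatorname{im}i^*$. Applying the preceding argument to this corrected class represents it by a balanced even module of the stated form, which proves that every even class is so represented up to the integral multiple $\ind(a)\,\gamma_r$ of $\gamma_r$. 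The delicate points to verify are that $\ind(\gamma_r)=1$ and that $\Eul$ has the stated kernel, both of which rest on the precise identification of $\gamma_r$ and of the map $\Eul$ supplied by the Gysin sequence.
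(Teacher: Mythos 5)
Your proposal is correct and follows essentially the same route as the paper: the paper combines its Corollary on the Gysin sequence (surjectivity of $i^*$ in odd degree, surjectivity in even degree when $\chi(\G)=0$, and the decomposition $a=\ind(a)\gamma_r+i^*(b)$ using $\ind(\gamma_r)=1$ otherwise) with a lemma stating that restricting the twisted K-cycles along $i$ and discarding the degenerate summand $(1-P_{\ell^2\G})$ yields the modules $\big(\lambda, P_{\ell^2\G}\lambdaop_\mu(a)P_{\ell^2\G}\big)$, $p$-summable over $\C\G$ since $\C\G\subset\Lip(\bd X,d_\e)\rtimes_{\mathrm{alg}}\G$. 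All the points you flag as delicate (the degeneracy of the $(1-P_{\ell^2\G})$ summand, $\ind(\gamma_r)=1$, the kernel of $\Eul$) are handled exactly as you describe.
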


Here $\lambda$ denotes, as usual, the regular representation of $\G$. A `reduced' \(\gamma\)-element is roughly the same as a 
\(\gamma\)-element (a class in \(\K^0(C^*\G)\) which factors in a certain way), but one which is defined over \(C^*_r\G\) rather 
than \(C^*\G\).

We do not know whether, in general, there exists a reduced \(\gamma\)-element with a finitely summable representative, for general hyperbolic groups. We also do not know whether, in general, the $\gamma$-element acts as the identity on $\K^*(C^*_r\G)$. But for the class of a-T-menable groups we do know, thanks to Higson - Kasparov \cite{HK}, that \(\gamma = 1\). Specializing Theorem~\ref{reps for reduced} to this class, we obtain:

\begin{thm}[Uniform summability for a-T-menable groups]\label{bigone applied}
Assume that $\G$ is regular, torsion-free, and a-T-menable. Then the odd K-homology $\K^1(C^*_r\G)$ is uniformly $p$-summable over $\C\G$ for every $p >\max\{ 2,  \visdim \bd X\}$. If  $\chi (\G) = 0$, then the even K-homology $\K^0(C^*_r\G)$ is uniformly $p$-summable over $\C\G$ for every $p >\max\{ 2,  \visdim \bd X\}$. If the \(\gamma\)-element $\gamma_r \in \K^0(C^*_r\G)$ can be represented by a $p(\gamma_r)$-summable Fredholm module over $\C\G$, then the even K-homology $\K^0(C^*_r\G)$ is uniformly $p$-summable over $\C\G$ for every $p >\max\{ 2, \visdim \bd X, p(\gamma_r)\}$. 
\end{thm}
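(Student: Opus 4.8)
The plan is to deduce everything from Theorem~\ref{reps for reduced} by exploiting the fact that a-T-menability trivializes the $\gamma$-element. Indeed, by Higson--Kasparov~\cite{HK} an a-T-menable group $\G$ satisfies $\gamma = 1$ in $\KK^\G_0(\C,\C)$; after descent this says that the idempotent cutting out the $\gamma$-part of $\K^*(\Cred\G)$ acts as the identity, so that $\gamma\K^*(\Cred\G) = \K^*(\Cred\G)$. Consequently every assertion of Theorem~\ref{reps for reduced} about the $\gamma$-part upgrades to an assertion about the full K-homology, and it remains only to read off and reconcile the summability degrees against the single dense subalgebra $\C\G \subseteq \Cred\G$.

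For the odd K-homology, the first bullet of Theorem~\ref{reps for reduced} already furnishes, for each class in $\K^1(\Cred\G) = \gamma\K^1(\Cred\G)$, a representative $\big(\lambda, \projcon\lambdaop_\mu(e)\projcon\big)$ that is $p$-summable over $\C\G$ for every $p > \max\{2, \visdim\bd X\}$, all over the one subalgebra $\C\G$; this is exactly uniform $p$-summability in the stated range. When $\chi(\G) = 0$, the second bullet does the analogous thing for $\K^0(\Cred\G)$ via balanced even modules $\big(\lambda, \projcon\lambdaop_\mu(u)\projcon\big)$, uniformly $p$-summable over $\C\G$ for $p > \max\{2, \visdim\bd X\}$. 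Both of these cases are therefore immediate.

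The only case needing extra work is the even K-homology when $\chi(\G) \neq 0$, where the third bullet of Theorem~\ref{reps for reduced} represents an arbitrary class $x \in \K^0(\Cred\G)$ only up to an integral multiple of the reduced $\gamma$-element: $x = [F] + n\,\gamma_r$ for some $n \in \Z$, with $F$ balanced even and $p$-summable over $\C\G$ for every $p > \max\{2, \visdim\bd X\}$. Granting, as in the statement, that $\gamma_r$ has a $p(\gamma_r)$-summable representative $G$ over $\C\G$, I would realize $n\gamma_r$ as the $|n|$-fold direct sum of $G$ (for $n \geq 0$), respectively of its conjugate module (for $n < 0$), which remains $p(\gamma_r)$-summable over $\C\G$ since conjugation and finite direct sums do not raise the summability degree. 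Representing $x$ by the direct sum of $F$ with this module, and using that a commutator with a direct sum is the direct sum of the commutators together with the nesting $\LL^{p'} \subseteq \LL^{p''}$ for $p' \leq p''$, one checks that this representative is $p$-summable over $\C\G$ for every $p > \max\{2, \visdim\bd X, p(\gamma_r)\}$, which is the final assertion.

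The main obstacle is precisely the bookkeeping in this last case: one must verify that the correction term is carried over the \emph{same} subalgebra $\C\G$, handle negative multiples of $\gamma_r$ via the conjugate module, and track how the summability exponent of the combined module is governed by the larger of the two degrees. Everything else is a direct transcription of Theorem~\ref{reps for reduced} under the identification $\gamma\K^* = \K^*$ supplied by a-T-menability.
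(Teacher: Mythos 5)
Your proposal is correct and follows essentially the same route as the paper: the paper obtains this theorem by specializing Theorem~\ref{reps for reduced} (proved in the body as Theorem~\ref{corbigone}) to a-T-menable groups via the Higson--Kasparov identity $\gamma=1$, which gives $\gamma\K^*(\Cred\G)=\K^*(\Cred\G)$. The direct-sum bookkeeping you carry out for the correction term $k\gamma_r$ is exactly what the paper leaves implicit in the ``$\max\{p(\gamma_r), D_\e^>\}$-summable'' clause of Theorem~\ref{corbigone}, so nothing is missing.
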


A-T-menable hyperbolic groups include finitely generated free groups, cocompact lattices in $\SO(n,1)$ and $\SU(n,1)$, and $C'(1/6)$ small-cancellation groups -- the latter by \cite{Wise}. Applying the previous theorem to each one of these classes, we obtain the following consequences.

\begin{cor}
Let $\G$ be a finitely generated free group. Then the K-homology of $C^*_r\G$ is uniformly $p$-summable over $\C \G$ for every $p>2$.
\end{cor}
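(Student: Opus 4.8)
The plan is to recognize a finitely generated non-abelian free group $\F_n$ (on $n\geq 2$ generators) as an instance of Theorem~\ref{bigone applied} and then to assemble the odd and even halves of its K-homology separately. The first task is to check the three standing hypotheses. A free group is torsion-free; its boundary $\bd\F_n$ is a Cantor set, which admits fixed-point-free self-homeomorphisms (for instance the coordinate flip on $\{0,1\}^{\N}$), so $\F_n$ is regular in the sense required for Poincar\'e duality; and free groups have the Haagerup property, hence are a-T-menable. With these in hand, Theorem~\ref{bigone applied} is available.

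The second task is to pin down the visual dimension. Taking the $2n$-regular tree as geometric model $X$, the boundary is $0$-hyperbolic, so for every base $a>1$ the formula $d_a(\xi\mid\eta)=a^{-(\xi\mid\eta)}$ defines a genuine visual ultrametric, with no upper bound on $a$ imposed by a hyperbolicity constant. A cylinder of combinatorial depth $m$ has $d_a$-diameter $a^{-m}$, and the number of such cylinders grows like $(2n-1)^m$, so the Hausdorff dimension of $(\bd X, d_a)$ is $\log(2n-1)/\log a$. Letting $a\to\infty$ gives $\visdim \bd X = 0$; concretely, fixing $a$ large makes the Hausdorff dimension smaller than $2$. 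Thus $\max\{2,\visdim\bd X\}=2$, and the first clause of Theorem~\ref{bigone applied} immediately yields that the odd K-homology $\K^1(\Cred\F_n)$ is uniformly $p$-summable over $\C\F_n$ for every $p>2$.

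What remains is the even part, and this is where the real work lies. Since $\chi(\F_n)=1-n\neq 0$, the hypothesis $\chi(\G)=0$ of the second clause fails, so I would apply the third clause of Theorem~\ref{bigone applied}; this demands a reduced $\gamma$-element represented by a $p(\gamma_r)$-summable Fredholm module over $\C\F_n$. Here I would use the Julg--Valette module attached to the action of $\F_n$ on its tree: the Hilbert space $\ell^2(V)\oplus\ell^2(E)$ of vertices and edges, on which $\F_n$ (and hence $\Cred\F_n$) acts by the regular-type permutation representations, with the operator $F$ that sends each vertex $v\neq o$ to the edge pointing from $v$ toward the basepoint $o$. This represents the reduced $\gamma$-element, and the decisive feature is that for each $g$ the commutator $[\lambda_g, F]$ is supported on the finitely many cells lying on the geodesic from $o$ to $go$, hence is of \emph{finite rank}. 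A finite-rank operator lies in every Schatten ideal, so this module is $p$-summable over $\C\F_n$ for every $p>0$, and one may take $p(\gamma_r)$ arbitrarily small; thus $\max\{2,\visdim\bd X, p(\gamma_r)\}=2$. The third clause then gives that $\K^0(\Cred\F_n)$ too is uniformly $p$-summable over $\C\F_n$ for every $p>2$.

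Finally I would combine the two halves. Both use the single smooth subalgebra $\C\F_n$ and the single threshold $p>2$, so the full K-homology of $\Cred\F_n$ is uniformly $p$-summable for every $p>2$, as claimed. The step I expect to be the main obstacle is the even case: one must exhibit a genuinely finitely summable reduced $\gamma$-element and verify that its summability does not raise the threshold above $2$. The finite-rank-commutator property of the tree module is exactly the input that makes this possible, and it reflects the special fact that free groups act freely on trees.
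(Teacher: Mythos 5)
Your proposal is correct and follows essentially the same route as the paper: the visual dimension of the tree boundary is $0$, so Theorem~\ref{bigone applied} handles $\K^1$ for all $p>2$, and since $\chi(\F_n)\neq 0$ the even part is completed by invoking the Julg--Valette model for the reduced $\gamma$-element, whose commutators with group elements are finite rank (the paper records this as $1$-summability), so that the threshold $\max\{2,\visdim\bd X, p(\gamma_r)\}$ remains $2$. Your additional checks (regularity of the Cantor set, a-T-menability, the explicit Hausdorff dimension computation) are all consistent with what the paper establishes elsewhere.
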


\begin{cor}\label{rank-1} If $\G$ is a torsion-free cocompact lattice in $\SO(n,1)$, then the K-homology of $C^*_r\G$ is uniformly $n^+$-summable over \(\C \G\) when $n\geq 3$, respectively $p$-summable over $\C\G$ for every $p>2$, when $n=2$. If $\G$ is a torsion-free cocompact lattice in \(\SU(n,1)\), then the K-homology of  $C^*_r\G$ is uniformly $(2n)^+$-summable over $\C \G$.
\end{cor}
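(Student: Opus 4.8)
The plan is to read the corollary off Theorem~\ref{bigone applied}, so the work is to verify its hypotheses and to compute the two quantities that determine the summability degree: the visual dimension $\visdim \bd X$ of the boundary, and --- in the cases where $\chi(\G)\neq 0$ --- the summability degree $p(\gamma_r)$ of a finitely summable reduced $\gamma$-element. First I would dispatch the hypotheses. Torsion-freeness is assumed. A-T-menability holds because cocompact lattices in the rank-one groups $\SO(n,1)$ and $\SU(n,1)$ inherit the Haagerup property from the ambient group; this is exactly the point at which $\Sp(n,1)$, which has property (T), would be excluded. Regularity holds because the natural geometric model is the symmetric space, whose boundary is a sphere, and a sphere admits a fixed-point-free self-homeomorphism (for instance the antipodal map), so the Poincar\'e duality of \cite{Emerson} is available.

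Next I would compute the visual dimensions, taking as geometric model the symmetric space $X=G/K$. For $\SO(n,1)$ this is real hyperbolic space $\Hyp^n$, whose boundary $\bd X=S^{n-1}$, equipped with a visual metric, is bi-Lipschitz to the round sphere; hence $\visdim \bd X=n-1$. For $\SU(n,1)$ this is complex hyperbolic space $\Hyp^n_\C$, whose boundary $\bd X=S^{2n-1}$ carries the sub-Riemannian Carnot--Carath\'eodory metric of the Heisenberg group, whose Hausdorff dimension equals the homogeneous dimension $2n$; hence $\visdim \bd X=2n$. Feeding these into the odd clause of Theorem~\ref{bigone applied} already shows that $\K^1(C^*_r\G)$ is uniformly $p$-summable over $\C\G$ for $p>\max\{2,n-1\}$, respectively for $p>2n$.

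For the even K-homology I would split on the Euler characteristic. By the Chern--Gauss--Bonnet theorem a closed real-hyperbolic $n$-manifold has $\chi\neq 0$ exactly when $n$ is even (for odd $n$ it vanishes by Poincar\'e duality), whereas a closed complex-hyperbolic manifold has real dimension $2n$ and, by Hirzebruch proportionality, Euler characteristic a nonzero multiple of its volume, hence $\chi\neq 0$. When $\chi(\G)=0$ --- that is, $\SO(n,1)$ with $n$ odd --- the second clause of Theorem~\ref{bigone applied} gives $\K^0$ the same bound $p>\max\{2,n-1\}$ with no further input. When $\chi(\G)\neq 0$ I would supply the finitely summable $\gamma_r$ demanded by the third clause by representing it through the Dirac operator $D$ on the symmetric space: the regular (deck-transformation) action of $\G$ on the spinor space $L^2(X,S)$ together with the phase $D|D|^{-1}$ forms a $\G$-Fredholm module whose commutators $[\lambda(g),D|D|^{-1}]$ lie in $\LL^p$ for every $p>\dim_\R X$, so one may take $p(\gamma_r)=\dim_\R X$, namely $n$ for $\SO(n,1)$ and $2n$ for $\SU(n,1)$. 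The third clause then makes $\K^0(C^*_r\G)$ uniformly $p$-summable for $p>\max\{2,\visdim \bd X,\dim_\R X\}=\max\{2,\dim_\R X\}$.

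Assembling the two parities, the uniform summability degree over all of $\K^*(C^*_r\G)$ is $\max\{2,\dim_\R X\}^+$. For $\SO(n,1)$ this is $p>2$ when $n=2$ and $n^+$ when $n\geq 3$; the value $n$ is forced by the even classes through the bulk dimension of $\Hyp^n$ (for odd $n$ both the odd and the even parts are in fact already $(n-1)^+$-summable, so the recorded bound $n^+$ holds a fortiori and is sharp only for even $n$). For $\SU(n,1)$ it is $(2n)^+$, where the boundary Hausdorff dimension and the symmetric-space dimension coincide. The one genuinely analytic step --- and the expected main obstacle --- is the finite summability of $\gamma_r$: one must prove that the phase of the Dirac operator on $X$ has commutators with the regular representation lying in $\LL^p$ for every $p>\dim_\R X$. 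This rests on off-diagonal decay estimates for the Schwartz kernel of the order-zero operator $D|D|^{-1}$, combined with control of the exponential volume growth of the hyperbolic symmetric space under the cocompact $\G$-action; the diagonal singularity of this operator, of order equal to $\dim_\R X$, is exactly what produces the degrees $n$ and $2n$.
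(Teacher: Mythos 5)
Everything up to the construction of a finitely summable reduced $\gamma$-element is correct and matches the paper: the verification of a-T-menability and regularity, the visual dimensions $n-1$ and $2n$, the resulting bound for $\K^1$, and the treatment of $\K^0$ when $\chi(\G)=0$ (i.e.\ $\SO(n,1)$ with $n$ odd). The gap is in the one step you yourself flag as the main obstacle. The pair $\big(\lambda, D|D|^{-1}\big)$ on $L^2(X,S)$, with $D$ the Dirac operator of the symmetric space $X=G/K$, cannot represent $\gamma_r$: $D$ is invariant under the full isometry group, so every commutator $[\lambda(g),D|D|^{-1}]$ is identically zero. The module is degenerate, its class is $0$, and in particular its Fredholm index against the trivial representation is $0$, contradicting $\ind(\gamma_r)=1$, which is exactly the property used in Corollary~\ref{applied gysin}. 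To produce $\gamma$ from the Dirac morphism one must compose with the dual-Dirac element (Clifford multiplication by a Bott-type vector field on $X$), and the summability question then concerns the commutators of $\lambda(g)$ with \emph{that} operator --- a different and genuinely delicate problem on a noncompact space, not controlled by the local pseudodifferential calculus of order $-\dim_\R X$ that you invoke.

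The paper sidesteps this entirely by using the known \emph{boundary} models for $\gamma$. For $\SO(n,1)$ with $n$ odd, Kasparov's Fredholm module is the phase $F$ of a first-order elliptic operator on $S^{n-1}$; conformality of the boundary action forces $gFg^{-1}-F$ to have order $-1$, so $[g,F]$ has singular values $\asymp k^{-1/(n-1)}$ and the module is $(n-1)^+$-summable over $\C\G$. For $n$ even (the case where $\chi(\G)\neq0$ and $\gamma_r$ is actually needed), one pulls back the $\gamma$-element along $\G\subset\SO(n,1)\subset\SO(n+1,1)$, whose boundary sphere is $S^{n}$; this costs one dimension and is the source of the $n^+$ in the statement. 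For $\SU(n,1)$, the Julg--Kasparov module is built from a hypoelliptic operator on the contact sphere $S^{2n-1}$, with commutators in $\Psi^{-1}_H(S^{2n-1})$ and singular values $\asymp k^{-1/(2n)}$, giving $(2n)^+$. That your final exponents agree with the paper's is a numerical coincidence ($\dim_\R\Hyp^n=n=\dim S^{n}$ and $\dim_\R\Hyp^n_\C=2n=$ homogeneous dimension of $S^{2n-1}$); it does not repair the construction, which as written yields the zero class.
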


These two corollaries rely on the existence of finitely summable representatives for the $\gamma$-element, due to Julg - Valette \cite{JV} in the free group case, Kasparov \cite{Kas1} in the $\SO(n,1)$ case, respectively Julg - Kasparov \cite{JK} in the $\SU(n,1)$ case. For small-cancellation groups, the outcome is less satisfactory. We have to apply the vanishing Euler characteristic criterion of Theorem~\ref{bigone applied}, as we are lacking information on the finite summability of the $\gamma$-element, and we also do not have an explicit formula for the visual dimension.

\begin{cor}\label{one-sixth-intro} Let $\G$ be a torsion-free group given by a $C'(1/6)$ presentation $\la S\: | \: \mathcal{R}\ra$. Then the odd K-homology $\K^1(C^*_r\G)$ is uniformly summable over $\C\G$, and the same is true for the even K-homology $\K^0(C^*_r\G)$ provided that $|S|-|\mathcal{R}|=1$.
\end{cor}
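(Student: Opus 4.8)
The plan is to derive Corollary~\ref{one-sixth-intro} by specializing Theorem~\ref{bigone applied} to the class of $C'(1/6)$ small-cancellation groups, so the work is almost entirely in verifying that such a group $\G = \la S \mid \mathcal{R}\ra$ satisfies the hypotheses of that theorem. First I would recall that a torsion-free $C'(1/6)$ small-cancellation group is hyperbolic, and I would invoke the regularity of its boundary; since the excerpt states that no hyperbolic group is known to fail regularity and that these groups fall under the standing assumptions, I take regularity for granted. The crucial input is a-T-menability: by Wise~\cite{Wise}, $C'(1/6)$ groups act properly on CAT(0) cube complexes and are therefore a-T-menable, which by Higson--Kasparov~\cite{HK} gives $\gamma = 1$. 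With these three properties -- regular, torsion-free, a-T-menable -- the group $\G$ satisfies every standing hypothesis of Theorem~\ref{bigone applied}.

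The odd statement then follows immediately: Theorem~\ref{bigone applied} asserts that $\K^1(C^*_r\G)$ is uniformly $p$-summable over $\C\G$ for every $p > \max\{2, \visdim \bd X\}$, and in particular it is uniformly summable, which is exactly the first assertion of the corollary. For the even statement, the obstacle is the Euler characteristic. Theorem~\ref{bigone applied} gives uniform summability of $\K^0(C^*_r\G)$ unconditionally only when $\chi(\G) = 0$ (the alternative route, through a finitely summable reduced $\gamma$-element, is unavailable here because we lack summability information on the $\gamma$-element for small-cancellation groups). So the even case reduces to showing that the hypothesis $|S| - |\mathcal{R}| = 1$ forces $\chi(\G) = 0$.

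The heart of the argument is therefore the computation of the Euler characteristic from the presentation. For a torsion-free group, one may take the presentation $2$-complex $K$ associated to $\la S \mid \mathcal{R}\ra$ -- one $0$-cell, $|S|$ one-cells (from the generators), and $|\mathcal{R}|$ two-cells (from the relators). For a $C'(1/6)$ presentation this $2$-complex is aspherical (the small-cancellation condition $C'(1/6)$ yields a Dehn presentation whose presentation complex is a $K(\G,1)$, by the standard Lyndon--Greendlinger theory), so it computes the Euler characteristic of the group: $\chi(\G) = 1 - |S| + |\mathcal{R}|$. Substituting the hypothesis $|S| - |\mathcal{R}| = 1$ gives $\chi(\G) = 1 - (|S| - |\mathcal{R}|) = 1 - 1 = 0$. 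With $\chi(\G) = 0$ established, the vanishing-Euler-characteristic clause of Theorem~\ref{bigone applied} applies and yields uniform summability of $\K^0(C^*_r\G)$, completing the proof.

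I expect the main subtlety to be the asphericity of the presentation complex, since this is what legitimizes reading $\chi(\G)$ off the presentation; it is classical for $C'(1/6)$ presentations but is the one step that genuinely uses the small-cancellation hypothesis beyond hyperbolicity and a-T-menability. Everything else is bookkeeping: matching the qualitative `uniformly summable' conclusion of the corollary to the quantitative `$p$-summable for $p > \max\{2, \visdim \bd X\}$' conclusion of Theorem~\ref{bigone applied} requires only that the visual dimension is finite, which holds for any hyperbolic group boundary, so a uniform summability degree (any $p$ exceeding this finite threshold) exists even though, as noted in the text, we have no closed formula for $\visdim \bd X$ in the small-cancellation case.
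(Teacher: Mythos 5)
Your overall strategy --- specialize Theorem~\ref{bigone applied}, verify a-T-menability via Wise and Higson--Kasparov, and compute $\chi(\G)=1-|S|+|\mathcal{R}|$ from the asphericity of the presentation $2$-complex --- is exactly the paper's route, and those parts are fine. But there is a genuine gap at the regularity step. You write that you ``take regularity for granted'' because no hyperbolic group is known to fail it; that is an empirical remark in the paper, not a theorem, and regularity (Definition~\ref{defn-regular}: the boundary admits a continuous fixed-point-free self-map) is an honest \emph{hypothesis} of Theorem~\ref{bigone applied}, needed because the Poincar\'e duality of \cite{Emerson} underlying the twisted K-cycles requires it. Without verifying it for $C'(1/6)$ groups, the corollary does not follow.

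The paper closes this gap as follows. Asphericity of the presentation complex gives not only the Euler characteristic formula but also $\mathrm{cd}\,\G\leq 2$. If $\mathrm{cd}\,\G=1$ then $\G$ is free, whose Cantor-set boundary is regular (and this case was treated earlier). If $\mathrm{cd}\,\G=2$ then $\topdim\bd\G=1$, and Lemma~\ref{1-dim is regular} (attributed to Kapovich) shows every hyperbolic group with $1$-dimensional boundary is regular: by Bonk--Kleiner the boundary of a non-virtually-free hyperbolic group contains a topological circle, dimension theory then yields a retraction $\rho\colon\bd\G\to S^1$, and composing with the antipodal involution gives a fixed-point-free self-map. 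You should incorporate this argument (or some substitute) to make the proof complete; the remaining bookkeeping in your proposal, including the observation that finiteness of $\visdim\bd X$ suffices for the qualitative ``uniformly summable'' conclusion (the paper additionally extracts the explicit threshold $\kappa(S\,|\,\mathcal{R})=15\log(2|S|-1)\max\{|r|:r\in\mathcal{R}\}$), is correct.
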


\noindent\textbf{Further problems.} Our work calls attention to the following questions. The first two have already been mentioned.

\begin{itemize}[leftmargin=20pt,itemsep=3pt]
\item Let $\G$ be a hyperbolic group. Does there exist a finitely summable representative for a reduced $\gamma$-element $\gamma_r\in \K^0(C^*_r\G)$?

\item Let $\G$ be a hyperbolic group. Does $\gamma$ act as the identity on the K-homology of $C^*_r\G$? 

\item
Let $\G$ be a hyperbolic group. Is the K-homology of $C^*_r\G$ uniformly summable over $\C\G$? 
\end{itemize}

We conjecture the answer to the first problem to be positive for fundamental groups of compact, negatively curved manifolds. It is quite plausible for the answer to be positive in general. Note that our results cover the case when $\chi(\G)=0$. 

The second problem may well have a negative answer.

Positive solutions to the first two problems also settle the third. But it is entirely possible that the third problem can be attacked from a different perspective. 


\section{Preliminaries on K-homology} 
\subsection{Fredholm modules and K-homology} We recall some definitions, while fixing notations along the way. For further details, we refer to Connes \cite[Ch.IV]{Connes} and Higson - Roe \cite[Ch.8]{HR}.

Let $A$ be a unital C*-algebra. As usual, $\mathcal{B}(H)$ and $\mathcal{K}(H)$ denote the bounded operators, respectively the ideal of compact operators on a (separable) Hilbert space $H$.

\begin{defn}[Atiyah, Kasparov]\label{def:FM} An \emph{odd Fredholm module} for $A$ is a pair $(\pi,P)$, where $\pi:A\to \mathcal{B}(H)$ is a representation, $P: H\to H$ is an essential projection in the sense that $P^*-P, P^2-P\in \mathcal{K}(H)$, and such that $[P,\pi(a)]=P\pi(a)-\pi(a)P\in \mathcal{K}(H)$ for all $a\in A$.

An \emph{even Fredholm module} for $A$ is a pair $(\pi_\pm,U)$, where $\pi_\pm:A\to \mathcal{B}(H_\pm)$ are representations, $U:H_+\to H_-$ is an essential unitary in the sense that $U^*U-1\in \mathcal{K}(H_+), UU^*-1\in  \mathcal{K}(H_-)$, and such that $\pi_+(a)-U^*\pi_-(a)U\in \mathcal{K}(H_+)$ for all $a\in A$. If $H_+=H_-=:H$ and $\pi_+=\pi_-=: \pi$, then we say that the even Fredholm module is \emph{balanced}, and we simply write it $(\pi,U)$.
\end{defn}

Fredholm modules are the cycles in Kasparov's K-homology groups, and for that reason they are also called \emph{K-cycles}. Here is an outline of the odd case, leading to the odd K-homology group $\K^1(A)$. The equivalence relation defined by Kasparov on odd Fredholm modules is generated by \emph{unitary equivalence}, \emph{operator homotopy}, and \emph{addition of degenerates}. Unitary equivalence has the obvious meaning. Two Fredholm modules $(\pi,P_0)$ and $(\pi,P_1)$ are operator homotopic if there is a norm-continuous path of essential projections $(P_t)_{t\in [0,1]}$ such that $(\pi,P_t)$ is a Fredholm module at all times $t\in [0,1]$. Thus, the representation $\pi$ is fixed throughout an operator homotopy. A Fredholm module $(\pi,P)$ is degenerate if $P$ is a projection which commutes with the representation $\pi$. Under direct summation of K-homology classes, $\K^1(A)$ is an abelian group. Modulo essentially the same equivalence relation as in the odd case, even Fredholm  modules up to equivalence are the classes in the even K-homology group \(\K^0(A)\). Every class in $\K^0(A)$ can be represented by a balanced even Fredholm module.

\subsection{Finitely summable Fredholm modules} The singular values $\{s_n(T)\}_{n\geq 1}$ of a compact operator $T\in\mathcal{K}(H)$ are the eigenvalues of $|T|$, arranged in non-increasing order and repeated according to their multiplicity. The compactness of $T$ means that $s_n(T)\to 0$. For $p\geq 1$, the Schatten ideals $\mathcal{L}^p(H)$ and $\mathcal{L}^{p+}(H)$ are defined as follows:
\begin{align*}
\mathcal{L}^p(H)=\Big\{T\in \mathcal{K}(H):\; \sum s_n(T)^p<\infty\Big\}, \qquad\mathcal{L}^{p+}(H)=\Big\{T\in \mathcal{K}(H):\; s_n(T)=O(n^{-1/p})\Big\}.
\end{align*}
(Actually, the definition of $\mathcal{L}^{1+}(H)$ is slightly different, and it will not be used in this paper.) We have $\mathcal{L}^p(H)\subset\mathcal{L}^{p+}(H)\subset\mathcal{L}^q(H)$ for all $q>p$.

The summable Fredholm modules are those which satisfy a restricted version of  Definition~\ref{def:FM}, in which the ideals $\mathcal{L}^p(H)$ or $\mathcal{L}^{p+}(H)$ replace the ideal of compact operators $\mathcal{K}(H)$. 

\begin{defn}[Connes]
An odd Fredholm module $(\pi,P)$ is \emph{$p$-summable} (over $\mathcal{A}$) if $P^*-P, P^2-P\in \mathcal{L}^p(H)$ and $[P, \pi(a)]\in\mathcal{L}^p(H)$ for all $a$ in a dense subalgebra $\mathcal{A}$ of $A$. A balanced even Fredholm module $(\pi,U)$ is \emph{$p$-summable} (over $\mathcal{A}$) if $UU^*-1, U^*U-1 \in \mathcal{L}^p(H)$, and $[U,\pi(a)]\in \mathcal{L}^p(H)$ for all $a$ in a dense subalgebra $\mathcal{A}$ of $A$.
\end{defn}

The notion of $p^+$-summable Fredholm module is defined analogously. We note that $p$-summability implies $p^+$-summability, which in turn implies $q$-summability for all $q>p$.

The property that every K-homology class is representable by a finitely summable Fredholm module could be deemed as \emph{K-homological finiteness}. Even sharper would be to require that finite summability can be achieved in a uniform way throughout the K-homology classes. Such a uniformity could be imposed on the degree of summability, or on the summability subalgebra, or both. We propose the following definition.

\begin{defn}
The K-homology of a C*-algebra $A$ is \emph{uniformly $p$-summable (over $\mathcal{A}$)} if there is a dense subalgebra $\mathcal{A}$ of $A$ such that every K-homology class of $A$ can be represented by a Fredholm module which is $p$-summable over $\mathcal{A}$. 
\end{defn}

There is an obvious variation for $p^+$-summability. The motivating example for this strong notion of K-homological finiteness is the following: the K-homology of the commutative C*-algebra $C(M)$, where $M$ is a smooth closed manifold, is uniformly $(\dim M)^+$-summable over the smooth subalgebra $C^\infty(M)$.

\section{The basic K-cycle}\label{paradigm}
Throughout this section, $G$ is a discrete countable group acting by homeomorphisms on a compact metrizable space $X$. To avoid trivialities, we assume that $X$ is not a singleton. We consider the reduced crossed-product $C(X)\rcross G$ associated to the topological dynamics $G\act X$.

\subsection{Left regular representation, $G$-expectation and $G$-deviation} Let $\mu$ be a Borel probability measure on $X$ which has full support, meaning that non-empty open subsets have positive measure, and it is $G$-quasi-invariant, i.e., the action of $G$ preserves the measure class of $\mu$. We do not assume $\mu$ to be $G$-invariant, in fact a highly non-invariant $\mu$ will turn out to be the most interesting case for our purposes.

The faithful representation of $C(X)$ on $L^2(X,\mu)$ by multiplication induces a faithful representation $\lambda_\mu$ of $C(X)\rcross G$ on $\ell^2(G, L^2(X,\mu))$, the \emph{left regular representation with respect to $\mu$}. In fact, the C*-algebra $C(X)\rcross G$ can be defined as the norm completion of the algebraic crossed-product $C(X)\rtimes_{\mathrm{alg}} G$ in the regular representation $\lambda_\mu$. Concretely, $\lambda_{\mu}$ is given as follows:
\begin{align*}
\lambda_{\mu}(\phi)\Big(\sum \psi_h \delta_h\Big)=\sum (h^{-1}.\phi)\psi_h \delta_h, \qquad \lambda_{\mu}(g)\Big(\sum \psi_h \delta_h\Big)=\sum \psi_h \delta_{gh}
\end{align*}
where $\phi\in C(X)$, $g\in G$, and $\sum \psi_h \delta_h\in \ell^2(G, L^2(X,\mu))$. The covariance relation $\lambda_\mu(g.\phi)=\lambda_\mu(g)\lambda_\mu(\phi)\lambda_\mu(g^{-1})$ holds. 

On the probability space $(X,\mu)$, momentarily devoid of the $G$-action, there are two important numerical characteristics attached to a continuous functions on $X$: the \emph{expectation} and the \emph{standard deviation}. Namely, for $\phi\in C(X)$ we put
\begin{align*}\E \phi=\int \phi \: d\mu,\qquad \dev  \phi=\sqrt{\E\big(|\phi|^2\big)-\big|\E\phi\big|^2}.
\end{align*}
When we bring in the $G$-action, we are led to consider the following dynamical counterparts.

\begin{defn}
The \emph{$G$-expectation} and the \emph{$G$-deviation} of $\phi\in C(X)$ with respect to $\mu$ are the functions $\E\phi:G\to\C$ and $\dev\phi:G\to [0,\infty)$ given as follows:
\begin{align*}
\E\phi(g)=\int g^{-1}.\phi \: d\mu=\int \phi \: d g_*\mu,\qquad \dev \phi=\sqrt{\E \big(|\phi|^2\big)-\big|\E\phi\big|^2}.
\end{align*}
\end{defn}

An explicit, and useful, formula for the $G$-deviation is
\begin{align}\label{iint}\dev \phi(g)=\sqrt{\tfrac{1}{2}\iint |\phi(gx)-\phi(gy)|^2 \: d\mu(x)\: d\mu(y)}.
\end{align}
As an illustration of the dynamical expectation for a non-trivial group action, consider the case of a group $G\subseteq \mathrm{SU}(1,1)$ acting by linear fractional transformations on the unit circle $S^1=\{\zeta\in \C:|\zeta|=1\}$. With respect to the normalized Lebesgue measure, $\E\phi(g)$ is the value of the Poisson transform of $\phi$ on the unit disk at the point $g(0)$.

\begin{defn}
We say that $\mu$ has \emph{$C_0$-deviation} if $\dev \phi\in C_0(G)$ for all $\phi\in C(X)$, respectively \emph{$\ell^p$-deviation} if $\dev \phi\in \ell^p G$ for all $\phi$ in a dense subalgebra of $C(X)$. 
\end{defn}

\subsection{The basic K-cycle} We view $\ell^2 G$ as the constant-coefficient subspace of $\ell^2(G, L^2(X,\mu))$. The corresponding projection $P_{\ell^2 G}$ is given by coefficient-wise integration: 
\begin{align*}
P_{\ell^2 G}\Big(\sum \psi_h \delta_h\Big)=\sum \bigg(\int \psi_h \: d\mu\bigg) \delta_h.
\end{align*}

We are interested in the event that $(\lambda_\mu, P_{\ell^2 G})$ is a Fredholm module -- or, even better, a summable one -- for $C(X)\rcross G$. When this happens, we refer to $(\lambda_\mu, P_{\ell^2 G})$ as \emph{the basic K-cycle associated to $\mu$}. The Fredholmness and the summability of $(\lambda_\mu, P_{\ell^2 G})$ can be conveniently expressed in terms of the decay of the $G$-deviation. First, we record a general observation regarding Fredholmness and summability in the odd case.

\begin{lem}\label{howtocheckFredholmness}
Let $A$ be a unital C*-algebra, let $\pi:A\to \mathcal{B}(H)$ be a representation, and let $P$ be a projection in $\mathcal{B}(H)$. Denote by $s(a):=P\pi(a)P$ the corresponding compression. Then $(\pi,P)$ is a Fredholm module for $A$ if and only if $\sqrt{s(|a|^2)-|s(a)|^2}\in \mathcal{K}(H)$ for all $a\in A$. Furthermore, $(\pi,P)$ is $p$-summable over a dense $*$-subalgebra $\mathcal{A}\subseteq A$ if and only if $\sqrt{s(|a|^2)-|s(a)|^2}\in \mathcal{L}^p(H)$ for all $a\in \mathcal{A}$.
\end{lem}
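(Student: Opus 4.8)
The plan is to reduce both assertions to a single algebraic identity, after which the $*$-structure of $A$ together with the universal quantifier ``for all $a$'' does the rest. Since $P$ is a genuine projection, the conditions $P^*-P\in\mathcal{K}(H)$ and $P^2-P\in\mathcal{K}(H)$ (and their $\mathcal{L}^p$-analogues) hold trivially, so the only thing to check is the commutator condition $[P,\pi(a)]\in\mathcal{K}(H)$, respectively $\in\mathcal{L}^p(H)$. Writing $T=\pi(a)$ and $Q=1-P$, and using that $\pi$ is a $*$-representation (so that $\pi(|a|^2)=T^*T$ and $|s(a)|^2=s(a)^*s(a)=PT^*PTP$), one computes
\begin{align*}
s(|a|^2)-|s(a)|^2=PT^*TP-PT^*PTP=PT^*(1-P)TP=(QTP)^*(QTP)=|QTP|^2,
\end{align*}
so that $\sqrt{s(|a|^2)-|s(a)|^2}=|QTP|$.

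Because an operator $B$ and its modulus $|B|$ share the same singular values, membership of $|QTP|$ in $\mathcal{K}(H)$, respectively $\mathcal{L}^p(H)$, is equivalent to membership of $QTP=Q\pi(a)P$ in the same ideal. On the other hand, decomposing along $H=PH\oplus QH$ shows that $[P,\pi(a)]$ has vanishing diagonal blocks and off-diagonal blocks $P\pi(a)Q$ and $-Q\pi(a)P$; hence $[P,\pi(a)]$ lies in $\mathcal{K}(H)$ (respectively $\mathcal{L}^p(H)$) if and only if \emph{both} $Q\pi(a)P$ and $P\pi(a)Q$ lie in that ideal. The identity above controls the block $Q\pi(a)P$; to recover the conjugate block I would use $P\pi(a)Q=(Q\pi(a^*)P)^*$ together with the fact that $\mathcal{K}(H)$ and $\mathcal{L}^p(H)$ are $*$-closed ideals.

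These pieces assemble immediately. For the Fredholm-module assertion, $(\pi,P)$ is a Fredholm module if and only if $Q\pi(a)P\in\mathcal{K}(H)$ for every $a\in A$: one direction is the block decomposition, while for the converse one applies the hypothesis to $a^*\in A$ and takes adjoints to produce the missing block $P\pi(a)Q$. The summable case is word for word the same, with $\mathcal{K}(H)$ replaced by $\mathcal{L}^p(H)$ and $A$ replaced by the dense subalgebra $\mathcal{A}$, which we take to be a $*$-subalgebra. I expect the core identity $s(|a|^2)-|s(a)|^2=|QTP|^2$ to be routine; the one point requiring any care is the passage between the two off-diagonal blocks, which is precisely where the $*$-operation and the ``for all $a$'' quantifier enter, and which is the reason one wants $\mathcal{A}$ closed under adjoints.
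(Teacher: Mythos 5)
Your argument is correct and is essentially identical to the paper's proof: your $Q\pi(a)P$ is the paper's $\Pi(a)=(1-P)\pi(a)P$, your block decomposition of $[P,\pi(a)]$ is the paper's identity $[\pi(a),P]=\Pi(a)-\Pi(a^*)^*$, and the core computation $s(|a|^2)-|s(a)|^2=|Q\pi(a)P|^2$ is the same. Your explicit remark that $\mathcal{A}$ should be taken $*$-closed is a point the paper leaves implicit, but both proofs use it in the same way.
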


\begin{proof}
Let $\Pi(a)=(1-P)\pi(a)P$; this is the lower left corner of the $2$-by-$2$ matrix defined by  the decomposition of $\pi$ with respect to $P$. Using the relations 
\begin{align*}
 [\pi (a), P]= \Pi(a)-\Pi(a^*)^*, \qquad \Pi(a)= (1-P) [\pi (a), P]
 \end{align*}
we see that $(\pi,P)$ is a Fredholm module if and only if $\Pi(a)\in \mathcal{K}(H)$ for all $a\in A$, and that $(\pi,P)$ is $p$-summable over $\mathcal{A}$ if and only if $\Pi(a)\in \mathcal{L}^p(H)$ for all $a\in \mathcal{A}$. Now
\begin{align*}
\Pi(a)^*\Pi(a)&=P\pi(a^*)(1-P)\pi(a)P\\
&=P\pi(a^*a)P-\big(P\pi(a)^*P\big)\big(P\pi(a)P\big)=s(a^*a)-s(a)^*s(a)
\end{align*}
shows that $|\Pi(a)|=\sqrt{s(|a|^2)-|s(a)|^2}$. \end{proof}

\begin{prop}\label{from deviation to Fredholm} The pair $(\lambda_\mu, P_{\ell^2 G})$ is a Fredholm module if and only if $\mu$ has $C_0$-deviation. If $\mu$ has $\ell^p$-deviation, then $(\lambda_\mu, P_{\ell^2 G})$ is a $p$-summable Fredholm module; for $p\geq 2$, the converse holds.
\end{prop}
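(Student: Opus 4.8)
The plan is to reduce everything to the scalar compression $s(\phi)=P_{\ell^2 G}\lambda_\mu(\phi)P_{\ell^2 G}$ and invoke Lemma~\ref{howtocheckFredholmness}. First I would compute $s$ on $C(X)$: feeding a constant-coefficient vector $\delta_l\in\ell^2 G$ into the formula for $\lambda_\mu(\phi)$ and integrating the resulting coefficients shows that $s(\phi)$ is the diagonal multiplication operator $M_{\E\phi}$ on $\ell^2 G$, that is $s(\phi)\delta_l=\E\phi(l)\,\delta_l$. Consequently $s(|\phi|^2)-|s(\phi)|^2$ is diagonal with entries $\E(|\phi|^2)(l)-|\E\phi(l)|^2=\dev\phi(l)^2$, so that by the identity $|\Pi(a)|=\sqrt{s(|a|^2)-|s(a)|^2}$ from the proof of Lemma~\ref{howtocheckFredholmness},
\[
\sqrt{s(|\phi|^2)-|s(\phi)|^2}=M_{\dev\phi},
\]
the positive diagonal operator on $\ell^2 G$ whose eigenvalues are exactly the values $\dev\phi(g)$, $g\in G$. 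Since a diagonal operator is compact iff its entries vanish at infinity and lies in $\LL^p$ iff its entries lie in $\ell^p$, Lemma~\ref{howtocheckFredholmness} immediately translates, for a single $\phi\in C(X)$, the Fredholm condition into $\dev\phi\in C_0(G)$ and the $p$-summability condition into $\dev\phi\in\ell^p G$.

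For the first assertion I would upgrade this from $C(X)$ to the whole crossed product. The group unitaries $\lambda_\mu(g)$ preserve the constant-coefficient subspace and hence commute with $P_{\ell^2 G}$; thus the set of $a\in A$ with $[\lambda_\mu(a),P_{\ell^2 G}]\in\Comp$ is a C*-subalgebra of $A$ that always contains $G$, and contains $C(X)$ precisely when $\dev\phi\in C_0(G)$ for every $\phi$, i.e. when $\mu$ has $C_0$-deviation. As $C(X)$ and $G$ generate $A$, and the $P$-conditions are automatic because $P_{\ell^2 G}$ is a genuine projection, $C_0$-deviation is equivalent to $(\lambda_\mu,P_{\ell^2 G})$ being a Fredholm module; necessity is clear by restricting to $C(X)$.

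For the forward summability implication, suppose $\mu$ has $\ell^p$-deviation, witnessed by a dense subalgebra $\mathcal{B}\subseteq C(X)$, and take $\smootha=\mathcal{B}\rtimes_{\mathrm{alg}}G$, which is dense in $A$. Writing $\Pi(a)=(1-P_{\ell^2 G})\lambda_\mu(a)P_{\ell^2 G}$, the commutation of $\lambda_\mu(g)$ with $P_{\ell^2 G}$ gives $\Pi(\phi g)=\Pi(\phi)\lambda_\mu(g)$, so $\|\Pi(\phi g)\|_{\LL^p}=\|\Pi(\phi)\|_{\LL^p}=\|\dev\phi\|_{\ell^p}<\infty$ for $\phi\in\mathcal{B}$; summing finitely many such terms keeps $\Pi(a)\in\LL^p$, whence $p$-summability over $\smootha$.

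The converse, for $p\geq 2$, is the step I expect to be the crux, because $p$-summability is assumed only over \emph{some} dense subalgebra $\smootha\subseteq A$, which need not meet $C(X)$ in anything dense, and $\LL^p$-membership is not stable under the norm perturbations one would use to approximate functions. My approach would be to descend to $C(X)$ using the canonical conditional expectation $\Phi\colon A\to C(X)$ together with the block-diagonal pinching $\mathcal{D}(T)=\sum_k Q_k T Q_k$, where $Q_k$ projects onto the $L^2$-summand at $k\in G$. A computation on the algebraic crossed product, extended by continuity, shows $\mathcal{D}(\lambda_\mu(a))=\lambda_\mu(\Phi(a))$, hence $\mathcal{D}(\Pi(a))=\Pi(\Phi(a))$. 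Since $[\lambda_\mu(a),P_{\ell^2 G}]$ is block-off-diagonal with nonzero corners $\Pi(a)$ and $-\Pi(a^*)^*$, its singular values are the union of theirs, so $p$-summability forces $\Pi(a)\in\LL^p$; as $\mathcal{D}$ is an $\LL^p$-contractive conditional expectation, $\Pi(\Phi(a))\in\LL^p$, i.e. $\dev\Phi(a)\in\ell^p G$. Finally $\Phi(\smootha)$ is dense in $C(X)$ and $\{\phi:\dev\phi\in\ell^p G\}$ is a subalgebra via a Leibniz-type bound $\dev(\phi\psi)\lesssim\|\phi\|_\infty\dev\psi+\|\psi\|_\infty\dev\phi$, yielding a dense subalgebra with $\ell^p$-deviation. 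The delicate point to pin down is exactly where $p\geq2$ is used: the expectation argument above appears to go through for all $p\geq1$, so I would scrutinize whether the intended self-contained proof instead rests on a Hilbert--Schmidt estimate sharp only for $p\geq2$, or whether the hypothesis $p\ge 2$ can in fact be relaxed.
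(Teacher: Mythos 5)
Your proof is correct, and its first two parts coincide with the paper's: the computation of the compression $s(\phi)=\M(\E\phi)$, the reduction of Fredholmness to $C_0$-deviation using that $\lambda_\mu|_G$ commutes with $P_{\ell^2 G}$, and the forward summability implication via $\Pi(\phi g)=\Pi(\phi)\lambda_\mu(g)$ are all exactly the paper's argument. Where you genuinely diverge is the converse. The paper also descends through the conditional expectation $\mathbb{E}$, but it does so vector-wise: it proves the pointwise lower bound $\|\Pi(a)\delta_h\|_2\ge \dev(\mathbb{E}(a))(h)$ and then invokes the fact that for $p\ge 2$ an operator $T\in\LL^p(H)$ satisfies $\sum_\iota \|T\xi_\iota\|^p\le \|T\|_{\LL^p}^p$ over any orthonormal system (\cite[Thm.1.18]{Sim}). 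That is precisely where $p\ge 2$ enters, and that estimate genuinely fails for $p<2$ (already for rank-one operators $\xi\otimes\bar\eta$ with $\eta\in\ell^2\setminus\ell^p$). Your pinching argument replaces it by the operator identity $\mathcal{D}(\Pi(a))=\Pi(\mathbb{E}(a))$ --- which is correct, since each $Q_k$ commutes with $P_{\ell^2 G}$ and $Q_k\lambda_\mu(a)Q_k=\lambda_\mu(\mathbb{E}(a))Q_k$ on the algebraic crossed product, hence everywhere by norm continuity of both sides --- combined with the pinching inequality, i.e.\ the fact that block-diagonal compression along mutually orthogonal projections summing to $1$ is a contraction for every unitarily invariant norm, in particular on $\LL^p(H)$ for all $p\ge 1$ (for infinitely many blocks one uses the finite case together with lower semicontinuity of Schatten norms under weak operator limits). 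So your closing suspicion is right on both counts: the paper's own proof rests on an orthonormal-system estimate that is available only for $p\ge 2$, and your route shows the converse in fact holds for all $p\ge 1$. This strengthening is consistent with the rest of the paper; combined with Proposition~\ref{2ergo} it simply says that in the doubly ergodic case the basic K-cycle is never $p$-summable for $p\le 2$.
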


\begin{proof} The projection $P_{\ell^2G}$ compresses the space restriction $\lambda_{\mu}|_{C(X)}$ to multiplication by the $G$-expectation on $\ell^2G$:
\begin{align}\label{projection compressed lambda}
P_{\ell^2G}\lambda_{\mu}(\phi)P_{\ell^2G}=\M(\E\phi)
\end{align}
for all $\phi\in C(X)$. Hence for $s_\mu(\phi):=P_{\ell^2G}\lambda_{\mu}(\phi)P_{\ell^2G}$ we have
\begin{align*}
\sqrt{s_\mu(|\phi|^2)-|s_\mu(\phi)|^2}=\M(\dev\phi).
\end{align*}
By the proof of Lemma~\ref{howtocheckFredholmness}, $\mu$ has $C_0$-deviation if and only if $[\lambda_{\mu}(\phi), P_{\ell^2G}]$ is compact for all $\phi\in C(X)$. As $P_{\ell^2G}$ commutes with the group restriction $\lambda_{\mu}|_G$, the latter condition is equivalent to having $[\lambda_{\mu}(a), P_{\ell^2G}]$ compact for all $a\in C(X)\rtimes_\mathrm{alg} G$, which is equivalent to $(\lambda_\mu, P_{\ell^2 G})$ being Fredholm. 

The summable analogue is argued in a similar way. For sufficiency, assume that $\mu$ has $\ell^p$-deviation. Then there is a $G$-invariant, dense $*$-subalgebra $A(X)\subseteq C(X)$ such that $\dev \phi\in \ell^p G$ for all $\phi\in A(X)$. As above, we deduce that $[\lambda_{\mu}(a), P_{\ell^2G}]$ is a $p$-summable operator for all $a\in A(X)\rtimes_\mathrm{alg} G$. Thus $(\lambda_\mu, P_{\ell^2 G})$ is $p$-summable.

For the converse, we bring in another expectation, namely the bounded linear map $\mathbb{E}: C(X)\rcross G\onto C(X)$ defined by $\mathbb{E}(\sum \phi_g\: g)=\phi_1$ over $C(X) \rtimes_\mathrm{alg} \G$. We claim that
\begin{align}\label{conditional expectation smaller}
\|\Pi(a)\delta_h\|_2\geq \sigma(\mathbb{E}(a))(h)
\end{align}
for all $h\in G$ and $a\in C(X)\rcross G$. Indeed, using along the way the fact that $\Pi(\phi_{g_2})^*\Pi(\phi_{g_1})$ is a multiplication operator on $\ell^2G$, we have:
\begin{align*}
\big\la \Pi(a) \delta_h,\Pi(a)\delta_h\big\ra&=\sum_{g_1,g_2} \big\la \Pi(\phi_{g_1}) \delta_{g_1h},\Pi(\phi_{g_2})\delta_{g_2h}\big\ra=\sum_{g_1,g_2}  \big\la \Pi(\phi_{g_2})^*\Pi(\phi_{g_1}) \delta_{g_1h},\delta_{g_2h}\big\ra\\
&=\sum_{g}  \big\la \Pi(\phi_{g})^*\Pi(\phi_g) \delta_{gh},\delta_{gh}\big\ra=\sum_{g}  \big\la \M(\dev\phi_g)^2 \delta_{gh},\delta_{gh}\big\ra\\
&=\sum_{g}  (\dev\phi_g)^2(gh)\geq (\dev\phi_1)^2(h)=\big(\sigma(\mathbb{E}(a))(h)\big)^2
\end{align*}
Now assume that $(\lambda_\mu, P_{\ell^2 G})$ is a $p$-summable Fredholm module for $C(X)\rcross G$. Then $\Pi(a)$ is a $p$-summable operator for all $a$ in a dense subalgebra $\mathcal{A}$ of $C(X)\rcross G$. For $p\geq 2$, the $p$-summability of $\Pi(a)$ implies the $p$-summability of $\{\|\Pi(a)\xi_\iota\|_2\}_{\iota\in I}$ for any orthonormal system $(\xi_\iota)_{\iota\in I}$ (\cite[Thm.1.18]{Sim}). In particular $\{\|\Pi(a)\delta_h\|_2\}_{h\in G}$ is $p$-summable, so $\sigma\big(\mathbb{E}(a)\big)\in \ell^p G$ by \eqref{conditional expectation smaller}. Thus, we have shown that $\dev \phi\in \ell^p G$ for all $\phi\in \mathbb{E}(\mathcal{A})$. It follows that $\{\phi\in C(X):\dev \phi\in \ell^p G \}$, which is always a subalgebra of $C(X)$, is dense. We conclude that $\mu$ has $\ell^p$-deviation.
\end{proof}

At the current level of generality, we cannot address the question whether $(\lambda_\mu, P_{\ell^2G})$, when a Fredholm module, is homologically non-trivial or not. It is, however, clear that it is non-degenerate, given our assumptions that $X$ is not a singleton and $\mu$ is fully-supported.

\subsection{$C_0$-deviation and the convergence property}\label{sec: C0 and conv} Let $\mathrm{Prob}(X)$ denote the space of Borel probability measures on $X$, and equip $\mathrm{Prob}(X)$ with the weak$^*$ convergence induced by $C(X)$: by definition, $\nu_\iota\to \nu$ if $\int \phi\: d\nu_\iota\to \int \phi\: d\nu$ for all $\phi\in C(X)$. The space $\mathrm{Prob}(X)$ is compact. In particular, push-forwards of $\mu$ by elements of $G$ must accumulate. We make the following definition.

\begin{defn} The probability measure $\mu$ is said to have the \emph{convergence property} if the accumulation points of the $G$-orbit $G\mu\subseteq\mathrm{Prob}(X)$ are all point measures.
\end{defn}

We think of the convergence property for a probability measure as a measurable analogue of an established notion in topological dynamics, that of a convergence group action. Let us recall the definition, originally due to Gehring and Martin for the case of group actions on spheres or closed balls, and then subsequently extended by Tukia, Freden, Bowditch to the general case of group actions on compact metrizable spaces. The action of $G$ on $X$ is said to be a \emph{convergence action} if the following holds: for each sequence $(g_n)\subseteq G$ with $g_n\to \infty$, there is a subsequence $(g_{n_i})$ with attracting and repelling points $x^+,x^-\in X$ in the sense that $g_{n_i}z\to x^+$ uniformly outside neighbourhoods of $x^-$. See, for instance, \cite[Sec.5]{KB} for further details and references.

We note the following simple fact.

\begin{prop}\label{convergence action}
Let $G\act X$ be a convergence action. If points in $X$ are $\mu$-negligible, then $\mu$ has the convergence property.
\end{prop}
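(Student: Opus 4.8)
The plan is to take an arbitrary accumulation point $\nu$ of the orbit $G\mu = \{g_*\mu : g\in G\}\subseteq \mathrm{Prob}(X)$ and show that it must be a point measure. First I would record that, since $X$ is compact metrizable, so is $\mathrm{Prob}(X)$; hence $\nu$ is realized as a weak$^*$ limit of a sequence $(g_n)_*\mu \to \nu$ with $(g_n)_*\mu \neq \nu$ for all $n$. A sequence taking only finitely many values cannot converge to a value it never attains, so the measures $(g_n)_*\mu$ take infinitely many distinct values; in particular $\{g_n\}$ is an infinite subset of $G$, and after passing to a subsequence I may assume $g_n \to \infty$ in the discrete group $G$. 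This reduction is what lets me feed the sequence into the convergence-action hypothesis.

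With $g_n \to \infty$ in hand, I would invoke the convergence action: pass to a further subsequence $(g_{n_i})$ with attracting point $x^+$ and repelling point $x^-$, so that $g_{n_i} z \to x^+$ uniformly for $z$ outside any fixed neighbourhood of $x^-$; in particular $g_{n_i} x \to x^+$ for every $x \neq x^-$. Fixing $\phi \in C(X)$ and using the pushforward description
\[
\int \phi \, d\big((g_{n_i})_*\mu\big) = \int \phi(g_{n_i} x)\, d\mu(x),
\]
I observe that the integrands are continuous, uniformly bounded by $\|\phi\|_\infty$, and converge pointwise to the constant $\phi(x^+)$ at every $x \neq x^-$. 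Since points are $\mu$-negligible, this convergence is $\mu$-almost everywhere, and dominated convergence yields $\int \phi\, d\big((g_{n_i})_*\mu\big) \to \phi(x^+)$. Thus $(g_{n_i})_*\mu \to \delta_{x^+}$, the point mass at $x^+$. As the same subsequence also converges to $\nu$, uniqueness of weak$^*$ limits forces $\nu = \delta_{x^+}$, a point measure, which is exactly the convergence property.

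The only place where anything can go wrong is the single repelling point $x^-$, at which the dynamical convergence $g_{n_i} x \to x^+$ genuinely fails, so the \textbf{main obstacle}, and the sole point requiring the hypothesis, is to ensure that this exceptional point contributes nothing to the integral. That is precisely guaranteed by the assumption that points are $\mu$-negligible: it confines the failure of convergence to a $\mu$-null set, after which dominated convergence applies unobstructed. Everything else, namely the metrizability of $\mathrm{Prob}(X)$, the extraction of a sequence with $g_n \to \infty$, and the uniqueness of weak$^*$ limits, is routine bookkeeping.
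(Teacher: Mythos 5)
Your proof is correct and follows essentially the same route as the paper: reduce to a sequence $(g_n)_*\mu\to\nu$ with $g_n\to\infty$, extract a subsequence with attracting/repelling points via the convergence-action hypothesis, and apply dominated convergence using that the repelling point is $\mu$-negligible. The only difference is that you spell out the (routine) reduction to a sequence going to infinity, which the paper's proof takes for granted.
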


\begin{proof} Let $(g_n)_*\mu$ converge in $\mathrm{Prob}(X)$, where $g_n\to \infty$ in $G$. Without loss of generality $(g_n)$ has attracting and repelling points $x^+,x^-\in X$. We claim that $(g_n)_*\mu\to \delta_{x^+}$. Indeed, let $\phi\in C(X)$. Then $g_n^{-1}.\phi$ converges pointwise to the constant function $\phi(x^+)$ on $X-\{x^-\}$. As $x^-$ is $\mu$-negligible, we get $\int \phi \: d(g_n)_*\mu=\int g_n^{-1}.\phi\: d\mu\to \phi(x^+)$ by Lebesgue's dominated convergence theorem. 
\end{proof}

The convergence property is relevant for our discussion, in light of the following characterization.

\begin{prop}\label{Furstenberg type condition}
The probability measure $\mu$ has $C_0$-deviation if and only if it has the convergence property.
\end{prop}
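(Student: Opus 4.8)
The plan is to prove both implications directly from the integral formula \eqref{iint} for the $G$-deviation, unwinding the definition of $C_0$-deviation as decay along sequences going to infinity in $G$, and matching this against the accumulation behaviour of the orbit $G\mu$ in $\mathrm{Prob}(X)$. The connecting observation is that $\dev\phi(g)^2 = \tfrac12\iint |\phi(gx)-\phi(gy)|^2\,d\mu(x)\,d\mu(y)$ equals the \emph{variance} of $\phi$ with respect to the pushed-forward measure $g_*\mu$; that is, writing $\nu_g = g_*\mu$, one has $\dev\phi(g)^2 = \int|\phi|^2\,d\nu_g - \big|\int\phi\,d\nu_g\big|^2 = \Var_{\nu_g}(\phi)$. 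This reduces the entire statement to a purely measure-theoretic fact: the variance $\Var_\nu(\phi)$ of a fixed $\phi\in C(X)$ vanishes precisely when $\nu$ is concentrated (is a point mass as far as $\phi$ can detect), and it is jointly continuous in $\nu$ for the weak$^*$ topology.

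First I would prove the implication \emph{convergence property} $\Rightarrow$ \emph{$C_0$-deviation}. Suppose, for contradiction, that $\dev\phi\notin C_0(G)$ for some $\phi\in C(X)$. Then there is $\e>0$ and a sequence $g_n\to\infty$ in $G$ with $\dev\phi(g_n)\geq\e$, i.e.\ $\Var_{\nu_{g_n}}(\phi)\geq\e^2$ for all $n$. By compactness of $\mathrm{Prob}(X)$, pass to a subsequence so that $\nu_{g_n}\to\nu$ weak$^*$; since $g_n\to\infty$, this $\nu$ is an accumulation point of the orbit $G\mu$, hence a point mass $\delta_x$ by the convergence property. But $\phi\mapsto\Var_\nu(\phi)$ is weak$^*$-continuous in $\nu$ (it is built from the continuous linear functionals $\nu\mapsto\int|\phi|^2\,d\nu$ and $\nu\mapsto\int\phi\,d\nu$), so $\Var_{\nu_{g_n}}(\phi)\to\Var_{\delta_x}(\phi)=0$, contradicting $\Var_{\nu_{g_n}}(\phi)\geq\e^2$. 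Hence $\dev\phi\in C_0(G)$ for every $\phi$, which is exactly $C_0$-deviation.

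For the converse, \emph{$C_0$-deviation} $\Rightarrow$ \emph{convergence property}, I would take any accumulation point $\nu$ of $G\mu$, so $\nu_{g_n}\to\nu$ for some $g_n\to\infty$, and show $\nu$ is a point mass. By $C_0$-deviation, $\dev\phi(g_n)\to0$, hence $\Var_{\nu_{g_n}}(\phi)\to0$ for every $\phi\in C(X)$; by weak$^*$-continuity of the variance this gives $\Var_\nu(\phi)=0$ for all $\phi\in C(X)$. It remains to deduce that a probability measure with zero variance against every continuous function is a point mass: zero variance means $\int|\phi-\int\phi\,d\nu|^2\,d\nu=0$, so each real $\phi$ is $\nu$-a.e.\ equal to its mean, i.e.\ $\nu$-essentially constant; since $C(X)$ separates points of the compact metrizable $X$, the support of $\nu$ cannot contain two distinct points, forcing $\nu=\delta_x$ for some $x$. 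I expect the only subtlety here—the main point requiring a little care rather than a genuine obstacle—to be this last step, where one must argue cleanly that $C(X)$ separating points upgrades ``$\nu$-essentially constant for every $\phi$'' to ``the support is a single point''; all the rest is bookkeeping with the elementary identity $\dev\phi(g)^2=\Var_{g_*\mu}(\phi)$ and the joint weak$^*$-continuity of the variance functional.
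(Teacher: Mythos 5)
Your proof is correct and follows essentially the same route as the paper: the key identity $\dev\phi(g)^2=\Var_{g_*\mu}(\phi)$ together with weak$^*$-continuity of the variance functional is exactly what the paper's argument uses, and you additionally write out the converse implication that the paper leaves to the reader. The only (harmless) divergence is the last step: the paper deduces that the limit measure $\nu$ is a point mass by extending $\int|\phi|^2\,d\nu=\big|\int\phi\,d\nu\big|^2$ to $L^2(X,\nu)$ and concluding that $\nu$ is $\{0,1\}$-valued, whereas you use that $C(X)$ separates points to show that $\mathrm{supp}\,\nu$ is a singleton -- both arguments are valid.
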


\begin{proof} Assume that $\mu$ has $C_0$-deviation, and let $\nu\in \mathrm{Prob}(X)$ be the limit of a sequence $(g_n)_*\mu$ with $g_n\to \infty$ in $G$. For each $\phi\in C(X)$ we have, on the one hand, that $\dev\phi(g_n)$ converges to $0$, and on the other hand that $\dev\phi(g_n)$ converges to the standard deviation of $\phi$ with respect to $\nu$. Therefore, $\int |\phi|^2\: d\nu=|\int \phi \: d\nu|^2$ for all $\phi\in C(X)$. This continues to hold throughout $L^2(X,\nu)$, by the density of $C(X)$ in $L^2(X,\nu)$ -- Borel probability measures on compact metrizable spaces are automatically Radon. Taking characteristic functions of measurable sets, we see that $\nu$ is $\{0,1\}$-valued. But the only $\{0,1\}$-valued Borel probability measures on $X$ are the point measures: choosing a compatible metric on $X$, there exists a sequence of full-measure balls with radius converging to $0$, hence a point having full measure.

The converse implication is left to the reader.\end{proof}

In \S \ref{sec: finite summability} we will show that suitable measures 
on the boundary of a Gromov hyperbolic group have the convergence property 
with respect to the boundary action of the group (which is a convergence action.)

\subsection{Double ergodicity and $\ell^p$-deviation} We address the condition $p\geq 2$, encountered in Proposition~\ref{from deviation to Fredholm}. Namely, we show that double ergodicity of $\mu$ is an obstruction to having $\ell^p$-deviation with $p\leq 2$.

\begin{prop}\label{2ergo}
If $\mu\times \mu$ is ergodic for the diagonal action of $G$ on $X\times X$, and $X$ has no isolated points, then a function $\phi\in C(X)$ with $\sigma_G\phi\in \ell^2 G$ must be constant. In particular, if $\mu$ has $\ell^p$-deviation then $p>2$.
\end{prop}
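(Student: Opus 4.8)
The plan is to repackage the hypothesis $\sigma_G\phi\in\ell^2 G$ as the integrability of a single $G$-invariant function on $X\times X$, and then to extract rigidity from double ergodicity. After splitting $\phi$ into real and imaginary parts (each of which again has $\ell^2$-deviation, and whose constancy gives that of $\phi$), I may assume $\phi$ is real-valued. Put $F(x,y)=|\phi(x)-\phi(y)|^2$ and $H=\sum_{g\in G}F\circ g$. By the formula \eqref{iint} and Tonelli, $\int_{X\times X}H\,d(\mu\times\mu)=2\sum_{g}\sigma_G\phi(g)^2<\infty$, so $H$ is a nonnegative integrable function; a reindexing of the sum shows that it is invariant under the diagonal $G$-action. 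Double ergodicity then forces $H\equiv c$ almost everywhere, where $c=2\sum_g\sigma_G\phi(g)^2$.

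Next I would integrate out one variable. For a.e.\ $x$ one has $H(x,y)=c$ for a.e.\ $y$, and since $\int_X|\phi(gx)-\phi(gy)|^2\,d\mu(y)=|\phi(gx)-\E\phi(g)|^2+\sigma_G\phi(g)^2$, summing over $g$ gives $K(x):=\sum_g|\phi(gx)-\E\phi(g)|^2=c/2$ for a.e.\ $x$. Expanding the square termwise via $|a-b|^2=a^2+b^2-2ab$ yields $H=K(x)+K(y)-2\Theta(x,y)$, where the \emph{dynamical covariance} $\Theta(x,y):=\sum_g(\phi(gx)-\E\phi(g))(\phi(gy)-\E\phi(g))$ converges absolutely a.e.\ by Cauchy--Schwarz. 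Comparing $H\equiv c$ with $K\equiv c/2$, I conclude that $\Theta(x,y)=0$ for $\mu\times\mu$-a.e.\ $(x,y)$.

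Finally I would read this off spectrally. The vectors $v_x:=(\phi(gx)-\E\phi(g))_{g\in G}$ lie in $\ell^2 G$ with $\|v_x\|^2=K(x)=c/2$ a.e.\ and $\langle v_x,v_y\rangle=\Theta(x,y)=0$ a.e. The positive operator $T$ on $\ell^2 G$ defined by $Tw=\int_X\langle w,v_x\rangle\,v_x\,d\mu(x)$ is trace-class with $\Tr T=\int_X\|v_x\|^2\,d\mu=c/2$, whereas $\Tr(T^2)=\iint|\langle v_x,v_y\rangle|^2\,d\mu\,d\mu=\iint\Theta^2\,d\mu\,d\mu=0$. A positive operator of zero Hilbert--Schmidt norm vanishes, so $T=0$ and $c=0$; hence $K\equiv 0$, and reading off the $g=e$ coordinate gives $\phi=\E\phi(e)$ $\mu$-a.e., whence $\phi$ is constant by continuity and the full support of $\mu$. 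The final assertion is then immediate: if $\mu$ had $\ell^p$-deviation with $p\le 2$, then $\ell^p G\subseteq\ell^2 G$ would make every $\phi$ in a dense subalgebra of $C(X)$ constant, which is impossible since $X$ is not a singleton.

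The real obstacle is the passage from \emph{$H$ is a.e.\ constant} to \emph{$\phi$ is constant}: constancy of $H$ is by itself far too weak, being consistent with the mere $C_0$-decay of the deviation that a nonconstant $\phi$ already exhibits. The quantitative input is squeezed out by the vanishing of the off-diagonal covariance $\Theta$, together with the elementary fact that a positive trace-class operator with $\Tr(T^2)=0$ must be zero. This is precisely where the no-isolated-points hypothesis enters: it guarantees that $\mu$ is non-atomic, so that the diagonal of $X\times X$ is $\mu\times\mu$-null and the a.e.\ vanishing of $\Theta$ off the diagonal is not contaminated by its diagonal values $\Theta(x,x)=K(x)=c/2$. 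I would expect the only technical points left to check to be the measurability and trace-class claims for $T$ and the identity $\Tr(T^2)=\iint\Theta^2$, both of which are routine.
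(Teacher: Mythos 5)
Your proof is correct, but it takes a genuinely different route from the paper's. The paper also begins by observing that $S(x,y)=\sum_g|\phi(gx)-\phi(gy)|^2$ is a $G$-invariant integrable function, hence a.e.\ equal to a constant $C$; but from there it argues softly by contradiction: if $\phi$ is non-constant, the no-isolated-points hypothesis produces, for every $N$, disjoint non-empty open subsets $U_1,\dots,U_N$ of $\{|\phi(x)-\phi(y)|>c\}$, a second application of ergodicity shows each saturation $G\cdot U_i$ is conull, and evaluating $S$ at a point of $\bigcap_i G\cdot U_i$ where $S=C$ gives $C>Nc^2$ for all $N$. Your argument is instead quantitative: you identify the constant as $c=2\|\sigma\phi\|_{\ell^2}^2$, integrate out one variable to get $K\equiv c/2$, deduce that the off-diagonal covariance $\Theta=\langle v_x,v_y\rangle$ vanishes a.e., and then kill $c$ with the trace identity $\Tr(T^2)=\iint\Theta^2$ for the positive trace-class operator $T=\int|v_x\rangle\langle v_x|\,d\mu$. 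This uses ergodicity only once, directly yields $\sigma\phi\equiv 0$, and -- notably -- never uses the no-isolated-points hypothesis, so it proves a slightly stronger statement than the paper's argument does. One small correction to your closing commentary: it is not true that absence of isolated points forces $\mu$ to be non-atomic (a fully supported measure on a perfect space can have atoms), and in fact your proof needs neither fact -- the identity $\Tr(T^2)=\iint\Theta^2\,d\mu\,d\mu=0$ only requires $\Theta=0$ a.e., irrespective of whether the diagonal is null; and if $\mu$ did have an atom, then $H=c$ a.e.\ together with $H(x,x)=0$ would force $c=0$ outright. So the hypothesis you single out as essential plays no role in your argument; it is the paper's combinatorial construction of the $U_i$ that genuinely needs it.
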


\begin{proof}
Arguing by contradiction, we assume that $\phi\in C(X)$ is a non-constant function with the property that $\sigma\phi\in \ell^2 G$. By \eqref{iint}, we have
\begin{align*}
\|\dev \phi\|^2_{\ell^2G}=\tfrac{1}{2}\iint \sum_{g\in G}|\phi(gx)-\phi(gy)|^2 \:d\mu(x)d\mu(y)
\end{align*}
Therefore $S(x,y)=\sum_{g\in G}|\phi(gx)-\phi(gy)|^2$ defines a $G$-invariant $L^2$ map on $X\times X$. By ergodicity, $S$ is a.e. constant, say $S(x,y)=C$ for almost all $(x,y)\in X\times X$. 

There exists $c>0$ such that the open subset $V=\{(x,y): |\phi(x)-\phi(y)|> c\}\subseteq X\times X$ is non-empty. As $X\times X$ has no isolated points, for each positive integer $N$ there exist disjoint, non-empty open subsets $U_1,\dots, U_N\subseteq V$. Using again the ergodicity assumption, we have that each $G\cdot U_i=\cup_{g\in G}\: gU_i$ is either negligible or of full measure. Since non-empty open subsets of $X\times X$ have positive measure, the latter alternative must occur. It follows that $\cap_{i=1}^N\: G\cdot U_i$ has full measure. Let $(x,y)$ in $\cap_{i=1}^N\: G\cdot U_i$ with $S(x,y)=C$. Thus, for each $i$ we have some $g_i\in G$ such that $(g_ix,g_iy)\in U_i$. Now the $g_i$'s are distinct since the $U_i$'s are disjoint, and $|\phi(g_ix)-\phi(g_iy)|>c$ since $U_i\subseteq V$, so
\[C=S(x,y)\geq \sum_{i=1}^N|\phi(g_ix)-\phi(g_iy)|^2>Nc^2.\]
As $N$ is arbitrary, this is a contradiction.
\end{proof}

\section{Further properties of the basic K-cycle}\label{paradigm II}
We now investigate the behaviour of the basic K-cycle under two operations: changing the measure $\mu$, respectively passing to a finite-index subgroup of $G$. We keep the notations of the previous section.

\subsection{Comparable measures} The pair
$(\lambda_\mu, P_{\ell^2 G})$ is constructed in reference to the measure $\mu$, which is not part of the given topological setting. Nevertheless, its relevant features -- Fredholmness, degree of summability, K-homology class -- are canonical over the measure class of $\mu$. The suitable equivalence here is the following: a Borel probability measure $\mu'$ on $X$ is said to be \emph{comparable} to $\mu$ if $\mu'\asymp \mu$, in the sense that $C_1\mu\leq \mu'\leq C_2\mu$ for some positive constants $C_1, C_2$. Clearly, comparability is finer than the usual equivalence of measures which, we recall, means that each measure is absolutely continuous with respect to the other. Formula \eqref{iint} shows that comparable measures have comparable $G$-deviations, hence the following:

\begin{prop}\label{deviation for comparable measures}
Let $\mu$ and $\mu'$ be comparable probability measures. Then $(\lambda_\mu, P_{\ell^2 G})$ is a Fredholm module if and only if $(\lambda_{\mu'}, P_{\ell^2 G})$ is a Fredholm module. For $p\geq 2$, $(\lambda_\mu, P_{\ell^2 G})$ is $p$-summable if and only if $(\lambda_{\mu'}, P_{\ell^2 G})$ is $p$-summable.
\end{prop}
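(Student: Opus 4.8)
The plan is to reduce the statement about Fredholm modules directly to the comparability of $G$-deviations, via Proposition~\ref{from deviation to Fredholm}, which already characterizes Fredholmness and $p$-summability (for $p\geq 2$) of the basic K-cycle in terms of the $C_0$- and $\ell^p$-decay of $\sigma\phi$. Concretely, by that proposition the pair $(\lambda_\mu, P_{\ell^2 G})$ is Fredholm if and only if $\mu$ has $C_0$-deviation, and (for $p\geq 2$) it is $p$-summable if and only if $\mu$ has $\ell^p$-deviation; the same holds with $\mu'$ in place of $\mu$. So it suffices to prove that $\mu\asymp\mu'$ forces $\sigma_\mu\phi$ and $\sigma_{\mu'}\phi$ to decay together, i.e. to belong to $C_0(G)$ or $\ell^p G$ simultaneously.

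First I would invoke the explicit double-integral formula \eqref{iint} for the $G$-deviation, which expresses $(\sigma_\mu\phi)(g)^2$ as $\tfrac12\iint |\phi(gx)-\phi(gy)|^2\,d\mu(x)\,d\mu(y)$, and the analogous formula with $\mu'\times\mu'$. The comparability $C_1\mu\leq\mu'\leq C_2\mu$ passes to product measures as $C_1^2\,\mu\times\mu\leq \mu'\times\mu'\leq C_2^2\,\mu\times\mu$, and since the integrand $|\phi(gx)-\phi(gy)|^2$ is nonnegative, integrating the chain of inequalities yields
\begin{align*}
C_1^2\,(\sigma_\mu\phi)(g)^2 \leq (\sigma_{\mu'}\phi)(g)^2 \leq C_2^2\,(\sigma_\mu\phi)(g)^2
\end{align*}
for every $g\in G$ and every $\phi\in C(X)$. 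Taking square roots gives a pointwise two-sided bound $\sqrt{C_1}\,\sigma_\mu\phi\leq \sigma_{\mu'}\phi\leq\sqrt{C_2}\,\sigma_\mu\phi$ on $G$, so the two deviation functions are comparable as functions on the group.

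Once this comparability of deviations is in hand, the rest is immediate from the ideal properties of $C_0(G)$ and $\ell^p G$. A function on $G$ dominated by a constant multiple of a $C_0$-function is itself $C_0$, and likewise for $\ell^p$; symmetry in $\mu$ and $\mu'$ then gives the equivalences. Thus $\mu$ has $C_0$-deviation if and only if $\mu'$ does, and $\mu$ has $\ell^p$-deviation if and only if $\mu'$ does, and Proposition~\ref{from deviation to Fredholm} translates these back into the asserted equivalences for Fredholmness and, for $p\geq 2$, for $p$-summability. I do not anticipate a genuine obstacle here; the only point requiring a little care is that the product-measure inequality is applied to a nonnegative integrand so that the direction of the inequalities is preserved, and that the restriction $p\geq 2$ is inherited from Proposition~\ref{from deviation to Fredholm} rather than from anything in the present argument. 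The statement is essentially a formal consequence of \eqref{iint}, as the remark preceding the proposition already signals.
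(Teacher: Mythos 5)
Your argument is correct and is exactly the one the paper intends: the paper gives no separate proof, stating only that formula \eqref{iint} shows comparable measures have comparable $G$-deviations, and then appealing to Proposition~\ref{from deviation to Fredholm}, which is precisely your reduction. The only (harmless) slip is in the constants: taking square roots of $C_1^2(\sigma_\mu\phi)^2\leq(\sigma_{\mu'}\phi)^2\leq C_2^2(\sigma_\mu\phi)^2$ gives $C_1\sigma_\mu\phi\leq\sigma_{\mu'}\phi\leq C_2\sigma_\mu\phi$, not $\sqrt{C_1}$ and $\sqrt{C_2}$, but this does not affect the conclusion.
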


Most importantly, basic K-cycles associated to comparable measures define one and the same homology class:

\begin{prop}\label{independence of comparable measures}
Let $\mu$ and $\mu'$ be comparable probability measures having $C_0$-deviation. Then the Fredholm modules $(\lambda_\mu, P_{\ell^2 G})$ and $(\lambda_{\mu'}, P_{\ell^2 G})$ are K-homologous.
\end{prop}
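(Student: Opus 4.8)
The plan is to reduce the statement to an operator homotopy on a single Hilbert space. First I would transport the cycle for $\mu'$ onto the Hilbert space of $\mu$. Since $\mu'\asymp\mu$, the two measures are equivalent and the Radon--Nikodym derivative $f = d\mu'/d\mu$ satisfies $C_1\le f\le C_2$ almost everywhere. Multiplication by $\sqrt f$ then defines a unitary $W\colon L^2(X,\mu')\to L^2(X,\mu)$, and since multiplication operators commute, $W$ intertwines the multiplication representations of $C(X)$ for the two measures. Applying $W$ coefficient-wise yields a unitary $\tilde W\colon \ell^2(G,L^2(X,\mu'))\to \ell^2(G,L^2(X,\mu))$ which commutes with the group part (a mere permutation of coefficients) and intertwines the $C(X)$-parts; hence $\tilde W$ intertwines $\lambda_{\mu'}$ and $\lambda_\mu$. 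Consequently $(\lambda_{\mu'},P_{\ell^2 G})$ is unitarily equivalent to $(\lambda_\mu, Q)$, where $Q=\tilde W P_{\ell^2 G}\tilde W^*$ is the coefficient-wise orthogonal projection onto the line $\C\sqrt f\subseteq L^2(X,\mu)$ (because $W$ carries the constant function $1$ to $\sqrt f$). It therefore suffices to show that $(\lambda_\mu, P_{\ell^2 G})$ and $(\lambda_\mu, Q)$ are K-homologous, both cycles now living over $\mu$.

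For this I would build an explicit operator homotopy. Set $\xi_t=(1-t)\cdot 1+t\sqrt f$ for $t\in[0,1]$; each $\xi_t$ is real-valued, bounded, and bounded away from $0$, hence a nonzero vector in $L^2(X,\mu)$, and $t\mapsto\xi_t$ is continuous with $\|\xi_t\|$ bounded away from $0$. Let $p_t$ be the rank-one orthogonal projection onto $\C\xi_t$, and let $P_t=\bigoplus_{h}p_t$ be the corresponding coefficient-wise projection on $\ell^2(G,L^2(X,\mu))$. Then $P_0=P_{\ell^2 G}$, $P_1=Q$, the path $t\mapsto P_t$ is norm-continuous, and each $P_t$ is a genuine projection. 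It remains to verify that $(\lambda_\mu,P_t)$ is a Fredholm module for every $t$.

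Here the key computation is the following. Group elements permute coefficients and act identically on each of them, so $\lambda_\mu(g)$ commutes exactly with $P_t$; thus only the $C(X)$-part must be checked. For $\phi\in C(X)$ the commutator $[\lambda_\mu(\phi),P_t]$ is block-diagonal, its $h$-block being $[\M(h^{-1}.\phi),p_t]$, an operator of rank at most two. Exactly as in the proof of Lemma~\ref{howtocheckFredholmness}, compressing $\M(h^{-1}.\phi)$ by the rank-one projection $p_t$ produces a defect equal to the standard deviation of $h^{-1}.\phi$ computed with respect to the probability measure $\mu_t=\|\xi_t\|^{-2}|\xi_t|^2\,d\mu$, which gives the bound $\|[\M(h^{-1}.\phi),p_t]\|\le 2\,\dev_{\mu_t}\phi(h)$. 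Since $\mu_t$ is comparable to $\mu$, formula \eqref{iint} shows $\dev_{\mu_t}\phi\asymp\dev_\mu\phi$ with constants uniform in $t$ (the densities of all $\mu_t$ lie between fixed positive bounds), so $\dev_{\mu_t}\phi\in C_0(G)$ by the $C_0$-deviation hypothesis on $\mu$ together with Proposition~\ref{deviation for comparable measures}. Hence the block norms tend to $0$ at infinity in $G$, making the block-diagonal operator $[\lambda_\mu(\phi),P_t]$ compact; by density of $C(X)\rtimes_\mathrm{alg} G$ and norm-continuity of $a\mapsto[\lambda_\mu(a),P_t]$, this yields compactness of $[\lambda_\mu(a),P_t]$ for all $a\in C(X)\rcross G$. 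Therefore each $(\lambda_\mu,P_t)$ is a Fredholm module, the family is an operator homotopy from $(\lambda_\mu,P_{\ell^2 G})$ to $(\lambda_\mu,Q)$, and combining with the unitary equivalence of the first paragraph completes the argument.

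The step I expect to be the main obstacle is controlling the intermediate projections $P_t$: the difference $Q-P_{\ell^2 G}$ is patently non-compact (it is a fixed finite-rank operator repeated over all coefficients of $G$), so the two cycles are not a compact perturbation of one another and no shortcut avoids checking Fredholmness all along the path. The essential point that makes the homotopy work is that the twisted expectations appearing at each time $t$ are governed by the deviation of a measure comparable to $\mu$, which inherits $C_0$-deviation; this keeps the whole family inside the class of Fredholm modules.
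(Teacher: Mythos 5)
Your argument is correct and follows essentially the same route as the paper: transport by the unitary given by multiplication with $\sqrt{d\mu'/d\mu}$, then an operator homotopy through coefficient-wise rank-one projections joining the constant function $1$ to $\sqrt{d\mu'/d\mu}$. The only divergence is in verifying Fredholmness along the path: the paper uses the normalized arc $\eta(t)=(\cos t)\,1+(i\sin t)\sqrt{\rho}$ and the identity $[P(\eta),\lambda_\mu(a)]=M(\bar{\eta})[P_{\ell^2 G},\lambda_\mu(a)]M(\eta)$, which yields compactness for free from the $\mu$-cycle, whereas you re-estimate the commutators block by block via the $C_0$-deviation of the reweighted measures $\mu_t$ -- both are valid.
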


\begin{proof}
Let $\rho=d\mu'/d\mu$ be the Radon-Nikodym derivative, so $\rho$ is essentially bounded from above and from below by the comparability constants of $\mu$ and $\mu'$. First, we have a unitary
\begin{align*}
U: \ell^2(G, L^2(X, \mu')) \to \ell^2(G, L^2(X, \mu)), \qquad \sum \psi_h\delta_h\mapsto \sum \sqrt{\rho}\: \psi_h\delta_h
\end{align*}
which intertwines the corresponding regular representations of $C(X)\rcross G$, that is, $U \lambda_{\mu'}U^*=\lambda_{\mu}$. We may therefore exchange $(\lambda_{\mu'}, P'_{\ell^2 G})$ for $(\lambda_\mu, UP'_{\ell^2G}U^*)$, where the notation $P'_{\ell^2G}$ is used in order to emphasize the dependence on $\mu'$. We now claim that the Fredholm modules $(\lambda_\mu, UP'_{\ell^2G}U^*)$ and $(\lambda_\mu, P_{\ell^2G})$ are operator homotopic. Note that
\begin{align*}
UP'_{\ell^2G}U^*\big(\sum \psi_h\delta_h\big)= \sum \sqrt{\rho}\: \bigg(\int \sqrt{\rho}\:\psi_h \: d\mu\bigg)\delta_h,
\end{align*}
and that $\sqrt{\rho}\in L^\infty (X,\mu)$ with $\|\sqrt{\rho}\|_{L^2(X,\mu)}=1$. For $\eta\in L^\infty (X,\mu)$ satisfying $\|\eta\|_{L^2(X,\mu)}=1$, let $M(\eta)$ be the corresponding multiplication operator on $\ell^2(G, L^2(X, \mu))$. Then
\begin{align*}
P(\eta)= M(\bar{\eta})P_{\ell^2G}M(\eta),\qquad \sum \psi_h\delta_h\mapsto \sum \bar{\eta}\: \bigg(\int \eta\:\psi_h \: d\mu\bigg)\delta_h.
 \end{align*}
is a projection, namely the projection of $\ell^2(G, L^2(X, \mu))$ onto $M(\bar{\eta})\ell^2G$. We have $[P(\eta),\lambda_{\mu}]=M(\bar{\eta})[P_{\ell^2G},\lambda_{\mu}]M(\eta)$ since $M(\eta)$ and $M(\bar{\eta})$ commute with $\lambda_{\mu}$, so $(\lambda_\mu, P(\eta))$ is a Fredholm module. On the other hand, we have $\|P(\eta_1)-P(\eta_2)\|\leq 2\|\eta_1-\eta_2\|_{L^2(X,\mu)}$; this follows from the fact that
\begin{align*}
\bigg\|\bar{\eta}_1\int \eta_1 \psi\: d\mu - \bar{\eta}_2\int \eta_2 \psi\: d\mu\bigg\|_2\leq 2\|\eta_1-\eta_2\|_2 \|\psi\|_2
\end{align*}
for all $\psi\in L^2(X, \mu)$. Now let $\eta(t)=(\cos t)\: 1+(i \sin t)\: \sqrt{\rho}$, where $0\leq t\leq\pi/2$. Then $\eta(t)\in L^\infty (X,\mu)$, and $\eta(t)$ describes a continuous path in the unit sphere of $L^2(X,\mu)$ between the constant function $1$ and $i\sqrt{\rho}$. Consequently, $P(\eta(t))$ describes a norm-continuous path between $P(1)=P_{\ell^2G}$ and $P(i\sqrt{\rho})=P(\sqrt{\rho})=UP'_{\ell^2G}U^*$.
\end{proof}

\subsection{Finite-index subgroups}
Let $H\leq G$ be a subgroup. Restriction of representations from $C(X)\rcross G$ to $C(X)\rcross H$ takes Fredholm modules for $C(X)\rcross G$ to Fredholm modules for $C(X)\rcross H$, and it defines a natural homomorphism of abelian groups
\begin{align*}
\mathrm{res}: \K^1(C(X)\rcross G)\to \K^1(C(X)\rcross H).
\end{align*}
Assume that $\mu$ has $C_0$-deviation. On the one hand, restricting $(\lambda^G_\mu, P_{\ell^2G})$ yields a Fredholm module for $C(X)\rcross H$. On the other hand, we can form the Fredholm module $(\lambda^H_\mu, P_{\ell^2H})$ for $C(X)\rcross H$. The homological relation between these two Fredholm modules for $C(X)\rcross H$ is particularly simple in the case when $H$ has finite index in $G$.

\begin{prop}\label{finite index} Assume that $\mu$ has $C_0$-deviation, and that $\{g_*\mu\}_{g\in G}$ forms a family of mutually comparable measures. If $H$ is a finite-index subgroup of $G$, then
\begin{align*}
\mathrm{res}\:  \big[(\lambda^G_\mu, P_{\ell^2G})\big] =[G:H] \: \big[(\lambda^H_\mu, P_{\ell^2H})\big]
\end{align*}
in $\K^1(C(X)\rcross H)$.
\end{prop}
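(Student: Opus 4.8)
The plan is to decompose the restricted Fredholm module $(\lambda^G_\mu, P_{\ell^2 G})|_{C(X)\rcross H}$ as a direct sum indexed by the cosets of $H$ in $G$, identify each summand with a basic $H$-cycle attached to a translate of $\mu$, and then absorb those translates using the mutual-comparability hypothesis together with Proposition~\ref{independence of comparable measures}.

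First I would fix right-coset representatives $t_1,\dots,t_n$, where $n=[G:H]$, so that $G=\bigsqcup_{j=1}^n Ht_j$, and observe that the left $H$-action $\lambda^G_\mu|_H$ (which sends $\delta_g\mapsto\delta_{hg}$) preserves each subspace $\ell^2(Ht_j, L^2(X,\mu))$, since $H\cdot Ht_j=Ht_j$. The action $\lambda^G_\mu|_{C(X)}$ is diagonal in the $G$-variable, and so is the projection $P_{\ell^2 G}$ (it integrates each coefficient against $\mu$); hence both preserve every coset subspace as well. Consequently the restriction of $(\lambda^G_\mu, P_{\ell^2 G})$ to $C(X)\rcross H$ splits as an orthogonal direct sum of $n$ Fredholm modules, one on each $\ell^2(Ht_j, L^2(X,\mu))$, with $P_{\ell^2 G}$ restricting on the $j$-th piece to the projection onto the constant-coefficient subspace $\ell^2(Ht_j)$.

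Next I would identify the $j$-th summand with the basic $H$-cycle for the pushforward $\mu_j:=(t_j)_*\mu$. Indexing the coset $Ht_j$ by $h\in H$ via $g=ht_j$ turns $\lambda^G_\mu|_H$ into the left regular representation of $H$, while $\lambda^G_\mu(\phi)$ acts on the fiber at $h$ by multiplication by $t_j^{-1}.(h^{-1}.\phi)$. Applying fiberwise the unitary $W_j\colon L^2(X,\mu)\to L^2(X,\mu_j)$, $(W_j\psi)(x)=\psi(t_j^{-1}x)$, conjugates multiplication by $t_j^{-1}.f$ into multiplication by $f$ on $L^2(X,\mu_j)$, carries constants to constants, and intertwines integration against $\mu$ with integration against $\mu_j$. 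Since $W_j$ acts identically in the $H$-variable, it commutes with the left $H$-action; hence it is a unitary equivalence of the $j$-th summand with the basic $H$-cycle $(\lambda^H_{\mu_j}, P_{\ell^2 H})$. This yields $\mathrm{res}\,[(\lambda^G_\mu, P_{\ell^2 G})]=\sum_{j=1}^n [(\lambda^H_{\mu_j}, P_{\ell^2 H})]$ in $\K^1(C(X)\rcross H)$.

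Finally, each $\mu_j=(t_j)_*\mu$ is comparable to $\mu=e_*\mu$ by the mutual-comparability hypothesis, and inherits $C_0$-deviation over $H$ from $\mu$ via formula~\eqref{iint} (comparable measures have comparable deviations). Thus each $(\lambda^H_{\mu_j}, P_{\ell^2 H})$ is a genuine Fredholm module for $C(X)\rcross H$, and Proposition~\ref{independence of comparable measures}, applied to the pair $\mu_j\asymp\mu$ over $H$, gives $[(\lambda^H_{\mu_j}, P_{\ell^2 H})]=[(\lambda^H_\mu, P_{\ell^2 H})]$ for every $j$. Summing over the $n=[G:H]$ cosets produces the claimed identity. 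I expect the only delicate point to be the bookkeeping in the second step: pinning down that the correct measure on the $j$-th coset is the translate $(t_j)_*\mu$ rather than $\mu$, and verifying that the fiberwise unitary simultaneously intertwines the $C(X)$-action and matches up the two projections. The comparability hypothesis is precisely what is then needed to eliminate these translates.
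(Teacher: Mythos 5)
Your proposal is correct and follows essentially the same route as the paper: the coset decomposition of $\ell^2(G,L^2(X,\mu))$, the reindexing of each summand as an $H$-cycle for the translate $(t_j)_*\mu$ via the fiberwise unitary $\psi\mapsto t_j.\psi$, and the final appeal to mutual comparability together with Proposition~\ref{independence of comparable measures}. The only cosmetic difference is that the paper splits your second step into two explicit unitaries ($R_t$ for the reindexing of the coset and $V_t$ for the change of measure), whereas you combine them; the substance is identical.
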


\begin{proof} Put $n=[G:H]$, and pick a transversal $t_1,\dots,t_n$ for the right $H$-cosets. The coset decomposition $\ell^2(G, L^2(X, \mu))=\oplus_{1}^n\: \ell^2(H t_i, L^2(X, \mu))$ yields
\begin{align*}
\mathrm{res}\:  \big[(\lambda^G_\mu, P_{\ell^2G})\big]=\oplus_{1}^n \big[(\lambda_{t_i}, P_{\ell^2(H t_i)})\big]
\end{align*}
in $\K^1(C(X)\rcross H)$, where $\lambda_{t_i}$ denotes the representation of $C(X)\rcross H$ on $\ell^2(H t_i, L^2(X, \mu))$. Now consider $(\lambda_t, P_{\ell^2(H t)})$ for $t\in \{t_1,\dots, t_n\}$. The unitary 
\begin{align*}
R_t: \ell^2(H, L^2(X, \mu))\to \ell^2(H t, L^2(X, \mu)), \qquad \sum \psi_h\delta_h\mapsto \sum \psi_h\delta_{ht}
\end{align*}
implements an equivalence between $(\lambda_t, P_{\ell^2(H t)})$ and $(R_t^*\lambda_t R_t, P_{\ell^2H})$. The representation $R_t^*\lambda_t R_t$ on $\ell^2(H, L^2(X,\mu))$ is given by
\begin{align*}
R_t^*\lambda_t R_t(\phi)\Big(\sum \psi_h \delta_h\Big)=\sum t^{-1}.(h^{-1}.\phi)\psi_h \delta_h, \qquad R_t^*\lambda_t R_t(h')\Big(\sum \psi_h \delta_h\Big)=\sum \psi_h \delta_{h'h}
\end{align*}
for $\phi\in C(X)$ and $h'\in H$. Next, the unitary 
\begin{align*}
V_t: \ell^2(H, L^2(X, \mu))\to \ell^2(H, L^2(X, t_*\mu)), \qquad \sum \psi_h\delta_h\mapsto \sum (t.\psi_h)\delta_h
\end{align*}
makes $(R_t^*\lambda_t R_t,  P_{\ell^2H})$ and $(\lambda^H_{t_*\mu},  P_{\ell^2H})$ equivalent. On the other hand, the assumption that $\{g_*\mu\}_{g\in G}$ consists of mutually comparable measures implies, in light of Proposition~\ref{independence of comparable measures}, that $(\lambda^H_{t_*\mu},  P_{\ell^2H})$ and $(\lambda^H_{\mu},  P_{\ell^2H})$ are homologous. Summarizing, we have
\begin{align*}
\big[(\mathrm{res}(\lambda^G_\mu), P_{\ell^2G})\big]=\oplus_{1}^n\: \big[(\lambda^H_\mu, P_{\ell^2H})\big]\end{align*}
in $\K^1(C(X)\rcross H)$, as desired.
\end{proof}

\section{Preliminaries on boundaries of hyperbolic spaces}\label{Sec: metric-measure}
This section is devoted to the metric-measure structure on the boundary of a hyperbolic space in the sense of Gromov \cite{Gromov}. In \S\ref{bhs} we recall some basic facts on hyperbolic spaces and their boundaries. In \S\ref{vm} we focus on the family of visual metrics, and their induced Hausdorff measures, on the boundary of a hyperbolic space. The content of these two subsections is, to a large extent, standard \cite{GdH}, \cite[Sec.5]{Vai}. In \S\ref{gga} we discuss geometric group actions, and their boundary effect. We describe results of Coornaert \cite{Coo} providing remarkable finiteness properties for the visual metric-measure structure. In \S\ref{visual section} we introduce a suitable notion of `Hausdorff dimension' for the boundary. 

\subsection{The boundary of a hyperbolic space}\label{bhs} Let $(X,d)$ be a proper geodesic space. The \emph{Gromov product} of  $x,y\in X$ with respect to $o\in X$ is defined by the formula
\begin{align*}
(x,y)_o: =\tfrac{1}{2}\big(d(o,x)+d(o,y)-d(x,y)\big).
\end{align*}
\begin{defn}[Gromov]
The space $X$ is \emph{hyperbolic} if there exists a constant $\delta\geq 0$ such that, for all $x,y,z,o\in X$, we have
\begin{align*}
(x,y)_o\geq \min\big\{(x,z)_o,(y,z)_o\big\}-\delta.
\end{align*}
\end{defn}

 Let $X$ be a hyperbolic space, and fix a basepoint $o\in X$. A sequence $(x_i)\subseteq X$ \emph{converges to infinity} if $(x_i,x_j)_o\to\infty$ as $i,j\to\infty$. Two sequences $(x_i)$, $(y_i)$ converging to infinity are \emph{asymptotic} if $(x_i,y_i)_o\to\infty$ as $i\to\infty$. The asymptotic relation is an equivalence on sequences converging to infinity. Both convergence to infinity, and the asymptotic relation, are independent of the chosen basepoint $o\in X$. The \emph{boundary} of $X$, denoted $\bd X$, is the set of asymptotic classes of sequences converging to infinity. A sequence $(x_i)\subseteq X$ \emph{converges to} $\xi\in\bd X$ if $(x_i)$ converges to infinity, and the asymptotic class of $(x_i)$ is $\xi$. 

The Gromov product on $\bd X\times \bd X$ is defined as follows:
\begin{align*}
(\xi,\xi')_o:=\inf \big\{\liminf\; (x_i,x'_i)_o: \; x_i\to\xi,\; x'_i\to\xi'\big\}
\end{align*}
If $\xi=\xi'$, then $(\xi,\xi')_o=\infty$. If $\xi\neq \xi'$, then the sequence $(x_i,x'_i)_o$ is bounded whenever $x_i\to\xi$ and $x'_i\to\xi'$, hence $(\xi,\xi')_o<\infty$. It turns out that 
\begin{align}\label{liminf limsup}
(\xi,\xi')_o\leq \liminf\; (x_i,x'_i)_o\leq \limsup\; (x_i,x'_i)_o \leq (\xi,\xi')_o+ 2\delta \qquad (x_i\to\xi, x'_i\to\xi').
\end{align}
Similarly, the Gromov product on $X\times \bd X$ is defined by setting
\begin{align*}
(x,\xi)_o:=\inf \big\{\liminf\; (x,x_i)_o: \; x_i\to\xi\big\}
\end{align*}
and we have
\begin{align}\label{liminf limsup 2}
(x,\xi)_o\leq \liminf\; (x,x_i)_o\leq \limsup\; (x,x_i)_o \leq (x,\xi)_o+ \delta\qquad (x_i\to\xi). 
\end{align}

\subsection{Visual metrics}\label{vm} Equipped with a canonical topology defined in terms of the Gromov product, the boundary $\bd X$ is compact and metrizable (see \cite[Ch.7, \S2]{GdH}). But the metric structure on $\bd X$, which is of great importance in this paper, is a more subtle issue. 

\begin{defn} A \emph{visual metric} on $\bd X$ is a metric $d_\e$ satisfying $d_\e\asymp\exp(-\e(\cdot,\cdot)_o)$ for some $\e>0$, called the \emph{visual parameter} of $d_\e$.
\end{defn}

This definition is independent of the chosen basepoint $o\in X$, and every visual metric determines the canonical topology on $\bd X$. If $d_\e$ is a visual metric, then so is $d_\e^\alpha$ for any $\alpha\in (0,1]$; consequently, if $\e$ is a visual parameter then so is any $\e'\in (0,\e]$. 

\begin{fact}[Scaling]\label{scaling lemma}
Let $d_\e$ and $d_{\e'}$ be two visual metrics. Then:
\begin{itemize}[leftmargin=20pt, itemsep=3pt]
\item[$\bullet$] $d_\e$ and $d_{\e'}$ are H\"older equivalent: $d^{1/\e}_{\e}\asymp  d^{1/\e'}_{\e'}$;
\item[$\bullet$] the corresponding Hausdorff dimensions are inversely proportional to the visual parameter: $\e \:\hdim(\bd X,d_\e)=\e' \:\hdim (\bd X,d_{\e'})$;
\item[$\bullet$] the corresponding Hausdorff measures are comparable: $\mu_\e \asymp\mu_{\e'}$.
\end{itemize}
\end{fact}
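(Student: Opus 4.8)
The three assertions are tightly linked, and the plan is to establish them in the stated order, with the Hölder equivalence serving as the engine for the other two. The starting observation is that the defining comparability $d_\e\asymp\exp(-\e(\cdot,\cdot)_o)$ can be rescaled: raising it to the power $1/\e$ preserves $\asymp$ (the map $t\mapsto t^{1/\e}$ is monotone, and the comparability constants simply get raised to the $1/\e$), yielding $d_\e^{1/\e}\asymp\exp(-(\cdot,\cdot)_o)$, and likewise $d_{\e'}^{1/\e'}\asymp\exp(-(\cdot,\cdot)_o)$. The first bullet is then immediate from transitivity of $\asymp$: both snowflaked metrics are comparable to one and the same exponential template, hence to each other.

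For the second bullet I would invoke two standard facts about Hausdorff dimension: it is unchanged under passage to a comparable (that is, bi-Lipschitz) metric, and under the exact snowflake $d\mapsto d^\alpha$ with $\alpha\in(0,1]$ it transforms by $\hdim(d^\alpha)=\hdim(d)/\alpha$. Applying the latter to $d_\e^{1/\e}$ gives $\hdim(\bd X, d_\e^{1/\e})=\e\,\hdim(\bd X,d_\e)$, and similarly $\hdim(\bd X, d_{\e'}^{1/\e'})=\e'\,\hdim(\bd X,d_{\e'})$. Since $d_\e^{1/\e}\asymp d_{\e'}^{1/\e'}$ by the first bullet, these two dimensions agree, which is exactly the asserted relation $\e\,\hdim(\bd X,d_\e)=\e'\,\hdim(\bd X,d_{\e'})$.

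The third bullet is the delicate one, and I would argue it directly at the level of diameters of covering sets rather than through an auxiliary metric. Writing $s=\hdim(\bd X,d_\e)$ and $t=\hdim(\bd X,d_{\e'})$, the first bullet rearranges to $d_\e\asymp d_{\e'}^{\e/\e'}$ pointwise, hence $\mathrm{diam}_{d_\e}(U)\asymp\bigl(\mathrm{diam}_{d_{\e'}}(U)\bigr)^{\e/\e'}$ uniformly over subsets $U\subseteq\bd X$ (the supremum defining the diameter commutes with the monotone power). Raising to the power $s$ and invoking the dimension identity $\e s=\e' t$ from the second bullet collapses the exponents to $\bigl(\mathrm{diam}_{d_\e}(U)\bigr)^s\asymp\bigl(\mathrm{diam}_{d_{\e'}}(U)\bigr)^t$, with constants independent of $U$. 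Consequently, for any cover the two Hausdorff sums $\sum_i\bigl(\mathrm{diam}_{d_\e}(U_i)\bigr)^s$ and $\sum_i\bigl(\mathrm{diam}_{d_{\e'}}(U_i)\bigr)^t$ are comparable; and since the fineness conditions $\mathrm{diam}_{d_\e}\leq\delta$ and $\mathrm{diam}_{d_{\e'}}\leq\delta'$ are cofinally matched as $\delta,\delta'\to 0$, the limiting Hausdorff measures satisfy $\mu_\e\asymp\mu_{\e'}$ with the same constants.

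The main obstacle is precisely this last piece of bookkeeping. Because the two visual metrics are only Hölder, and not bi-Lipschitz, equivalent, one cannot compare Hausdorff measures of a fixed common dimension; a naive attempt would pit $\mathcal{H}^s_{d_\e}$ against $\mathcal{H}^s_{d_{\e'}}$, which generically are $0$ and $\infty$. The point is to let the snowflake exponent $\e/\e'$ and the dimension shift $t=(\e/\e')s$ supplied by the second bullet cancel, so that the powers on the diameters agree \emph{exactly}; only after this alignment do the Hausdorff sums differ by a bounded multiplicative factor. Everything else reduces to transitivity of $\asymp$ and the standard scaling of Hausdorff dimension under snowflaking.
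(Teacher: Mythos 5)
Your argument is correct and complete. The paper itself offers no proof of this Fact -- it is stated as standard material and referred to \cite{GdH} and \cite[Sec.5]{Vai} -- and what you have written is precisely the standard argument those references supply: all three bullets flow from the single observation that $d_\e^{1/\e}\asymp\exp(-(\cdot,\cdot)_o)\asymp d_{\e'}^{1/\e'}$, combined with the exact identity $\mathcal{H}^{s/\alpha}_{d^\alpha}=\mathcal{H}^s_d$ for snowflakes and the invariance of Hausdorff measure (up to the $s$-th power of the comparability constants) under $\asymp$. You correctly isolate the one genuinely delicate point, namely that the measures can only be compared after the snowflake exponent $\e/\e'$ and the dimension shift $t=(\e/\e')s$ are made to cancel on the diameters, and your handling of the matched fineness conditions $\delta,\delta'\to 0$ is exactly right.
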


Visual metrics do exist, provided that the visual parameter is small with respect to $1/\delta$ where $\delta$ is a constant of hyperbolicity. Furthermore, there is a companion metric-like map on $X\times \bd X$, which is visual in the corresponding way \cite[Prop.5.16]{Vai}:

\begin{fact}[Small visual range]\label{small visual range} Let $\e>0$ be such that $\e\delta<1/5$. Then:
\begin{itemize}[leftmargin=20pt, itemsep=3pt]
\item[$\bullet$] there exists a visual metric $d_\e$ on $\bd X$, having visual parameter $\e$;
\item[$\bullet$] there exists $d_\e :X\times \bd X\to [0,\infty)$ satisfying 
\begin{align*}\tfrac{1}{2}\exp(-\e(x,\xi)_o)\leq d_\e(x, \xi) \leq \exp(-\e(x,\xi)_o)\end{align*}
and
\begin{align*}|d_\e(x, \xi)-d_\e(x, \xi')|\leq d_\e(\xi, \xi') \leq d_\e(x, \xi)+d_\e(x, \xi')\end{align*}
for all $x\in X$ and $\xi,\xi'\in \bd X$.
\end{itemize}
\end{fact}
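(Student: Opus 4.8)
The plan is to build the boundary metric and its companion map from a single \emph{gauge} extracted from the Gromov product, and then to metrize that gauge by a chain-infimum of Frink type; the numerical threshold $\e\delta<1/5$ is exactly what makes the metrization quantitatively controlled.

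First I would set $\rho_\e(\xi,\xi')=\exp(-\e(\xi,\xi')_o)$ on $\bd X\times\bd X$, with the convention $\rho_\e(\xi,\xi)=0$, and likewise $\rho_\e(x,\xi)=\exp(-\e(x,\xi)_o)$ on $X\times\bd X$. These are symmetric and vanish precisely on the diagonal, but they fail the triangle inequality. Hyperbolicity repairs this in a weak form: I would first upgrade the $\delta$-inequality of the definition from interior points to boundary points by choosing approximating sequences and passing to the limit through \eqref{liminf limsup}, obtaining a quasi-ultrametric inequality
\[
(\xi,\xi')_o\geq\min\{(\xi,\zeta)_o,(\zeta,\xi')_o\}-\delta'
\]
with a defect $\delta'$ equal to a small multiple of $\delta$ (the $2\delta$ gaps in \eqref{liminf limsup} are what determine its precise value, and in turn feed into the threshold $1/5$). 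In multiplicative form this reads $\rho_\e(\xi,\xi')\leq e^{\e\delta'}\max\{\rho_\e(\xi,\zeta),\rho_\e(\zeta,\xi')\}$, and the analogous statement for mixed triples $(x,\xi,\xi')$ follows the same way from \eqref{liminf limsup 2}.

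Next I would define
\[
d_\e(\xi,\xi')=\inf\Big\{\textstyle\sum_{i=0}^{n-1}\rho_\e(\xi_i,\xi_{i+1}):\ \xi=\xi_0,\dots,\xi_n=\xi'\Big\},
\]
the infimum over finite chains in $\bd X$. The triangle inequality is automatic, so $d_\e$ is a pseudometric with $d_\e\leq\rho_\e$ coming from the one-step chain. The substance is the reverse inequality $d_\e\geq\tfrac12\rho_\e$, proved by induction on chain length: when $e^{\e\delta'}$ is close enough to $1$---which is precisely the force of $\e\delta<1/5$---no chain can undercut half the direct gauge value. Combining the two bounds gives $d_\e\asymp\rho_\e=\exp(-\e(\cdot,\cdot)_o)$, so $d_\e$ is a genuine metric (nondegeneracy being inherited from $\rho_\e(\xi,\xi')>0$ for $\xi\neq\xi'$) and is visual with parameter $\e$.

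Finally, for the companion map I would run the same chain construction but with $x$ admitted only as an endpoint, i.e.\ over chains $x=z_0,z_1,\dots,z_n=\xi$ whose interior vertices lie in $\bd X$; this sidesteps the fact that $\rho_\e$ does not vanish on the interior diagonal. The Frink estimate then yields exactly the two-sided bound $\tfrac12\exp(-\e(x,\xi)_o)\leq d_\e(x,\xi)\leq\exp(-\e(x,\xi)_o)$, the lower bound again consuming $\e\delta<1/5$ via \eqref{liminf limsup 2}. The two displayed inequalities relating $d_\e(x,\xi)$, $d_\e(x,\xi')$ and $d_\e(\xi,\xi')$ are then triangle inequalities in the common gauge: the bound $d_\e(\xi,\xi')\leq d_\e(x,\xi)+d_\e(x,\xi')$ is the triangle inequality through the auxiliary endpoint $x$, while $|d_\e(x,\xi)-d_\e(x,\xi')|\leq d_\e(\xi,\xi')$ is its reverse form. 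The hard part throughout is the chain lower bound: one must track how the quasi-ultrametric defect accumulates along an arbitrary chain and verify that the threshold $\e\delta<1/5$ is sharp enough to keep the accumulated loss below a factor of $2$.
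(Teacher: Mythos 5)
The paper offers no proof of this Fact: it is quoted as standard, with the existence of the visual metric and its companion on $X\times\bd X$ attributed to \cite[Prop.5.16]{Vai} (see also \cite{GdH}). Your route --- extract the gauge $\rho_\e=\exp(-\e(\cdot,\cdot)_o)$, verify a quasi-ultrametric inequality with multiplicative defect $e^{\e\delta}$, and metrize by the Frink chain infimum, with $\e\delta<1/5$ calibrated so that the chain lower bound $(3-2e^{\e\delta})\rho_\e\leq d_\e$ beats $\tfrac12\rho_\e$ --- is exactly the standard argument of the cited source, so in spirit you are reproducing the intended proof. Two points need tightening, one of which is an actual error as written.

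First, the minor one: you leave the boundary defect $\delta'$ as ``a small multiple of $\delta$,'' but the numerology is tight --- one needs $e^{\e\delta'}\leq 5/4$ for the constant $\tfrac12$, and $e^{1/5}\approx 1.2214$ barely clears this, so $\delta'=\delta$ is required, not $2\delta$ or $3\delta$. Fortunately $\delta'=\delta$ does hold: because the boundary Gromov product is defined as an \emph{infimum} of liminfs, the inequality $(\xi,\xi')_o\geq\min\{(\xi,\zeta)_o,(\zeta,\xi')_o\}-\delta$ passes to $\bd X$ (and to mixed triples) with no loss; the $2\delta$ slack in \eqref{liminf limsup} never enters. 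Second, and more seriously: your companion map is defined by chains from $x$ to $\xi$ whose \emph{interior} vertices lie in $\bd X$, while $d_\e(\xi,\xi')$ is an infimum over chains lying entirely in $\bd X$. With these definitions the inequality $d_\e(\xi,\xi')\leq d_\e(x,\xi)+d_\e(x,\xi')$ is \emph{not} ``the triangle inequality through the auxiliary endpoint $x$'': the concatenated chain passes through the interior point $x$ and is therefore inadmissible in the infimum defining $d_\e(\xi,\xi')$. Deriving it instead from the two-sided comparison with $\rho_\e$ loses a factor of roughly $2e^{\e\delta}$, so the displayed inequality fails by your argument. (The reverse inequality $|d_\e(x,\xi)-d_\e(x,\xi')|\leq d_\e(\xi,\xi')$ is fine, since there the concatenation point is a boundary point.) The repair is the one V\"ais\"al\"a actually uses: run a single chain infimum over arbitrary finite chains in $X\cup\bd X$, with $\rho_\e(x,x)=\exp(-\e\, d(o,x))>0$ on the interior diagonal. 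This produces a \emph{metametric} on $X\cup\bd X$ for which both displayed inequalities are genuine triangle inequalities, the Frink induction applies verbatim to give $\tfrac12\rho_\e\leq d_\e\leq\rho_\e$ everywhere, and the restriction to $\bd X\times\bd X$ is the desired visual metric.
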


The small range for visual parameters is neither optimal, nor particularly natural, and we have to consider the entire `cone' of visual metrics on $\bd X$. Statements about visual metrics on $\bd X$ are sometimes proved by first dealing with visual parameters in the small range, and then extended by scaling (Fact~\ref{scaling lemma}).

\subsection{Geometric group actions}\label{gga} Underlying the following definition is the fundamental fact that hyperbolicity is invariant under quasi-isometries. 

\begin{defn}[Gromov]
A group $\G$ is \emph{hyperbolic} if it acts geometrically, that is to say isometrically, properly and cocompactly, on a hyperbolic space. \end{defn}

We refer to a space carrying a geometric action of a hyperbolic group $\G$ as a \emph{geometric model for $\G$}. For example, Cayley graphs of $\G$ with respect to various finite generating sets are geometric models for $\G$. There could be, however, more natural geometric models for a given hyperbolic group, e.g., for a surface group of genus at least $2$ the natural geometric model is the standard hyperbolic plane. Geometric models for $\G$ have $\G$-equivariantly homeomorphic boundaries, and each one of them is a topological realization of $\bd \G$. 

Now let $X$ be a hyperbolic space admitting a geometric action of a (hyperbolic) group $\G$. In what follows, we assume that $\G$ is \emph{non-elementary}, that is, $\G$ is neither finite, nor virtually infinite cyclic. In terms of the space $X$, the non-elementary hypothesis on $\G$ means that $\bd X$ is infinite as a set.

The action of $\G$ on $X$ extends to the boundary $\bd X$. The boundary action is a convergence action, in the sense of \S\ref{sec: C0 and conv}. We also have $(gx,gx')_o\geq (x,x')_o-d(o,go)$ for all $g\in \G$ and $x,x'\in X$, which implies that 
\begin{align}\label{boundary action is Lipschitz}
(g\xi,g\xi')_o\geq (\xi,\xi')_o-d(o,go)
\end{align} 
for all $g\in \G$ and $\xi,\xi'\in \bd X$. Therefore $\G$ acts by Lipschitz maps on $(\bd X, d_\e)$ for any choice of visual metric $d_\e$ on $\bd X$.

\begin{defn}
The \emph{exponent}, or the \emph{volume entropy} of $X$ is the finite positive number given by
\[e_{X}=\inf \Big\{s>0 \: :\: \sum_{g\in \G}\exp(-sd(o,go))<\infty\Big\}=\limsup_{R\to \infty} \frac{1}{R}\ln \big|\{g\in\G: d(o,go)\leq R\}\big|.\]
\end{defn}
The two formulas give two interpretations of the exponent, namely critical exponent as well as growth exponent. As the notation $e_X$ already suggests, the definition  is independent of the basepoint $o\in X$ and of the group $\G$ acting geometrically on $X$.

The Patterson - Sullivan theory developed by Coornaert in \cite{Coo} plays a crucial role in understanding the growth of $\G$-orbits in $X$, and the Hausdorff dimensions and measures associated to visual metrics on $\bd X$. The following hold.

\begin{fact}[Orbit growth]\label{growth} 
Let $o$ be a basepoint in $X$. Then $\big|\{g\in\G: d(o,go)\leq R\}\big|\asymp \exp(e_{X} R)$.
\end{fact}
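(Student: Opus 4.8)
The plan is to reconstruct Coornaert's Patterson--Sullivan argument, since the estimate is genuinely two-sided and does not follow from the defining property of $e_X$ alone. The critical-exponent characterization only yields, for each $\epsilon>0$, a bound $|\{g:d(o,go)\le R\}|\lesssim e^{(e_X+\epsilon)R}$; to sharpen both inequalities to $\asymp e^{e_X R}$ one needs a conformal measure on $\bd X$ together with a shadow lemma. So the real content is to produce the measure and then convert geometry of shadows into counting.

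First I would construct a Patterson--Sullivan measure $\nu$ on $\bd X$ as a weak-$*$ accumulation point of the normalized orbital measures $\nu_s=P(s)^{-1}\sum_{g\in\G}e^{-s\,d(o,go)}\delta_{go}$ as $s\downarrow e_X$, where $P(s)=\sum_g e^{-s\,d(o,go)}$ is the Poincar\'e series. Properness forces $\G o$ to accumulate only on $\bd X$, so the limit $\nu$ is a Borel probability measure supported on $\bd X$; if $P(s)$ happens to converge at $s=e_X$ one first inserts a slowly increasing weight $h(d(o,go))$ (Patterson's trick) to restore divergence at the critical exponent without altering the exponential rate. The output is that $\nu$ is $e_X$-conformal: for every $g\in\G$ the measures $g_*\nu$ and $\nu$ are mutually absolutely continuous with Radon--Nikodym derivative comparable, for $\nu$-a.e.\ $\xi$, to $\exp\!\big(e_X(2(go,\xi)_o-d(o,go))\big)$, the coarse Busemann cocycle expressed through the Gromov product.

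The next step is Sullivan's shadow lemma. For $r>0$ let the shadow $O_r(go)\subseteq\bd X$ consist of those $\xi$ for which a geodesic ray from $o$ to $\xi$ meets $B(go,r)$; up to $O(\delta)$ this is $\{\xi:(go,\xi)_o\ge d(o,go)-r\}$, i.e.\ a ball of radius $\asymp\exp(-\e\,d(o,go))$ in a visual metric $d_\e$. For $r$ large enough one proves
\begin{align*}
\nu\big(O_r(go)\big) \asymp \exp\!\big(-e_X\,d(o,go)\big),
\end{align*}
with constants depending only on $r$, by applying conformality of $\nu$ to $g$ and observing that on $O_r(go)$ the cocycle $2(go,\xi)_o-d(o,go)$ is $\approx d(o,go)$, while $g^{-1}O_r(go)$ is, up to bounded error, a fixed-size shadow based at $o$ and so of $\nu$-measure bounded away from $0$ and $1$.

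Finally I would extract the count. For the lower bound, cocompactness makes $\G o$ a net, so for each $\xi$ the ray $[o,\xi)$ passes within a fixed distance of some $go$ with $d(o,go)\in[R-C,R+C]$; hence these shadows cover $\bd X$ and $1=\nu(\bd X)\le\sum\nu(O_r(go))\asymp|\{g:d(o,go)\in[R-C,R+C]\}|\,e^{-e_X R}$, giving $\gtrsim e^{e_X R}$ elements in that window, hence $N(R)\gtrsim e^{e_X R}$. For the upper bound, properness bounds the overlap: if $\xi\in O_r(g_1o)\cap\cdots\cap O_r(g_ko)$ with all $d(o,g_io)\in[n,n+1]$, then the nearest-point projections of the $g_io$ onto $[o,\xi)$ lie at parameters in $[n-r,n+1]$ and the $g_io$ sit within $r+O(\delta)$ of that segment, so they all lie in one ball of bounded radius about the point of $[o,\xi)$ at distance $n$; uniform discreteness of the orbit then bounds $k$ by $M=M(r)$. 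Thus $|\{g:d(o,go)\in[n,n+1]\}|\,e^{-e_X n}\asymp\sum\nu(O_r(go))\le M$, so $|\{g:d(o,go)\in[n,n+1]\}|\lesssim e^{e_X n}$, and summing the geometric series over $n\le R$ gives $N(R)\lesssim e^{e_X R}$. The main obstacle is the whole of the first two steps: constructing a genuinely $e_X$-conformal probability measure and proving the shadow lemma in a setting where geodesics, Busemann functions and the cocycle are only defined up to additive error $O(\delta)$. It is exactly this Patterson--Sullivan package that Coornaert's analysis \cite{Coo} supplies, and the covering/overlap argument above is the elementary bridge from it to the stated orbit growth.
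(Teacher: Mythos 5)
Your proposal is correct, and it reconstructs essentially the argument the paper relies on: the paper does not prove this Fact itself but cites Coornaert \cite[Thm.7.2]{Coo}, whose proof is exactly the Patterson--Sullivan package you describe (construction of an $e_X$-conformal density via the Poincar\'e series with Patterson's trick, Sullivan's shadow lemma, then the covering argument for the lower bound and the bounded-multiplicity argument for the upper bound). Nothing essential is missing from your outline.
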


\begin{fact}[Hausdorff measure and dimension]\label{Ahlfors} 
Let $d_\e$ be a visual metric on $\bd X$. Then:

\begin{itemize}[leftmargin=20pt, itemsep=3pt]
\item the Hausdorff dimension $\hdim(\bd X, d_\e)$ equals $e_{X}/\e$;
\item the Hausdorff measure $\mu_\e$ is Ahlfors regular, that is $\mu_\e(B_r)\asymp r^{\hdim(\bd X, d_\e)}$
uniformly over all closed balls $B_r$ of radius $0\leq r\leq\mathrm{diam} (\bd X, d_\e)$.
\end{itemize}
\end{fact}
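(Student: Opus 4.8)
The plan is to construct a Patterson--Sullivan measure $\nu$ on $\bd X$, show that it is Ahlfors regular of dimension $e_X/\e$, and then deduce the two assertions from general metric-measure principles. First I would form the Poincar\'e series $P(s)=\sum_{g\in\G}\exp(-s\,d(o,go))$, which by definition of $e_X$ converges exactly for $s>e_X$. Fact~\ref{growth} shows that $P$ is of \emph{divergence type}, i.e.\ $P(e_X)=\infty$, so no Patterson thickening is needed. Consider the normalized atomic measures $\nu_s=P(s)^{-1}\sum_g\exp(-s\,d(o,go))\,\delta_{go}$ on the compactification $\overline{X}=X\cup\bd X$. For a fixed compact $K\subseteq X$ the numerator $\nu_s(K)$ stays bounded (properness of the action leaves finitely many orbit points in $K$), while $P(s)\to\infty$ as $s\downarrow e_X$; hence any weak-$*$ subsequential limit $\nu\in\mathrm{Prob}(\overline{X})$ is supported on $\bd X$.

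Next I would record the conformality of $\nu$. A routine computation with the defining limits, using hyperbolicity to control error terms, shows that $\nu$ is a conformal density of dimension $e_X$: for each $g\in\G$,
\begin{align*}
\frac{dg_*\nu}{d\nu}(\xi)\asymp\exp\big(-e_X\,b_\xi(go,o)\big),
\end{align*}
where $b_\xi$ is a Busemann cocycle on $X$ satisfying $b_\xi(x,o)=d(o,x)-2(x,\xi)_o$ up to a bounded error. This quasi-invariance, together with cocompactness, is what links $\nu$ to the visual structure.

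The technical heart is a \emph{shadow lemma} in the style of Sullivan and Coornaert. For large $R$, let the shadow $\mathcal{O}_R(go)$ be the set of $\xi\in\bd X$ for which some geodesic ray from $o$ to $\xi$ meets the $R$-ball about $go$. The claim is that, for $R$ large enough and uniformly in $g$,
\begin{align*}
\nu\big(\mathcal{O}_R(go)\big)\asymp\exp\big(-e_X\,d(o,go)\big).
\end{align*}
I expect this to be the main obstacle. The upper bound controls the Gromov products of points in the shadow using thinness of geodesic triangles; the lower bound combines the divergence-type property with the conformal density relation to see that a definite proportion of the orbit mass lands in the shadow. Both directions rest on the uniform hyperbolic estimates underlying Fact~\ref{small visual range}.

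Finally I would convert shadows into visual balls. Since $\xi\in\mathcal{O}_R(go)$ pins $(\xi,\cdot)_o$ to within $O(1)$ of $d(o,go)$, and the visual metric obeys $d_\e\asymp\exp(-\e(\cdot,\cdot)_o)$, the shadow $\mathcal{O}_R(go)$ is sandwiched between two $d_\e$-balls of radius comparable to $\exp(-\e\,d(o,go))$. Given a visual ball $B_r$ with $0\le r\le\mathrm{diam}(\bd X,d_\e)$, choosing $g$ with $d(o,go)\approx\e^{-1}\ln(1/r)$ whose shadow covers $B_r$ gives
\begin{align*}
\nu(B_r)\asymp\exp\big(-e_X\,d(o,go)\big)\asymp r^{e_X/\e},
\end{align*}
so $\nu$ is Ahlfors regular of dimension $e_X/\e$. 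A standard principle then closes the argument: an Ahlfors $Q$-regular probability measure on a compact metric space forces $\hdim=Q$ (via the mass distribution principle for the lower bound and a covering estimate for the upper bound) and forces the $Q$-dimensional Hausdorff measure to be finite, positive, and comparable to $\nu$ on all Borel sets (via a Vitali covering argument). Hence $\hdim(\bd X,d_\e)=e_X/\e$ and $\mu_\e\asymp\nu$ is itself Ahlfors regular. By Fact~\ref{scaling lemma} it suffices to carry all of this out for $\e$ in the small visual range of Fact~\ref{small visual range}, the general case following by snowflaking.
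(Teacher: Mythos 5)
Your proposal is correct and follows essentially the route the paper itself takes: the paper offers no independent proof of Fact~\ref{Ahlfors}, but derives it from Coornaert's Patterson--Sullivan theory \cite{Coo} (Prop.~7.4, Cor.~7.5, Cor.~7.6) for small visual parameters and then extends to arbitrary parameters by the scaling of Fact~\ref{scaling lemma}, exactly as in your final paragraph. Your sketch --- the conformal density built from the Poincar\'e series, the shadow lemma, the conversion of shadows into visual balls, and the standard passage from Ahlfors regularity of the Patterson--Sullivan measure to the dimension and Hausdorff-measure statements --- is precisely the content of those results of Coornaert.
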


Fact~\ref{growth} is \cite[Thm.7.2]{Coo}. For sufficiently small visual parameters, Fact~\ref{Ahlfors} follows from \cite[Prop.7.4, Cor.7.5, Cor.7.6]{Coo}; using Fact~\ref{scaling lemma}, it extends to arbitrary visual parameters. 

In particular, $\bd X$ has finite mass under the Hausdorff measure defined by a visual metric. This allows for the following definition. 

\begin{defn} A \emph{visual probability measure} on $\bd X$ is a Borel probability measure $\mu$ satisfying $\mu \asymp \mu_\e$ for some (equivalently, each) Hausdorff measure $\mu_\e$ defined by a visual metric $d_\e$.
\end{defn}

\subsection{Visual dimension}\label{visual section} 
In general, there is no canonical choice of metric on the boundary $\bd X$ of a hyperbolic space $X$. So the notion of Hausdorff dimension for the boundary has to be understood relative to a family of admissible metrics. For visual metrics, one obtains the following notion of dimension.

\begin{defn}\label{defn: visual dim}
The \emph{visual dimension} of $\bd X$, denoted $\visdim \bd X$, is the infimal Hausdorff dimension of $(\bd X, d)$ as $d$ runs over the visual metrics on $\bd X$.
\end{defn}

Clearly $\visdim \bd X\geq \topdim \bd X$, as the topological dimension of a metric space is at most the Hausdorff dimension with respect to a compatible metric. A more appropriate comparison term for the visual dimension is that of conformal dimension. The following chain of inequalities holds:
\begin{align*}
\visdim \bd X\geq \textrm{A-confdim}\: \bd X\geq \textrm{confdim}\: \bd X\geq \topdim \bd X.
\end{align*}
The \emph{conformal dimension} of $\bd X$ is a notion of metric dimension which only depends on the quasi-isometry type of $X$. It resolves the metric ambiguity at the boundary by taking all possible metrics which are equivalent to a visual metric in a suitable sense. The original definition, due to Pansu, uses quasi-conformal equivalence; more recently, the closely related quasi-M\"obius equivalence seems to be favored. Then $\textrm{confdim}\: \bd X$ is defined as the infimal Hausdorff dimension of $(\bd X, d)$ as $d$ runs over all metrics  which are equivalent to a visual metric. See \cite[Section 14]{KB}. From a measure-theoretic point of view, the equivalence relation used for defining the conformal dimension is too loose. For hyperbolic spaces admitting geometric group actions, the notion of \emph{Ahlfors conformal dimension} strikes a compromise by restricting the equivalence relation to Ahlfors regular metrics. Namely, $\textrm{A-confdim}\: \bd X$ is defined as the infimal Hausdorff dimension of $(\bd X, d)$ as $d$ runs over all Ahlfors regular metrics on $\bd X$ which are quasi-M\"obius equivalent to a visual metric. The Ahlfors conformal dimension is a key concept for much of the current work on boundaries of hyperbolic spaces from the perspective of analysis on metric spaces \cite{Kle}.
 
We illustrate Definition~\ref{defn: visual dim} on the following important examples.

\begin{ex}\label{ex: trees}
Let $\mathcal{T}$ be a regular tree of degree at least $3$. Topologically, the boundary $\bd \mathcal{T}$ is a Cantor set. The Gromov product $(\cdot,\cdot)_o$ on $\mathcal{T}$ extends canonically and continuously to $\bd \mathcal{T}$, and $\exp(-(\cdot,\cdot)_o)$ is an ultrametric on $\bd \mathcal{T}$. Each $\e>0$ is a visual parameter, so $\visdim \bd\mathcal{T}=0$.
\end{ex}

\begin{ex}\label{ex: standard hyperbolic}
The boundary of the standard hyperbolic space $\mathbb{H}^{n}$, where $n\geq 2$, is the sphere $S^{n-1}$. The usual spherical metric is a visual metric with visual parameter $\e=1$. In fact $\e=1$ is the largest possible visual parameter. Indeed, the Lipschitz functions with respect to a visual metric are dense in $C(S^{n-1})$. On the other hand, they are the $\e$-H\"older functions with respect to the spherical metric, $\e$ being the visual parameter. Now observe that, on a geodesic metric space, only the constant functions are $\e$-H\"older for $\e> 1$. Thus $\visdim \bd\mathbb{H}^{n}=n-1$, the Hausdorff dimension with respect to the spherical metric.

More generally, let us consider the (non-compact) rank-$1$ symmetric spaces. These are the real, complex, quaternionic, or octonionic hyperbolic spaces $\mathbb{H}^n_K$, where $n\geq 2$ respectively $n=2$ in the exceptional octonionic case. Put $k=\dim_\R K\in \{1,2,4,8\}$. Topologically, the boundary $\bd \mathbb{H}^n_K$ is a sphere of dimension $nk-1$. The standard metric on $\bd \mathbb{H}^n_K$, the so-called Carnot metric, is a visual metric with visual parameter $\e=1$. As the Carnot metric is geodesic, no parameter greater than $1$ is a visual parameter. Therefore $\visdim \bd\mathbb{H}^n_K=\hdim\, \bd \mathbb{H}^n_K$, the Hausdorff dimension of $\bd \mathbb{H}^n_K$ equipped with the Carnot metric. The latter is explicitly given by the Mitchell - Pansu formula: $\hdim\, \bd \mathbb{H}^n_K=\topdim \bd \mathbb{H}^n_K+k-1\; (=nk+k-2)$.
\end{ex}

The notion of visual dimension for a hyperbolic space can be taken a step further. Namely, one could define a visual dimension for a hyperbolic group $\G$ as the infimal visual dimension of $\bd X$, where $X$ runs over the (isometry classes of) geometric models for $\G$. Our results are, in fact, most conveniently expressed in terms of such a visual dimension for the group.

\section{The basic K-cycle for boundary actions of hyperbolic groups}\label{sec: finite summability}
In this section, we realize the paradigm of Section \ref{paradigm} in the case of a  hyperbolic group acting on its boundary. First, some standing notations for the rest of the paper. The main characters are
\begin{itemize}
\item[$\G$ :] a non-elementary hyperbolic group;
\item[$X$ :] a hyperbolic space on which $\G$ acts geometrically, i.e., a geometric model for $\G$;
\item[$\mu$ :] a visual probability measure on $\bd X$.
\end{itemize}
In order to be able to do geometric analysis on the boundary, we also fix
\begin{itemize}
\item[$d_\e$ :] a visual metric on $\bd X$,
\end{itemize}
and we denote
\begin{itemize}
\item[$D_\e$ :] the Hausdorff dimension of $(\bd X,d_\e)$.
\end{itemize}

\begin{lem}\label{double integral} 
Let $o\in X$ be a basepoint. Then there exists $C'>0$ such that, for all $g\in\G$, we have
\begin{align*}
\bigg(\iint d_\e(g\xi,g\xi')^{2}\: d\mu(\xi) \: d\mu(\xi')\bigg)^{1/2}\geq C' \exp(- \e\: d(o,go)).
\end{align*}
If $D_\e>2$, then there exists $C>0$ such that, for all $g\in\G$, we have
\begin{align*}
\bigg(\iint d_\e(g\xi,g\xi')^{2}\: d\mu(\xi) \: d\mu(\xi')\bigg)^{1/2}\leq C \exp(- \e\: d(o,go)).
\end{align*}
\end{lem}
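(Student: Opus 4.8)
The plan is to treat the two bounds separately. The lower bound follows quickly from the coarse equivariance \eqref{boundary action is Lipschitz} and needs no hypothesis on $D_\e$, whereas the upper bound---where $D_\e>2$ enters decisively---requires the sharper \emph{conformal} behaviour of the boundary action together with the Ahlfors regularity of $\mu$ (Fact~\ref{Ahlfors}).

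For the lower bound I would first extract a two-sided consequence of \eqref{boundary action is Lipschitz}. Applying that inequality to $g^{-1}$ at the points $g\xi,g\xi'$ and using $d(o,g^{-1}o)=d(o,go)$ gives $(g\xi,g\xi')_o\leq(\xi,\xi')_o+d(o,go)$. Since $d_\e\asymp\exp(-\e(\cdot,\cdot)_o)$, this yields the pointwise estimate $d_\e(g\xi,g\xi')\gtrsim d_\e(\xi,\xi')\exp(-\e\,d(o,go))$, with an implied constant depending only on the comparability constants of $d_\e$. Squaring and integrating against $\mu\times\mu$ gives $\iint d_\e(g\xi,g\xi')^2\,d\mu\,d\mu\gtrsim M\exp(-2\e\,d(o,go))$, where $M=\iint d_\e(\xi,\xi')^2\,d\mu(\xi)\,d\mu(\xi')$. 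As $\mu$ has full support and $d_\e$ is a genuine metric, $0<M<\infty$, and taking square roots produces the first assertion.

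The heart of the upper bound is a pointwise conformal estimate. Using the change-of-basepoint formula for the Gromov product (equivalently, the Busemann cocycle), one obtains, up to an additive $O(\delta)$, the identity $(g\xi,g\xi')_o=(\xi,\xi')_o+d(o,go)-(g^{-1}o,\xi)_o-(g^{-1}o,\xi')_o$, in which the last two terms are Gromov products of the \emph{interior} point $g^{-1}o$ with boundary points, controlled by the companion function of Fact~\ref{small visual range}. Exponentiating gives
\[
d_\e(g\xi,g\xi')\lesssim\frac{d_\e(\xi,\xi')}{d_\e(g^{-1}o,\xi)\,d_\e(g^{-1}o,\xi')}\,\exp(-\e\,d(o,go)).
\]
I would then invoke the companion triangle inequality $d_\e(\xi,\xi')\leq d_\e(g^{-1}o,\xi)+d_\e(g^{-1}o,\xi')$ from Fact~\ref{small visual range} to cancel the numerator, reducing matters, after squaring and integrating, to
\[
\iint d_\e(g\xi,g\xi')^2\,d\mu\,d\mu\lesssim\exp(-2\e\,d(o,go))\int\frac{d\mu(\xi)}{d_\e(g^{-1}o,\xi)^2}.
\]
It remains to bound $J:=\int d_\e(g^{-1}o,\xi)^{-2}\,d\mu(\xi)$ \emph{uniformly} in $g$. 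Since $(g^{-1}o,\xi)_o\leq d(o,go)$, one has $d_\e(g^{-1}o,\xi)\gtrsim\exp(-\e\,d(o,go))$, so $J$ is at least finite; for uniformity I would decompose $\bd X$ into the dyadic annuli $A_k=\{\xi:2^{-k-1}<d_\e(g^{-1}o,\xi)\leq 2^{-k}\}$, which are non-empty only for $k\lesssim\e\,d(o,go)$. The defining hyperbolic inequality shows any two points of $\{\xi:d_\e(g^{-1}o,\xi)\leq 2^{-k}\}$ lie within $d_\e$-distance $\lesssim 2^{-k}$, so this set is contained in a single $d_\e$-ball of radius $\lesssim 2^{-k}$, whence $\mu(A_k)\lesssim 2^{-kD_\e}$ by Ahlfors regularity and $\mu\asymp\mu_\e$. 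Thus $J\lesssim\sum_{k\geq 0}2^{2k}2^{-kD_\e}=\sum_{k\geq 0}2^{(2-D_\e)k}$, a geometric series convergent precisely because $D_\e>2$, with sum independent of $g$; combining the steps gives the claim.

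I expect the conformal estimate to be the main obstacle. The change-of-basepoint identity holds only up to the hyperbolicity constant, and the companion function on $X\times\bd X$, with its triangle inequalities, is guaranteed by Fact~\ref{small visual range} only in the small visual range $\e\delta<1/5$. I would therefore first establish the estimate for small $\e$ and then transport it to an arbitrary visual metric via the Hölder/scaling comparison of Fact~\ref{scaling lemma}, checking that the product $\e D_\e$ and the target rate $\exp(-\e\,d(o,go))$ scale consistently under snowflaking.
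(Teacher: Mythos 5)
Your proposal is correct and follows essentially the same route as the paper: the lower bound from the two-sided Lipschitz estimate \eqref{boundary action is Lipschitz}, and the upper bound from the basepoint-change/conformal inequality $d_\e(g\xi,g\xi')\lesssim \exp(-\e\,d(o,go))\,d_\e(g^{-1}o,\xi)^{-1}d_\e(g^{-1}o,\xi')^{-1}$, reduced to a $g$-uniform bound on $\int d_\e(g^{-1}o,\xi)^{-2}\,d\mu$ via annular decomposition and Ahlfors regularity, with the small-visual-range restriction removed by snowflaking. The only cosmetic deviation is that you pass from the double to the single integral by the companion triangle inequality and $(a+b)^2\leq 2(a^2+b^2)$, whereas the paper simply factors the double integral of the product into the square of the single integral; both land on the same series with the same threshold $D_\e>2$.
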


The proof is deferred to the end of the section. The important part is the second estimate; the purpose of the first part is to show that we are getting the correct asymptotics. 

\begin{thm}\label{sharp general}
$(\lambda_{\mu}, P_{\ell^2\G})$ is a Fredholm module for $\hiX$ which is $D_\e^+$-summable when $D_\e>2$, respectively $p$-summable for every $p>2$ when $D_\e\leq 2$. The summability holds over the dense $*$-subalgebra $\Lip(\bd X, d_{\e})\ralg \G$, where $\Lip(\bd X, d_\e)$ is the $\G$-invariant algebra of Lipschitz functions on $\bd X$.
\end{thm}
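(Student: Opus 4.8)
The plan is to read off both the Fredholm property and the precise summability degree from the decay of the $\G$-deviation $\dev\phi$ on the boundary, feeding in the geometric estimate of Lemma~\ref{double integral} and the orbit-counting of Fact~\ref{growth}.

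First I would settle Fredholmness. Since $\mu$ is a visual probability measure, Fact~\ref{Ahlfors} gives $\mu(B_r)\asymp r^{D_\e}$ with $D_\e=e_X/\e>0$, so every point of $\bd X$ is $\mu$-negligible. The boundary action is a convergence action (\S\ref{gga}), so Proposition~\ref{convergence action} shows that $\mu$ has the convergence property, and then Proposition~\ref{Furstenberg type condition} shows that $\mu$ has $C_0$-deviation. By the first part of Proposition~\ref{from deviation to Fredholm}, $(\lambda_\mu, P_{\ell^2\G})$ is a Fredholm module for $\hiX$. (Alternatively, Fredholmness is subsumed by the summability below, since $\mathcal{L}^p\subseteq\Comp$.)

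Next comes the core estimate. By the sufficiency half of Proposition~\ref{from deviation to Fredholm}, together with the identity $|\Pi(\phi)|=\M(\dev\phi)$ from the proofs of Lemma~\ref{howtocheckFredholmness} and Proposition~\ref{from deviation to Fredholm}, summability over $\Lip(\bd X,d_\e)\ralg\G$ reduces to controlling the singular values of the multiplication operators $\M(\dev\phi)$ for $\phi\in\Lip(\bd X,d_\e)$: the group variables in $a=\sum_g\phi_g\,g$ contribute only unitary factors $\lambda_\mu(g)$, which leave Schatten ideals invariant, and $P_{\ell^2\G}$ is a genuine projection (so its defect terms vanish). The algebra $\Lip(\bd X,d_\e)$ is $\G$-invariant by \eqref{boundary action is Lipschitz} and dense in $C(\bd X)$. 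For $\phi$ with Lipschitz constant $L$, formula \eqref{iint} and $|\phi(g\xi)-\phi(g\xi')|\le L\,d_\e(g\xi,g\xi')$ give
\[
\dev\phi(g)\le \frac{L}{\sqrt 2}\Bigl(\iint d_\e(g\xi,g\xi')^2\,d\mu(\xi)\,d\mu(\xi')\Bigr)^{1/2}.
\]
When $D_\e>2$, the upper bound in Lemma~\ref{double integral} turns this into $\dev\phi(g)\le C''\exp(-\e\,d(o,go))$ for a uniform constant $C''$. To extract the sharp exponent I would count with Fact~\ref{growth}: the number of $g$ with $\exp(-\e\,d(o,go))\ge t$ is $|\{g:d(o,go)\le \e^{-1}\ln(1/t)\}|\asymp t^{-e_X/\e}=t^{-D_\e}$, so the decreasing rearrangement of $\{\exp(-\e\,d(o,go))\}_g$ is $O(n^{-1/D_\e})$. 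Hence $\M(\dev\phi)\in\mathcal{L}^{D_\e+}$, so $[\lambda_\mu(\phi),P_{\ell^2\G}]=\Pi(\phi)-\Pi(\phi^*)^*\in\mathcal{L}^{D_\e+}$, and assembling over the finite sum $a=\sum_g\phi_g\,g$ yields $D_\e^+$-summability over $\Lip(\bd X,d_\e)\ralg\G$.

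The case $D_\e\le 2$, where the upper bound of Lemma~\ref{double integral} is unavailable, I would handle by scaling. Given $p>2$, Fact~\ref{scaling lemma} lets me pick an auxiliary visual parameter $\e'\in(e_X/p,\,e_X/2)$, which is necessarily $\le\e$ since $\e=e_X/D_\e\ge e_X/2$; its visual metric $d_{\e'}\asymp d_\e^{\e'/\e}$ has Hausdorff dimension $D_{\e'}=e_X/\e'\in(2,p)$. Because $\e'\le\e$ forces $d_\e\lesssim d_{\e'}$ on the bounded space $\bd X$, one gets $\Lip(\bd X,d_\e)\subseteq\Lip(\bd X,d_{\e'})$, so the $D_{\e'}^+$-summability just proved over $\Lip(\bd X,d_{\e'})\ralg\G$ restricts to $p$-summability over $\Lip(\bd X,d_\e)\ralg\G$; since $p>2$ was arbitrary, this completes the statement. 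Granting Lemma~\ref{double integral}, which is the one genuinely hard (geometric) ingredient and is deferred to the end of the section, the main remaining pitfall is bookkeeping: making sure the Lipschitz inclusion $\Lip(d_\e)\subseteq\Lip(d_{\e'})$ runs in the correct direction, and that the counting argument delivers $D_\e^+$ rather than merely $q$-summability for $q>D_\e$.
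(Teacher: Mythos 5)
Your proposal is correct and follows essentially the same route as the paper: bound $\dev\phi$ by the double integral of Lemma~\ref{double integral}, convert the resulting decay $\exp(-\e\,d(o,go))$ into the singular-value estimate $O(n^{-1/D_\e})$ via the orbit count of Fact~\ref{growth}, and handle $D_\e\le 2$ by snowflaking with the inclusion $\Lip(\bd X,d_\e)\subseteq\Lip(\bd X,d_\e^{\alpha})$. The only (harmless) differences are that you phrase the counting via the distribution function rather than the subsequence $m_k$, and you add a separate Fredholmness argument via the convergence property that the paper leaves implicit in the summability.
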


\begin{proof} 
We first prove the claim in the case when $D_\e>2$. Using formula \eqref{iint}, we have 
\begin{align*}\dev\phi(g)\leq\|\phi\|_{\Lip}\;\sqrt{\tfrac{1}{2}\iint d_\e(g\xi,g\xi')^{2} \: d\mu(\xi) \: d\mu(\xi')}
\end{align*}
for all $\phi\in \Lip(\bd X, d_\e)$. It follows from Lemma~\ref{double integral} that there exists $C>0$ such that 
\begin{align*}
\dev\phi(g)\leq C\|\phi\|_{\Lip} \exp(-\e\: d(o,go))
\end{align*}
for all $\phi\in \Lip(\bd X, d_\e)$ and $g\in \G$. Let $T$ denote the multiplication by $g\mapsto \exp(-\e\: d(o,go))$, viewed as an operator on $\ell^2\G$. We claim that $T\in \mathcal{L}^{D_\e+} (\ell^2\G)$. Once we know this, it follows that multiplication by $\sigma\phi$ is in $\mathcal{L}^{D_\e+} (\ell^2\G)$ for all $\phi\in\Lip(\bd X, d_\e)$, and the proof of Proposition~\ref{from deviation to Fredholm} shows that $(\lambda_{\mu_\e}, P_{\ell^2\G})$ is a $D_\e^+$-summable Fredholm module. In order to prove our claim that $s_n(T)=O(n^{-1/D_\e})$, we first control a subsequence of singular values for $T$. Let $m_k$ denote the size of the ``ball'' $\{g\in \G: d(o,go)\leq k\}$; thus $m_k\asymp \exp(e_{X} k)$ by Fact~\ref{growth}. We have
\begin{align*}
s_{m_k+1}(T) < \exp(-\e k)= \exp(e_{X} k)^{-1/D_\e}\leq C_1\: m_k^{-1/D_\e}.
\end{align*}
For an arbitrary positive integer $n$, let $k$ be such that $m_k+1\leq n\leq m_{k+1}+1$. Then
\begin{align*}
s_{n}(T)\leq s_{m_k+1}(T)\leq C_1\: m_k^{-1/D_\e}\leq C_1\: n^{-1/D_\e} \Big(\frac{m_{k+1}+1}{m_k}\Big)^{1/D_\e}\leq C_2\: n^{-1/D_\e}
\end{align*}
for some constant $C_2$ independent of $n$ and $k$. The claim is proved for $D_\e>2$. 

Now assume that $D_\e\leq 2$, and let $p>2$. Let also $\alpha\in (0,1]$ so that $D_\e/\alpha$, which is the Hausdorff dimension of $(\bd X, d_{\e}^\alpha)$, satisfies $p>D_\e/\alpha>2$. By the previous part of the proof, we know that $(\lambda_{\mu}, P_{\ell^2\G})$ is $(D_\e/\alpha)^+$-summable over $\Lip(\bd X, d_{\e}^\alpha)\ralg \G$. As $\Lip(\bd X, d_{\e})$ is contained in $\Lip(\bd X, d_{\e}^\alpha)$, we conclude that $(\lambda_{\mu}, P_{\ell^2\G})$ is $p$-summable over $\Lip(\bd X, d_{\e})\ralg \G$. \end{proof}

In Theorem~\ref{sharp general}, the summability of the basic K-cycle $(\lambda_{\mu}, P_{\ell^2\G})$ always occurs above $2$. We do not know whether this phenomenon is due to some structural obstruction. There is, however, an obstruction to our method of controlling summability by the decay of the $\G$-deviation, and this is the fact that visual probability measures are doubly ergodic \cite{Kai2}. Indeed, by Proposition~\ref{2ergo} the $\G$-deviation has to decay faster than $\ell^2$ in the presence of double ergodicity.

We may optimize Theorem~\ref{sharp general} by varying the visual metric. Our notion of visual dimension is in fact tailored for this very purpose.

\begin{cor}\label{sharp corollary}
$(\lambda_{\mu}, P_{\ell^2\G})$ is $p$-summable for every $p>\max\{2,\visdim \bd X\}$. Furthermore, $(\lambda_{\mu}, P_{\ell^2\G})$ is $(\visdim \bd X)^+$-summable provided $\visdim \bd X$ is attained and greater than $2$.
\end{cor}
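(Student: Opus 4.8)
The plan is to deduce the Corollary from Theorem~\ref{sharp general} by optimizing over the choice of visual metric, using the description of the visual dimension in terms of visual parameters. Recall from Fact~\ref{Ahlfors} that a visual metric $d_\e$ of parameter $\e$ has Hausdorff dimension $D_\e = e_X/\e$, and that the set of visual parameters is an interval of the form $(0,\textrm{vispar}\:\bd X]$ or $(0,\textrm{vispar}\:\bd X)$, since any $\e'\in(0,\e]$ is again a visual parameter whenever $\e$ is. Consequently the attainable Hausdorff dimensions $D_\e$ fill out an interval with infimum $\visdim\bd X = e_X/\textrm{vispar}\:\bd X$, and $\visdim\bd X$ is attained precisely when $\textrm{vispar}\:\bd X$ is itself a visual parameter.

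First I would treat the main assertion. Fix $p > \max\{2,\visdim\bd X\}$. Since $p > \visdim\bd X = \inf_\e D_\e$, the definition of infimum yields a visual metric $d_\e$ with $D_\e < p$. Now I would split into two cases according to the dichotomy in Theorem~\ref{sharp general}. If $D_\e > 2$, that theorem shows $(\lambda_\mu, P_{\ell^2\G})$ is $D_\e^+$-summable over $\Lip(\bd X, d_\e)\ralg\G$; since $D_\e^+$-summability implies $q$-summability for every $q > D_\e$ and $p > D_\e$, the Fredholm module is $p$-summable. If instead $D_\e \le 2$, then Theorem~\ref{sharp general} already gives $q$-summability for every $q > 2$, and $p > 2$ yields the conclusion directly. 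In both cases $(\lambda_\mu, P_{\ell^2\G})$ is $p$-summable, which is the first claim.

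For the refinement, suppose $\visdim\bd X$ is attained and exceeds $2$. Attainment means that $\textrm{vispar}\:\bd X$ is a visual parameter, so there is a visual metric $d_\e$ realizing $D_\e = \visdim\bd X > 2$. The case $D_\e > 2$ of Theorem~\ref{sharp general} then produces a $D_\e^+ = (\visdim\bd X)^+$-summable representative over $\Lip(\bd X, d_\e)\ralg\G$, completing the proof.

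I expect no serious obstacle here: the Corollary is essentially a bookkeeping consequence of Theorem~\ref{sharp general}, and the only points requiring care are the passage from the infimal visual dimension to an \emph{actual} visual metric with $D_\e < p$ (using that the set of realized dimensions is an interval with infimum $\visdim\bd X$), and keeping track of the two regimes $D_\e > 2$ and $D_\e \le 2$ in the dichotomy of the theorem.
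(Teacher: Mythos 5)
Your argument is correct and is exactly the optimization the paper intends: the paper states the corollary without a written proof, remarking only that one "may optimize Theorem~\ref{sharp general} by varying the visual metric," which is precisely your choice of a visual metric with $D_\e < p$ (or $D_\e = \visdim \bd X$ in the attained case) followed by an application of the theorem's dichotomy. No gaps.
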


Now let us return to Lemma~\ref{double integral}, whose proof we have postponed.

\begin{proof}[Proof of Lemma~\ref{double integral}] The first estimate is straightforward. As in \eqref{boundary action is Lipschitz}, we have $(g\xi,g\xi')_o\leq d(o,go)+(\xi,\xi')_o$ for all $\xi,\xi'\in\bd X$. Therefore $d_\e(g\xi,g\xi')\geq c_1\exp(-\e\: d(o,go))\:d_\e(\xi,\xi')$ for some $c_1>0$, which implies that
\begin{align*}\bigg(\iint d_\e(g\xi,g\xi')^{2}\: d\mu(\xi) \: d\mu(\xi')\bigg)^{1/2} \geq c_2\exp(- \e\: d(o,go)).
\end{align*}
The second, conditional estimate is more involved. First, we assume that the visual parameter $\e$ is in the small visual range and that $d_\e$ is a visual metric enjoying the properties listed in Fact~\ref{small visual range}. We let $\alpha>0$, and we show the following: if $D_\e>2\alpha$, then there exists $C>0$ such that for all $g\in\G$ we have
\begin{align*}\bigg(\iint d_\e(g\xi,g\xi')^{2\alpha}\: d\mu(\xi) \: d\mu(\xi')\bigg)^{1/2}\leq C\exp(- \alpha\e\: d(o,go)).
\end{align*}
Let $\xi, \xi'\in\bd X$. Observe that $(gx,gx')_o+(g^{-1}o,x)_o+(g^{-1}o,x')_o\geq d(o,go)$ whenever $x,x'\in X$; indeed, this is just a complicated rewriting of the triangle inequality $d(o,x)+d(o,x')\geq d(x,x')$. Letting $x\to\xi, x\to\xi'$ and using \eqref{liminf limsup} and \eqref{liminf limsup 2}, we obtain that
\begin{align*}
(g\xi,g\xi')_o+(g^{-1}o,\xi)_o+(g^{-1}o,\xi')_o\geq d(o,go)-4\delta.
\end{align*}
Thus there is $C_1>0$ such that
\begin{align*}
d_\e(g\xi,g\xi')\leq C_1 \exp(-\e\: d(o,go))\: d_\e(g^{-1}o,\xi)^{-1}\: d_\e(g^{-1}o,\xi')^{-1}
\end{align*}
for all $g\in \G$ and $\xi,\xi'\in \bd X$. It follows that
\begin{align*}
\bigg(\iint d_\e(g\xi,g\xi')^{2\alpha}\: d\mu(\xi) \: d\mu(\xi')\bigg)^{1/2} \leq C_1^\alpha \exp(-\alpha\e\: d(o,go))\: \int d_\e(g^{-1}o,\xi)^{-2\alpha}\: d\mu(\xi)
\end{align*}
and our next goal is to show that the integral on the right hand side is bounded above independently of $g\in \G$. At this point, the technical advantage of using $\e$ in the small visual range becomes apparent: the function $d_\e(g^{-1}o,\cdot)$ is Lipschitz, in particular measurable, on $\bd X$. For each positive integer $k$ we put
\begin{align*}
\Delta_k= \big\{\xi\in \bd X: \exp(-\e k)\leq d_\e(g^{-1}o,\xi)\leq \exp(-\e (k-1))\big\}.
\end{align*}
(Although we will not need this fact, we remark that $d_\e(g^{-1}o,\cdot)$ is bounded below by a constant multiple of $\exp(-\e\: d(o,go))$, so $\Delta_k$ is in fact empty for $k\gg d(o,go)$.) From the hyperbolic inequality $(\xi,\xi')_o\geq \min\big\{(g^{-1}o,\xi)_o,(g^{-1}o,\xi')_o\big\}-\delta$, where $\xi, \xi'\in \bd X$, we deduce that there is $C_2\geq 0$ such that
\[d_\e(\xi,\xi')\leq C_2 \max\big\{d_\e(g^{-1}o,\xi),d_\e(g^{-1}o,\xi')\big\}\] 
for all $\xi, \xi'\in \bd X$. This inequality implies that $\mathrm{diam} (\Delta_k)\leq e^\e C_2\exp(-\e k)$. It now follows from Fact~\ref{Ahlfors} that there exists $C_3>0$, independent of $k$, such that
\begin{align*}
\mu(\Delta_k)\leq C_3\big(\exp(-\e k)\big)^{e_{X}/\e}= C_3 \exp(-e_{X}k).
\end{align*}
Using this diameter bound, we immediately get the desired integral estimate:
\begin{align*}
\int d_\e(g^{-1}o,\xi)^{-2\alpha} \: d\mu(\xi) &\leq \sum_{k\geq 1} \int_{\Delta_k} d_\e(g^{-1}o,\xi)^{-2\alpha} \: d\mu(\xi)\leq \sum_{k\geq 1} \exp(2\alpha\e k) \mu(\Delta_k)\\
&\leq C_3 \sum_{k\geq 1} \exp\big((2\alpha\e -e_X)k\big) 
\end{align*}
and the latter series converges since $D_\e=e_X/\e>2\alpha$ by assumption.

Now let $\e$ be an arbitrary visual parameter. Pick $\e_0$ in the small visual range, and let $d_{\e_0}$ be a corresponding visual metric. By Fact~\ref{scaling lemma}, we have
\begin{align*}
\bigg(\iint d_\e(g\xi,g\xi')^{2}\: d\mu(\xi) \: d\mu(\xi')\bigg)^{1/2}\asymp \bigg(\iint d_{\e_0}(g\xi,g\xi')^{2\e/\e_0}\: d\mu(\xi) \: d\mu(\xi')\bigg)^{1/2}.
\end{align*}
According to the lemma's hypothesis, $\hdim (\bd X,d_{\e_0})=(\e/\e_0)\:\hdim (\bd X,d_{\e})$ is greater than $2\e/\e_0$. Hence the previous part of the proof shows that the right-hand side is bounded above by a constant multiple of $\exp(-(\e/\e_0)\e_0\: d(o,go))=\exp(-\e\: d(o,go))$. \end{proof}

For the sake of conciseness, we adopt the following for the rest of the paper.

\begin{notation}
We write \emph{$D_\e^>$-summable} to mean 
\begin{align*}
\begin{cases}
D_\e^+\textrm{-summable} & \textrm{when } D_\e>2,\\
p\textrm{-summable for every } p>2 & \textrm{when } D_\e\leq 2.
\end{cases}
\end{align*}
\end{notation}


\section{The boundary extension}\label{what is the boundary extension} 
All the basic K-cycles coming from visual probability measures on $\bd X$ are K-homologous, by Proposition~\ref{independence of comparable measures}. The purpose of this section is to describe the $\K^1$-class of a basic K-cycle as the class of a certain canonical extension of $\hiX$ by the compact operators on $\ell^2\G$. This extension encodes the compactification of $\G$ obtained by attaching the boundary $\bd X$. 

Let $\overline{\G}=\G\cup \bd X$ be the compactification of $\G$ by the boundary of the geometric model $X$. By definition, $g\to \omega\in \bd X$ in $\overline \G$ if $go\to \omega$ in $\overline{X}$ for some (equivalently, each) base point $o\in X$. From the exact sequence of $\G$-C*-algebras $0\to C_0(\G)\to C(\overline{\G})\to C(\bd X)\to 0$ we obtain an exact sequence of C*-crossed products by $\G$:
\begin{align}
\label{eq:crossed_extension}
0\Too C_0(\G)\rtimes \G\Too C(\overline{\G})\rtimes\G\Too C(\bd X)\rtimes\G\Too 0
\end{align}
Each C*-algebra in \eqref{eq:crossed_extension} is nuclear; in particular, the full and the reduced crossed products agree. The faithful representation of $C(\overline{\G})$ on $\ell^2\G$ by multiplication induces a faithful representation $\pi: C(\overline{\G})\rtimes\G\to \B(\ell^2\G)$, which restricts to the standard isomorphism between the ideal term $C_0(\G)\rtimes \G$ and the compact operators $\Comp (\ell^2\G)$. Thus \eqref{eq:crossed_extension} defines a class in the Brown - Douglas - Fillmore group $\mathrm{Ext}(\hi)$. The nuclearity of $\hi$ implies that $\mathrm{Ext}(\hi)$ and $\K^1(\hi)$ are isomorphic. The map $\mathrm{Ext} \to \K^1$ is given by the Stinespring construction, which dilates a completely positive map to an odd Fredholm module.

The compactification of $\G$, hence the exact sequence \eqref{eq:crossed_extension} as well, is defined in reference to the chosen geometric model $X$. However, and this is an important point, the boundaries of two geometric models for $\G$ are $\G$-equivariantly homeomorphic. It follows that, up to isomorphism of extensions, the exact sequence \eqref{eq:crossed_extension} is independent of the chosen geometric model $X$.

 \begin{defn}
\label{def:boundary_class}
The \emph{boundary extension class} $[\bd_\Gamma] \in \mathrm{Ext}(\hi)$ is the class defined by the extension \eqref{eq:crossed_extension}. 
\end{defn}

We will show that the Fredholm module $(\lambda_{\mu}, P_{\ell^2\G})$ represents $[\bd_\Gamma]$. The initial ingredient is the following lemma, which should be compared with Proposition~\ref{convergence action} and its proof. 

\begin{lem}\label{sharp delta} If $g\to \omega\in \bd X$ in $\overline \G$, then  $g_*\mu\to \delta_\omega$ in $\mathrm{Prob}(\bd X)$.
\end{lem}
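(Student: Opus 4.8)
The plan is to reduce to sequences and then run an argument parallel to the proof of Proposition~\ref{convergence action}, the new content being the identification of the limiting point mass as $\delta_\omega$ rather than an unspecified attracting point. Since $\overline{\G}$ and $\mathrm{Prob}(\bd X)$ are both compact metrizable, it suffices to prove that $(g_n)_*\mu\to\delta_\omega$ for every sequence $(g_n)\subseteq\G$ with $g_n\to\omega$ in $\overline{\G}$. By definition of the compactification this means $g_no\to\omega$ in $\overline{X}$, so in particular $d(o,g_no)\to\infty$ and hence $g_n\to\infty$ in $\G$. Then $d(o,g_n^{-1}o)=d(o,g_no)\to\infty$ as well, so after passing to a subsequence we may assume $g_n^{-1}o\to\omega'$ for some $\omega'\in\bd X$.

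The geometric heart of the proof is the claim that $g_n\xi\to\omega$ for every $\xi\neq\omega'$. Starting from the isometry invariance $(gx,gy)_{go}=(x,y)_o$ and specializing $y=g^{-1}o$, a direct manipulation of the triangle-inequality terms yields the exact identity
\begin{align*}
(gx,go)_o=d(o,go)-(x,g^{-1}o)_o \qquad (x\in X).
\end{align*}
Letting $x\to\xi$ and invoking the $O(\delta)$ controls of \eqref{liminf limsup} and \eqref{liminf limsup 2}, this passes to the boundary as $(g\xi,go)_o\geq d(o,go)-(\xi,g^{-1}o)_o-C\delta$. For a \emph{fixed} $\xi\neq\omega'$, the products $(\xi,g_n^{-1}o)_o$ stay bounded (they converge, up to a bounded error, to $(\xi,\omega')_o<\infty$), while $d(o,g_no)\to\infty$; therefore $(g_n\xi,g_no)_o\to\infty$. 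Combining this with $(g_no,\omega)_o\to\infty$ through the hyperbolic inequality gives $(g_n\xi,\omega)_o\to\infty$, that is, $g_n\xi\to\omega$ in $\bd X$.

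With the dynamics in hand, the conclusion follows exactly as in Proposition~\ref{convergence action}. The repelling point $\omega'$ is a single point, which is $\mu$-negligible: a visual probability measure is comparable to an Ahlfors regular Hausdorff measure of dimension $D_\e=e_X/\e>0$ by Fact~\ref{Ahlfors}, so $\mu(\{\omega'\})\leq\mu(B_r)\asymp r^{D_\e}\to0$. Hence for every $\phi\in C(\bd X)$ the functions $\xi\mapsto\phi(g_n\xi)$ converge pointwise to the constant $\phi(\omega)$ off the $\mu$-null set $\{\omega'\}$, and they are uniformly bounded by $\|\phi\|_\infty$, so Lebesgue dominated convergence gives $\int\phi\,d(g_n)_*\mu=\int\phi(g_n\xi)\,d\mu(\xi)\to\phi(\omega)$. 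Thus $(g_n)_*\mu\to\delta_\omega$. Since this holds for every subsequence along which $g_n^{-1}o$ converges, and any subsequence of the original sequence admits such a further subsequence, the full sequence satisfies $(g_n)_*\mu\to\delta_\omega$; by metrizability this establishes the net statement $g_*\mu\to\delta_\omega$.

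I expect the only genuine obstacle to be the second step: correctly extracting the boundary estimate $(g\xi,go)_o\geq d(o,go)-(\xi,g^{-1}o)_o-O(\delta)$ from the interior identity, keeping track of which Gromov product (on $X\times\bd X$ versus $\bd X\times\bd X$) is being controlled and ensuring the passage to the limit $x\to\xi$ is uniform enough. Everything else is bookkeeping, non-atomicity, and dominated convergence.
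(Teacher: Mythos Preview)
Your proof is correct, and it takes a genuinely different route from the paper's own argument.

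The paper proceeds by a direct quantitative estimate. For fixed $\phi\in C(\bd X)$ it bounds $|\int\phi\,dg_*\mu-\phi(\omega)|$ by splitting $\bd X$ into the set $Z(g)=\{\xi:d_\e(g\xi,\omega)\geq R\}$ and its complement; uniform continuity handles the complement, and the main work is to show $\mu(Z(g))\to 0$. This is done by deriving, from the Gromov-product inequality $(g\xi,\omega)_o+(g^{-1}o,\xi)_o\geq(go,\omega)_o-O(\delta)$, a diameter bound $\mathrm{diam}\,Z(g)\leq C\,d_\e(go,\omega)$, and then invoking Ahlfors regularity (Fact~\ref{Ahlfors}) to get $\mu(Z(g))\leq C'\,d_\e(go,\omega)^{e_X/\e}$. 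So the paper obtains an explicit rate of convergence in terms of $d_\e(go,\omega)$.

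Your argument is the purely dynamical one: you identify the sequence $(g_n)$ as a convergence-type sequence with attracting point $\omega$ and repelling point $\omega'$ (any subsequential limit of $g_n^{-1}o$), using the exact identity $(gx,go)_o+(x,g^{-1}o)_o=d(o,go)$ together with the boundary controls \eqref{liminf limsup}--\eqref{liminf limsup 2}, and then you apply dominated convergence exactly as in Proposition~\ref{convergence action}. Ahlfors regularity enters only to ensure $\mu(\{\omega'\})=0$. This is more elementary in that it avoids the small-visual-range apparatus and the diameter estimate on $Z(g)$; on the other hand it gives no quantitative rate. Your handling of the sub-subsequence reduction is also fine.
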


\begin{proof} Fix $\phi \in C(\bd X)$. We have
\begin{align*}
\bigg| \int \phi\: d g_*\mu-\phi(\omega)\bigg|=\bigg| \int \phi(g\xi)-\phi(\omega)\: d\mu(\xi)\bigg| \leq \int \big|\phi(g\xi)-\phi(\omega)\big|\: d\mu(\xi).
\end{align*}
and we show that the right-hand integral converges to $0$ as $g\to \omega$ in $\overline \G$. Let $t>0$, and let $d_\e$ be a visual metric on $\bd X$ with parameter $\e$ in the small visual range so that Fact~\ref{small visual range} applies. The uniform continuity of $\phi$ provides us with some $R>0$ such that $|\phi(\xi)-\phi(\xi')|<t$ whenever $d_\e(\xi,\xi')<R$. The set
\begin{align*}
Z(g)=\{\xi\in \bd X: d_\e(g\xi,\omega)\geq R\}
\end{align*}
is measurable, since $\xi\mapsto d_\e(g\xi,\omega)$ is continuous. We write:
\begin{align*}
\int \big|\phi(g\xi)-\phi(\omega)\big|\: d\mu(\xi) &= \int_{\bd X\setminus Z(g)} \big|\phi(g\xi)-\phi(\omega)\big|\: d\mu(\xi) + \int_{Z(g)} \big|\phi(g\xi)-\phi(\omega)\big|\: d\mu(\xi)\\
&\leq  t+2\|\phi\|_\infty\: \mu (Z(g))
\end{align*}
It suffices to show that $\mu (Z(g))\to 0$ as $g\to \omega$. Let $o\in X$ be a basepoint. One easily checks that $(gx,w)_o+(g^{-1}o,x)_o\geq (go,w)_o$ for $x,w\in X$. Letting $x\to \xi$ and $w\to \omega$, and using \eqref{liminf limsup} and \eqref{liminf limsup 2}, we get $(g\xi,\omega)_o+(g^{-1}o,\xi)_o\geq (go,\omega)_o-3\delta$. It follows that there is $C_1>0$ such that
\begin{align*}
d_\e(g\xi,\omega)\:d_\e(g^{-1}o,\xi)\leq C_1\:d_\e(go,\omega)
\end{align*}
for all $g\in \G$ and $\xi,\omega\in\bd X$. Now by hyperbolicity there exists $C_2>0$ such that $d_\e(\xi,\xi')\leq C_2 \max\big\{d_\e(g^{-1}o,\xi),d_\e(g^{-1}o,\xi')\big\}$ for all $\xi,\xi'\in Z(g)$. In turn, both $d_\e(g^{-1}o,\xi)$ and $d_\e(g^{-1}o,\xi')$ are at most $C_1R^{-1}\: d_\e(g_o,\omega)$ by the inequality above. Thus $\mathrm{diam}(Z(g))\leq C_3 d_\e(go,\omega)$. By Ahlfors regularity (Fact~\ref{Ahlfors}) $\mu (Z(g))\leq C_4\: d_\e(go,\omega)^{e_{X}/\e}$. Now if  $g\to \omega$ then $d_\e(go,\omega)\to 0$, so $\mu (Z(g))\to 0$ as desired.
\end{proof}

In terms of the $\G$-expectation, Lemma~\ref{sharp delta} can be interpreted as follows: if $g\to \omega\in \bd X$ in $\overline \G$, then $\E \phi(g)\to \phi(\omega)$ for all $\phi\in C(\bd X)$. This means that we may extend continuous maps on the boundary $\bd X$ to continuous maps on the compactification $\overline{\G}$ by gluing a map to its $\G$-expectation. Namely, for $\phi\in C(\bd X)$ we define $\Ebar \phi\in C(\overline{\G})$ by 
\begin{align*}
\Ebar\phi= 
\begin{cases} \phi & \text{on $\bd X$}
\\
\E\phi &\text{on $\G$}.
\end{cases}
\end{align*}
We view $\Ebar: C(\bd X)\to C(\overline{\G})$ as a $\G$-equivariant, completely positive section for the quotient map $C(\overline{\G})\onto C(\bd X)$ given by restriction. We immediately obtain a completely positive section for the quotient map $C(\overline{\G})\rtimes \G\onto \hiX$.

\begin{thm}\label{representing boundary class} Define 
\begin{align}\label{def sec}
s_\mu : C(\bd X)\ralg \G \to C(\overline{\G})\rtimes \G, \qquad s_\mu\big(\sum \phi_g\, g\big) = \sum \big(\Ebar \phi_g\big)\, g.
\end{align} 
Then $s_\mu$ extends to a completely positive section for the quotient map $C(\overline{\G})\rtimes \G\onto \hiX$. Consequently, $(\lambda_{\mu}, P_{\ell^2\G})$ is a Fredholm module representing the boundary extension class $[\bd_\G]$.
\end{thm}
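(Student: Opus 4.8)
The plan is to establish the two assertions in turn. The decisive observation, from which both follow almost formally, is that after composing with the faithful representation $\pi$ the section $s_\mu$ becomes the compression of $\lambda_\mu$ by $\projcon$. Concretely, for $a=\sum\phi_g\,g$ in $C(\bd X)\ralg\G$ I would compute
\begin{align*}
\pi\bigl(s_\mu(a)\bigr)=\sum\pi(\Ebar\phi_g)\,\pi(g)=\sum\M(\E\phi_g)\,\lambda(g)=\projcon\,\lambda_\mu(a)\,\projcon,
\end{align*}
where the middle equality uses that $\Ebar\phi_g$ restricts to $\E\phi_g$ on $\G$ (so $\pi(\Ebar\phi_g)=\M(\E\phi_g)$) together with $\pi(g)=\lambda(g)$, and the last equality is the identity \eqref{projection compressed lambda} combined with the fact that $\projcon$ commutes with the group part $\lambda_\mu|_\G$. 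Throughout, $\projcon\B\bigl(\ell^2(\G,L^2(\bd X,\mu))\bigr)\projcon$ is identified with $\B(\ell^2\G)$ via the isometric inclusion $V\colon\ell^2\G\to\ell^2(\G,L^2(\bd X,\mu))$ of constant functions, so that $VV^*=\projcon$ and $\projcon\lambda_\mu(a)\projcon$ corresponds to $V^*\lambda_\mu(a)V$.

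For the first assertion, set $\Theta(a):=\projcon\lambda_\mu(a)\projcon$. Being a compression of the $*$-representation $\lambda_\mu$, the map $\Theta$ is a unital completely positive map defined on all of $\hiX$. For $a$ in the algebraic crossed product its value equals $\pi(s_\mu(a))$ and so lies in the C*-algebra $\pi\bigl(C(\overline{\G})\rtimes\G\bigr)$; since this algebra is norm-closed and $\Theta$ is norm-contractive with dense image in it, we get $\Theta(\hiX)\subseteq\pi\bigl(C(\overline{\G})\rtimes\G\bigr)$. As $\pi$ is faithful, $\tilde s_\mu:=\pi^{-1}\circ\Theta$ is a well-defined unital completely positive map $\hiX\to C(\overline{\G})\rtimes\G$ which restricts to $s_\mu$ on the algebraic crossed product. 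Finally $\tilde s_\mu$ is a section: composing with the quotient $q\colon C(\overline{\G})\rtimes\G\onto\hiX$ gives the identity on $C(\bd X)\ralg\G$ (because $\Ebar\phi_g$ restricts to $\phi_g$ on $\bd X$), hence on all of $\hiX$ by continuity.

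For the second assertion, recall that under the isomorphism $\mathrm{Ext}(\hiX)\cong\K^1(\hiX)$ the boundary extension class of Definition~\ref{def:boundary_class} is represented by the odd Fredholm module obtained by Stinespring-dilating any unital completely positive lift, through $\pi$, of the Busby invariant of \eqref{eq:crossed_extension} (see Higson--Roe, Ch.~8). I take the lift $\sigma:=\pi\circ\tilde s_\mu=\Theta$; it genuinely lifts the Busby invariant since $\tilde s_\mu$ is a section of $q$ and $\pi$ carries the ideal $C_0(\G)\rtimes\G$ onto $\Comp(\ell^2\G)$. The key computation reads $\sigma(a)=V^*\lambda_\mu(a)V$, so $(\lambda_\mu,V)$ is a Stinespring dilation of $\sigma$ and the associated Fredholm module is precisely $(\lambda_\mu,\projcon)$. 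Passing to a non-minimal dilation only adds the degenerate summand supported on the orthocomplement of $\overline{\lambda_\mu(\hiX)V\ell^2\G}$, on which $\projcon$ vanishes and commutes with $\lambda_\mu$; this leaves the K-homology class unchanged. Hence $(\lambda_\mu,\projcon)$, already known to be a Fredholm module, represents $[\bd_\G]$.

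The computational heart is the identity $\pi\circ s_\mu=\projcon\,\lambda_\mu(\cdot)\,\projcon$, and I expect the only genuinely delicate points to be bookkeeping around it: first, the closedness argument that places $\Theta(\hiX)$ inside $\pi\bigl(C(\overline{\G})\rtimes\G\bigr)$, which is exactly what lets faithfulness of $\pi$ upgrade the evidently completely positive compression into a section; and second, matching conventions in the $\mathrm{Ext}\cong\K^1$ correspondence carefully enough to read off $(\lambda_\mu,\projcon)$ itself, rather than merely some homologous module, as the Stinespring output. Everything else is formal once the compression identity is in hand.
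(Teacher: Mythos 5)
Your proposal is correct and follows essentially the same route as the paper: the computational core in both cases is the identity $\pi\circ s_\mu=\projcon\,\lambda_\mu(\cdot)\,\projcon$ on the algebraic crossed product, from which the completely positive extension follows by faithfulness of $\pi$ and the representation of $[\bd_\G]$ by $(\lambda_\mu,\projcon)$ follows from reading this identity as a Stinespring dilation. You merely spell out two points the paper leaves implicit (the closedness argument placing $\Theta(\hiX)$ inside $\pi\bigl(C(\overline{\G})\rtimes\G\bigr)$, and the harmlessness of a non-minimal dilation), which is fine.
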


\begin{proof} Recall that $\pi: C(\overline{\G})\rtimes\G\to \B(\ell^2\G)$ is the representation induced by the multiplication representation of $C(\overline{\G})$ on $\ell^2\G$. We claim that
\begin{align}\label{stined}
\pi s_\mu(a)= P_{\ell^2\G} \lambda_{\mu}(a) P_{\ell^2\G}
\end{align}
 for all $a\in C(\bd X)\rtimes _{\textrm{alg}} \G$. Indeed, for $\phi \in C(\bd X)$ and $g \in  \G$ we have
\begin{align*}\pi s_\mu(\phi g)=\pi\big(\Ebar\phi\big)\pi(g)= \big(P_{\ell^2\G} \lambda_{\mu}(\phi) P_{\ell^2\G}\big)\big(P_{\ell^2\G} \lambda_{\mu}(g) P_{\ell^2\G}\big)=P_{\ell^2\G} \lambda_{\mu}(\phi g) P_{\ell^2\G}
\end{align*}
by using \eqref{projection compressed lambda}, and the fact that $\lambda_{\mu}|_\G$ commutes with $P_{\ell^2\G}$. Since $\pi$ is faithful, and therefore isometric,  \eqref{stined} implies that $s_\mu$ extends by continuity to a completely positive section for the quotient map $C(\overline{\G})\rtimes \G\onto \hiX$. The Stinespring dilation $\pi s_\mu= P_{\ell^2\G}\: \lambda_{\mu}\: P_{\ell^2\G}$ precisely means that $(\lambda_{\mu}, P_{\ell^2\G})$ represents $[\bd_\G]$. \end{proof}
 
Theorem~\ref{representing boundary class} and Proposition ~\ref{finite index} imply the following. 

\begin{prop}\label{finite index boundary} Let $\Lambda$ be a finite-index subgroup of $\G$. Then the restriction homomorphism $\mathrm{res}: \K^1(\hi)\to\K^1(C(\bd \Lambda)\rtimes \Lambda)$  sends $[\bd_\G]$ to $[\G:\Lambda] \cdot [\bd_{\Lambda}]$.
\end{prop}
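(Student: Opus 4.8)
The plan is to combine the concrete representation of the boundary extension class furnished by Theorem~\ref{representing boundary class} with the finite-index formula of Proposition~\ref{finite index}. First I would record the elementary fact that, since $\Lambda$ has finite index in $\G$, the restricted action of $\Lambda$ on the geometric model $X$ is again geometric, so $X$ is a geometric model for $\Lambda$ as well. Consequently $\Lambda$ is itself a non-elementary hyperbolic group, its boundary $\bd\Lambda$ may be identified with $\bd X$, the crossed product $C(\bd\Lambda)\rtimes\Lambda$ with $C(\bd X)\rtimes\Lambda$, and the fixed visual probability measure $\mu$ on $\bd X$ remains a visual probability measure for the $\Lambda$-action. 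Theorem~\ref{representing boundary class}, applied once to $\G$ and once to $\Lambda$, then tells us that $(\lambda^\G_\mu, P_{\ell^2\G})$ represents $[\bd_\G]$ and that $(\lambda^\Lambda_\mu, P_{\ell^2\Lambda})$ represents $[\bd_\Lambda]$.

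With these identifications the statement reduces to the homological identity
\begin{align*}
\mathrm{res}\:\big[(\lambda^\G_\mu, P_{\ell^2\G})\big] = [\G:\Lambda]\:\big[(\lambda^\Lambda_\mu, P_{\ell^2\Lambda})\big]
\end{align*}
in $\K^1(C(\bd X)\rtimes\Lambda)$, which is exactly the conclusion of Proposition~\ref{finite index} applied with the abstract space there taken to be $\bd X$ and the groups taken to be $\G\geq\Lambda$. It therefore remains only to verify, for the visual measure $\mu$, the two hypotheses of that proposition.

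The first hypothesis, that $\mu$ has $C_0$-deviation, follows from Proposition~\ref{convergence action} together with Proposition~\ref{Furstenberg type condition}: the boundary action is a convergence action, and every point is $\mu$-negligible, since Ahlfors regularity (Fact~\ref{Ahlfors}) with the strictly positive dimension $D_\e=e_X/\e$ gives $\mu(\{\xi\})\leq\mu(B_r(\xi))\asymp r^{D_\e}\to 0$. Hence $\mu$ has the convergence property, and therefore $C_0$-deviation. For the second hypothesis, that $\{g_*\mu\}_{g\in\G}$ is a family of mutually comparable measures, I would note that for each fixed $g$ the homeomorphism $\xi\mapsto g\xi$ is bi-Lipschitz on $(\bd X,d_\e)$: inequality \eqref{boundary action is Lipschitz}, applied to $g$ and to $g^{-1}$ and using $d(o,g^{-1}o)=d(o,go)$, bounds $d_\e(g\xi,g\xi')$ above and below by constant multiples of $d_\e(\xi,\xi')$. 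Since $\mu\asymp\mu_\e$ with $\mu_\e$ Ahlfors $D_\e$-regular, a bi-Lipschitz homeomorphism pushes $\mu_\e$ forward to a comparable measure, so $g_*\mu\asymp\mu$ for every $g$; transitivity of $\asymp$ then yields mutual comparability of the entire orbit.

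The only genuinely delicate point is this mutual comparability, and even there the difficulty is bookkeeping rather than substance: the bi-Lipschitz constant of $g$ degenerates as $d(o,go)\to\infty$, so comparability is not uniform over the orbit. This is harmless, however, because Proposition~\ref{finite index} requires only that each \emph{pair} $g_*\mu,h_*\mu$ be comparable, which is precisely what the bi-Lipschitz argument delivers. Once both hypotheses are in place, invoking Proposition~\ref{finite index} and substituting the representatives of $[\bd_\G]$ and $[\bd_\Lambda]$ supplied by Theorem~\ref{representing boundary class} completes the proof.
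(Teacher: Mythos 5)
Your proposal is correct and follows essentially the same route as the paper: combine Theorem~\ref{representing boundary class} with Proposition~\ref{finite index}, identifying $\bd\Lambda$ with $\bd X$ and checking that the Lipschitz action of $\G$ on $(\bd X,d_\e)$ makes the orbit $\{g_*\mu\}$ mutually comparable. The only (harmless) cosmetic difference is that you re-derive the $C_0$-deviation of $\mu$ via the convergence property, whereas the paper already has it from Theorem~\ref{sharp general}.
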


Indeed, the comparability condition required in the statement of Proposition ~\ref{finite index} is satisfied in our setting. Since $\G$ acts by Lipschitz maps on $\bd X$ for any choice of visual metric $d_\e$, the corresponding Hausdorff measure satisfies $g_*\mu_\e\asymp\mu_\e$ for all $g\in \G$; the same will then hold for any visual probability measure on $\bd X$.


\section{Poincar\'e duality and twisted K-cycles}
\subsection{Poincar\'e duality} Poincar\'e duality for $\hi$, proved in \cite{Emerson}, plays an essential role in this paper. The proof from \cite{Emerson}, though most likely not Poincar\'e duality itself, needs the following mild symmetry condition on the boundary.

\begin{defn}\label{defn-regular}
A hyperbolic group is \emph{regular} if  its boundary admits a continuous self-map without fixed-points.
\end{defn}

Regularity in the above sense is satisfied whenever the boundary is a topological sphere, a Cantor set or a Menger compactum. We are not aware of any example where regularity fails. The `topologically rigid' hyperbolic groups of Kapovich and Kleiner \cite{KK} come close, though, for their boundaries admit no self-homeomorphisms without fixed points (but regularity does not require a homeomorphism, merely a 
map).  

Poincar\'e duality is defined by a cup-cap procedure explained in \cite{Emerson}. A C*-algebra is \emph{Poincar\'e self-dual} in this sense if there are two 
`fundamental classes', 
\begin{align*} 
\Delta \in \KK_1(A\otimes A, \C), \qquad \widehat{\Delta}\in \KK_1(\C, A\otimes A)
\end{align*}
 satisfying certain equations which we do not specify here (the zig-zag equations of the theory of adjoint functors.) Given \(\Delta\) as above, a `Poincar\'e duality' 
 map is defined by 
 \begin{align*} 
 \Delta\cap \colon \K_*(A) \to \K^{*+1}(A),\qquad  \Delta \cap x = (x\otimes_\C 1_A)\otimes_{A\otimes A} \Delta,
 \end{align*}
 or in other words, by composing in \(\KK\), the morphisms \(x\otimes 1_A\in \KK_*(A, A\otimes A)\) with \(\Delta\in \KK_1(A\otimes A, \C)\), to get a morphism in \(\KK_{*+1}(A, \C) = \K^{*+1}(A)\). 
 
For $\hi$, such fundamental classes $\Delta$ and $\widehat{\Delta}$ are constructed in \cite{Emerson}, and, using the Baum - Connes machinery, it is shown that \(\Delta \cap \) is an isomorphism when $\G$ is regular and torsion-free. The inverse isomorphism comes from \(\widehat{\Delta}\). The class \(\Delta\) is defined as an \emph{extension}, \emph{i.e.} as a cycle for the Brown - Douglas - Fillmore group \(\mathrm{Ext}(\hi\,\otimes \,\hi, \C)\), whose Busby invariant is as follows. First let 
\begin{align*}
\tau\colon \hi \to \Calk
\end{align*}
be the Busby invariant of the extension \eqref{eq:crossed_extension}. Thus
\(\tau\) is the integrated form of the covariant pair associated to the regular representation \(\lambda \colon \G \to \B(\ell^2\G)\) followed by the quotient map 
\(\B \to \B/\Comp\), and the map \(\phi\mapsto \M (\tilde{\phi})\) followed by the 
quotient map, where \(\tilde{\phi}\) is an extension of \(\phi\in C(\bd \G)\) to a continuous function on \(\overline{\G}\). Let
\begin{align*}
\tauop\colon \hi \to \Calk, \qquad \tauop(a) := 
J\tau (a)J
\end{align*} 
where \(J\) is the symmetry 
\begin{align*}
J: \ell^2\G\to \ell^2\G,\qquad  J(\delta_g) := \delta_{g^{-1}}.
\end{align*} 
Note that \(\sop\colon \hi \to JC(\overline{\G})\rtimes \G J \subset \B(l^2\G)\), 
defined by \(\sop (a) := Js(a)J\) is a completely positive splitting of \(\tauop\).

As proved in \cite{Emerson}, \(\tau\) and \(\tauop\) commute so they 
combine to give a unital $*$-homomorphism \(\hi\,\otimes \,\hi\to 
\Calk\), \(a\otimes b \mapsto \tau (a)\tauop (b)\). The class \(\Delta\) is 
by definition the pre-image of this extension under the canonical map 
\begin{align*} 
 \KK_1(\hi\,\otimes \,\hi, \C) \to  \mathrm{Ext}(\hi\, \otimes \, \hi, \C),
 \end{align*}
which is an isomorphism since \(\hi\,\otimes \,\hi\) is nuclear. However, a \emph{cycle} in \(\KK_1\) representing \(\Delta\) is difficult to describe because we 
do not know of a concrete completely positive splitting of the extension 
defining \(\Delta\). 

Nevertheless, we show that the ideas of the previous sections can be used 
to compute the Poincar\'e duality \emph{map} \(\Delta \cap\) in explicit terms.

\begin{lem}
\label{whatispd}
Let \(e\in \hi\) be a projection. Then the Poincar\'e dual \(\Delta \cap [e]\) of the class \([e]\in \K_0(\hi)\) is the class in \(\K^1(\hi)  = \KK_1(\hi, \C) \cong \mathrm{Ext}(\hi, \C)\) of the extension 
with Busby invariant 
\begin{align*}\tau_e: \hi \to \B(\ell^2\G)/\Comp(\ell^2\G), \qquad \tau_e(a):= \tauop(e) \tau (a)\tauop (e).
\end{align*}
Let \(u \in \hi\) be a unitary, and let
\begin{align*}
\bar{u}\colon C_0(\R) \subset C(S^1) \to \hi
\end{align*}
denote the composition of the usual inclusion of \(C_0(\R) \cong C_0(S^1-\{1\})\) into \(C(S^1)\), functional calculus for \(u\). Then the Poincar\'e dual 
\(\Delta \cap [u]\) of the class \([u]\in \K_1(\hi)\) is the class in \(\KK_1(C_0(\R)\otimes \hi, \C)\) of the extension of \(C_0(\R)\otimes \hi\) by \(\Comp(\ell^2\G)\) whose Busby invariant is 
\begin{align*} 
\tau_u(f\otimes a) = (\tauop\circ \bar{u})(f)\tau (a).
\end{align*}
\end{lem}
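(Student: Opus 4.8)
The task is to identify the Poincaré dual classes $\Delta \cap [e]$ and $\Delta \cap [u]$ in terms of the explicit extensions $\tau_e$ and $\tau_u$. The strategy is to unwind the definition of the cap product in $\KK$-theory against the concrete description of $\Delta$ as the external product extension $a \otimes b \mapsto \tau(a)\tauop(b)$, and to show that capping with a K-theory class amounts to pre- and post-compressing this extension by the relevant idempotent data. I would organize the argument around the formula $\Delta \cap x = (x \otimes_\C 1_A) \otimes_{A \otimes A} \Delta$ for $A = \hi$, treating the projection case first and then adapting to the unitary case.

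\emph{The projection case.} First I would recall that $[e] \in \K_0(\hi) = \KK_0(\C, \hi)$ is represented, under the isomorphism $\KK_0(\C, A) \cong \K_0(A)$, by the projection $e$. The external product $[e] \otimes 1_A \in \KK_0(\hi, \hi \otimes \hi)$ is then the class of the map $a \mapsto e \otimes a$ (or more precisely, multiplication into the first tensor leg by $e$). Composing with $\Delta \in \KK_1(\hi \otimes \hi, \C)$ amounts to precomposing the Busby invariant $a \otimes b \mapsto \tau(a)\tauop(b)$ of $\Delta$ with this map. The key computation is that the resulting extension of $\hi$ has Busby invariant $a \mapsto \tau(e)\tauop(a)$ --- but one must then transpose the roles of $\tau$ and $\tauop$, since $\tau$ and $\tauop$ commute (as established in \cite{Emerson}) and the two tensor factors are symmetric under the flip. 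The commutation of $\tau$ and $\tauop$ is precisely what allows the factor $\tauop(e)$ to be moved to both sides of $\tau(a)$, yielding the symmetric compression $\tau_e(a) = \tauop(e)\tau(a)\tauop(e)$; here one uses that $\tauop(e)$ is a projection in the Calkin algebra (since $e$ is a projection and $\tauop$ is a $*$-homomorphism) so that sandwiching is idempotent-compatible. I would verify that $\tau_e$ is indeed a Busby invariant, i.e.\ a $*$-homomorphism into the corner $\tauop(e)\,\Calk\,\tauop(e)$, using multiplicativity of $\tau$ together with the commutation relation to collapse the product of two such sandwiched elements.

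\emph{The unitary case.} Here $[u] \in \K_1(\hi) = \KK_1(\C, \hi)$ is described via the standard identification of odd K-theory with homotopy classes of unitaries, equivalently via the Bott map $C_0(\R) \to \hi$, $f \mapsto \bar u(f)$, defined by functional calculus as in the statement. The external product with $1_A$ and composition with $\Delta$ now produces an extension of $C_0(\R) \otimes \hi$ (rather than $\hi$ itself), reflecting the degree shift, with Busby invariant $f \otimes a \mapsto (\tauop \circ \bar u)(f)\,\tau(a)$. The computation parallels the projection case: the $C_0(\R)$-leg is fed through $\tauop \circ \bar u$ while the $\hi$-leg goes through $\tau$, and their images commute by the commutation of $\tau$ and $\tauop$, so the formula genuinely defines a $*$-homomorphism into the Calkin algebra. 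I would take care to match the grading/parity bookkeeping so that the class lands in $\KK_1(C_0(\R) \otimes \hi, \C)$ as asserted.

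\emph{Main obstacle.} The hard part will be making the passage from the abstract $\KK$-theoretic cap product to the concrete Busby-invariant formula fully rigorous, because --- as the excerpt explicitly warns --- there is no concrete completely positive splitting of the extension defining $\Delta$ available, so one cannot simply manipulate cycles. I would therefore work at the level of extensions and the isomorphism $\KK_1(\hi \otimes \hi, \C) \cong \mathrm{Ext}(\hi \otimes \hi, \C)$, exploiting that the external product by a K-theory class and the Kasparov product against an extension can both be computed functorially on Busby invariants by composition with the corresponding completely positive maps. Concretely, the completely positive splittings $s_\mu$ and $\sop$ of $\tau$ and $\tauop$ (which \emph{are} available, from Theorem~\ref{representing boundary class}) let one realize the compressions as genuine completely positive sections of $\tau_e$ and $\tau_u$, confirming these are legitimate BDF cycles. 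The remaining subtlety is checking that the cap product indeed corresponds to this compression and not, say, to a homotopic but differently-presented extension; this I would settle by appealing to the functoriality of the cup-cap construction of \cite{Emerson} together with the symmetry afforded by the commuting pair $(\tau, \tauop)$.
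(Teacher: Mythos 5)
Your proposal follows essentially the same route as the paper: unwind $\Delta\cap[e]=([e]\otimes 1_{\hi})\otimes_{\hi\otimes\hi}\Delta$ by composing the Busby invariant $a\otimes b\mapsto\tau(a)\tauop(b)$ of $\Delta$ with the $*$-homomorphism determined by $e$ (respectively by $\bar u$ on $C_0(\R)$ in the odd case), then use that $\tauop(e)$ is a projection in the Calkin algebra commuting with every $\tau(a)$ to arrive at $\tau_e$. The paper's proof is exactly this functoriality-of-$\KK$-and-$\mathrm{Ext}$ argument, and is equally terse about which tensor leg receives $e$; your extra remarks about realizing completely positive splittings of $\tau_e$ via $s_\mu$ and $\sop$ are consistent elaborations rather than a different method.
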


\begin{proof}
Both assertions follow from the functoriality of the Kasparov and Brown - Douglas - Fillmore theories. We prove the first assertion, concerning projections. The second one, concerning unitaries, is proved similarly. If \(e\) is a projection in \(\hi\), then \([e] = e_*([1])\) where \(e_*\colon \K_*(\C) \cong \Z \to \K_*(\hi)\) is the $*$-homomorphism induced by \(e\), and 
the element 
\begin{align*}
[e]\otimes 1_{\hi}\in \KK_0\big(\hi, \hi \,\otimes \,\hi\big)
\end{align*} 
is represented by the $*$-homomorphism \(e\otimes 1_{\hi}\), and composing with \(\Delta\) amounts to composing the Busby invariant for \(\Delta\) and the $*$-homomorphism \(e\otimes 1\). By the definitions and the fact that \(\tauop (e)\) is a projection in the Calkin algebra commuting with \(\tau (a)\) for all \(a\), this yields \(\tau_e\).  \end{proof}

Note that the boundary extension class $[\bd_\G]\in \K^1(\hi)$ is the Poincar\'e dual of the unit class $[1_{\hi}]\in \K_0(\hi)$. On the other hand, the K-theory Gysin sequence of \cite{Emerson:Euler} shows that the order of $[1_{\hi}]$ in $\K_0(\hi)$ is determined by the Euler characteristic of $\G$. Consequently: 

\begin{prop}[Emerson, Emerson - Meyer]\label{order tf} Assume that $\G$ is regular and torsion-free. Then $[\bd_\G]= 0$ in $\K^1(\hi)$ if and only if $\chi(\G)=\pm 1$. Furthermore, $[\bd_\G]$ has infinite order in $\K^1(\hi)$ if and only if $\chi(\G)=0$. 
\end{prop}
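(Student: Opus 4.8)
The plan is to reduce the statement about $[\bd_\G]$ to a computation about the order of the unit class $[1_{\hi}] \in \K_0(\hi)$, using two facts assembled just before the proposition. First, the boundary extension class $[\bd_\G] \in \K^1(\hi)$ is the Poincar\'e dual of the unit class $[1_{\hi}]$, that is $[\bd_\G] = \Delta \cap [1_{\hi}]$; this is recorded immediately after Lemma~\ref{whatispd} (and is the content of Theorem~\ref{intro: basic K-cycle} together with Theorem~\ref{representing boundary class}). Second, since $\G$ is regular and torsion-free, the Poincar\'e duality map $\Delta \cap \colon \K_0(\hi) \to \K^1(\hi)$ is an \emph{isomorphism} by \cite{Emerson}. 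An isomorphism preserves the order of elements, so the order of $[\bd_\G]$ in $\K^1(\hi)$ equals the order of $[1_{\hi}]$ in $\K_0(\hi)$. Thus both assertions follow once I know the order of $[1_{\hi}]$.

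The remaining step is therefore purely a K-theoretic input about $\K_0(\hi)$, which I would import from the K-theory Gysin sequence of \cite{Emerson:Euler} (Emerson--Meyer). That sequence relates $\K_*(\hi)$ to $\K_*(B\G)$ via the inclusion $i \colon \Cred\G \to \hi$ and an Euler-class map, and in particular it pins down the image and order of the unit class in terms of the Euler characteristic $\chi(\G)$. The output I want to extract is: the unit class $[1_{\hi}]$ has order dividing (in fact equal to, up to sign) the quantity governed by $\chi(\G)$, so that $[1_{\hi}] = 0$ exactly when $\chi(\G) = \pm 1$, and $[1_{\hi}]$ has infinite order exactly when $\chi(\G) = 0$. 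Concretely, in the Gysin picture the unit maps to a generator weighted by $\chi(\G)$ inside a free abelian summand, so vanishing forces $|\chi(\G)| = 1$, a nonzero finite multiple would force a torsion phenomenon that does not occur, and $\chi(\G) = 0$ leaves the class with infinite order.

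Assembling these: transport the order statement for $[1_{\hi}]$ across the isomorphism $\Delta \cap$ to conclude that $[\bd_\G] = 0$ iff $\chi(\G) = \pm 1$, and that $[\bd_\G]$ has infinite order iff $\chi(\G) = 0$. I would phrase the proof as: (i) cite $[\bd_\G] = \Delta \cap [1_{\hi}]$; (ii) cite that $\Delta \cap$ is an isomorphism for $\G$ regular and torsion-free, hence order-preserving; (iii) cite the Gysin-sequence computation of the order of $[1_{\hi}]$ in terms of $\chi(\G)$; (iv) combine.

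The main obstacle is step (iii): everything else is formal once one accepts that $\Delta \cap$ is an order-preserving bijection, but the precise order of the unit class is genuinely external input. The delicate point is verifying that the only possibilities are $[1_{\hi}] = 0$ (when $|\chi(\G)| = 1$) and $[1_{\hi}]$ of infinite order (when $\chi(\G) = 0$) --- in other words, that the unit class is never a nonzero element of finite order. This requires reading off from the exact sequence of \cite{Emerson:Euler} that $[1_{\hi}]$ lands, via the Euler map $\Eul(a) = \chi(\G)\,\ind(a)\,[\pnt]$, in a torsion-free summand isomorphic to $\K_0(B\G) \supseteq \Z\,[\pnt]$, so its order is exactly the order of $\chi(\G) \cdot [\pnt]$ there; this is $1$ (trivial) precisely when $|\chi(\G)| = 1$ and infinite precisely when $\chi(\G) = 0$, with no intermediate finite order possible because the ambient group is free abelian. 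I would attribute this computation to Emerson and Emerson--Meyer, as the proposition's statement already does.
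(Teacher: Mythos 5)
Your reduction --- $[\bd_\G]=\Delta\cap[1_{\hi}]$, the Poincar\'e duality map is an isomorphism for regular torsion-free $\G$ and hence order-preserving, so everything comes down to the order of $[1_{\hi}]$ in $\K_0(\hi)$, which is imported from the K-theory Gysin sequence of \cite{Emerson:Euler} --- is exactly the paper's argument; the paper disposes of the proposition in the two sentences preceding its statement. The gap is in your elaboration of step (iii), which as written is incorrect and would yield the \emph{inverted} conclusion. The actual computation behind the citation is: from the extension $0\to\Comp(\ell^2\G)\to C(\overline{\G})\rtimes\G\to\hi\to 0$ and the $\KK$-equivalence $\Cred\G\simeq C(\overline{\G})\rtimes\G$ one gets $\K_0(\hi)\cong\K_0(\Cred\G)/\Z\cdot\chi(\G)[1_{\Cred\G}]$, the image of the generator of $\K_0(\Comp)\cong\Z$ being the Euler class $\chi(\G)\,[1_{\Cred\G}]$. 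Since $[1_{\Cred\G}]$ has infinite order (pair with the canonical trace), the image $[1_{\hi}]$ of $[1_{\Cred\G}]$ in this quotient has order exactly $|\chi(\G)|$, and infinite order when $\chi(\G)=0$.

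Two concrete problems follow. First, your claim that ``the unit class is never a nonzero element of finite order'' is false: for a free group of rank $n\geq 3$ the unit of $\hi$ (a Cuntz--Krieger algebra) has order $n-1=|\chi(\G)|\geq 2$, and the proposition is deliberately worded so as not to exclude nonzero torsion when $|\chi(\G)|\geq 2$. Second, your proposed mechanism --- that the order of $[1_{\hi}]$ equals the order of $\chi(\G)\,[\pnt]$ inside the torsion-free group $\K_0(B\G)$ --- has the logic backwards: that element vanishes iff $\chi(\G)=0$ and has infinite order otherwise, which is the negation of what must be proved; it also conflates the K-homology Euler map $\Eul$, whose source is $\gamma\K^0(\Cred\G)$, with a map defined on $\K_0(\hi)$. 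The two equivalences actually asserted do follow if you cite the order computation from \cite{Emerson:Euler} as a black box, as the paper does, but the justification you supply for it does not work and should be replaced by the quotient description above.
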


We may pass from torsion-free to virtually torsion-free groups, and establish a version of Proposition~\ref{order tf} for this much larger class, by using Proposition ~\ref{finite index boundary}. The \emph{rational Euler characteristic} of a virtually torsion-free group $\G$ is defined by the formula $\chi(\G)=\chi(\Lambda)/[\G: \Lambda]$ where $\Lambda$ is any torsion-free subgroup of finite index. 

\begin{cor}
Assume that $\G$ is regular and virtually torsion-free. If $\chi(\G)\notin 1/\Z$ then $[\bd_\G]\neq 0$ in $\K^1(\hi)$. If $\chi(\G)=0$ then $[\bd_\G]$ has infinite order in $\K^1(\hi)$.
\end{cor}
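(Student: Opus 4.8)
The plan is to reduce the statement to the torsion-free case, Proposition~\ref{order tf}, by descending to a torsion-free finite-index subgroup and transporting the boundary class along the restriction map of Proposition~\ref{finite index boundary}. First I would fix a torsion-free subgroup $\Lambda\leq\G$ of finite index $n=[\G:\Lambda]$, available by virtual torsion-freeness. As $\Lambda$ acts geometrically on the same model $X$, its boundary is the space $\bd X=\bd\G$; hence $\Lambda$ is hyperbolic and, sharing the boundary of $\G$, inherits regularity, so Proposition~\ref{order tf} applies to $\Lambda$. By definition of the rational Euler characteristic, $\chi(\Lambda)=n\,\chi(\G)\in\Z$, and Proposition~\ref{finite index boundary} yields $\mathrm{res}\,[\bd_\G]=n\,[\bd_\Lambda]$ in $\K^1(C(\bd\Lambda)\rtimes\Lambda)$, where $\mathrm{res}$ is a homomorphism of abelian groups.

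The second assertion then drops out directly. If $\chi(\G)=0$ then $\chi(\Lambda)=0$, so $[\bd_\Lambda]$ has infinite order by Proposition~\ref{order tf}; since $n\geq1$, its multiple $n\,[\bd_\Lambda]=\mathrm{res}\,[\bd_\G]$ also has infinite order. A group homomorphism cannot carry a finite-order element to one of infinite order, so $[\bd_\G]$ has infinite order in $\K^1(\hi)$.

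For the first assertion I would proceed by contraposition: assuming $[\bd_\G]=0$, I aim to show $\chi(\G)\in 1/\Z$. Applying $\mathrm{res}$ gives $n\,[\bd_\Lambda]=0$. At this point the two endpoint cases of Proposition~\ref{order tf} are not enough, and I would use the full Euler-characteristic obstruction computed by the Gysin sequence of \cite{Emerson:Euler} (equivalently its K-homology form, Theorem~\ref{intro: gysin}): the boundary class $[\bd_\Lambda]$ is the image of the point class $[\pnt]$ under the connecting map $\K_0(B\Lambda)\to\K^1(C(\bd\Lambda)\rtimes\Lambda)$, whose kernel on $\Z[\pnt]$ is $\chi(\Lambda)\,\Z\,[\pnt]$, so that $[\bd_\Lambda]$ has order exactly $|\chi(\Lambda)|$ (infinite when $\chi(\Lambda)=0$). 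The case $\chi(\Lambda)=0$ cannot occur, as it would make $n\,[\bd_\Lambda]\neq0$; hence $\chi(\Lambda)\neq0$, and $n\,[\bd_\Lambda]=0$ forces the integer $|\chi(\Lambda)|=n\,|\chi(\G)|$ to divide $n$. Therefore $1/|\chi(\G)|$ is a positive integer, i.e. $\chi(\G)=\pm1/k$ for some $k\in\Z\setminus\{0\}$, which is to say $\chi(\G)\in 1/\Z$. The contrapositive is the desired implication.

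The main obstacle is exactly this last point. Restriction to a finite-index subgroup is not injective, so $n\,[\bd_\Lambda]=0$ cannot be upgraded to $[\bd_\Lambda]=0$, and the intermediate regime $\chi(\Lambda)\neq0,\pm1$ is invisible to the two endpoint statements recorded in Proposition~\ref{order tf}. Resolving it demands the exact order $|\chi(\Lambda)|$ read off from the Gysin sequence --- identifying $[\bd_\Lambda]$ with $\partial[\pnt]$ and computing the kernel $\chi(\Lambda)\,\Z\,[\pnt]$ --- after which the divisibility bookkeeping above finishes the argument; confirming that hyperbolicity and regularity descend to $\Lambda$ is routine but should be noted.
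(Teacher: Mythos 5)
Your argument is correct and follows the route the paper intends: descend to a torsion-free finite-index subgroup $\Lambda$, transport the boundary class via Proposition~\ref{finite index boundary}, note that $\Lambda$ is hyperbolic and regular with the same boundary, and reduce to the torsion-free case. Your treatment of the case $\chi(\G)=0$ is exactly the paper's. You are also right to insist that the two endpoint statements of Proposition~\ref{order tf} do not suffice for the first assertion: from $n[\bd_\Lambda]=0$ one genuinely needs the \emph{exact} order of $[\bd_\Lambda]$, namely $|\chi(\Lambda)|$, to extract the divisibility $|\chi(\Lambda)|\mid n$ and hence $\chi(\G)=\pm 1/k$. The only step where you go beyond what the paper establishes is the identification of $[\bd_\Lambda]$ with the image of $[\pnt]$ under the connecting map of the K-homology Gysin sequence; this is true (it is in essence contained in \cite{Emerson:Euler}), but it is not proved in this paper, and the more economical in-paper route to the same exact-order statement is the one already underlying Proposition~\ref{order tf}: $[\bd_\Lambda]=\Delta\cap[1]$ with $\Delta\cap$ an isomorphism, so the order of $[\bd_\Lambda]$ equals the order of the unit class $[1]$ in $\K_0\bigl(C(\bd\Lambda)\rtimes\Lambda\bigr)$, which the K-theory Gysin sequence of \cite{Emerson:Euler} computes to be exactly $|\chi(\Lambda)|$ when $\chi(\Lambda)\neq 0$ and infinite when $\chi(\Lambda)=0$. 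With that substitution your divisibility bookkeeping goes through verbatim, and the proof is complete.
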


The assumption of virtual torsion-freeness is a very mild one. A long-standing open problem asks whether all hyperbolic groups are virtually torsion-free.

\subsection{Twisted K-cycles} We now show how to compute \(\Delta \cap \) in terms of certain canonical Fredholm modules obtained by `twisting' the basic K-cycle \((\lambda_\mu, \projcon)\). 

The \emph{right regular representation}  $\lambdaop_{\mu}$ of  $\hiX$ on $\ell^2(\G, L^2(\bd X,\mu))$ is given as follows:
\begin{align*}
\lambdaop_{\mu}(\phi)\Big(\sum \psi_h \delta_h\Big)=\sum (h.\phi)\psi_h \delta_h, \qquad \lambdaop_{\mu}(g)\Big(\sum \psi_h \delta_h\Big)=\sum \psi_h \delta_{hg^{-1}}
\end{align*}
for $\phi\in C(\bd X)$, $g\in \G$. Note the covariance relation $\lambdaop_\mu(g.\phi)=\lambdaop_\mu(g)\lambdaop_\mu(\phi)\lambdaop_\mu(g^{-1})$. The right and the left regular representations do not commute, but they do satisfy
\begin{align*}
[\lambda_\mu(\phi), \lambdaop_\mu(\phi')]=0, \qquad [\lambda_\mu(g), \lambdaop_\mu(g')]=0
\end{align*}
for all $\phi,\phi'\in C(\bd X)$ and $g,g'\in \G$.

The symmetry $J$ on $\ell^2\G$ has an obvious extension $J\otimes \mathrm{id}$ to $\ell^2(\G, L^2(\bd X,\mu))$, and, using 
the same notation for this extension, we have
\begin{align*}
\lambdaop_\mu=J\lambda_\mu J.
\end{align*}
Since \(s\) splits \(\tau\), the image in the Calkin algebra of 
\(\projcon \lambda_\mu (a) \projcon=s(a)\) is \(\tau (a)\). Also, 
\( \projcon \lambdamuop (a)\projcon = \sop(a)\), and its image in the Calkin algebra 
is \(\tauop (a)\). Hence 
\begin{equation}
\label{helly}
 \projcon \lambda_\mu (a)\projcon = \tau (a)  \; \textup{mod} \; \Comp, \;\; \textup{and} \; 
\projcon\lambdamuop (a)\projcon = \tauop (a) \; \textup{mod} \; \Comp.
\end{equation}

\begin{thm}\label{twisted representatives}
Let $\G$ be regular and torsion-free. Then the following hold.
\begin{itemize}[leftmargin=20pt, itemsep=3pt]
\item Every class in $\K^1(\hi)$ is represented by an odd Fredholm module of the form 
\begin{align*}
\big(\lambda_\mu, P_{\ell^2\G}\lambdaop_\mu(e)P_{\ell^2\G}\big), \qquad e \textrm{ projection in }\hi.
\end{align*}
\item Every class in $\K^0(\hi)$ is represented by a balanced even Fredholm module of the form
\begin{align*}
\big(\lambda_\mu, P_{\ell^2\G}\lambdaop_\mu(u)P_{\ell^2\G}+(1-P_{\ell^2\G})\big), \qquad u \textrm{ unitary in } \hi.
\end{align*}
\end{itemize}
\end{thm}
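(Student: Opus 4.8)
The plan is to leverage the surjectivity of the Poincar\'e duality map $\Delta\cap\colon \K_*(\hi)\to\K^{*+1}(\hi)$ together with the explicit description of $\Delta\cap$ on projection and unitary classes provided by Lemma~\ref{whatispd}. By \cite{Emerson} the map $\Delta\cap$ is an isomorphism when $\G$ is regular and torsion-free, so $\K^1(\hi)=\{\Delta\cap[e]\}$ and $\K^0(\hi)=\{\Delta\cap[u]\}$. Since $\hi$ is unital, purely infinite and simple (\cite{Ana97, LS96}), it is a standard property of such algebras that every class in $\K_0(\hi)$ is $[e]$ for a projection $e\in\hi$, and every class in $\K_1(\hi)$ is $[u]$ for a unitary $u\in\hi$. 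It therefore suffices to realize $\Delta\cap[e]$ and $\Delta\cap[u]$ by the stated Fredholm modules. Throughout I write $Q=\projcon$ and let $V\colon\ell^2\G\to\hilb$ be the isometry onto the constant-coefficient subspace, so that $VV^*=Q$, $V^*V=1$, and \eqref{helly} says that $V^*\lambda_{\mu}(a)V$ and $V^*\lambdaop_{\mu}(a)V$ have Calkin images $\tau(a)$ and $\tauop(a)$ respectively.

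For the odd case, set $P=Q\lambdaop_{\mu}(e)Q$. First I would check that $(\lambda_{\mu},P)$ is a Fredholm module: clearly $P=P^*$, and since $V^*PV=V^*\lambdaop_{\mu}(e)V$ has Calkin image the projection $\tauop(e)$, $P$ is an essential projection; the commutation $[P,\lambda_{\mu}(a)]\in\Comp$ follows modulo compacts from the facts that $\lambda_{\mu}$ and $\lambdaop_{\mu}$ commute and that $[Q,\lambda_{\mu}(a)]\in\Comp$ by Theorem~\ref{sharp general}. The crux is the Busby invariant. Because $QP=PQ=P$ one has the \emph{exact} identity $P\lambda_{\mu}(a)P=P\,(Q\lambda_{\mu}(a)Q)\,P$, and inserting $Q=VV^*$ gives, again exactly, $V^*P\lambda_{\mu}(a)PV=(V^*PV)(V^*\lambda_{\mu}(a)V)(V^*PV)$. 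Passing to the Calkin algebra and applying \eqref{helly} yields $\tauop(e)\,\tau(a)\,\tauop(e)=\tau_e(a)$, precisely the Busby invariant of $\Delta\cap[e]$ from Lemma~\ref{whatispd}. Hence $(\lambda_{\mu},Q\lambdaop_{\mu}(e)Q)$ represents $\Delta\cap[e]$.

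For the even case, set $U=Q\lambdaop_{\mu}(u)Q+(1-Q)$. The same bookkeeping shows $(\lambda_{\mu},U)$ is a balanced even Fredholm module: the cross terms in $U^*U$ and $UU^*$ vanish since $Q(1-Q)=0$, while $V^*Q\lambdaop_{\mu}(u)^*Q\lambdaop_{\mu}(u)QV$ has Calkin image $\tauop(u^*u)=\tauop(1)=1$, so $U^*U\equiv Q+(1-Q)=1$ and likewise $UU^*\equiv 1$; the essential commutation is as before. The task is then to match the class of $(\lambda_{\mu},U)$ with $\Delta\cap[u]$. By Lemma~\ref{whatispd}, $\Delta\cap[u]$ is the odd extension of $C_0(\R)\otimes\hi$ with Busby invariant $\tau_u(f\otimes a)=(\tauop\circ\bar u)(f)\,\tau(a)$, which only represents a class in $\K^0(\hi)$ after the suspension isomorphism $\KK_1(C_0(\R)\otimes\hi,\C)\cong\K^0(\hi)$. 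I would realize this isomorphism concretely through the Toeplitz construction: the unitary symbol $\tauop(u)\in\Calk$, compressed to the positive-frequency subspace $Q$ and extended by the identity on $1-Q$, is exactly the essential unitary $U$, while $\lambda_{\mu}$ supplies the commuting copy $\tau(a)$ of the coefficients. Thus $(\lambda_{\mu},U)$ should be exactly the even cycle corresponding to $\tau_u$ under suspension.

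I expect the even case to be the main obstacle. In the odd case the Busby invariant is pinned down by a single exact compression computation, whereas the even case requires tracking the class through the suspension isomorphism and verifying that the Toeplitz-type operator $Q\lambdaop_{\mu}(u)Q+(1-Q)$ is \emph{genuinely} the image of the suspended extension $\tau_u$, with $Q$ playing the role of the Hardy-type projection. Making this identification rigorous is the delicate step, and it is where I would concentrate the effort: either by a direct comparison of $(\lambda_{\mu},U)$ with the Toeplitz cycle built from the pair $(\tau,\tauop(u))$, or, as a fallback, by comparing the index pairings of $(\lambda_{\mu},U)$ against projection classes in $\hi$ with the pairings computed from $\tau_u$ and invoking Poincar\'e duality to conclude equality of classes.
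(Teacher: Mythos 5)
Your strategy is the same as the paper's: use the isomorphism $\Delta\cap$ (together with the fact that $\hi$, being unital, purely infinite and simple, realizes every K-theory class by an actual projection or unitary), compress by $\projcon$, compute the resulting Busby invariant, and match it against Lemma~\ref{whatispd}; the even case is likewise dispatched in the paper by Bott periodicity (Lemma 2 of \cite{Emerson}), which is what your Toeplitz-type identification of the suspension isomorphism amounts to. Your exact compression identity $V^*P\lambda_\mu(a)PV=(V^*PV)(V^*\lambda_\mu(a)V)(V^*PV)$ pinning down the Busby invariant in the odd case is precisely the paper's computation.

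There is, however, one step whose justification fails as written. You derive the essential commutation $[P,\lambda_\mu(a)]\in\Comp$ from the assertion that ``$\lambda_\mu$ and $\lambdaop_\mu$ commute.'' They do not: the paper states explicitly that only the commutators $[\lambda_\mu(\phi),\lambdaop_\mu(\phi')]$ and $[\lambda_\mu(g),\lambdaop_\mu(g')]$ vanish, while the mixed commutators $[\lambda_\mu(\phi),\lambdaop_\mu(g)]$ are nonzero in general (the coefficient of $\delta_{hg^{-1}}$ comes out as $(h^{-1}.\phi)\psi_h$ in one order and $((gh^{-1}).\phi)\psi_h$ in the other). The correct argument, and the one the paper uses, pushes everything into the Calkin algebra after compressing by $Q=\projcon$: modulo compacts,
\begin{align*}
\lambda_\mu(a)\,Q\lambdaop_\mu(e)Q \equiv Q\lambda_\mu(a)Q\,\lambdaop_\mu(e)Q \equiv \tau(a)\tauop(e)=\tauop(e)\tau(a)\equiv Q\lambdaop_\mu(e)Q\,\lambda_\mu(a),
\end{align*}
where the first equivalence uses $[Q,\lambda_\mu(a)]\in\Comp$ and the middle equality is the commutation of $\tau$ and $\tauop$ in $\Calk$, a nontrivial fact proved in \cite{Emerson} and recalled in the paper just before Lemma~\ref{whatispd}. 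With this substitution (and the analogous one in the even case) your argument goes through; without it, the Fredholm-module property of the twisted cycles is not established.
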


\begin{proof}
Let \(Q_e:= \projcon \lambdamuop (e)\projcon\). 
By the discussion preceding the Theorem, \(Q_e = \tauop (e)\) mod compact operators. Hence \(Q_e\) is a self-adjoint, essential 
projection. Furthermore, the commutator $[\lambda_\mu (a), Q_e]$ is compact for all \(a\in \hi\). For mod compacts we have
\begin{align*} 
\lambda_\mu (a)Q_e & = \lambda_\mu (a) \projcon \lambdamuop (e) \projcon = 
\projcon \lambda_\mu (a) \projcon\lambdamuop (e)\projcon\\
& = \tau (a) \tauop (e) = \tauop (e)\tau (a) = Q_e \lambda_\mu (a).
\end{align*} 
This shows that \( (\lambda_\mu, P_{\ell^2\G}\lambdaop_\mu(e)P_{\ell^2\G})\) is a Fredholm module. The map \(\KK_1\) to \(\mathrm{Ext}\) sends its class to the extension with Busby invariant 
\(a\mapsto Q_e \lambda_\mu (a)Q_e \; \textup{mod} \; \Comp\) and this equals 
\(\tau_e (a):= \tauop (e) \tau (a)\tauop (e)\). Hence \((\lambda_\mu, P_{\ell^2\G}\lambdaop_\mu(e)P_{\ell^2\G})\) represents \([\tau_e (a)]\), which equals \(\Delta \cap [e]\) by Lemma \ref{whatispd}. 

The second assertion is proved by combining the same observations with Bott periodicity, see Lemma 2 of \cite{Emerson}. 
\end{proof}

We are implicitly using the fact that all the K-theory classes of a unital, simple and purely infinite C*-algebra can be represented by non-zero projections, respectively unitaries of the C*-algebra \cite{Cun}. 


\section{Summability of the twisted K-cycles}
In this section we prove the following theorem.

\begin{thm}\label{summability achieved}
Let $\G$ be regular and torsion-free. Then $C(\bd X)\rtimes \G$ has uniformly $D_\e^>$-summable K-homology over $\Lip(\bd X,d_\e)\ralg \G$.
\end{thm}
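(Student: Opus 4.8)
The plan is to leverage the structural results already established: Theorem~\ref{twisted representatives} gives \emph{every} K-homology class of $\hi$ a twisted representative built from the basic K-cycle, and Theorem~\ref{sharp general} (optimized via Corollary~\ref{sharp corollary}) shows the basic K-cycle is $D_\e^>$-summable over $\Lip(\bd X, d_\e)\ralg\G$. What remains is to verify that the \emph{twisting} operation -- replacing $\projcon$ by $P_{\ell^2\G}\lambdaop_\mu(e)P_{\ell^2\G}$ (odd case) or $P_{\ell^2\G}\lambdaop_\mu(u)P_{\ell^2\G}+(1-P_{\ell^2\G})$ (even case) -- preserves $D_\e^>$-summability, \emph{provided} the twisting projection $e$ or unitary $u$ is chosen inside the smooth subalgebra $\Lip(\bd X, d_\e)\ralg\G$.

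First I would reduce to a commutator estimate. By the definition of $p$-summability, I need $[\lambda_\mu(a), Q]\in\LL^{D_\e^>}$ for all $a$ in the dense subalgebra, where $Q$ is the twisted operator. Writing $Q=\projcon\lambdaop_\mu(e)\projcon$ (plus the harmless $(1-\projcon)$ summand in the even case, which is a degenerate piece commuting with everything relevant), I would expand the commutator using that $\lambda_\mu$ and $\lambdaop_\mu$ commute (the key relations $[\lambda_\mu(\phi),\lambdaop_\mu(\phi')]=0$ and $[\lambda_\mu(g),\lambdaop_\mu(g')]=0$ recorded before Theorem~\ref{twisted representatives}). The obstruction to commuting $\lambda_\mu(a)$ past $Q$ is therefore \emph{entirely} the presence of the projections $\projcon$: schematically $[\lambda_\mu(a),\projcon\lambdaop_\mu(e)\projcon]$ decomposes into terms each containing a factor $[\lambda_\mu(a),\projcon]$, which is exactly the operator controlled by Theorem~\ref{sharp general}, multiplied by bounded operators.

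Concretely, I would establish the algebraic identity expressing $[\lambda_\mu(a),Q]$ as a sum of products in which each summand contains at least one factor of the form $[\lambda_\mu(b),\projcon]$ with $b$ ranging over the relevant generators. Since $\lambdaop_\mu(e)$ is bounded and commutes with $\lambda_\mu(a)$, one pushes $\lambda_\mu(a)$ through $\lambdaop_\mu(e)$ freely, so that all the non-commutativity is absorbed into commutators with $\projcon$. The ideal property of the Schatten class $\LL^{D_\e^>}$ -- that $\LL^{p}\cdot\B\subseteq\LL^{p}$ and $\B\cdot\LL^{p}\subseteq\LL^{p}$ -- then upgrades each such summand to a $D_\e^>$-summable operator, whence the whole commutator is $D_\e^>$-summable. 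The same bookkeeping handles the essential-projection defect $Q^2-Q$ and $Q^*-Q$, which again reduce to compressions involving $[\lambda_\mu(\cdot),\projcon]$.

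The genuine subtlety, and the step I expect to be the main obstacle, is the \emph{choice} of the twisting element: Theorem~\ref{twisted representatives} produces an arbitrary projection $e$ or unitary $u$ in the C*-algebra $\hi$, but summability forces $e$ (resp.\ $u$) to lie in the smooth subalgebra $\Lip(\bd X, d_\e)\ralg\G$. I would resolve this by a smoothing/approximation argument: since $\Lip(\bd X, d_\e)\ralg\G$ is a dense $*$-subalgebra of $\hi$ which is closed under holomorphic functional calculus (a spectral-invariance property I would need to invoke or verify for this Lipschitz crossed product), every K-theory class $[e]\in\K_0(\hi)$ and every $[u]\in\K_1(\hi)$ already admits a representative projection, respectively unitary, \emph{within} the smooth subalgebra. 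Feeding such a smooth representative into Theorem~\ref{twisted representatives} yields a twisted K-cycle representing the same K-homology class, and the commutator estimates above then apply verbatim to give $D_\e^>$-summability over $\Lip(\bd X, d_\e)\ralg\G$. Since this dense subalgebra and the degree $D_\e^>$ are fixed independently of the class, this is precisely uniform $D_\e^>$-summability, completing the proof.
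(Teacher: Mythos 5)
There are two genuine gaps. The first is in your central reduction: you assert that $\lambda_\mu$ and $\lambdaop_\mu$ commute, so that all the non-commutativity in $[\lambda_\mu(a), \projcon\lambdaop_\mu(e)\projcon]$ is absorbed into factors $[\lambda_\mu(\cdot),\projcon]$ already controlled by Theorem \ref{sharp general}. But the left and right regular representations do \emph{not} commute: only the special commutators $[\lambda_\mu(\phi),\lambdaop_\mu(\phi')]$ and $[\lambda_\mu(g),\lambdaop_\mu(g')]$ vanish, while the mixed commutator $[\lambda_\mu(\phi),\lambdaop_\mu(g)]$ does not (a direct computation on $\ell^2(\G,L^2(\bd X,\mu))$ shows the two orderings differ by the $\G$-action on the coefficient function). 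Consequently, after commuting $\lambda_\mu(a)$ past the outer projections, there remains the commutator of the two compressions, $[\projcon\lambda_\mu(a)\projcon,\projcon\lambdaop_\mu(b)\projcon]$, which is nonzero and whose summability is the actual heart of the matter. The paper reduces this to the operator of multiplication by $h\mapsto \E\phi(hg^{-1})-\E\phi(h)$ on $\ell^2\G$ and controls it by a \emph{new} integral estimate (Lemma \ref{single integral}: $\int d_\e(g\xi,h\xi)\,d\mu(\xi)\leq C\exp(-\e\,(go,ho)_o)$), which does not follow from the double-integral estimate behind Theorem \ref{sharp general}. Your argument skips exactly this estimate.

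The second gap is the choice of the twisting element. You propose to take $e$ (resp.\ $u$) inside $\Lip(\bd X,d_\e)\ralg\G$ by invoking spectral invariance of that subalgebra. But $\Lip(\bd X,d_\e)\ralg\G$ is the \emph{algebraic} crossed product --- finite sums $\sum\phi_g\,g$ --- and such a subalgebra is essentially never closed under holomorphic functional calculus (already $\C[\Z]\subset C(S^1)$ fails), so one cannot expect every K-theory class of $\hi$ to be represented by a projection or unitary there. The paper's resolution is the key structural idea of the proof: it introduces the larger subalgebra $\mathcal{A}=\{a: [\lambda_\mu(a),\projcon\lambdaop_\mu(b)\projcon]\in\LL^{D_\e+}\text{ for all } b\in\Lip(\bd X,d_\e)\ralg\G\}$, which contains the Lipschitz crossed product by Lemma \ref{prop:quantifiedcommutation} and is stable under holomorphic calculus essentially by construction (an intersection of commutator-defined subalgebras, each holomorphically closed). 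The twisting element is drawn from $\mathcal{A}$, and a conjugation by the symmetry $J$ swaps the roles of $a\in\mathcal{A}$ and $b\in\Lip(\bd X,d_\e)\ralg\G$ so that the resulting Fredholm module is summable over the Lipschitz crossed product. Without some such enlargement, your smoothing step does not go through.
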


Theorem~\ref{intro: uniform for boundary} is an immediate corollary. In turn, Theorem~\ref{intro: uniform for boundary} and Examples~\ref{ex: trees}, ~\ref{ex: standard hyperbolic} yield the following explicit applications.

\begin{cor} If $\G$ is a finitely generated free group, then the K-homology of $\hi$ is uniformly $p$-summable for every $p>2$. If $\G$ is a torsion-free cocompact lattice in $\SO(n,1)$, then the K-homology of $C(S^{n-1})\rtimes \G$ is uniformly $(n-1)^+$-summable when $n\geq 4$, respectively uniformly $p$-summable for every $p>2$ when $n=2,3$. If $\G$ is a torsion-free cocompact lattice in $\SU(n,1)$, then the K-homology of $C(S^{2n-1})\rtimes \G$ is uniformly $(2n)^+$-summable. If $\G$ is a torsion-free cocompact lattice in $\Sp(n,1)$, then the K-homology of $C(S^{4n-1})\rtimes \G$ is uniformly $(4n+2)^+$-summable.
\end{cor}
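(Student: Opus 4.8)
The plan is to take Theorem~\ref{twisted representatives} as the starting point: it already exhibits every class of $\K^1(\hi)$ as $\big(\lambda_\mu,\projcon\lambdaop_\mu(e)\projcon\big)$ for a projection $e\in\hi$, and every class of $\K^0(\hi)$ as $\big(\lambda_\mu,\projcon\lambdaop_\mu(u)\projcon+(1-\projcon)\big)$ for a unitary $u\in\hi$. What remains is to show that $e$, respectively $u$, may be chosen so that these modules become $D_\e^>$-summable over $\Lip(\bd X,d_\e)\ralg\G$. Writing $Q_e=\projcon\lambdaop_\mu(e)\projcon$, this splits into two essentially independent tasks: controlling the commutators $[Q_e,\lambda_\mu(a)]$ for $a$ in the Lipschitz crossed product, and controlling the essential-projection defect $Q_e^2-Q_e$ (with the analogous essential-unitarity defects in the even case).

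For the commutators I would use the commutation relations $[\lambda_\mu(\cdot),\lambdaop_\mu(\cdot)]=0$. A direct expansion then gives $[Q_e,\lambda_\mu(a)]=-\projcon\,\lambdaop_\mu(e)\,[\lambda_\mu(a),\projcon]-[\lambda_\mu(a),\projcon]\,\lambdaop_\mu(e)\,\projcon$, so both terms factor through $[\lambda_\mu(a),\projcon]$. Since $[\lambda_\mu(a),\projcon]$ is exactly the commutator occurring in the basic K-cycle, which is $D_\e^>$-summable over $\Lip(\bd X,d_\e)\ralg\G$ by Theorem~\ref{sharp general}, and since the relevant Schatten class is an ideal, both terms land in it. Crucially this holds for \emph{every} projection $e\in\hi$, so the commutator condition imposes no constraint on the twisting datum. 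The even case is identical, using in addition $[(1-\projcon),\lambda_\mu(a)]=-[\projcon,\lambda_\mu(a)]$.

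The defect term is where the choice of $e$ enters. Here $Q_e$ is manifestly self-adjoint, and a short computation using $\lambdaop_\mu(e)^2=\lambdaop_\mu(e)$ yields $Q_e^2-Q_e=-S^*S$ with $S=(1-\projcon)\lambdaop_\mu(e)\projcon$. I would then eliminate the right regular representation by conjugating with the symmetry $J$: since $\lambdaop_\mu=J\lambda_\mu J$ and $J$ commutes with $\projcon$, one gets $S=J\,\Pi(e)\,J$, where $\Pi(e)=(1-\projcon)\lambda_\mu(e)\projcon$ is precisely the off-diagonal corner of the basic K-cycle analysed in Lemma~\ref{howtocheckFredholmness}. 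Thus $S$ lies in the ideal, and $Q_e^2-Q_e=-S^*S$ even lands in a smaller ideal (so the $D_\e^+$ refinement is automatic), exactly when $[\projcon,\lambda_\mu(e)]$ is $D_\e^>$-summable, i.e.\ when $e$ is \emph{smooth} for the basic K-cycle. The even case reduces in the same way to summability of the two corners $\Pi(u)$ and $\Pi(u^*)$, both controlled by $[\projcon,\lambda_\mu(u)]$.

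It remains to arrange that each K-theory class has a representative projection (respectively unitary) lying in the smooth algebra $\smootha:=\{a\in\hi:[\projcon,\lambda_\mu(a)]\in\LL^p\}$, and this is the crux of the argument. One checks that $\smootha$ is a $*$-subalgebra containing $\Lip(\bd X,d_\e)\ralg\G$ (hence dense, by Theorem~\ref{sharp general}), that it is complete for the graph norm $a\mapsto\|a\|+\|[\projcon,\lambda_\mu(a)]\|_{\LL^p}$, and --- most importantly --- that it is inverse-closed: the resolvent identity $[\projcon,\lambda_\mu(a^{-1})]=-\lambda_\mu(a^{-1})\,[\projcon,\lambda_\mu(a)]\,\lambda_\mu(a^{-1})$ shows $a^{-1}\in\smootha$ whenever $a\in\smootha$ is invertible in $\hi$, and likewise at every matrix amplification. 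By the spectral-invariance principle (Bost, Schweitzer) this gives $\K_*(\smootha)\cong\K_*(\hi)$. Since $\hi$ is purely infinite and simple, every class in $\K_0(\hi)$ (respectively $\K_1(\hi)$) is carried by a genuine projection (respectively unitary) in $\hi$, which may then be pushed into $\smootha$ by holomorphic functional calculus without altering its class. Feeding such a representative into Theorem~\ref{twisted representatives}, together with the surjectivity of $\Delta\cap$, yields the uniform $D_\e^>$-summability statement. I expect the main obstacle to be precisely this last step: verifying that $\smootha$ is a genuine spectrally invariant smooth subalgebra (complete and inverse-closed at all matrix levels) so that K-theory classes admit smooth representatives; the summability estimates themselves are then routine consequences of the basic K-cycle.
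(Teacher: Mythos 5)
Your proposal does not actually prove the stated corollary, and the general machinery it sketches in place of the paper's Theorem~\ref{summability achieved} contains a genuine gap. The gap is in the commutator step: you invoke ``$[\lambda_\mu(\cdot),\lambdaop_\mu(\cdot)]=0$'', but the left and right regular representations do \emph{not} commute --- only the special relations $[\lambda_\mu(\phi),\lambdaop_\mu(\phi')]=0$ and $[\lambda_\mu(g),\lambdaop_\mu(g')]=0$ hold, while $[\lambda_\mu(\phi),\lambdaop_\mu(g)]\neq 0$ in general. Consequently your expansion of $[Q_e,\lambda_\mu(a)]$ is off by the term $\projcon[\lambda_\mu(a),\lambdaop_\mu(e)]\projcon$, which modulo the Schatten ideal equals $[s_\mu(a),\sop_\mu(e)]$ and is exactly the hard part of the analysis: its $D_\e^+$-summability is the content of Lemma~\ref{prop:quantifiedcommutation}, which rests on the integral estimate $\int d_\e(g\xi,h\xi)\,d\mu(\xi)\leq C\exp(-\e\,(go,ho)_o)$ of Lemma~\ref{single integral} and the resulting decay of $h\mapsto \E\phi(hg^{-1})-\E\phi(h)$. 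Your claim that the commutator condition ``imposes no constraint on the twisting datum'' is therefore false: the paper must take $e$ (resp.\ $u$) in the subalgebra of those $a$ with $[\lambda_\mu(a),\projcon\lambdaop_\mu(b)\projcon]\in\LL^{D_\e+}$ for \emph{all} Lipschitz $b$ (Theorem~\ref{enough are summable}), a two-sided condition strictly stronger than your $[\projcon,\lambda_\mu(a)]\in\LL^{p}$; with your smaller algebra the commutators $[\lambda_\mu(b),Q_e]$ for Lipschitz $b$ would not be controlled. Your treatment of the defect $Q_e^2-Q_e=-S^*S$ with $S=J\Pi(e)J$ is correct, and the spectral-invariance argument for producing smooth projections and unitaries is in the spirit of the paper, but neither repairs the missing cross-term.

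Separately, even granting the general uniform summability theorem, your write-up never reaches the statement being proved. In the paper this corollary is obtained by specializing Theorem~\ref{summability achieved} to concrete geometric models and reading off $D_\e^>$ from the visual dimension computations of Section~\ref{visual section}: a free group acts on a tree, whose boundary has visual dimension $0$, giving $p$-summability for every $p>2$; a torsion-free cocompact lattice in $\SO(n,1)$, $\SU(n,1)$ or $\Sp(n,1)$ acts on the rank-one symmetric space $\mathbb{H}^n_K$, whose boundary sphere has visual dimension $nk+k-2$ by the Mitchell--Pansu formula, i.e.\ $n-1$, $2n$, $4n+2$ respectively; one also checks regularity (spheres and Cantor sets admit fixed-point-free self-maps) and notes that $D_\e^>$ collapses to ``$p$-summable for every $p>2$'' when the dimension is at most $2$, which accounts for the cases $n=2,3$ of $\SO(n,1)$. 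None of these identifications appears in your argument, so the exponents $(n-1)^+$, $(2n)^+$ and $(4n+2)^+$ are never derived.
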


To prove Theorem~\ref{summability achieved}, we start with an integral estimate.

\begin{lem}\label{single integral} 
Let $o\in X$ be a basepoint, and assume that $D_\e>2$. Then there exists $C>0$ such that, for all $g, h\in\G$, we have
\begin{align*}
\int d_\e(g\xi, h\xi)\: d\mu(\xi) \leq C \exp(-\e\: (go,ho)_o).
\end{align*}
\end{lem}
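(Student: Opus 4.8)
The plan is to reduce the estimate to a pointwise bound on $d_\e(g\xi,h\xi)$ and then integrate, in close parallel with the proof of Lemma~\ref{double integral}. The geometric input is an elementary Gromov product inequality involving the two elements $g,h$ simultaneously. For interior points $x\in X$ one computes the identity
\[
(gx,hx)_o+(g^{-1}o,x)_o+(h^{-1}o,x)_o=(go,ho)_o+\tfrac12\big(2\,d(o,x)+d(go,ho)-d(gx,hx)\big),
\]
which is just a rewriting of the triangle inequality: since $\G$ acts isometrically, $d(gx,hx)\le d(gx,go)+d(go,ho)+d(ho,hx)=2\,d(o,x)+d(go,ho)$, so the parenthesis is nonnegative and $(gx,hx)_o+(g^{-1}o,x)_o+(h^{-1}o,x)_o\ge (go,ho)_o$. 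Letting $x\to\xi$ and using \eqref{liminf limsup} and \eqref{liminf limsup 2} to pass to the boundary at the cost of a bounded multiple of $\delta$, I would obtain
\[
(g\xi,h\xi)_o+(g^{-1}o,\xi)_o+(h^{-1}o,\xi)_o\ge (go,ho)_o-C_0\,\delta.
\]

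Next I would translate this into the visual metric. Working first with a parameter $\e$ in the small visual range, so that the companion map of Fact~\ref{small visual range} is available and $d_\e(x,\xi)\asymp\exp(-\e(x,\xi)_o)$, the inequality above becomes the pointwise bound
\[
d_\e(g\xi,h\xi)\le C_1\,\exp(-\e(go,ho)_o)\,d_\e(g^{-1}o,\xi)^{-1}\,d_\e(h^{-1}o,\xi)^{-1}.
\]
Integrating against $\mu$ and applying the Cauchy--Schwarz inequality gives
\[
\int d_\e(g\xi,h\xi)\,d\mu(\xi)\le C_1\,\exp(-\e(go,ho)_o)\Big(\int d_\e(g^{-1}o,\xi)^{-2}\,d\mu\Big)^{1/2}\Big(\int d_\e(h^{-1}o,\xi)^{-2}\,d\mu\Big)^{1/2}.
\]
The key point is that $\int d_\e(x,\xi)^{-2}\,d\mu(\xi)$ is bounded \emph{uniformly} over $x\in X$: this is precisely the integral estimate carried out inside the proof of Lemma~\ref{double integral} in the case $\alpha=1$, where the level sets $\Delta_k=\{\xi:\exp(-\e k)\le d_\e(x,\xi)\le\exp(-\e(k-1))\}$ have diameter at most a constant multiple of $\exp(-\e k)$, hence measure at most a constant multiple of $\exp(-e_X k)$ by Ahlfors regularity (Fact~\ref{Ahlfors}), so that the resulting geometric series converges exactly because $D_\e=e_X/\e>2$. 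This settles the lemma for $\e$ in the small range.

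For an arbitrary visual parameter $\e$ with $D_\e>2$, I would conclude by snowflaking, exactly as at the end of Lemma~\ref{double integral}. Choosing $\e_0$ in the small range and setting $\beta=\e/\e_0\ge 1$, the scaling Fact~\ref{scaling lemma} gives $d_\e\asymp d_{\e_0}^{\beta}$ and $D_{\e_0}=\beta D_\e>2\beta$; raising the small-range pointwise bound to the power $\beta$ and integrating with Cauchy--Schwarz, now against the uniform bound on $\int d_{\e_0}(x,\xi)^{-2\beta}\,d\mu$ (again finite since $D_{\e_0}>2\beta$), recovers $\int d_\e(g\xi,h\xi)\,d\mu\le C\exp(-\e(go,ho)_o)$.

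I expect the main obstacle to lie in the first step: pinning down the correct symmetric Gromov product inequality and justifying the passage to the boundary together with its $\delta$-errors. Once the pointwise bound is in hand, the remaining analysis is routine, and the simultaneous appearance of the two correction factors $d_\e(g^{-1}o,\xi)^{-1}$ and $d_\e(h^{-1}o,\xi)^{-1}$ is exactly what forces the use of Cauchy--Schwarz against the borderline hypothesis $D_\e>2$, mirroring the role of that hypothesis in Theorem~\ref{sharp general} and Lemma~\ref{double integral}.
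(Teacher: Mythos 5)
Your proposal is correct and follows essentially the same route as the paper: the same triangle-inequality identity $(gx,hx)_o+(g^{-1}o,x)_o+(h^{-1}o,x)_o\ge (go,ho)_o$, the same passage to the boundary via \eqref{liminf limsup} and \eqref{liminf limsup 2}, the same pointwise bound $d_\e(g\xi,h\xi)\le C_1\exp(-\e(go,ho)_o)\,d_\e(g^{-1}o,\xi)^{-1}d_\e(h^{-1}o,\xi)^{-1}$, Cauchy--Schwarz against the uniform integral estimate already established inside the proof of Lemma~\ref{double integral}, and the concluding snowflaking step. The only (harmless) differences are that the paper states the small-range claim for a general exponent $\alpha$ from the outset rather than specializing to $\alpha=1$ and rescaling afterwards, and that your assertion $\e/\e_0\ge 1$ is unnecessary and not always true, though nothing in the argument uses it.
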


\begin{proof} 
The proof  is similar to that of Lemma~\ref{double integral}. First, we assume that the parameter $\e$ is in the small visual range and we let $\alpha>0$. We claim the following: if $D_\e>2\alpha$, then there exists $C>0$ such that for all $g, h\in\G$ we have
\begin{align}\label{a temporary claim}
\int d_\e(g\xi, h\xi)^{\alpha}\: d\mu(\xi) \leq C \exp(-\alpha\e\: (go,ho)_o).
\end{align}
Pick $\xi\in\bd X$. Observe that $(gx,hx)_o+(g^{-1}o, x)_o+(h^{-1}o, x)_o\geq  (go,ho)_o$ for $x\in X$; indeed, this amounts to $d(gx,hx)-d(go,ho)\leq 2d(o,x)$. Letting $x\to \xi$ and using \eqref{liminf limsup} and \eqref{liminf limsup 2}, we get
\begin{align*}
(g\xi,h\xi)_o+(g^{-1}o,\xi)_o+(h^{-1}o,\xi)_o\geq (go,ho)_o-4\delta.
\end{align*}
Hence, there is $C_1\geq 0$ such that, for all $\xi\in\bd X$, we have 
\begin{align}\label{a temporary distance bound}
d_\e(g\xi,h\xi)\leq C_1 \exp(-\e\: (go,ho)_o)\: d_\e(g^{-1}o,\xi)^{-1}\: d_\e(h^{-1}o,\xi)^{-1}.
\end{align}
Recall from the proof of Lemma~\ref{double integral} that
\begin{align*}
\int d_\e(g^{-1}o,\xi)^{-2\alpha} \: d\mu(\xi) \leq C_2 \end{align*}
independently of $g\in \G$. By the Cauchy-Schwartz inequality, it follows that 
\begin{align}\label{a cauchy-schwartz inequality}
\int d_\e(g^{-1}o,\xi)^{-\alpha}\: d_\e(h^{-1}o,\xi)^{-\alpha} \: d\mu(\xi)\leq C_2 
\end{align}
independently of $g, h\in \G$. Now ~\eqref{a temporary distance bound} and ~\eqref{a cauchy-schwartz inequality} yield ~\eqref{a temporary claim}.

The remainder of the proof goes just like the last step in the proof of Lemma~\ref{double integral}. Let $\e$ be an arbitrary visual parameter, let $\e_0$ be in the small visual range, and let $d_{\e_0}$ be a visual metric as in Fact~\ref{small visual range}. We have
\begin{align*}\int d_\e(g\xi, h\xi)\: d\mu(\xi)\asymp \int d_{\e_0}(g\xi, h\xi)^{\e/\e_0}\: d\mu(\xi)
\end{align*}
by Fact~\ref{scaling lemma}. As $\hdim (\bd X,d_{\e_0})>2\e/\e_0$, the previous part says that a constant multiple of $\exp(-(\e/\e_0)\e_0\:  (go,ho)_o)=\exp(-\e\:  (go,ho)_o)$ is an upper bound for the right hand side. 
 \end{proof}

\begin{lem}
\label{prop:quantifiedcommutation}
For all $a,b \in \Lip(\bd X,d_\e)\ralg \G$, the commutator $\big[\lambda_\mu(a), P_{\ell^2\G}\lambdaop_\mu(b)P_{\ell^2\G}\big]$ is $D_\e^>$-summable.
\end{lem}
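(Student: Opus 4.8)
The plan is to reduce the commutator to a weighted shift on $\ell^2\G$ whose singular values decay at the rate already controlled in Theorem~\ref{sharp general}. Write $L=\lambda_\mu(a)$, $R=\lambdaop_\mu(b)$ and $P=\projcon$. Using only the identities $LP=PL+[L,P]$ and $PL=LP-[L,P]$, one gets the exact operator identity
\begin{align*}
[L,PRP]=P[L,R]P+[L,P]RP+PR[L,P].
\end{align*}
The last two terms are $D_\e^>$-summable: the commutator $[L,P]=[\lambda_\mu(a),\projcon]$ is $D_\e^>$-summable by Theorem~\ref{sharp general} (this is precisely the summability of the basic K-cycle over $\Lip(\bd X,d_\e)\ralg\G$), and $R,P$ are bounded, so left- or right-multiplication preserves the relevant Schatten ideal. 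It therefore remains to show that $P[\lambda_\mu(a),\lambdaop_\mu(b)]P$ is $D_\e^>$-summable. Note that $\lambda_\mu$ and $\lambdaop_\mu$ do \emph{not} commute across the function/group divide, so this term is genuinely present and carries the content of the lemma.

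By bilinearity it suffices to treat generators $a=\phi g$ and $b=\psi g'$ with $\phi,\psi\in\Lip(\bd X,d_\e)$ and $g,g'\in\G$. Computing the two compositions on a basis vector $\delta_h\in\ell^2\G$ (viewed as the constant function in the fibre over $h$), one finds that both $\projcon\lambda_\mu(\phi g)\lambdaop_\mu(\psi g')\projcon$ and $\projcon\lambdaop_\mu(\psi g')\lambda_\mu(\phi g)\projcon$ send $\delta_h$ to a scalar multiple of $\delta_{gh(g')^{-1}}$. Hence $P[\lambda_\mu(\phi g),\lambdaop_\mu(\psi g')]P$ is the weighted shift $\delta_h\mapsto w(h)\,\delta_{gh(g')^{-1}}$ with
\begin{align*}
w(h)=\int \phi(gh(g')^{-1}\xi)\,\psi(g'h^{-1}\xi)\,d\mu(\xi)-\int \phi(gh\xi)\,\psi(g'h^{-1}g^{-1}\xi)\,d\mu(\xi).
\end{align*}
Since $h\mapsto gh(g')^{-1}$ is a bijection of $\G$, the singular values of this operator are exactly the non-increasing rearrangement of $\{|w(h)|\}_{h\in\G}$.

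The core estimate is then a pointwise bound $|w(h)|\leq K\exp(-\e\, d(o,ho))$ with $K$ independent of $h$. To obtain it, I would insert the cross term $\int \phi(gh(g')^{-1}\xi)\,\psi(g'h^{-1}g^{-1}\xi)\,d\mu$ to split $w(h)$ into two pieces, each integrating a single Lipschitz difference: one in the $\psi$-variable (group elements $g'h^{-1}$ and $g'h^{-1}g^{-1}$) and one in the $\phi$-variable (group elements $gh(g')^{-1}$ and $gh$). Bounding each difference by $d_\e$ and applying Lemma~\ref{single integral} controls each piece by $\exp(-\e(\cdot,\cdot)_o)$ of the corresponding pair of orbit points; a short sequence of triangle inequalities, using that $d(o,go)$ and $d(o,g'o)$ are constants, converts these Gromov products into $d(o,ho)$ up to additive constants, giving the claimed bound. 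Finally $|w(h)|\leq K\exp(-\e\,d(o,ho))$ dominates, term by term after rearrangement, the singular values of the multiplication operator $g\mapsto\exp(-\e\,d(o,go))$ analysed in Theorem~\ref{sharp general}; by the orbit growth estimate (Fact~\ref{growth}) the latter lies in $\mathcal{L}^{D_\e+}(\ell^2\G)$ when $D_\e>2$, hence so does our weighted shift. The case $D_\e\leq 2$ follows by the snowflaking argument of Theorem~\ref{sharp general}, replacing $d_\e$ by $d_\e^\alpha$ and using $\Lip(\bd X,d_\e)\subseteq\Lip(\bd X,d_\e^\alpha)$.

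The main obstacle is the pointwise estimate on $w(h)$: the two integrals have their $\phi$- and $\psi$-arguments shifted relative to one another, and one must resist changing variables (which would introduce Radon–Nikodym factors from the non-invariance of $\mu$). The telescoping into single Lipschitz differences is exactly what makes Lemma~\ref{single integral} applicable, and the ensuing Gromov-product bookkeeping — isolating the $h$-dependence as $\exp(-\e\,d(o,ho))$ while absorbing $g,g'$ into the constant $K$ — is the delicate point. Everything downstream (the weighted-shift identification, the domination of singular values, and the passage to $D_\e\leq 2$) then runs parallel to the proof of Theorem~\ref{sharp general}.
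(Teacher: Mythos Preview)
Your proof is correct and follows the same overall strategy as the paper: reduce the commutator modulo $\mathcal{L}^{D_\e^>}$ to a compressed commutator on $\ell^2\G$, show this is a weighted shift dominated by $h\mapsto\exp(-\e\,d(o,ho))$ via Lemma~\ref{single integral}, and conclude by the growth argument from Theorem~\ref{sharp general}. Your telescoping estimate on $w(h)$ and the subsequent Gromov-product bookkeeping are accurate.

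The paper organizes the reduction more efficiently, and it is worth knowing how. Instead of working with the full commutator $P[\lambda_\mu(a),\lambdaop_\mu(b)]P$, the paper passes to $[s_\mu(a),\sop_\mu(b)]$ where $s_\mu(a)=P\lambda_\mu(a)P$, and then exploits that $s_\mu$ and $\sop_\mu$ are \emph{multiplicative} modulo $\mathcal{L}^{D_\e^>}$ on $\Lip(\bd X,d_\e)\ralg\G$. This lets one reduce not just bilinearly to products $a=\phi g$, $b=\psi g'$ as you do, but all the way to single symbols $\phi\in\Lip(\bd X,d_\e)$ and $g\in\G$. At that level two of the four commutators vanish exactly, namely $[s_\mu(g),\sop_\mu(g')]=0$ and $[s_\mu(\phi),\sop_\mu(\phi')]=0$, and the remaining two are $J$-conjugate. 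So one only has to analyze $\sop_\mu(g^{-1})[s_\mu(\phi),\sop_\mu(g)]$, which is diagonal multiplication by $h\mapsto\E\phi(hg^{-1})-\E\phi(h)$; Lemma~\ref{single integral} then applies directly with no telescoping and no second Lipschitz function in play. Your route buys directness and avoids invoking the approximate multiplicativity of $s_\mu$; the paper's route buys a substantially simpler weight to estimate.
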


\begin{proof} Assume that $D_\e>2$. The case when $D_\e\leq 2$ is deduced as in the proof of Theorem~\ref{sharp general}.

We recall from Theorem~\ref{sharp general} that $\lambda_\mu(a)$ commutes mod $\mathcal{L}^{D_\e+}$ with $P_{\ell^2\G}$. It follows that 
\begin{align*}
\big[\lambda_\mu(a), P_{\ell^2\G}\lambdaop_\mu(b)P_{\ell^2\G}\big]=\big[P_{\ell^2\G}\lambda_\mu(a)P_{\ell^2\G}, P_{\ell^2\G}\lambdaop_\mu(b)P_{\ell^2\G}\big]\quad \textrm{ mod }\mathcal{L}^{D_\e+}
\end{align*}
and the right-hand commutator is, with our notations, $[s_\mu(a), \sop_\mu(b)]$. Clearly, the property that $[s_\mu(a), \sop_\mu(b)]\in \mathcal{L}^{D_\e+}$ is additive in $a$ and $b$. Now $s_\mu(aa')=s_\mu(a)s_\mu(a')$ mod $\mathcal{L}^{D_\e+}$ for $a,a'\in\Lip(\bd X,d_\e)\ralg \G$, hence $\sop_\mu=Js_\mu J$ is multiplicative mod $\mathcal{L}^{D_\e+}$ on $\Lip(\bd X,d_\e)\ralg \G$ as well. Therefore, the property that $[s_\mu(a), \sop_\mu(b)]\in \mathcal{L}^{D_\e+}$ is also multiplicative in $a$ and $b$. We thus see that it suffices to treat the case when $a$ and $b$ are either Lipschitz functions or group elements.

For all $g,g'\in \G$ and $\phi, \phi'\in \Lip(\bd X,d_\e)$ we have
\begin{align*}
[s(g),\sop(g')]=0, \qquad [s(\phi), \sop(\phi')]=0, \qquad [s(g), \sop (\phi)] = -J[s(\phi), \sop(g)]J.
\end{align*} It therefore suffices to analyze the summability of the commutator $[s(\phi), \sop(g)]$ or, more conveniently, the summability of $\sop(g^{-1})[s(\phi), \sop(g)]$ which is readily verified to be multiplication by $h\mapsto \E\phi(hg^{-1})- \E\phi(h)$ on $\ell^2\G$.

 For $h_1,h_2\in\G$ and $\phi\in \Lip(\bd X,d_\e)$ we have
\begin{align*}
\big|\E\phi(h_1)- \E\phi(h_2)\big| \leq \int \big|\phi(h_1\xi)-\phi(h_2\xi)\big|\: d\mu(\xi)  \leq \|\phi\|_{\Lip} \int d_\e(h_1\xi, h_2\xi)\: d\mu(\xi)
\end{align*}
so, by Lemma~\ref{single integral}, there exists a constant $C>0$ such that
\begin{align*}
\big|\E\phi(h_1)- \E\phi(h_2)\big| \leq C \|\phi\|_{\Lip} \exp(-\e\: (h_1o,h_2o)_o).
\end{align*}
Put $h_1=hg^{-1}$ and $h_2=h$. As $(hg^{-1}o,ho)_o=(g^{-1}o,o)_{h^{-1}o}\geq d(o,ho)-d(o,go)$, we obtain
\begin{align*}\label{towards lipschitzness II}
\big|\E\phi(hg^{-1})- \E\phi(h)\big| \leq C \|\phi\|_{\Lip} \exp(\e\: d(o,go))\exp(-\e\: d(o,ho)).
\end{align*}
Finally, we recall from the proof of Theorem~\ref{sharp general} that multiplication by $h\mapsto \exp(-\e\: d(o,ho))$ is in $\mathcal{L}^{D_\e+} (\ell^2\G)$.  \end{proof}

\begin{thm}\label{enough are summable}
There is a smooth subalgebra $\mathcal{A}\subseteq C(\bd X)\rtimes \G$, containing $\Lip(\bd X,d_\e)\ralg \G$, such that the odd, respectively even, Fredholm modules
\begin{align*}
&\big(\lambda_\mu, P_{\ell^2\G}\lambdaop_\mu(e)P_{\ell^2\G}\big), \qquad e \textrm{ projection in }\mathcal{A}\\
&\big(\lambda_\mu, P_{\ell^2\G}\lambdaop_\mu(u)P_{\ell^2\G}+(1-P_{\ell^2\G})\big), \qquad u\textrm{ unitary in } \mathcal{A}
\end{align*}
are $D_\e^>$-summable Fredholm modules over $\Lip(\bd X,d_\e)\ralg \G$.
\end{thm}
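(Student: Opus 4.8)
The plan is to construct the smooth subalgebra $\mathcal{A}$ by operator-theoretic conditions rather than by decay of crossed-product coefficients, and to read off its stability under holomorphic functional calculus from derivation identities of the form $\delta(b^{-1})=-b^{-1}\delta(b)b^{-1}$. I work throughout under the assumption $D_\e>2$, so that $\mathcal{L}^{D_\e+}$ is a genuine Banach operator ideal; the case $D_\e\le 2$ is reduced to this one by snowflaking the visual metric, exactly as in the proofs of Theorem~\ref{sharp general} and Lemma~\ref{prop:quantifiedcommutation}. Concretely, I would set
\begin{align*}
\mathcal{A}:=\big\{b\in\hi: [\lambda_\mu(b),\projcon],\ [s(\phi),\sop(b)],\ [\lambda_\mu(g),\sop(b)]\in\mathcal{L}^{D_\e+}\ \text{for all}\ \phi\in\Lip(\bd X,d_\e),\ g\in\G\big\},
\end{align*}
equipped with the $C^*$-norm together with the graph seminorms measuring the $\mathcal{L}^{D_\e+}$-size of these three commutators. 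Since $\G$ is countable and $\Lip(\bd X,d_\e)$ is separable, countably many seminorms suffice and $\mathcal{A}$ is Fréchet. That $\Lip(\bd X,d_\e)\ralg\G\subseteq\mathcal{A}$, so that $\mathcal{A}$ is dense, is precisely the content of Theorem~\ref{sharp general} (first condition) and of Lemma~\ref{prop:quantifiedcommutation} applied with the left entry a generator $\phi$ or $g$ (remaining two conditions).

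The second step is to verify that $\mathcal{A}$ is a $*$-subalgebra. The assignment $b\mapsto[\lambda_\mu(b),\projcon]$ is an honest derivation, because $\lambda_\mu$ is a representation, so the first condition already cuts out a $*$-subalgebra $\mathcal{A}_1$. The key algebraic observation is that for $b\in\mathcal{A}_1$ and arbitrary $c\in\hi$ one has
\begin{align*}
s(bc)-s(b)s(c)=-\projcon[\lambda_\mu(b),\projcon]\lambda_\mu(c)\projcon\in\mathcal{L}^{D_\e+},
\end{align*}
and likewise for $\sop$; thus $s$ and $\sop$ are multiplicative modulo $\mathcal{L}^{D_\e+}$ as soon as the left factor lies in $\mathcal{A}_1$. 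Combined with the fact that $\mathcal{L}^{D_\e+}$ is a two-sided ideal stable under left and right multiplication by the bounded operators $s(\phi)$ and $\lambda_\mu(g)$, this Leibniz-modulo-ideal rule shows the remaining two commutator conditions are preserved under products and adjoints, so $\mathcal{A}$ is a $*$-subalgebra.

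The crux, and the step I expect to be the main obstacle, is inverse-closedness of $\mathcal{A}$ in $\hi$: for a Fréchet subalgebra this is equivalent to stability under holomorphic functional calculus and yields $\K_*(\mathcal{A})\cong\K_*(\hi)$. Here the naive expectation — that summability should force exponential decay of coefficients, which is incompatible with holomorphic closure — is defeated precisely because $\mathcal{A}$ is defined by operator conditions that are themselves preserved by inversion. Given $b\in\mathcal{A}$ invertible in $\hi$, holomorphic closure of $\mathcal{A}_1$ is immediate from $[\lambda_\mu(b^{-1}),\projcon]=-\lambda_\mu(b^{-1})[\lambda_\mu(b),\projcon]\lambda_\mu(b^{-1})$, giving $b^{-1}\in\mathcal{A}_1$; feeding this back into the Leibniz-modulo-ideal rule upgrades the Calkin-level invertibility of $\sop(b)$ to the statement that $\sop(b^{-1})$ inverts $\sop(b)$ modulo $\mathcal{L}^{D_\e+}$, not merely modulo compacts. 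This upgrade from ``mod $\Comp$'' to ``mod $\mathcal{L}^{D_\e+}$'' is the delicate point. Writing $\delta(b)=[s(\phi),\sop(b)]$ and differentiating $\delta(1)=\delta(b\,b^{-1})=0$ via the Leibniz-modulo-ideal rule then produces $\delta(b^{-1})=-\sop(b^{-1})\delta(b)\sop(b^{-1})$ modulo $\mathcal{L}^{D_\e+}$, which lies in $\mathcal{L}^{D_\e+}$; the identical computation with $s(\phi)$ replaced by $\lambda_\mu(g)$ handles the third condition, so $b^{-1}\in\mathcal{A}$.

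Finally I would harvest the summability statement. For a projection $e\in\mathcal{A}$ the operator $Q_e:=\projcon\lambdaop_\mu(e)\projcon=\sop(e)$ is self-adjoint, and multiplicativity of $\sop$ modulo $\mathcal{L}^{D_\e+}$ together with $e^2=e$ gives $Q_e^2-Q_e\in\mathcal{L}^{D_\e+}$, so $Q_e$ is an essential projection with $\mathcal{L}^{D_\e+}$-defect. For $a\in\Lip(\bd X,d_\e)\ralg\G$, expanding $a$ as a finite sum and applying the exact Leibniz rule for the derivation $x\mapsto[\lambda_\mu(x),Q_e]$ reduces $[\lambda_\mu(a),Q_e]$ to the generators: $[\lambda_\mu(\phi),Q_e]\equiv[s(\phi),\sop(e)]$ modulo $\mathcal{L}^{D_\e+}$ (using $[\lambda_\mu(\phi),\projcon]\in\mathcal{L}^{D_\e+}$ and $e\in\mathcal{A}_1$), while $[\lambda_\mu(g),Q_e]=[\lambda_\mu(g),\sop(e)]$ since $\lambda_\mu(g)$ commutes with $\projcon$. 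Both lie in $\mathcal{L}^{D_\e+}$ by membership $e\in\mathcal{A}$, so $(\lambda_\mu,Q_e)$ is $D_\e^>$-summable over $\Lip(\bd X,d_\e)\ralg\G$. The even case is identical upon replacing $Q_e$ by $Q_u+(1-\projcon)$ and the idempotent defect by the unitarity defects, which reduce to $\sop(u^*u)-\projcon,\ \sop(uu^*)-\projcon\in\mathcal{L}^{D_\e+}$.
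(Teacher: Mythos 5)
Your proposal is correct, and it reaches the theorem by a route that is essentially the $J$-conjugate of the one in the paper. The paper defines $\mathcal{A}$ by the condition $[\lambda_\mu(a), \projcon\lambdaop_\mu(b)\projcon]\in\mathcal{L}^{D_\e+}$ for all $b\in\Lip(\bd X,d_\e)\ralg\G$, i.e.\ it places the sought-after smooth element in the $\lambda_\mu$-slot and the Lipschitz test elements in the $\sop$-slot. This makes each defining condition a derivation condition in $a$, so spectral invariance of each $\mathcal{A}_b$ (hence of the intersection) is immediate from $[\lambda_\mu(a^{-1}),T]=-\lambda_\mu(a^{-1})[\lambda_\mu(a),T]\lambda_\mu(a^{-1})$; the price is that the condition obtained is not the one needed for summability of $(\lambda_\mu,\sop(e))$, and the paper must invoke the symmetry $[s(a),\sop(b)]=-J[s(b),\sop(a)]J$ (together with $[\lambda_\mu(\cdot),\projcon]\in\mathcal{L}^{D_\e+}$ on both entries) to swap the slots at the end. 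You make the transposed choice: your $\mathcal{A}$ puts the smooth element in the $\sop$-slot against generators in the $\lambda_\mu$-slot, which delivers the summability of the twisted modules with no further work, but forces you to develop the Leibniz-modulo-$\mathcal{L}^{D_\e+}$ calculus for the completely positive (non-multiplicative) map $\sop$ in order to prove both the algebra property and inverse-closedness. Your computation there is sound: the identity $s(bc)-s(b)s(c)=-\projcon[\lambda_\mu(b),\projcon]\lambda_\mu(c)\projcon$ is correct, $\delta(1)=0$ holds because $s(\phi)$, $\lambda_\mu(g)$ and $\sop(x)$ all respect $\projcon$, and the observation $\delta(b^{-1})=\projcon\delta(b^{-1})$ is exactly what is needed to pass from $\projcon\delta(b^{-1})\equiv-\sop(b^{-1})\delta(b)\sop(b^{-1})$ to membership of $\delta(b^{-1})$ itself in the ideal. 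Your version is in fact more explicit than the paper's about why holomorphic stability holds (the paper asserts it in one line for each $\mathcal{A}_b$), at the cost of a longer verification; both arguments share the same unaddressed technicalities (choice of Banach norm on $\mathcal{L}^{D_\e+}$, the standard Fr\'echet spectral-invariance-implies-holomorphic-closure step). One cosmetic point: the unitarity defects you name, $\sop(u^*u)-\projcon$ and $\sop(uu^*)-\projcon$, are identically zero; the actual content is $\sop(u)^*\sop(u)-\sop(u^*u)\in\mathcal{L}^{D_\e+}$, which is what your approximate multiplicativity provides, so the substance is unaffected.
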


We recall that a subalgebra $\mathcal{A}$ of a C*-algebra $A$ is said to be smooth if it is dense and stable under holomorphic calculus. Then the projections of $\mathcal{A}$ are dense in the projections of $A$, and the unitaries of $\mathcal{A}$ are dense in the unitaries of $A$. In particular, the K-theory classes of $A$ can be represented by projections, respectively unitaries, from $\mathcal{A}$. In light of  this fact, Theorem~\ref{summability achieved} follows by combining Theorem~\ref{enough are summable} and Theorem~\ref{twisted representatives}.

 \begin{proof}[Proof of Theorem~\ref{summability achieved}] To fix ideas, let us assume that $D_\e>2$.
 
Consider the $*$-subalgebra
\begin{align*}
\mathcal{A}=\big\{a\in \hiX: \big[\lambda_\mu(a), P_{\ell^2\G}\lambdaop_\mu(b)P_{\ell^2\G}\big]\in \mathcal{L}^{D_\e+} \textrm{ for all } b\in \Lip(\bd X,d_\e)\ralg \G\big\}.
\end{align*}
Then $\mathcal{A}$ contains $\Lip(\bd X,d_\e)\ralg \G$, by Lemma~\ref{prop:quantifiedcommutation}, in particular $\mathcal{A}$ is dense in $\hiX$. To see that $\mathcal{A}$ is stable under holomorphic calculus, consider for a moment the subalgebra $\mathcal{A}_b=\{a\in \hiX: \big[\lambda_\mu(a), P_{\ell^2\G}\lambdaop_\mu(b)P_{\ell^2\G}\big]\in \mathcal{L}^{D_\e+}\}$ corresponding to a \emph{fixed} $b\in \Lip(\bd X,d_\e)\ralg \G$. Then $\mathcal{A}_b$ is stable under holomorphic calculus in $\hiX$, therefore the same holds true for $\mathcal{A}$, the intersection of the family of subalgebras $\{\mathcal{A}_b: b\in \Lip(\bd X,d_\e)\ralg \G\}$.
 
Next, we prove the summability claim. Note first that 
\begin{align*}
\big[\lambda_\mu(a), P_{\ell^2\G}\big]\in \mathcal{L}^{D_\e+}, \qquad \textrm{for all }a \in \mathcal{A}.
\end{align*}
Then also $\big[\lambdaop_\mu(a), P_{\ell^2\G}\big]\in \mathcal{L}^{D_\e+}$ for all $a \in \mathcal{A}$. It follows that $P_{\ell^2\G}\lambdaop_\mu(e)P_{\ell^2\G}$ is a projection mod $\mathcal{L}^{D_\e+}$ whenever $e\in \mathcal{A}$ is a projection, and that $P_{\ell^2\G}\lambdaop_\mu(u)P_{\ell^2\G}+(1-P_{\ell^2\G})$ is a unitary mod $\mathcal{L}^{D_\e+}$ whenever $u\in \mathcal{A}$ is a unitary.

By definition, if $a \in \mathcal{A}$ then the commutator $\big[\lambda_\mu(a), P_{\ell^2\G}\lambdaop_\mu(b)P_{\ell^2\G}\big]$ is $D_\e^+$-summable for every $b\in \Lip(\bd X,d_\e)\ralg \G$. As already hinted in the proof of Lemma~\ref{prop:quantifiedcommutation}, the summability of the above commutators is in fact symmetric in $a$ and $b$. Indeed, using the fact that $\big[\lambda_\mu(a), P_{\ell^2\G}\big]\in \mathcal{L}^{D_\e+}$ for all $a \in \mathcal{A}$, we may write\begin{align*}
\big[\lambda_\mu(a), P_{\ell^2\G}\lambdaop_\mu(b)P_{\ell^2\G}\big]&=\big[P_{\ell^2\G}\lambda_\mu(a)P_{\ell^2\G}, P_{\ell^2\G}\lambdaop_\mu(b)P_{\ell^2\G}\big]\quad \textrm{ mod }\mathcal{L}^{D_\e+},\\
\big[\lambda_\mu(b), P_{\ell^2\G}\lambdaop_\mu(a)P_{\ell^2\G}\big]&=\big[P_{\ell^2\G}\lambda_\mu(b)P_{\ell^2\G}, P_{\ell^2\G}\lambdaop_\mu(a)P_{\ell^2\G}\big]\quad \textrm{ mod }\mathcal{L}^{D_\e+}.
\end{align*}
Now observe that the right-hand side commutators are conjugate by the symmetry $J$. This shows that, if $b\in \Lip(\bd X,d_\e)\ralg \G$, then the commutator $\big[\lambda_\mu(b), P_{\ell^2\G}\lambdaop_\mu(a)P_{\ell^2\G}\big]$ is $D_\e^+$-summable for every $a \in \mathcal{A}$. We conclude that the indicated Fredholm modules are $D_\e^+$-summable over $\Lip(\bd X,d_\e)\ralg \G$.
\end{proof}

\section{The K-homology Gysin sequence for boundary actions }\label{gysin section}
In this section, we attack the problem of proving that the reduced C*-algebra of a hyperbolic group has uniformly summable \(\K\)-homology. This involves some tools from \(\KK\)-theory. We start by summarizing the basic facts we will need about `\(\gamma\)-elements' and the Dirac dual-Dirac method. 

\subsection{Descent, \(\gamma\)-elements}
For any discrete group (or more generally locally compact group) \(\G\), 
`descent,' in Kasparov theory, refers to a natural map 
\[ j\colon \KK^\G_*(A,B) \to \KK_*(A\rtimes \G, B\rtimes \G)\]
which extends to equivariant \(\KK\)-cycles (and homotopies) the process of integrating a 
\(\G\)-equivariant $*$-homomorphism \(A\to B\) to an ordinary $*$-homomorphism 
\(A\rtimes \G \to B\rtimes \G\). Either the maximal or the reduced crossed-product 
can be used; thus there is also a `reduced' descent map 
\[ j_r \colon \KK^\G_*(A,B) \to \KK_*(A\rtimes_r \G, B\rtimes_r \G)\]
in which the reduced is used.

Descent \(j\) (respectively reduced descent \(j_r\)) makes 
the abelian group \(\KK(A\rtimes \G, B\rtimes \G)\) 
(respectively \(\KK(A\rtimes_r \G, B\rtimes_r \G)\) 
a left module over the ring \(\KK^\G_*(\C, \C)\), and likewise a 
right module, using the structure of \(\KK^\G(A, B)\) as a module over 
\(\KK^\G(\C, \C)\), for any \(\G\)-C*-algebras \(A,B\). 

The \(\gamma\)-element is defined contingent on the existence of a proper \(\G\)-C*-algebra \(P\) and 
classes 
\(\eta \in \KK^\G(\C, P)\) and \(D\in \KK^\G(\C, P)\) 
such that \(D\otimes_\C \eta = 1_P \in \KK^\G(P, P)\), as the
idempotent \(\eta\otimes_P D \in \KK^\G(\C, \C)\). 
The existence of a \(\gamma\)-element is not guaranteed for arbitrary discrete groups, but a group can have at 
most one \(\gamma\)-element, as one can argue without difficulty. The existence issue 
involves the existence of \(\eta\), called the \emph{dual-Dirac morphism}: it
 can be shown (see \cite{Meyer-Nest} and \cite{Emerson:dd}) that for any \(\G\), there exist
 proper \(P\) and a morphism \(D\in \KK^\G(P, \C)\) (the \emph{Dirac morphism}) such that existence of \(\eta\) 
 is equivalent to a coarse geometric condition on the group, namely, that the `coarse co-assembly map' for \(\G\) is an isomorphism (the coarse co-assembly map is described in \cite{Emerson:dd}). The coarse co-assembly map is, however, an isomorphism for all hyperbolic groups, and more generally, for groups which uniformly embed in a Hilbert space, so all such groups have \(\gamma\)-elements. The first explicit construction of them in the case of hyperbolic groups 
is due to Kasparov and Skandalis \cite{KS}. 

It is \emph{not} true that \(\gamma = 1\in \KK^\G_0(\C, \C)\) for general hyperbolic groups, \(1\) being the  class \(1:= [\epsilon] 
\in \KK_0^\G(\C, \C)\) of the trivial representation \(\epsilon \colon C^*\G \to \C\). An argument of Skandalis \cite{Ska} even gives examples where 
\(j_r (\gamma) \not=1_{C^*_r\G}\in \KK_0(C^*_r\G, C^*_r\G)\). 

For cocompact lattices in 
\(\mathrm{SO}(n,1)\) or \(\mathrm{SU}(n,1)\), or free groups, \(\gamma=1\) is true due to results of Kasparov \cite{Kas1}, and Kasparov and Julg \cite{JK}. These groups are also known to be a-T-menable, so \(\gamma = 1\) follows from the Higson - Kasparov theorem (see \cite{HK}) as well.

For our purposes, we are mostly concerned about 
whether \(\gamma\) \emph{acts} as the identity on various \(\KK\)-groups, especially \(\K^*(C^*_r\G)\). When \(\G\) is hyperbolic, recent work of Lafforgue and others \cite{Laf, MY} shows that \(\gamma\) \emph{does} act as the identity on the \(\K\)-\emph{theory} of \(C^*_r\G\), but nothing seems to be known at present about the case of \(\K\)-homology.

The point of the \(\gamma\)-part, is that it is the `topologically accessible' part of the \(\K\)-homology, in the 
sense of the following theorem which 
is essentially due to Kasparov. See Theorem 
23 of \cite{Emerson:dd} and Theorem 7.1 of 
\cite{Meyer-Nest}.

 \begin{lem}[Kasparov]\label{lembc} Let \(\G\) be a discrete group with a 
 \(\gamma\)-element and 
 \(\EG\) its classifying space for proper actions. Then the
  canonical inflation map of \cite{Kas}
 \[ p_{\EG}^*\colon \KK^\G_*(A,B) \to \RKK^\G_*(\EG; A,B)\]
is an isomorphism from the \(\gamma\)-part of \(\KK^\G_*(A,B)\) onto 
its target. 
\end{lem}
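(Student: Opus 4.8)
The plan is to produce an explicit inverse to \(p_{\EG}^*\) on the \(\gamma\)-part out of the Dirac and dual-Dirac morphisms, and to show that \(\gamma\) acts as the identity on the target. Recall that \(\gamma = \eta\otimes_P D\) is idempotent in the ring \(\KK^\G_0(\C,\C)\); its action on \(\KK^\G_*(A,B)\) by Kasparov product is therefore an idempotent endomorphism whose range is, by definition, the \(\gamma\)-part, a direct summand. Since \(p_{\EG}^*\) is a homomorphism of rings and of \(\KK^\G_*(\C,\C)\)-modules, the action of \(\gamma\) on the target \(\RKK^\G_*(\EG;A,B)\) is implemented by \(p_{\EG}^*(\gamma)\). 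Consequently it suffices to carry out two steps: (a) construct a map \(\nu\colon \RKK^\G_*(\EG;A,B)\to \KK^\G_*(A,B)\) satisfying \(\nu\circ p_{\EG}^* = \gamma\cdot(-)\) on the source and \(p_{\EG}^*\circ \nu = \gamma\cdot(-)\) on the target; and (b) prove that \(p_{\EG}^*(\gamma)=1\) in \(\RKK^\G_0(\EG;\C,\C)\).

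For step (a) the essential input is Kasparov's theorem \cite{Kas} that \(p_{\EG}^*\) is an isomorphism whenever a coefficient algebra is proper. Applied to the proper algebra \(A\otimes P\) this gives \(p_{\EG}^*\colon \KK^\G_*(A\otimes P, B)\xrightarrow{\ \sim\ }\RKK^\G_*(\EG;A\otimes P,B)\). I would then define \(\nu\) as the composite that first precomposes with the class \(1_A\otimes D\in \KK^\G(A\otimes P, A)\), landing in \(\RKK^\G_*(\EG;A\otimes P,B)\); then applies the inverse of the above isomorphism to return to \(\KK^\G_*(A\otimes P, B)\); and finally precomposes with the dual-Dirac class \(\eta\otimes 1_A\in\KK^\G(A, A\otimes P)\) to land in \(\KK^\G_*(A,B)\). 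Using multiplicativity of \(p_{\EG}^*\) under Kasparov products together with \((\eta\otimes 1_A)\otimes_{A\otimes P}(1_A\otimes D)=\gamma\cdot 1_A\), a routine bookkeeping of products yields both \(\nu\circ p_{\EG}^* = \gamma\cdot(-)\) and \(p_{\EG}^*\circ\nu = \gamma\cdot(-)\).

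For step (b), properness of \(P\) again intervenes: Kasparov's theorem makes the image \(p_{\EG}^*(D)\) of the Dirac morphism invertible in the \(\RKK\)-theory over \(\EG\). Since \(D\otimes_\C\eta = 1_P\) holds already in \(\KK^\G\), applying \(p_{\EG}^*\) shows one composite of \(p_{\EG}^*(D)\) and \(p_{\EG}^*(\eta)\) equals \(1\), and invertibility forces the other composite to be \(1\) as well; hence \(p_{\EG}^*(\gamma)=p_{\EG}^*(\eta\otimes_P D)=1\). Therefore \(\gamma\) acts as the identity on the target, so \(p_{\EG}^*\circ\nu=\mathrm{id}\) and \(p_{\EG}^*\) is surjective. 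On the \(\gamma\)-part, \(\gamma\) acts as the identity, so \(\nu\circ p_{\EG}^*=\mathrm{id}\) there, giving injectivity; and since \(\nu = \nu\circ p_{\EG}^*\circ\nu = \gamma\cdot\nu(-)\), the image of \(\nu\) lies in the \(\gamma\)-part. Thus \(p_{\EG}^*\) restricts to an isomorphism of the \(\gamma\)-part onto \(\RKK^\G_*(\EG;A,B)\) with inverse \(\nu\).

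The main obstacle is the one nontrivial ingredient underlying both steps: Kasparov's theorem that \(p_{\EG}^*\) is an isomorphism for proper coefficients, equivalently that the Dirac morphism becomes invertible over \(\EG\). This is precisely where properness of \(P\) is genuinely used, and it is the only analytic content; everything else is formal manipulation of the idempotent \(\gamma\) and of Kasparov products. The remaining care needed is in the compatibility of \(p_{\EG}^*\) with products and with the reindexing \(A\leftrightarrow A\otimes P\), where associativity and degree bookkeeping must be tracked, but this is routine given the functoriality of \(\RKK\).
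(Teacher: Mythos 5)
The paper does not prove this lemma at all: it is quoted as a theorem of Kasparov, with \cite{Kas} (see also \cite{Meyer-Nest}) as the reference, so there is no in-text argument to compare yours against. Taken on its own terms, your proposal is the standard derivation and is sound: it rests on exactly the two external facts you isolate --- that \(p_{\EG}^*\) is an isomorphism when the first variable is a proper \(\G\)-C*-algebra, and that the Dirac morphism becomes invertible over \(\EG\) --- and everything else is formal manipulation of the idempotent \(\gamma\), as you say. Three small caveats. First, those two inputs are closely related but not literally the same statement; a careful write-up should either cite both or derive the invertibility of \(p_{\EG}^*(D)\) from the fact that \(\RKK^\G(\EG;-,-)\) only depends on restrictions to finite subgroups, which is where the properness of \(\EG\) genuinely enters. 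Second, the lemma as stated presupposes that a \(\gamma\)-element exists --- the paper itself stresses that this is not automatic for a general discrete group --- and your proof inherits that hypothesis through its use of \(\eta\) and \(D\); this is harmless in the paper's context since hyperbolic groups do have \(\gamma\)-elements. Third, \(p_{\EG}^*\) is not a ring homomorphism in general, since \(\KK^\G_*(A,B)\) is not a ring for \(A\neq B\); what you actually use, correctly, is that it is a \(\KK^\G_*(\C,\C)\)-module map compatible with Kasparov products, so that \((\eta\otimes 1_A)\otimes_{A\otimes P}(1_A\otimes D)=\gamma\otimes 1_A\) together with the idempotence of \(\gamma\) gives \(\nu\circ p_{\EG}^*=\gamma\cdot(-)\) and \(p_{\EG}^*\circ\nu=p_{\EG}^*(\gamma)\cdot(-)=\mathrm{id}\), as required.
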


Here
\(\RKK^\G(\EG; A , B)\) is the \(\G\)-equivariant representable 
\(\K\)-theory of \(\EG\). If \(A = B = \C\) and \(G\backslash \EG\) is compact, then 
it agrees with the ordinary K-theory of 
\(C_0(\EG)\rtimes \G\), and if in addition \(G\) is torsion-free, then it is 
isomorphic to \(\K^*(\G\backslash \EG)\) -- see \cite{Emerson:rkk} for 
information on equivariant representable \(\K\)-theory.

Finally, we remind the reader that 
since a Gromov hyperbolic group acts amenably on its boundary, 
\(\gamma\) acts as the identity on \(\KK^\G_*(C(\bd \G)\otimes A, B)\) for 
any \(A,B\). (The Dirac dual-Dirac method gives a \(\KK^\G\)-equivalence 
between \(C(\bd \G)\) and a proper \(\G\)-C*-algebra, while \(\gamma\) acts 
as the identity on any \(\KK^\G(P, B)\)-group when \(P\) is proper, by 
properties of \(\gamma\) -- see \cite{Meyer-Nest}.)

\begin{cor}
\label{lemhkt}
For any \(\G\)-C*-algebras \(A,B\), 
\begin{align*} 
\KK^\G_*(C(\bd \G)\otimes A, B) &\cong \RKK^\G_*(\EG; C(\bd \G)\otimes A,B)\\
\KK^\G_*(C(\overline{\G})\otimes A, B) &\cong \RKK^\G_*(\EG; C(\overline{\G})\otimes A,B)
\end{align*}
by the inflation map $p_{\EG}^*$. \end{cor}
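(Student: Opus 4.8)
The plan is to deduce both isomorphisms from Kasparov's Lemma~\ref{lembc}. That lemma says that $p_{\EG}^*$ restricts to an isomorphism from the $\gamma$-part of $\KK^\G_*(C,B)$ onto $\RKK^\G_*(\EG;C,B)$ for \emph{every} $\G$-C*-algebra $C$; hence it suffices to check that the $\gamma$-element acts as the identity on each of the two source groups, so that their $\gamma$-parts fill them entirely. For $\KK^\G_*(C(\bd\G)\otimes A,B)$ this is exactly the statement recalled just before the corollary, coming from amenability of the boundary action (which makes $C(\bd\G)$ $\KK^\G$-equivalent to a proper algebra) together with the vanishing of $1-\gamma$ on proper coefficients. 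So the first isomorphism is immediate, and the real work is the $C(\overline{\G})$ case.

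First I would establish that $\gamma$ also acts as the identity on $\KK^\G_*(C(\overline{\G})\otimes A,B)$, using the boundary extension
\begin{equation*}
0\Too C_0(\G)\Too C(\overline{\G})\Too C(\bd\G)\Too 0,
\end{equation*}
which, as shown in Section~\ref{what is the boundary extension}, admits the $\G$-equivariant completely positive section $\Ebar$. Tensoring this sequence by $A$ over the identity of $A$, the map $\Ebar\otimes\mathrm{id}_A$ is an equivariant completely positive section of the tensored sequence, so the latter is exact and equivariantly semisplit, and applying $\KK^\G_*(-,B)$ yields a six-term exact sequence. The ideal term is $C_0(\G)\otimes A\cong C_0(\G,A)$, which is a proper $\G$-C*-algebra because the left-translation action of $\G$ on itself is free and proper; hence $\gamma$ acts as the identity on $\KK^\G_*(C_0(\G)\otimes A,B)$. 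The quotient term is $\KK^\G_*(C(\bd\G)\otimes A,B)$, handled by the previous paragraph.

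Next I would push the $\gamma$-action through the six-term sequence. Each $\KK^\G_*(-,B)$ is a module over the ring $\KK^\G_*(\C,\C)$ via Kasparov product, and all maps in the sequence are module maps, so $1-\gamma$ acts as a degree-preserving endomorphism of the whole sequence and vanishes on both the ideal and quotient terms. Since $\gamma$ is idempotent, $1-\gamma$ is idempotent too, and a short chase gives vanishing on the middle term: for $x\in\KK^\G_*(C(\overline{\G})\otimes A,B)$ the restriction of $(1-\gamma)x$ to the ideal term is zero, so by exactness $(1-\gamma)x=p^*(y)$ for some $y$ in the quotient term, whence $(1-\gamma)x=(1-\gamma)^2x=p^*\big((1-\gamma)y\big)=0$. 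Therefore $\gamma$ acts as the identity there as well, and a second application of Lemma~\ref{lembc} delivers the second isomorphism.

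The main obstacle is precisely the $C(\overline{\G})$ case, and within it the need for the six-term exact sequence to be available for an \emph{arbitrary} (possibly non-nuclear) coefficient algebra $A$; this is exactly what the explicit equivariant completely positive splitting $\Ebar$ of Section~\ref{what is the boundary extension} guarantees, since $\Ebar\otimes\mathrm{id}_A$ splits the tensored extension. Everything else --- properness of $C_0(\G)\otimes A$, and the formal passage from ``$\gamma$ acts as the identity'' to ``$p_{\EG}^*$ is an isomorphism'' via Lemma~\ref{lembc} --- is then routine.
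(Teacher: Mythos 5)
Your proof is correct and follows the paper's intended route: combine Kasparov's Lemma~\ref{lembc} with the fact that \(\gamma\) acts as the identity on the two source groups. The paper only records the \(\gamma=1\) statement for \(\KK^\G_*(C(\bd \G)\otimes A,B)\) and leaves the \(C(\overline{\G})\) case implicit; your six-term-sequence argument, using the \(\G\)-equivariant completely positive section \(\Ebar\) to semisplit the tensored boundary extension and the properness of \(C_0(\G)\otimes A\), is a correct and natural way to supply that missing step.
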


\subsection{The \(\gamma\)-element regarded as a K-homology class for \(C^*_r\G\).}\label{bloop}


Let \(\lambda \colon C^*\G \to C^*_r\G\) be the projection from the maximal group C*-algebra to the reduced group C*-algebra, and let us call any element \(\gamma_r \in \K^0(C^*_r\G)\) such that 
\begin{align*}
\lambda^*(\gamma_r)=\gamma \in \K^0(C^*\G) \cong \KK^\G_0(\C, \C)
\end{align*}
a \emph{reduced \(\gamma\)-element} for \(\G\).

\begin{prop}
\label{indexoneelement}
The map 
\(\lambda^*\colon \K^*(C^*_r\G) \to \K^*(C^*\G) \cong \KK^\G_*(\C, \C)\) induces an isomorphism between the \(\gamma\)-parts of these two rings. In particular, if \(\G\) has a \(\gamma\)-element, then it has a reduced \(\gamma\)-element. 
\end{prop}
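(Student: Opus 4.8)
The plan is to exhibit $\lambda^*$ on the $\gamma$-parts as an isomorphism by writing down an explicit two-sided inverse built from the factorization $\gamma=\eta\otimes_P D$ through the proper $\G$-C*-algebra $P$. Recall that under $\K^*(C^*\G)\cong\KK^\G_*(\C,\C)$ the ring $R:=\KK^\G_0(\C,\C)$ acts on $\K^*(C^*\G)$ by Kasparov product, while it acts on $\K^*(C^*_r\G)=\KK_*(C^*_r\G,\C)$ through reduced descent, $z\cdot y:=j_r(z)\otimes_{C^*_r\G}y$; in both cases the $\gamma$-part is the image of the idempotent $p:=\gamma\cdot(-)$. First I would record that $\lambda^*(y)=[\lambda]\otimes_{C^*_r\G}y$ is a map of $R$-modules: this is exactly the naturality of descent with respect to the quotient $\lambda\colon C^*\G\to C^*_r\G$, namely $j(z)\otimes_{C^*\G}[\lambda]=[\lambda]\otimes_{C^*_r\G}j_r(z)$ for all $z\in\KK^\G(\C,\C)$, applied to $z=\gamma$. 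Hence $\lambda^*$ carries $\gamma$-parts to $\gamma$-parts, and it suffices to produce a two-sided inverse there.

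For the inverse I would use the Dirac and dual-Dirac classes $D\in\KK^\G(P,\C)$ and $\eta\in\KK^\G(\C,P)$ with $\gamma=\eta\otimes_P D$ and $P$ proper. The key input is that, $P$ being proper, the canonical map $\lambda_P\colon P\rtimes\G\to P\rcross\G$ is a KK-equivalence (for genuinely proper $\G$-algebras it is even an isomorphism); write $[\lambda_P]^{-1}\in\KK(P\rcross\G,P\rtimes\G)$. Then define
\[
\nu\colon\K^*(C^*\G)\to\K^*(C^*_r\G),\qquad
\nu(x):=j_r(\eta)\otimes_{P\rcross\G}[\lambda_P]^{-1}\otimes_{P\rtimes\G}j(D)\otimes_{C^*\G}x .
\]

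To finish I would compute the two composites and show each equals multiplication by $\gamma$. Using functoriality of descent under Kasparov products ($j(\eta\otimes_P D)=j(\eta)\otimes j(D)$, and likewise for $j_r$) together with the naturality square for descent applied to $\eta$ and to $D$, one gets
\[
j(\eta)\otimes_{P\rtimes\G}[\lambda_P]=[\lambda]\otimes_{C^*_r\G}j_r(\eta),\qquad
j(D)\otimes_{C^*\G}[\lambda]=[\lambda_P]\otimes_{P\rcross\G}j_r(D).
\]
Substituting these into $\lambda^*\circ\nu$ and into $\nu\circ\lambda^*$, the factors $[\lambda_P][\lambda_P]^{-1}$ (resp.\ $[\lambda_P]^{-1}[\lambda_P]$) cancel, leaving $j(\gamma)\otimes_{C^*\G}(-)=\gamma\cdot(-)$ and $j_r(\gamma)\otimes_{C^*_r\G}(-)=\gamma\cdot(-)$ respectively; that is, $\lambda^*\circ\nu=p$ and $\nu\circ\lambda^*=p$. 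Since $\gamma$ is idempotent, $p$ is the identity on the $\gamma$-parts (and, from $\nu p=\nu\lambda^*\nu=p\nu$, the map $\nu$ also preserves them), so $\lambda^*$ and $\nu$ restrict to mutually inverse isomorphisms $\gamma\K^*(C^*_r\G)\cong\gamma\K^*(C^*\G)$. The final claim is then immediate: $\gamma$ lies in $\gamma\K^0(C^*\G)$ because it is idempotent, so $\gamma_r:=\nu(\gamma)$ lies in $\gamma\K^0(C^*_r\G)$ and satisfies $\lambda^*(\gamma_r)=\gamma$, i.e.\ it is a reduced $\gamma$-element.

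The step I expect to require the most care is the justification of the two displayed naturality identities — that full and reduced descent are intertwined by the regular-representation quotient — together with the fact that $\lambda_P$ is a KK-equivalence for proper $P$; both are standard in the Dirac–dual-Dirac literature and can be cited from \cite{Meyer-Nest}, \cite{KS}, but they carry the genuine content, everything else being formal bookkeeping with associativity of the Kasparov product. A secondary point I would state carefully is the precise $R$-module structures on the two K-homology groups and the compatibility of the isomorphism $\K^*(C^*\G)\cong\KK^\G_*(\C,\C)$ with the $\gamma$-action, so that ``the $\gamma$-part'' denotes the same summand on both sides.
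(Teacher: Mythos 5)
Your proposal is correct and follows essentially the same route as the paper: both construct the inverse on the $\gamma$-parts as $j_r(\eta)$ followed by the identification $P\rtimes_r\G\cong P\rtimes\G$ (your $[\lambda_P]^{-1}$) followed by $j(D)\otimes_{C^*\G}[\epsilon]$, exploiting the factorization $\gamma=\eta\otimes_P D$ through a proper algebra and the naturality of descent. You are in fact slightly more complete than the paper's own argument, which only writes down the candidate inverse and the relation $\lambda^*(a')=a$, whereas you verify that both composites equal the idempotent $\gamma\cdot(-)$ and that $\nu$ preserves the $\gamma$-part.
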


The (or any such) element \(\gamma_r\) as in the Proposition will play a role 
in the `Gysin sequence' developed in the next subsection.

\begin{proof}
We produce a map \(\gamma \K^*(C^*\G) \to \gamma \K^*(C^*_r\G)\) inverting \(\lambda^*\) as 
follows. We first recall that the standard identification  \(\KK^\G_*(A, \C) \to \KK_*(A\rtimes \G, \C)\), 
coming from the fact that the groups have the same cycles when \(\G\) is discrete, 
can be expressed in terms the 
`descent' construction and the trivial representation of the group in the following way: 
it is the composition 
of the descent map 
\(j\colon \KK^\G_*(A, \C) \to \KK_*(A\rtimes \G, C^*\G)\), and  
\(\epsilon_*\colon  \KK_*(A\rtimes \G, C^*\G)\to \KK_*(A\rtimes \G, \C)\).
In particular, taking \(\gamma \in \KK^\G_0(\C, \C)\), its image under the 
isomorphism with \(\KK_0(C^*\G, \C) = \K^0(C^*\G)\) is 
\(j(\gamma) \otimes_{C^*\G} [\epsilon]\).
With these formalities aside, we next factor the \(\gamma\)-element, or rather, its 
image in \(\KK_0(C^*\G, \C)\) as follows. Let \(\eta\in 
\KK^\G(\C, P)\) be the dual-Dirac morphism, and let \(D\in \KK^\G(P, \C)\) be the Dirac morphism for \(\G\). Then
 \(j (\eta)\otimes_{P\rtimes \G} j(D) \otimes_{C^*\G} [\epsilon] \) factors the image of \(\gamma\) in \(\KK_0(C^*\G, \C)\). This is because \(\gamma = \eta\otimes_P D\), and the naturality of the descent map \(j\). 
 
 More generally, any \(a\in \gamma \KK^\G_*(\C, \C)\), interpreted as an element of 
 \(\K^*(C^*\G)\), can be thus factored as 
 \begin{multline}
 \label{1}
  j (a) \otimes_{C^*\G} [\epsilon] = j (a\otimes_\C \gamma) \otimes_{C^*\G} [\epsilon]
  \\ = j (a\otimes_\C \eta \otimes_P D) \otimes_{C^*\G} [\epsilon]
  = j (a)\otimes_{C^*\G}  j( \eta) \otimes_{P\rtimes \G}  j(D) \otimes_{C^*\G} [\epsilon]
 \end{multline}
where the first equality is due to \(\gamma \otimes_\C a = a\) for 
\(a\) in the \(\gamma\)-part, the third by the naturality of the descent map.

 Now to obtain an element \(a'\) such that \(\lambda^*(a') = a\), consider the element  
 \begin{align*} b' = j_r (a) \otimes_{C^*_r\G} j_r( \eta) \in \KK_*(C^*_r\G, P\rtimes_r \G)
 \end{align*}
defined using the reduced descent map. Now \(P\) being proper implies \(P\rtimes_r\G \cong 
P\rtimes \G\). Applying this isomorphism to \(b'\) gives  a class \(b\in \KK_*(C_r^*\G, P\rtimes\G)\). Then the required element \(a'\) such that \(\lambda^*(a') = a\) is 
\begin{align*} a':= b\otimes_{P\rtimes \G} j(D) \otimes_{P\rtimes \G} [\epsilon].
\end{align*}
\end{proof}

\begin{rem}
\label{rem:differentgammaparts}
In particular, Kasparov's Theorem (Lemma \ref{lembc}) can be alternately phrased 
in terms of the \(\K\)-homology of the \emph{reduced} C*-algebra: the 
\(\gamma\)-part of the \(\K\)-homology of \(C^*_r\G\) is isomorphic to the 
topological group \(\RKK^\G(\EG; \C, \C)\) (by the composition of \(\lambda^*\) and 
the inflation map.) 
\end{rem}

\subsection{The Gysin sequence}
Let \(\G\) be a hyperbolic group, \(\bd \G\) its Gromov boundary, \emph{etc}. 
Let \(i_\G \colon \C \to C(\bd \G)\) be the natural inclusion of \(\C\) as constant 
functions on \(\bd \G\), defining a morphism \([i_\G]\in \KK^\G_0(\C, C(\bd \G))\) 
and then, by reduced descent, a morphism 
\([i]:= j_r([i_\G]) \in \KK_0(C^*_r\G, \hi)\), which is nothing but the Kasparov morphism 
determined by the C*-algebra injection \(i \colon C^*_r\G\to \hi\) of the reduced C*-algebra in the reduced crossed-product. 

 Then composition with \([i]\) induces 
a map \(i^*\colon \K^*(\hi) \to \K^*(C^*_r\G)\). The aim of this section is to 
compute this map. We first observe that the range of this 
map is contained in the \(\gamma\)-part of \(\K^*(C^*_r\G)\). More 
generally: 

\begin{lem}
\label{lem:contained_in_gamma_part}
Let \(A\) be any \(\G\)-C*-algebra and 
\(\alpha_\G  \colon A \to C(\bd \G)\) a \(\G\)-equivariant *-homomorphism. 
Let \(\alpha \colon A\rtimes_r \G \to \hi\) be the induced 
*-homomorphism. Then the range of the induced map \(\alpha^*\colon \K^*\bigl(\hi \bigr) \to 
\K^*(A\rtimes_r \G)\) is contained in the \(\gamma\)-part of 
\(\K^*(A\rtimes_r\G)\). 
\end{lem}

\begin{proof}
Since \(\gamma\) acts as the identity on \(\K^*(\hi)\)  and 
\(\alpha_\G^*\colon \KK^\G_*(C(\bd \G), \C) \to \KK^\G_*(A, \C)\)
 is a 
\(\KK^\G_*(\C, \C)\)-module map,  for any \(x\in \KK^\G_*(C(\bd \G), \C)\) it holds that 
\(i^*(x) = i^*(\gamma x) = \gamma i^*(x) \in \gamma \KK^\G_*(A, \C)\).
The result follows. 

\end{proof}

Let \(X\) be a Rips complex for \(\G\) which models \(\EG\) (see \cite{Mein}). Let 
\begin{itemize}[leftmargin=20pt, itemsep=3pt]
\item \(r\colon C(\overline{X})\to C(\bd X) \cong C(\bd \G)\) be the 
\(\G\)-equivariant map of restriction to the boundary, 
\item \(u \colon \C \to C(\overline{X})\) be the inclusion as constant functions.
\end{itemize}
Both maps are \(\G\)-equivariant. By 
Lemma \ref{lem:contained_in_gamma_part}, the range of 
\(r^*\) is contained in the \(\gamma\)-part of \(\KK_*^\G(C(\overline{X}), \C)\). 

\begin{lem}
\label{happiness1}
The map 
\begin{align*}
u^*\colon  \KK^\G_*(C(\overline{X}), \C)\rightarrow \KK^\G_*(\C, \C)
\end{align*}
on \(\KK^\G\)-theory induced by 
composition with \(u\in \KK^\G_0(\C, C(\overline{X}))\),
restricts to an \emph{isomorphism}
between the \(\gamma\)-parts of the domain and co-domain. 
 Moreover, the composition 
\begin{align*}
 \KK^\G_*(C(\bd \G), \C) \xrightarrow{r^*} \gamma \KK^\G_*(C(\overline{X}), \C) 
 \xrightarrow{u^*} \gamma \KK^\G_*(\C, \C)  \end{align*}equals \(i_\G^*\). 
 \end{lem}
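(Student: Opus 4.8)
The second assertion is purely formal, so I would dispose of it first. The composite $*$-homomorphism $r\circ u\colon \C\to C(\bd\G)$ carries the constant function $1$ on $\overline X$ to its boundary restriction, which is the constant function $1$ on $\bd\G$; thus $r\circ u=i_\G$ as $\G$-equivariant $*$-homomorphisms, and by functoriality of $\KK^\G$ in its contravariant variable one has $u^*\circ r^*=(r\circ u)^*=i_\G^*$. All the real content lies in the first assertion.

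To prove that $u^*$ is an isomorphism onto the $\gamma$-part, the plan is to compare it with its representable counterpart via the inflation map of Lemma~\ref{lembc}, using the square
\[
\begin{CD}
\KK^\G_*(C(\overline X), \C) @>u^*>> \KK^\G_*(\C, \C)\\
@Vp_{\EG}^*VV @VVp_{\EG}^*V\\
\RKK^\G_*(\EG; C(\overline X), \C) @>u^*>> \RKK^\G_*(\EG; \C, \C)
\end{CD}
\]
which commutes by naturality of $p_{\EG}^*$. By Lemma~\ref{lembc} the right-hand vertical map is an isomorphism from the $\gamma$-part of $\KK^\G_*(\C,\C)$ onto its target. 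The left-hand vertical map is an isomorphism as well: from the extension $0\to C_0(X)\to C(\overline X)\to C(\bd X)\to 0$, whose ideal $C_0(X)=C_0(\EG)$ is a proper $\G$-algebra and whose quotient is $C(\bd X)\cong C(\bd\G)$, the element $\gamma$ acts as the identity on $\KK^\G_*$ of both end terms (properness for the ideal, amenability of the boundary action for the quotient). Applying the five lemma to the induced six-term sequence, together with naturality of the $\gamma$-action, shows that $\gamma$ acts as the identity on $\KK^\G_*(C(\overline X),\C)$, so the left $p_{\EG}^*$ is an isomorphism; this is the same reasoning that underlies Corollary~\ref{lemhkt}.

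The crux is to show that the \emph{bottom} arrow $u^*$ is an isomorphism, and here I would pass to the fibered-over-$\EG$ description of representable $\KK$, recalling that $\EG=X$. In $\RKK^\G_*(\EG;-,\C)$ the object $\C$ is represented by the $C_0(X)$-algebra $C_0(X)$ and $C(\overline X)$ by $C_0(X\times\overline X)$ (with the diagonal $\G$-action and the $C_0(X)$-structure from the first coordinate), and $u$ is induced by the projection $\pi_1\colon X\times\overline X\to X$. I would then exhibit $\pi_1$ as a $\G$-equivariant homotopy equivalence over $X$, with homotopy inverse the diagonal section $s(x)=(x,x)$: one has $\pi_1\circ s=\mathrm{id}_X$ on the nose, while a fiberwise homotopy from $s\circ\pi_1$ to $\mathrm{id}_{X\times\overline X}$ is furnished by the geodesic contraction $c(x,\omega,t)$ running along the geodesic from the interior point $x$ to $\omega\in\overline X$, with $c(x,\omega,0)=x$ and $c(x,\omega,1)=\omega$. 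This homotopy preserves the first coordinate and is $\G$-equivariant because $\G$ acts by isometries preserving geodesics, so it descends to a homotopy through $\G$-equivariant $C_0(X)$-algebra maps; hence $u^*$ is an isomorphism on $\RKK^\G$.

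Granting these three facts, a diagram chase finishes the argument: since the right vertical is injective with image exactly $\RKK^\G_*(\EG;\C,\C)$ when restricted to the $\gamma$-part, while the bottom and left maps are isomorphisms, the top map $u^*$ is injective with image precisely the $\gamma$-part of $\KK^\G_*(\C,\C)$. The main obstacle is the geometric input of the preceding paragraph, namely verifying that the Rips-complex compactification $\overline X$ admits a jointly continuous, $\G$-equivariant geodesic contraction to its interior points that remains continuous up to the boundary $\bd X$. This is exactly the contractibility of $\overline X$ (its $\mathcal{Z}$-structure) underlying the dual-Dirac construction for hyperbolic groups; once it is in hand, everything else is formal.
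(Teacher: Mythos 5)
Your overall architecture — the commuting square comparing $u^*$ with its image under the inflation map $p_{\EG}^*$, the observation that the left vertical is an isomorphism because $\gamma$ acts as the identity on $\KK^\G_*(C(\overline X),\C)$ (proper ideal, amenable boundary action, five lemma), and the concluding diagram chase — matches the paper's strategy, and your treatment of the second assertion ($r\circ u=i_\G$ plus functoriality) is exactly the ``routine verification'' the paper leaves to the reader. The divergence, and the problem, is in the one step that carries all the weight: showing that the bottom arrow is an isomorphism, i.e.\ that $p_X^*(u)$ is invertible in $\RKK^\G_0(X;\C,C(\overline X))$.

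Here you propose a direct geometric argument: realize $u^*$ as pullback along $\pi_1\colon X\times\overline X\to X$ and invert it by a fiberwise, $\G$-equivariant, jointly continuous ``geodesic contraction'' $c(x,\omega,t)$ on the compactified Rips complex. You flag this yourself as ``the main obstacle,'' and it is a genuine gap, not a routine verification: in a Rips complex geodesics are neither unique nor continuously dependent on their endpoints, so ``the geodesic from $x$ to $\omega$'' is not defined, and producing a continuous equivariant selection (extending continuously to $\omega\in\bd X$) is a hard geometric construction of roughly the same order of difficulty as building a dual-Dirac element. It is \emph{not} supplied by the $\mathcal{Z}$-structure of Bestvina--Mess, which only gives (non-equivariant) contractibility of $\overline X$ together with the ``small at infinity'' condition. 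The paper avoids this entirely by a localization argument: by Meyer--Nest, a class in $\KK^\G_0(\C,C(\overline X))$ becomes invertible after applying $p_X^*$ as soon as its restriction to $\KK^H_0(\C,C(\overline X))$ is invertible for every \emph{finite} subgroup $H\subset\G$, and for finite $H$ this reduces to $H$-equivariant contractibility of $\overline X$ — for torsion-free $\G$, just plain contractibility, which is the soft Bestvina--Mess fact. So either fill the geometric gap (e.g.\ via a continuous equivariant bicombing in the sense of Mineyev, extended to the boundary) or, more efficiently, replace that paragraph with the Meyer--Nest reduction to finite subgroups; as written, the proof is incomplete at its central step.
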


\begin{proof}
Recalling that \(X = \EG\), Lemma \ref{lembc} says that the inflation map 
\begin{align*} 
p_X^*\colon \KK^\G_*(A, B) \to \RKK^\G_*(X; A, B)
\end{align*}
is an isomorphism from the \(\gamma\)-part of \(\KK^\G_*(A, B)\) to 
\(\RKK^\G_*(X; A, B)\). 
On the other hand,
 \(\overline{X}\) is \(H\)-equivariantly contractible for every finite subgroup 
\(H\) of \(\G\). In other worlds \(C(\overline{X})\) is \(H\)-equivariantly 
homotopy equivalent to \(\C\) for every finite \(H \subset \G\), equivalently, 
\(u\colon \C \to C(\overline{\G})\) regarded as an element of \(\KK^H_0(\C, C(\overline{X}))\) 
is invertible for every such \(H\). Hence by  
\cite{Meyer-Nest}, \(p_X^*(u)\) is invertible. So \(u^*\) is invertible 
between the \(\gamma\)-parts of \(\KK^\G_*(C(\overline{X}), \C)\) and 
\( \KK_*^\G(\C, \C)\). 

The last statement is left to the reader. 
\end{proof}

The basis of the arguments to follow is the 
\(\G\)-exact sequence 
\begin{equation} 
\label{equation: the_equivariant_sequence}
0 \to C_0(X) \to C(\overline{X}) \to C(\bd X)\cong C(\bd \G) \to 0
\end{equation}
of \(\G\)-C*-algebras. Let us make a few preliminary observations regarding excision in \(\KK\)-theory for this situation. 

Firstly, corresponding to the sequence \eqref{equation: the_equivariant_sequence} is a 
commutative diagram 
\begin{equation}
\xymatrix{ 0 \ar[r]& C_0(X)\rtimes_{\textup{max}}  \G \ar[r] \ar[d] & C(\overline{X})\rtimes _{\textup{max}}\G \ar[d]\ar[r] 
& C(\bd \G)\rtimes_{\textup{max}} \G \ar[r] \ar[d] &  0 \\  
0\ar[r] & C_0(X)\rtimes_r \G \ar[r]  & C(\overline{X})\rtimes_r \G \ar[r] 
&C(\bd \G)\rtimes_r \G \ar[r] & 0 \\  }
\end{equation}
of crossed-products, with exact rows. The vertical maps are 
the natural ones, from the maximal crossed-products to the reduced. 
The top row is exact because we are using the maximal crossed-product; 
the lower row is exact by exactness of \(\G\). As the first and 
third vertical maps are isomorphisms, so is the middle one. 
This give a quick proof that 
\[ C(\overline{X})\rtimes_{\textup{max}} \G \cong C(\overline{X})\rtimes_r \G,\]
which in turn implies that 
\[ \KK^\G_*(C(\overline{X}), \C) \cong \K^*\bigl( C(\overline{X})\rtimes_r \G).\]

Next, by nuclearity of \(\hi\), the exact sequence 
\[ 0 \to C_0(X)\rtimes \G \to C(\overline{X})\rtimes \G \to \hi \to 0 \]
in which all crossed-products are reduced, induces a long exact sequence of \(\K\)-homology groups
\begin{align}
\label{esk1}
\cdots \leftarrow \K^*(C_0(X)\rtimes \G) \leftarrow \K^*(C(\overline{X})\rtimes \G) 
\leftarrow \K^*(\hi) \leftarrow \cdots
\end{align}
Since all the \(\K\)-homology groups in this sequence are 
isomorphic to their equivariant counterparts, we can view this, 
and prefer to do so, as an exact sequence of 
equivariant \(\K\)-homology groups
\begin{align}
\label{esk1}
\cdots \leftarrow \KK^\G_*(C_0(X), \C) \leftarrow \KK^\G_*(C(\overline{X}), \C) 
\leftarrow \KK^\G_*(C(\bd X), \C) \leftarrow \cdots
\end{align}

The map \(\KK_*^\G( C(\overline{X}), \C) \to \KK^\G_*(C_0(X), \C)\) in this sequence 
will be denoted \(\varphi^!\): it is the map on K-homology induced by the equivariant $*$-homomorphism 
\(C_0(X) \to C(\overline{X})\), while the map \(\KK^\G_*(C(\bd \G), \C) \to \KK^\G_*(C(\overline{X}) ,\C)\)
is the map \(r^*\) on equivariant \(\K\)-homology induced by the restriction homomorphism \(r\colon C(\overline{X}) \to C(\bd X)\).

Now \(\gamma\) is an idempotent and all maps in this 
exact sequence are \(\KK^\G_0(\C, \C)\)-module maps so commute with 
\(\gamma\). It follows that, taking \(\gamma\)-parts, gives an exact 
sequence 
\begin{align}
\label{esk2}
\cdots \leftarrow \gamma \KK^\G_*(C_0(X), \C) \xleftarrow{\varphi^!}\gamma \KK^\G_*(C(\overline{X}), \C) 
\xleftarrow{r^*}\gamma \KK^\G_*(C(\bd X), \C) \leftarrow \cdots
\end{align}
Combining this sequence with the sequence \eqref{esk1}, which 
maps to it, and 
applying the Five Lemma gives that actually \(\gamma\) acts as the 
identity on all the groups in \eqref{esk1}. So \eqref{esk2}
can be used in place of \eqref{esk1}, as they are exactly the 
same sequence.

By Lemma \ref{happiness1} we can replace the middle term 
\(\gamma \KK^\G_*(C(\overline{X}), \C)\) in \eqref{esk2} by \(\gamma \KK^\G(\C, \C)\). With this 
replacement, the map \(r^*\) is replaced by (the map induced on \(\gamma\)-parts by) 
 \(i_\G^*\) since \(r\circ u = i_\Gamma\). 
Exactly as in \cite{Emerson:Euler} one computes that the map 
\(\varphi^!\) becomes 
the map induced on \(\gamma\)-parts by the map
\(\KK^\G_*(\C, \C) \to \KK^\G_*(C_0(X), \C)\) of external product with 
the \emph{Euler class} 
defined to be  
\begin{align*}
\Eul_\G := (p_X^*)^{-1}(\Delta_X)\in \KK^\G_0(C_0(X), \C)
\end{align*}
where \(\Delta_X\in \RKK^\G_0(X; C_0(X), \C)\) is the morphism induced by the 
diagonal embedding \(X\to X\times X\). (See \S 3 of \cite{Emerson:Euler}).

Now if \(\G\) is torsion-free then, since \(\G\backslash X\) is compact and models \(B\G\), we have 
\(\KK^\G(C_0(X), \C) \cong \K_0(\G\backslash X) \cong \K_0(B\G)\), and 
under this identification, the Euler class for \(\G\) is just the ordinary Euler characteristic of \(\G\) (an integer, equal to the Euler characteristic 
of \(B\G\)) multiplied by the class of a point in K-homology (see \cite{Emerson:Euler}). So we can insert this into the sequence 
\eqref{esk1} in the torsion-free case. Finally, using the fact that \(\gamma\) acts as the identity on both 
\(\KK^\G_*(C_0(X), \C)\) and on \(\KK^\G_*(C(\bd X), \C)\),  we obtain the following.

\begin{thm}[Gysin sequence for K-homology]\label{gysin}
Let \(\G\) be a torsion-free hyperbolic group. Then there is an exact sequence 
\begin{multline*}
\label{gysin1}
0 \to \K_1(B\G)\rightarrow \K^0(\hi) \xrightarrow{i^*} \gamma\KK^\G_0(\C, \C) \xrightarrow{\Eul} \K_0(B\G)\\ \to \K^1(\hi)\xrightarrow{i^*} \gamma\KK^\G_1(\C, \C) \to 0
\end{multline*}
where \(i^*\colon \K^*(\hi) \to \K^*(C^*_r\G)\) is the map induced by the 
inclusion \(i\colon C^*_r\G\to \hi\), and where \(\Eul\) is the map $\Eul (a) = \chi (\G)\: \ind(a)\:[\pnt]\in \K_0(B\G)$, with \(\ind\) the ordinary Fredholm index map \(\KK^\G(\C, \C) \to \Z\), and \([\pnt]\) is the class in \(\K\)-homology of a point in \(B\G\). 
\end{thm}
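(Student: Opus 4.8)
The plan is to obtain the asserted sequence by rewriting the long exact sequence \eqref{esk1}, which is the $\KK^\G_*(-,\C)$ sequence attached to the $\G$-extension $0\to C_0(X)\to C(\overline{X})\to C(\bd X)\to 0$. Unwound into its full cyclic form, this sequence reads
\[
\cdots \to \KK^\G_*(C(\bd X),\C)\xrightarrow{r^*}\KK^\G_*(C(\overline{X}),\C)\xrightarrow{\varphi^!}\KK^\G_*(C_0(X),\C)\xrightarrow{\partial}\cdots,
\]
and I would reinterpret each recurring term and each map through the identifications assembled above, keeping the even and odd rows together.

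For the quotient term, since the boundary action is amenable the maximal and reduced crossed products of $C(\bd X)$ both equal $\hi$, so the descent-plus-trivial-representation identification recalled in the proof of Proposition~\ref{indexoneelement} gives $\KK^\G_*(C(\bd X),\C)\cong\K^*(\hi)$. For the middle term, Lemma~\ref{happiness1} furnishes the isomorphism $u^*\colon\KK^\G_*(C(\overline{X}),\C)\xrightarrow{\cong}\gamma\KK^\G_*(\C,\C)$. For the ideal term, torsion-freeness of $\G$ and cocompactness of $X=\EG$ give $\KK^\G_*(C_0(X),\C)\cong\K_*(\G\backslash X)\cong\K_*(B\G)$. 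Under these substitutions the maps translate as claimed: by the second assertion of Lemma~\ref{happiness1} the composite $u^*\circ r^*$ equals $i_\G^*$, and functoriality of reduced descent identifies $i_\G^*$ with the restriction map $i^*$ induced by $i\colon\Cred\G\to\hi$, whose image lies in the $\gamma$-part that Proposition~\ref{indexoneelement} matches with $\gamma\KK^\G_*(\C,\C)$; thus $r^*$ becomes $i^*$. By the computation carried out exactly as in \cite{Emerson:Euler}, $\varphi^!$ becomes external product with the Euler class $\Eul_\G=(p_X^*)^{-1}(\Delta_X)$, and under $\KK^\G_*(C_0(X),\C)\cong\K_*(B\G)$ this class equals $\chi(\G)$ times the class of a point, so that $\varphi^!$ is the map $a\mapsto\chi(\G)\,\ind(a)\,[\pnt]$.

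Finally I would check that the cyclic sequence unrolls into the stated one, which rests on a single parity observation. External product with $\Eul_\G$ preserves degree, but the Fredholm index factors through the forgetful map to $\KK_*(\C,\C)$, and $\KK_1(\C,\C)=0$; equivalently $[\pnt]\in\K_0(B\G)$, so $\chi(\G)\,\ind(a)\,[\pnt]=0$ for odd $a$. Hence $\Eul\colon\gamma\KK^\G_1(\C,\C)\to\K_1(B\G)$ vanishes. Exactness at $\gamma\KK^\G_1(\C,\C)$ then makes $i^*\colon\K^1(\hi)\to\gamma\KK^\G_1(\C,\C)$ surjective, and exactness at $\K_1(B\G)$ makes the connecting map $\K_1(B\G)\to\K^0(\hi)$ injective. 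These produce the terminal $\to 0$ and the initial $0\to$, giving precisely the sequence in the statement.

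The main obstacle is not the homological algebra of the last step, which is routine, but ensuring that the three independent identifications are simultaneously natural, so that $r^*$ really corresponds to $i^*$ and $\varphi^!$ really corresponds to $\Eul$, with the two rows correctly aligned in parity. The single deepest ingredient, namely the identification of $\varphi^!$ with multiplication by the Euler class together with its evaluation as $\chi(\G)[\pnt]$, is imported from \cite{Emerson:Euler}; granting it, what remains is the bookkeeping just outlined.
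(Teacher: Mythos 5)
Your proposal is correct and follows essentially the same route as the paper: the theorem is obtained from the long exact sequence \eqref{esk1} by substituting $\K^*(\hi)$ for the quotient term, $\gamma\KK^\G_*(\C,\C)$ for the middle term via Lemma~\ref{happiness1}, and $\K_*(B\G)$ for the ideal term, with $\varphi^!$ identified with the Euler class map as imported from \cite{Emerson:Euler}. The only point you make explicit that the paper leaves implicit is the parity argument unrolling the six-term sequence into the stated form, namely that $\Eul$ vanishes on the odd part because the index factors through $\KK_1(\C,\C)=0$.
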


\begin{cor}\label{applied gysin}
The restriction homomorphism \(i^*\colon \K^*(\hi) \to \gamma \K^*(C^*_r\G)\) is a surjection in dimension \(*=1\), and a surjection in both dimensions if \(\chi (\G) = 0\). When \(\chi (\G) \not= 0\), let \(\gamma_r\in \gamma \K^0(C^*_r\G)\) be a reduced \(\gamma\)-element. Then for each \(a\in \gamma \K^0(C^*_r\G)\) there exists \(b\in \K^0(\hi)\) such that 
 \begin{align*}
 a = \ind (a) \gamma_r+  i^*(b).
 \end{align*}
\end{cor}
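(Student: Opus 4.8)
The plan is to deduce everything from the exact Gysin sequence of Theorem~\ref{gysin}, feeding in only the explicit shape of the map $\Eul$ together with a single numerical input, namely that the $\gamma$-element has Fredholm index one. Throughout I will freely identify $\gamma\K^*(C^*_r\G)$ with $\gamma\KK^\G_*(\C,\C)$ by means of the isomorphism $\lambda^*$ of Proposition~\ref{indexoneelement}; under this identification the reduced $\gamma$-element $\gamma_r$ corresponds to $\gamma$, and the restriction map $i^*$ of the corollary becomes the $i^*$ occurring in the Gysin sequence, since $\lambda^*\circ i^* = i^*_{\mathrm{Gysin}}$ by the naturality of reduced descent and the definition $[i]=j_r([i_\G])$.

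First I would read off the two surjectivity statements directly from exactness. The tail of the sequence is $\K_0(B\G)\to \K^1(\hi)\xrightarrow{i^*}\gamma\KK^\G_1(\C,\C)\to 0$, so exactness at the final term gives surjectivity of $i^*$ in dimension one, i.e. onto $\gamma\K^1(C^*_r\G)$. If $\chi(\G)=0$, then the formula $\Eul(a)=\chi(\G)\,\ind(a)\,[\pnt]$ shows that $\Eul$ is identically zero; exactness at $\gamma\KK^\G_0(\C,\C)$ then forces the image of $i^*$ to equal the kernel of $\Eul$, which is all of $\gamma\KK^\G_0(\C,\C)$, giving surjectivity in dimension zero.

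For the remaining case $\chi(\G)\neq 0$, the crucial ingredient is the identity $\ind(\gamma)=1$. The Fredholm index map is the restriction $\KK^\G_0(\C,\C)\to\KK_0(\C,\C)=\Z$ to the trivial subgroup, and by construction $\gamma$ restricts to the unit on every finite subgroup, in particular on $\{e\}$. Granting this, for an arbitrary class $a\in\gamma\KK^\G_0(\C,\C)$ I form $a-\ind(a)\,\gamma$ and compute, using the $\Z$-linearity of $\ind$,
\[
\Eul\big(a-\ind(a)\,\gamma\big)=\chi(\G)\,\big(\ind(a)-\ind(a)\,\ind(\gamma)\big)\,[\pnt]=0.
\]
Hence $a-\ind(a)\,\gamma$ lies in the kernel of $\Eul$, which by exactness equals the image of $i^*$, so there exists $b\in\K^0(\hi)$ with $i^*(b)=a-\ind(a)\,\gamma$. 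Transporting this equation back along $(\lambda^*)^{-1}$, which carries $\gamma$ to $\gamma_r$ and intertwines the two versions of $i^*$, and invoking the injectivity of $\lambda^*$ on the $\gamma$-parts, yields $a=\ind(a)\,\gamma_r+i^*(b)$, as claimed.

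The only non-formal step is the identity $\ind(\gamma)=1$, and I expect this to be the main point to pin down precisely; it is, however, a standard property of the $\gamma$-element (and is transparent for the a-T-menable groups of the intended applications, where $\gamma=1$ outright). A secondary matter deserving a line of verification is the compatibility of the several incarnations of $i^*$ under $\lambda^*$, which I would settle simply by unwinding $[i]=j_r([i_\G])$ together with the naturality of the reduced descent map. Everything beyond these two remarks is a diagram chase through the sequence of Theorem~\ref{gysin}.
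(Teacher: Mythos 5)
Your argument is correct and follows essentially the same route as the paper's proof: both read the two surjectivity statements directly off the exactness of the Gysin sequence (with \(\Eul\) vanishing when \(\chi(\G)=0\)), and both obtain the decomposition \(a = \ind(a)\,\gamma_r + i^*(b)\) by observing that \(\ind(\gamma_r)=1\) forces \(a-\ind(a)\,\gamma_r\) into the kernel of \(\Eul\), hence into the range of \(i^*\). The paper simply asserts \(\ind(\gamma_r)=1\) and works with the identification \(\gamma\K^*(C^*_r\G)\cong\gamma\KK^\G_*(\C,\C)\) implicitly, so your extra care on those two points only makes explicit what the paper leaves tacit.
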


\begin{proof}
The statement regarding \(*=1\) and the one when the Euler characteristic is zero 
are both obvious from the Gysin sequence. For the second statement, let 
\(a\in \gamma \K^0(C^*_r\G)\), then since \(\ind (\gamma_r) =1\), \(a - \ind (a)\gamma_r\) has 
index zero and hence is in the kernel of the map \(\Eul\). Hence it is in the range of 
\(i^*\), by the Gysin sequence. Thus 
\(a = \ind (a) \gamma_r + i^*(b)\) for \(b\in \mathrm{ran}(i^*)\) as claimed. 
\end{proof}

The results of this section show that,
 up to a cyclic summand, the \(\K\)-homology of the reduced 
C*-algebra of \(\G\) comes entirely from restricting  \(\G\)-equivariant 
\(\K\)-homology classes from the boundary.

\section{Uniformly summable K-cycles over the reduced group C*-algebra}
Let $\G$ be a regular and torsion-free hyperbolic group. Recall that every class in $\K^1(\hiX)$ can be represented by an odd Fredholm module of the form
\begin{align*}
\big(\lambda_\mu, P_{\ell^2\G}\lambdaop_\mu(e)P_{\ell^2\G}\big)
\end{align*}
for some projection $e\in \hiX$, and every class in $\K^1(\hiX)$ can be represented by a balanced even Fredholm module of the form
\begin{align*}
\big(\lambda_\mu, P_{\ell^2\G}\lambdaop_\mu(u)P_{\ell^2\G}+(1-P_{\ell^2\G})\big)
\end{align*}
for some unitary $u\in \hiX$. At the level of cycles, the map \(i^*\) on \(\K\)-homology induced by \(i\colon 
C^*_r\G \to \hi\) merely restricts the representation of \(\hi\) to the subalgebra \(C^*_r\G\). Thus we restrict 
the representation \(\lambda_\mu \) to \(C_r^*\G\). Then, as each \(\lambda_\mu(g)\) commutes with \(P_{\ell^2\G}\), we can remove the degenerate summand $(1-P_{\ell^2\G})\cdot \ell^2(\G, L^2(\bd X, \mu))$. Note that the restriction of \(\lambda_\mu\) to 
the remaining summand $P_{\ell^2\G}\cdot \ell^2(\G, L^2(\bd X, \mu))= 
\ell^2\G$ is the regular representation $\lambda$. Thus, over \(C^*_r\G\) the above Fredholm modules take the form
\begin{align*}
\Phi(a):=\big(\lambda, P_{\ell^2\G}\lambdaop_\mu(a)P_{\ell^2\G}\big)
\end{align*}
where $a$ is a projection or a unitary in $\hiX$. If $a$ is a projection or a unitary in $\mathcal{A}$, where $\mathcal{A}$ is as in Theorem~\ref{enough are summable}, then $\Phi(a)$ is $D_\e^>$-summable over the group algebra $\C\G$.

\begin{lem}
\label{lemrangeofi}
Assume that $\G$ is regular and torsion-free, and let \(\mathcal{A}\) be the smooth subalgebra of Theorem \ref{enough are summable}. Then every class in the image of the restriction map \(i^*\colon \K^*(\hiX) \to \K^*(C^*_r\G)\) is represented by a Fredholm module of the form $[\Phi(a)]$ for some projection, respectively unitary $a\in \mathcal{A}$. In particular, every class in $i^*\K^*(C^*_r\G)$ has a representative which is $D_\e^>$-summable over $\C\G$. 
\end{lem}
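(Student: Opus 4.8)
The plan is to take an arbitrary class in the range of $i^*$, say $\beta = i^*(\alpha)$ with $\alpha \in \K^*(\hiX)$, represent $\alpha$ by one of the twisted K-cycles furnished by Theorem~\ref{twisted representatives}, arrange that the twisting projection or unitary actually lies in the smooth subalgebra $\mathcal{A}$ of Theorem~\ref{enough are summable}, and then push the cycle through $i^*$ to obtain a $\Phi(a)$ with $a \in \mathcal{A}$. In the odd case Theorem~\ref{twisted representatives} writes $\alpha = \big[(\lambda_\mu, \projcon\lambdaop_\mu(e)\projcon)\big]$ for a projection $e \in \hiX$, and in the even case $\alpha$ is represented by the balanced module built from a unitary $u \in \hiX$; the issue is that $e$ and $u$ are only known to lie in the C*-algebra, not in $\mathcal{A}$.

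The decisive step I would carry out is to replace $e$ (respectively $u$) by an element of $\mathcal{A}$ in the \emph{same} K-theory class, at no cost to the K-homology class of the cycle. By the proof of Theorem~\ref{twisted representatives}, the odd twisted cycle attached to $e$ represents the Poincar\'e dual $\Delta \cap [e]$ and the even one attached to $u$ represents $\Delta \cap [u]$; in either case the resulting K-homology class is a function of the K-theory class of the twisting element alone. Since $\mathcal{A}$ is smooth --- dense and stable under holomorphic functional calculus --- its projections are dense among the projections of $\hiX$ and its unitaries are dense among the unitaries of $\hiX$, so every class in $\K_*(\hiX)$ admits a representative inside $\mathcal{A}$. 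Picking $e' \in \mathcal{A}$ with $[e'] = [e]$ in $\K_0(\hiX)$ (respectively $u' \in \mathcal{A}$ with $[u'] = [u]$ in $\K_1(\hiX)$) yields $\Delta \cap [e'] = \Delta \cap [e]$, so the twisted cycle built from $e'$ (respectively $u'$) still represents $\alpha$.

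It then remains to transport the cycle across $i^*$ and read off summability. As explained in the paragraph preceding the lemma, at the level of cycles $i^*$ merely restricts $\lambda_\mu$ to $C^*_r\G$; the degenerate summand on $(1-\projcon)\ell^2(\G, L^2(\bd X,\mu))$ then drops out, and on the surviving copy $\projcon\,\ell^2(\G, L^2(\bd X,\mu)) \cong \ell^2\G$ the representation $\lambda_\mu$ becomes the regular representation $\lambda$. Thus $\beta = i^*(\alpha)$ is represented by $\Phi(e') = (\lambda, \projcon\lambdaop_\mu(e')\projcon)$ (respectively $\Phi(u')$), and because $e'$ (respectively $u'$) lies in $\mathcal{A}$, Theorem~\ref{enough are summable} guarantees that this Fredholm module is $D_\e^>$-summable over $\C\G$. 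The one point that I expect to require genuine care --- and which I regard as the main obstacle --- is precisely the invariance of the K-homology class under the substitution $e \rightsquigarrow e'$: the whole argument hinges on the twisted cycle computing a Poincar\'e dual that factors through $\K$-theory, so that a K-theoretic perturbation of the twisting element into the smooth subalgebra is invisible at the level of $\K$-homology.
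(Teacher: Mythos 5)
Your proposal is correct and follows essentially the same route as the paper, which proves this lemma implicitly by combining Theorem~\ref{twisted representatives} with the observation (made right after Theorem~\ref{enough are summable}) that smoothness of $\mathcal{A}$ lets one choose the twisting projection or unitary inside $\mathcal{A}$ without changing the K-theory class, hence without changing the Poincar\'e dual $\Delta\cap[a]$ represented by the twisted cycle. The restriction step along $i^*$ --- dropping the degenerate summand and identifying $\lambda_\mu$ on $\projcon\,\ell^2(\G,L^2(\bd X,\mu))$ with the regular representation $\lambda$ --- is likewise exactly as in the paragraph preceding the lemma.
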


Combining Lemma~\ref{lemrangeofi} and Corollary~\ref{applied gysin}, we obtain:

\begin{thm}
\label{corbigone}
Assume that $\G$ is regular and torsion-free, and let \(\mathcal{A}\) be the smooth subalgebra of Theorem \ref{enough are summable}. Then the following hold.

\begin{itemize}[leftmargin=20pt, itemsep=3pt]
\item Every class in $\gamma \K^1(C^*_r\G)$ is of the form $[\Phi(e)]$ for some projection $e\in \mathcal{A}$. In particular, every class in $\gamma \K^1(C^*_r\G)$ is represented by a Fredholm module which is $D_\e^>$-summable over $\C\G$.

\item If $\chi (\G) = 0$, then every class in $\gamma \K^0(C^*_r\G)$ is of the form $[\Phi(u)]$ for some unitary $u\in \mathcal{A}$. In particular, every class in $\gamma \K^0(C^*_r\G)$ is represented by a Fredholm module which is $D_\e^>$-summable over $\C\G$.

\item If \(\chi (\G)\not=0\), and \(\gamma_r\) is a reduced \(\gamma\)-element, then every class in $\gamma \K^0(C^*_r\G)$ is of the form $k\gamma_r+[\Phi(u)]$ for some integer $k$ and some unitary $u\in \mathcal{A}$. In particular, if $\gamma_r$ is represented by a Fredholm module which is $p(\gamma_r)$-summable over $\C\G$, then every class in $\gamma \K^0(C^*_r\G)$ is represented by a Fredholm module which is $\max\{p(\gamma_r), D_\e^>\}$-summable over $\C\G$. 
\end{itemize}
\end{thm}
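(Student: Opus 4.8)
The plan is to read off all three assertions by feeding the surjectivity information of Corollary~\ref{applied gysin} into the cycle-level description of the range of $i^*$ supplied by Lemma~\ref{lemrangeofi}. Recall from the opening of the section that whenever $a\in\mathcal{A}$ is a projection or a unitary, the restricted module $\Phi(a)=(\lambda, \projcon\lambdaop_\mu(a)\projcon)$ is $D_\e^>$-summable over $\C\G$; this is the only summability input needed in the first two cases.

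For the odd part I would argue as follows. By Corollary~\ref{applied gysin} the map $i^*\colon \K^1(\hi)\to\gamma\K^1(C^*_r\G)$ is surjective, so every class in $\gamma\K^1(C^*_r\G)$ lies in the range of $i^*$. Lemma~\ref{lemrangeofi} then produces a projection $e\in\mathcal{A}$ with $[\Phi(e)]$ representing that class, and $\Phi(e)$ is $D_\e^>$-summable over $\C\G$ by the remark just recalled. The even case under the hypothesis $\chi(\G)=0$ is identical, using the surjectivity of $i^*$ in dimension $0$ (again Corollary~\ref{applied gysin}) together with the unitary part of Lemma~\ref{lemrangeofi} to obtain a unitary $u\in\mathcal{A}$ with $[\Phi(u)]$ representing the given class.

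The case $\chi(\G)\neq 0$ requires the full strength of Corollary~\ref{applied gysin}: for $a\in\gamma\K^0(C^*_r\G)$ it supplies $b\in\K^0(\hi)$ with $a=\ind(a)\,\gamma_r+i^*(b)$. Setting $k=\ind(a)$ and applying the unitary part of Lemma~\ref{lemrangeofi} to $i^*(b)$ yields $u\in\mathcal{A}$ with $a=k\gamma_r+[\Phi(u)]$, which is exactly the asserted form. The one point deserving care---and the only place the argument moves beyond pure citation---is the summability bookkeeping: if $\gamma_r$ admits a $p(\gamma_r)$-summable representative, then $k\gamma_r$ is represented by the direct sum of $|k|$ copies of that representative (taking the opposite module when $k<0$), which remains $p(\gamma_r)$-summable since finite direct sums do not worsen the summability degree. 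Direct-summing this with the $D_\e^>$-summable module $\Phi(u)$ gives a representative of $a$ whose commutators and idempotent defects lie in the larger of the two Schatten classes, hence $\max\{p(\gamma_r),D_\e^>\}$-summable over $\C\G$.

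I expect no substantive obstacle here; the whole content is the compatibility of the cycle-level description in Lemma~\ref{lemrangeofi} with the index-theoretic splitting of Corollary~\ref{applied gysin}, together with the elementary additivity of finite summability under finite direct sums. The only matter to state cleanly is that the reduced $\gamma$-element enters as an honest summand whose own summability degree $p(\gamma_r)$ is external to our construction, which is precisely why it appears inside the maximum in the last assertion.
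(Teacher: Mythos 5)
Your proposal is correct and follows exactly the paper's route: the paper's entire proof is the single sentence ``Combining Lemma~\ref{lemrangeofi} and Corollary~\ref{applied gysin}, we obtain [the theorem],'' and you have simply spelled out that combination, including the (correct, elementary) observation that the summability of $k\gamma_r+[\Phi(u)]$ is governed by the larger of the two Schatten exponents under finite direct sums.
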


We now specialize Theorem~\ref{corbigone} to four families of a-T-menable groups; note that \(\gamma = 1\) for a-T-menable groups by \cite{HK}, so \(\gamma \K^*(C^*_r\G) = \K^*(C^*_r\G)\).

\subsection{Free groups} Let $\G$ be a finitely generated free group of rank at least $2$. Given any \(p>2\), every class in 
$i^*\K^*(\hi)$ has a $p$-summable representative over $\C\G$. On the other hand, the Julg - Valette model for the $\gamma$-element \cite{JV} is $1$-summable over $\C\G$, hence the same holds true for the reduced $\gamma$-element $\gamma_r$. We conclude that $C^*_r\G$ has uniformly $p$-summable K-homology over $\C \G$ for every $p>2$.

\subsection{Real uniform lattices} Let \(\G\) be a torsion-free uniform lattice in \(\SO(n,1)\). Then classes in $i^*\K^*(\hi)$ are $(n-1)^+$-summable over $\C\G$ when $n\geq 4$, respectively $p$-summable over $\C\G$ for every $p>2$, when $n=2,3$.

If $n$ is odd, then $\chi(\G)=0$ so $i^*\K^*(\hi)$ covers in fact all the K-homology of \(C^*_r\G\). On the other hand, but still in this odd case, Kasparov shows in \cite{Kas1} that the \(\gamma\)-element for \(\SO(n,1)\) is represented by a Fredholm module in which the operator \(F\) is the phase of a degree $1$ elliptic operator on the sphere \(S^{n-1}\). Moreover, the unitary action of the 
group $\G$ commutes with $F$ modulo pseudodifferential operators of order $-1$ because the action is conformal (and so the operators $F$ and $gFg^{-1}$ have the same symbol). Hence the commutators \([g, F]\) have singular values satisfying $s_k\asymp k^{-1/(n-1)}$, that is, Kasparov's Fredholm module is $(n-1)^+$-summable over $\C\G$. 
Finally, Kasparov's equivariant Fredholm module supplies a reduced \(\gamma\)-element, that is, the group representations involved are weakly contained in the 
regular representation, since they factor through representations of
 \(C(S^{2n-1})\rtimes \G\), and \(\G\) acts amenably on \(S^{2n-1}\).

If \(n\) is even, we can make small adjustments to this argument. The pull-back of 
the \(\gamma\)-element in \(\KK^{\SO(n+1, 1)}_0(\C, \C)\) to an element of \(\KK^\G_0(\C, \C)\) 
under the inclusion \(\G \subset \SO(n,1)\subset \SO(n+1, 1)\) of \(\G\) as a closed subgroup of \(\SO(n+1, 1)\), is the 
\(\gamma\)-element for \(\G\). By Kasparov's constructions described in the previous paragraph, 
we have therefore a description of the \(\gamma\)-element for \(\G\) as a Fredholm module in which the 
group representations are weakly contained in the regular representation, and which is \(n^+\)-summable 
over \(\C \G\). We obtain therefore a reduced \(\gamma\)-element with the same properties.

We conclude that the K-homology of \(C^*_r\G\) is uniformly $n^+$-summable over \(\C \G\) when
 $n\geq 3$, respectively $p$-summable over $\C\G$ for every $p>2$, when $n=2$.

\subsection{Complex uniform lattices} Let \(\G\) be a torsion-free uniform lattice in \(\SU(n,1)\). Then classes in $i^*\K^*(\hi)$ are $(2n)^+$-summable over $\C\G$. A model for the \(\gamma\)-element of \(\SU(n,1)\) has been given by Julg and Kasparov in \cite{JK}. In this case, the method involves construction of an appropriate hypoelliptic operator on the contact manifold \(S^{2n-1}\) (the contact structure is \(\SU(n,1)\)-invariant.)
 Inspection of the article \cite{JK} reveals that the  
relevant commutators \([g,F]\) are pseudodifferential operators in the class \(\Psi^{-1}_H(S^{2n-1})\), and it is well-known that the singular values in this case satisfy \(s_k\asymp k^{-1/(2n)}\). We conclude that the K-homology of \(C^*_r\G\) is uniformly \((2n)^+\)-summable over \(\C \G\).

\subsection{Small-cancellation groups} Let $\G$ be a torsion-free group given by a finite presentation $\la S\: | \: \mathcal{R}\ra$ satisfying the $C'(1/6)$ small-cancellation condition. As a geometric model for $\G$ we take the Cayley graph with respect to $S$, denoted $\G(S)$. 

Firstly, let us point out an explicit estimate for the visual dimension of the boundary of $\G(S)$. Combining Fact~\ref{small visual range} and Fact~\ref{Ahlfors}, we get the coarse estimate $\visdim \bd \G(S)\leq 5\delta\: e_{\G(S)}\leq 5\delta \log(2|S|-1)$, where $\delta$ is the hyperbolicity constant of $\G(S)$. In a $C'(1/6)$ situation, it is possible to give a combinatorial estimate for $\delta$, namely, $\delta\leq 3 \max\{|r|: r\in \mathcal{R}\}$ by \cite[Appendix, Thm.36]{GdH}. We thus get the explicit, though far from optimal, bound
\begin{align*}
\visdim \bd \G(S)\leq 15\log(2|S|-1)\max\{|r|: r\in \mathcal{R}\}=: \kappa(S | \mathcal{R}).
\end{align*}

Secondly, let us argue that $\G$ is regular, in the sense of Definition~\ref{defn-regular}. As $\G$ is torsion-free, the $2$-complex defined by the $C'(1/6)$ presentation $\la S\: | \: \mathcal{R}\ra$ is aspherical. Hence $\chi(\G)=1-|S|+|\mathcal{R}|$, and $\G$ has cohomological dimension at most $2$. If $\mathrm{cd}\: \G=1$ then, by a well-known theorem of Stallings, $\G$ is a free group, and this is a case we have already discussed. So let us assume that $\mathrm{cd}\: \G=2$. A theorem of Bestvina and Mess (see, e.g., \cite[Thm.6.5]{KB}) implies that $\bd \G(S)$ has topological dimension $1$. We then have the following fact, due to Misha Kapovich (personal communication):

\begin{lem}\label{1-dim is regular}
If $\G$ is a hyperbolic group whose boundary has topological dimension $1$, then $\G$ is regular.
\end{lem}

\begin{proof}
A result of Bonk and Kleiner \cite{BK} says that the boundary of a hyperbolic group contains a quasi-circle provided that the group is not virtually free. In particular, $\bd\G$ contains a topological circle. The proof is completed by the following general claim: if $Z$ is a $d$-dimensional compact space containing a $d$-dimensional topological sphere $S^d$, then $Z$ admits a continuous self-map without fixed points. 

To prove the claim, recall the following alternative definition of topological dimension: a compact space $X$ has dimension at most $n$ if and only if every continuous map $X_0\to S^n$, defined on a compact subset $X_0\subseteq X$, can be continuously extended to the entire $X$. Applying this fact to the space $Z$ and the identity map $S^d\to S^d$, we obtain a retraction $\rho: Z\to S^d$. The composition $\tau \rho$, where $\tau: S^d\to S^d$ is the antipodal involution, is clearly fixed-point free.
\end{proof}

Thus $\G$ meets the conditions of Theorem~\ref{bigone applied}. It follows that the odd K-homology $\K^1(C^*_r\G)$ is uniformly $p$-summable over $\C\G$ for every $p > \kappa(S | \mathcal{R})$, and that the same is true for the even K-homology $\K^0(C^*_r\G)$ provided that $\chi(\G)=0$, i.e., $\G$ has deficiency $|S|-|\mathcal{R}|=1$.


\end{document}